\documentclass{article}

\usepackage{amssymb}
\usepackage{amsmath}
\usepackage{latexsym}
\usepackage{bm}
\usepackage{graphicx}
\usepackage{extarrows}
\usepackage{bm}
\usepackage{bbm}
\usepackage{amsfonts}
\usepackage{amssymb}
\usepackage{indentfirst}
\usepackage{bbm}
\usepackage[all]{xy}
\usepackage{mathtools}
\usepackage{indentfirst}

\usepackage{float}
\usepackage{amsthm}
\linespread{1.0}

\newtheorem{definition}{Definition}[section]
\newtheorem{theorem}{Theorem}[section]
\newtheorem{proposition}[theorem]{Proposition}
\newtheorem{lemma}[theorem]{Lemma}
\newtheorem{corollary}[theorem]{Corollary}
\newtheorem{example}[theorem]{Example}
\newtheorem*{remark}{Remark}


\newcommand{\stirlingii}{\genfrac{\{}{\}}{0pt}{}}

\DeclareMathOperator{\End}{End}
\DeclareMathOperator{\Hom}{Hom}
\DeclareMathOperator{\rk}{rank}

\DeclareMathOperator{\tr}{tr}
\DeclareMathOperator{\Tr}{Tr}
\DeclareMathOperator{\spn}{span}

\begin{document}
\title{Motzkin Algebras and the $A_n$ Tensor Categories of Bimodules}

\author{Vaughan F.R. Jones  \and  Jun Yang}
\date{}

\maketitle

\begin{abstract}
We discuss the structure of the Motzkin algebra $M_k(D)$ by introducing a sequence of idempotents and the basic construction.
We show that $\cup_{k\geq 1}M_k(D)$ admits a factor trace if and only if $D\in \{2\cos(\pi/n)+1|n\geq 3\}\cup [3,\infty)$
and higher commutants of these factors depend on $D$.
Then a family of irreducible bimodules over these factors are constructed.
A tensor category with $A_n$ fusion rule is obtained from these bimodules.
\end{abstract}
\tableofcontents

\section{Introduction}

A unitary fusion category is a tensor category with finitely many simple objects whose morphism spaces all admit a positive definite invariant inner product, and satisfy certain other axioms - see \cite{BK01}.
One way to obtain fusion categories is to take a family of bimodules (correspondences in the sense of Connes) over a von Neumann algebra, closed under tensor product, and look at the category they generate (morphisms being bimodule maps between Hilbert spaces).
It is perhaps not surprising that this method is universal.
Using a graded construction related to random matrices \cite{GJS}, it was shown in \cite{HP} that any suitable unitary tensor category can be obtained from a system of bimodules over the von Neumann algebra of the free group on infinitely many generators.
But realising these categories over the hyperfinite $\text{II}_1$ factor $R$ is actually more difficult.
In a beautiful paper \cite{Ya04} Yamagami has shown how to do it for unitary fusion categories and more general, “amenable” tensor categories.

The simplest tensor categories known to exist are ones with a generator V whose “fusion graph” is $A_n$,
that is to say there are objects $V_i$ for $i=0,1,2,\dots,n-1$ and such that

\begin{equation*}
V_1\otimes V_i=
\left\{
             \begin{array}{ll}
             V_1, & \text{if~} i=0 \\
             V_{i-1}\oplus V_{i+1}, & \text{if~} 1\leq i\leq n-2  \\
             V_{n-2}, & \text{if~} i=n-1
             \end{array}
\right.
\end{equation*}

We became interested in the most elementary way to realise such a system of bimodules over the hyperfinite $\text{II}_1$ factor $R$.
One way would be to use Wassermann’s result in conformal field theory - \cite{Was98} but that is the opposite of elementary.
A way that, surprisingly, does not work is to consider the bimodule system coming from a subfactor $N\subset M$ with principal graph $A_n$.
From a distance the fusion rules are ok but in fact the bimodule system is graded-there are $N-N$,$N-M$,$M-N$ and $M-M$ bimodules.
One can try to convert them all into $R-R$ bimodules by using isomorphisms with R but such isomorphisms will destroy the nice fusion rules.

One could always construct a whole category containing such $V_i$ and use Yamagami’s theorem.
The crucial issue is positive definiteness in the category which is in general a subtle and difficult question-see \cite{J83,J99,Liu16}.
But if one considers “Temperley-Lieb” diagrams, positivity on the morphism spaces between simple objects is well established and can be deduced from \cite{J83}.
To apply \cite{Ya04} one would then have to extend positivity to morphisms between direct sums of simple objects which can be done.

We attempted a very “naive” approach using just TL diagrams in which the
(Hilbert space) bimodules would be linearly spanned by diagrams of the following form:
\begin{figure}[H]
  \centering
  \includegraphics[width=5cm]{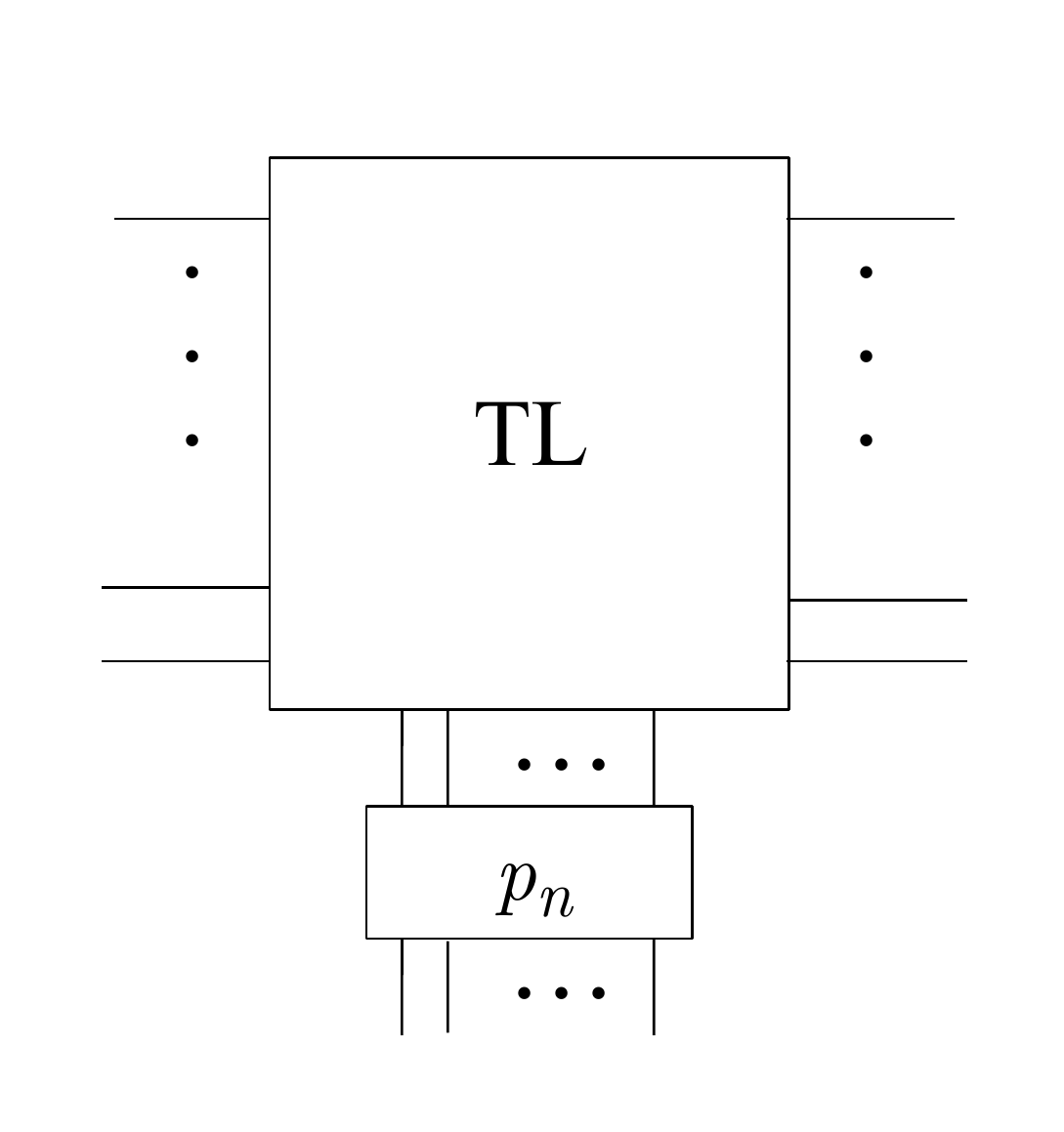}\\
  \caption{Temperley-Lieb bimodules}\label{}
\end{figure}
where $p_n$ is a Jones-Wenzl projection \cite{J83,W87} - a linear combination of TL diagrams whose coefficients are complicated but can be known explicitly \cite{FK,GL97,M17}. The action of $R$ would be by concatenation of diagrams on the left and right. But we have a problem here.
If $n$ is odd (which is the case for the generator $V_1$ that we are after), the number of boundary points on the left and right of the diagram are necessarily different modulo $2$.
This means that the embeddings for the direct limit Hilbert space involve sloping lines which create an insurmountable left-right asymmetry.
One obtains a system of $R-R$ bimodules with infinitely many simple objects.
This is the same construction as turning all the various $N-M$, etc.  bimodules into $R-R$ bimodules using the “shift by one” endomorphism of $R$ coming from adding one string to the left of a TL diagram.

We see we have a parity problem with this approach. Our observation in this paper is that the parity can be corrected using the “Motzkin algebra” of \cite{BH}.
From our point of view the Motzkin algebra is just the planar algebra generated by TL and a single “1-box” which is pictorially represented by a string ending in that 1-box, which is just a string ending in the diagram \cite{J99}.
Thus a Motzkin diagram is one of the form:
\begin{figure}[H]
  \centering
  \includegraphics[width=7cm]{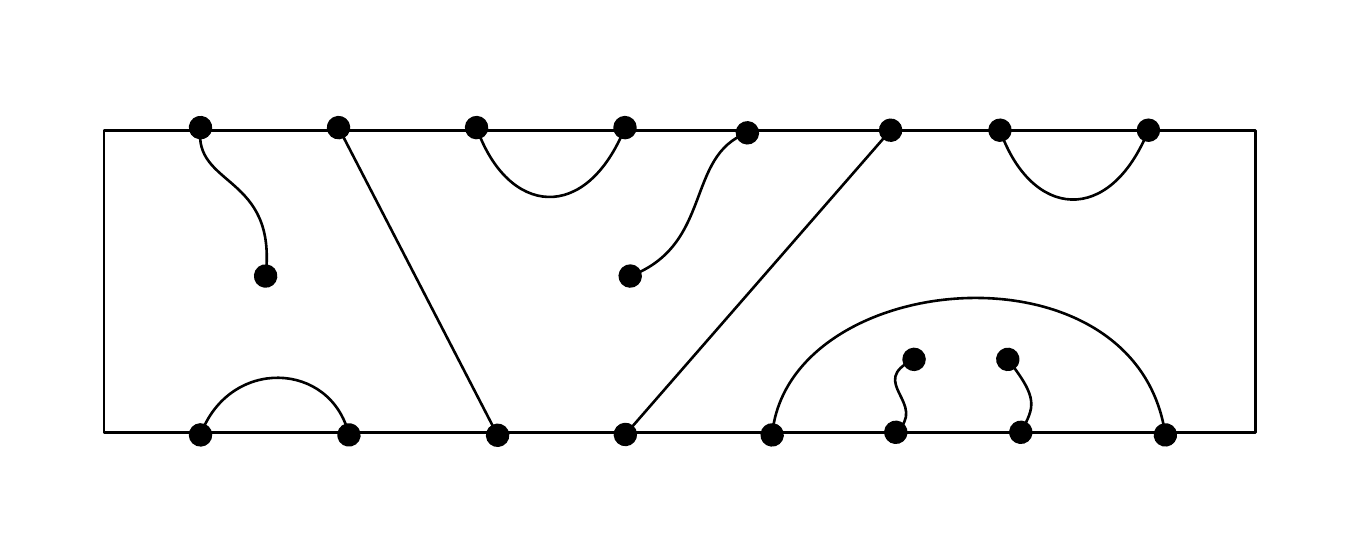}\\
  \caption{A Motzkin diagram}
\end{figure}
So Motzkin diagrams can have an odd number of boundary points.

In \cite{BH}, Benkart and Halverson gave a penetrating analysis of the Motzkin algebras from the algebraic point of view, calculating the “generic” structure in great detail.
But the main thing for our construction is positive definiteness at roots of unity, i.e. when the loop parameter of $M_n(D)$ is $D=1+2\cos(\pi/n)$ for $n=3,4,5,\dots$.
We will demonstrate positivity and exhibit the relevant quotient of the abstract Motzkin algebra by using the basic construction of \cite{J83} for finite dimensional $C^*$-algebras with positive definite trace.
For completeness we also calculate the dimensions and principal graphs of the corresponding towers of algebras. We also introduce the “Motzkin” Jones-Wenzl idempotents
which play the role of the usual TL Jones-Wenzl idempotents.

Once positivity is established,
the construction goes smoothly as outlined above except that
the diagrams of figure are allowed to be Motzkin diagrams.
The subset of all bimodules obtained in this way is closed under tensor product and has $A_n$ as its fusion graph.

Finally, we get the whole fusion algebra for the Motzkin bimodules at $D=2\cos\frac{\pi}{n}$, $n\geq 3$.
As a subalgebra, we have achieved our goal of constructing a family of irreducible $R-R$ bimodules with the required fusion rules. It is perhaps debatable whether this approach is “simpler” than constructing the whole TL fusion category and applying Yamagami’s theorem.

\section{Motzkin Algebras}

Motzkin algebra is an example of planar algebra \cite{J99} and the algebra structure we are about to define is exactly the one coming from planar algebra structure.
In this section, we introduce Motzkin diagrams with the notations from \cite{BH} (see also \cite{Wah20} that has been brought to our attention for related results).  
Then we define the algebra structure and a sequence of special idempotents within it.

\subsection{Motzkin Diagrams}

In this section, we introduce the Motzkin planar algebra.
It is an example of the planar algebra \cite{J99,J19} and the algebra structure is exactly the one from general planar algebra.
The algebra structure on diagrams in \cite{BH} is a consequence of the planar algebra picture. 
(One need to identify rectangles with circles, the marked boundary interval with left side of the rectangle.)

\begin{figure}[H]
  \centering
  \includegraphics[width=7cm]{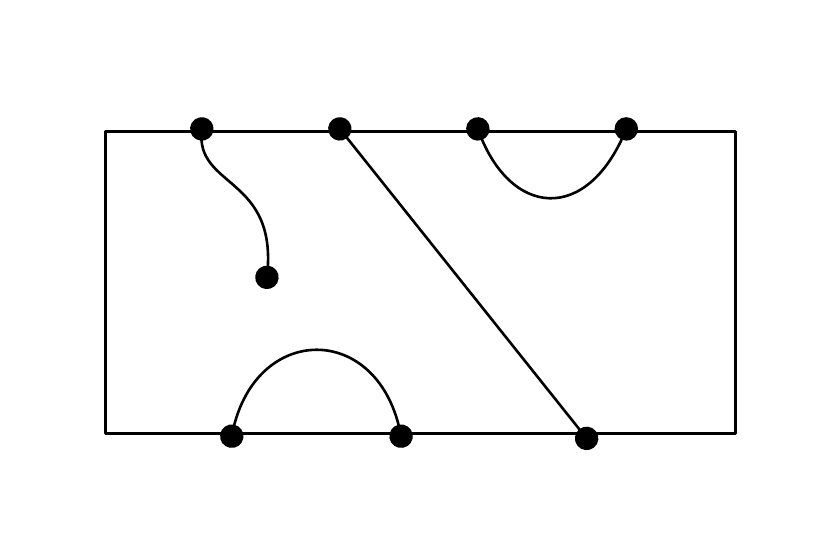}\\
  \caption{A Motzkin $(4,3)$-tangle}
\end{figure}

\begin{definition}
Given $m,n \in \mathbb{N}$, we define
a rectangular Motzkin $(m,n)$-tangle $x$ as the smooth isotopy class of a planar graph in the real plane as follows.
\begin{enumerate}
\item A rectangle bounded by $0\leq x\leq 1$ and $0\leq y\leq 1$.
\item There are $m$ vertices on the upper edge: \begin{center}{$(\frac{1}{m+1},1),\dots,(\frac{m}{m+1},1)$}\end{center}
    and $n$ vertices on the lower edge:  \begin{center}{$(\frac{1}{n+1},0),\dots,(\frac{n}{n+1},0)$.}\end{center}
\item Each vertex is connected to at most one other vertex by smooth curves in the rectangle called strings which do not intersect with each other.
\end{enumerate}
We denote the set of all Motzkin $(m,n)$-tangles by $\mathbf{M}(m,n)$. 
If $m+n=k$, an Motzkin $(m,n)$-tangle is also called Motzkin $k$-tangle and denoted by $\mathbf{M}(k)$. 

\end{definition}

One can check if $m_1+n_1=m_2+n_2=k$, then both the cardinalities of  $(m_1,n_1)$-tangles and Motzkin $(m_1,n_1)$-tangles are the same, which is $\mathcal{M}_k$. 
We prove that they are the well-known Motzkin numbers \cite{Mot}.
The Motzkin numbers can form a sequence: 
\begin{center}
    $1, 1, 2, 4, 9, 21, 51, 127, \dots$
\end{center}
\begin{lemma}
$\mathcal{M}_{k}=\sum\limits_{i=0}^{\lfloor n/2 \rfloor}\frac{1}{i+1}\binom{k}{2i}\binom{2i}{i}$.
Moreover, $\frac{1-t-\sqrt{1-2t-3t^2}}{2t^2}=\sum_{k\geq 0}\mathcal{M}_{k}t^k$
is the generating function for $\mathcal{M}_{k}$.
\end{lemma}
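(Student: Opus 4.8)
The plan is to establish both assertions by a single combinatorial bijection: Motzkin $k$-tangles (in the sense of the definition above, i.e. with $m$ points on top and $n$ on the bottom, $m+n=k$) are in bijection with \emph{Motzkin paths} of length $k$, that is lattice paths from $(0,0)$ to $(k,0)$ using steps $U=(1,1)$, $D=(1,-1)$, $L=(1,0)$ that never go below the $x$-axis. First I would observe that a Motzkin $(m,n)$-tangle is completely determined by its combinatorial type: which of the $k=m+n$ boundary vertices are isolated, and how the remaining vertices are matched up by strings. Since the strings are planar, non-crossing, and the vertices sit on the boundary of a disk (unfolding the rectangle, the $m$ top vertices and $n$ bottom vertices become $k$ points on a circle), the matching is exactly a non-crossing partial matching of $k$ cyclically-or-linearly arranged points. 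The standard device is to read the $k$ points left-to-right along the boundary and record a $U$ for the left endpoint of a string, a $D$ for the right endpoint of a string, and an $L$ for an isolated vertex; planarity and the non-crossing condition translate precisely into the path staying weakly above the axis, and the count being balanced forces it to return to $0$ at step $k$. This gives $\mathcal{M}_k = \#\{\text{Motzkin paths of length }k\}$, and in particular shows the count depends only on $k$, not on the splitting $(m,n)$.

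Next I would derive the closed formula. A Motzkin path of length $k$ with exactly $i$ up-steps (hence $i$ down-steps and $k-2i$ level steps) is obtained by choosing the positions of the $2i$ non-level steps among the $k$ steps — $\binom{k}{2i}$ ways — and then arranging those $2i$ steps as a Dyck path, of which there are the Catalan number $C_i=\frac{1}{i+1}\binom{2i}{i}$. Summing over $i$ from $0$ to $\lfloor k/2\rfloor$ yields
\[
\mathcal{M}_k=\sum_{i=0}^{\lfloor k/2\rfloor}\frac{1}{i+1}\binom{k}{2i}\binom{2i}{i},
\]
which is the stated formula (the $n$ in the excerpt's upper limit should read $k$).

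For the generating function, let $F(t)=\sum_{k\ge 0}\mathcal{M}_k t^k$. I would set up the first-return decomposition of a Motzkin path: a nonempty Motzkin path either begins with a level step $L$ followed by an arbitrary Motzkin path, or begins with an up-step $U$, runs through an arbitrary Motzkin path lifted by one unit until the matching down-step $D$, and then continues with another arbitrary Motzkin path. This yields the functional equation
\[
F(t)=1+tF(t)+t^2F(t)^2,
\]
and solving the quadratic (choosing the branch analytic at $t=0$ with $F(0)=1$) gives
\[
F(t)=\frac{1-t-\sqrt{1-2t-3t^2}}{2t^2},
\]
as claimed. Alternatively one can verify that this $F$ reproduces the closed formula by expanding $\sqrt{1-2t-3t^2}=\sqrt{(1+t)(1-3t)}$ via the binomial series, but the first-return argument is cleaner.

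The only genuinely delicate point is the bijection in the first paragraph: one must check that the "unfolding" of the rectangle to a disk really does turn planar non-crossing strings between top and bottom edges into non-crossing chords on a circle, and that reading boundary vertices in the correct cyclic order (down one side, along the bottom, up the other side — or equivalently all $m$ top points then all $n$ bottom points in reversed order) makes every string have its $U$-endpoint strictly before its $D$-endpoint. This is where the "$L$ corrects the parity" phenomenon emphasized in the introduction shows up concretely: isolated vertices are exactly the level steps, and they are what allows $k$ to be odd. Once this correspondence is pinned down, everything else is a routine generating-function manipulation.
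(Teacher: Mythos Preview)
Your proof is correct. The approach differs from the paper's in a useful way, so a brief comparison is in order.

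The paper argues directly on Motzkin tangles: fix one boundary vertex and split into the case where it is isolated (contributing $\mathcal{M}_{k-1}$) and the case where it is joined to the $i$-th vertex, so that the string cuts the disk into an $(i-1)$-tangle and a $(k-i-1)$-tangle. This yields the recursion
\[
\mathcal{M}_k=\mathcal{M}_{k-1}+\sum_{i=1}^{k-1}\mathcal{M}_{i-1}\mathcal{M}_{k-i-1},
\]
after which the paper simply says that the closed formula and the generating function ``can be obtained by induction.'' Your first-return decomposition on Motzkin paths is exactly this same recursion in path language, so the generating-function halves of the two arguments are essentially identical once your bijection is in place.

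Where you genuinely add something is the closed formula. The paper does not explain the shape $\binom{k}{2i}\cdot C_i$; it just asserts that induction works. Your argument --- choose the $2i$ non-level positions, then fill them with a Dyck word --- is a direct bijective proof that makes the formula transparent and avoids having to guess it in advance. The cost is the extra step of setting up the tangle-to-path bijection, which you correctly flag as the only point requiring care (the ordering of boundary vertices so that every chord is opened before it is closed). Overall your route is slightly longer but more self-contained, while the paper's is shorter because it works natively with tangles and defers the formula verification to an unspecified induction.
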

\begin{proof}
For any Motzkin tangle $x$, we consider an arbitrary marked boundary point. 

If it is connected to no other boundary points, there are certainly $\mathcal{M}_{k-1}$ possibilities for the remaining $k-1$ boundary points.

Otherwise, it is connected to the $i$-th boundary point (counting clockwise), $2 \leq i\leq m+n-1$.
Then the tangle is separated by this string into two parts.
And these two parts are $(i-1)$-tangle and $(m+n-i-1)$-tangle.
There are totally $\sum\limits_{i=1}^{k-1}\mathcal{M}_{i-1}\mathcal{M}_{k-i-1}$ choices of connections.

Then the numbers $\mathcal{M}_{k}$'s can be obtained by induction.
\end{proof}

\begin{definition}
Given $x\in \mathbf{M}(m,n)$ and $y \in \mathbf{M}(p,q)$, we let
\begin{enumerate}
\item $x^*\in \mathbf{M}(m,n)$, the adjoint of $x$, be the $(n,m)$-tangle that interchanges the upper and lower edges and vertices of $x$ but keeps all connections.
\item $x|y\in \mathbf{M}(m+p,n+q)$ be the $(m+p,n+q)$-tangle juxtaposed by $x,y$.
\end{enumerate}
\end{definition}

\begin{figure}[H]
  \centering
  \includegraphics[width=10cm]{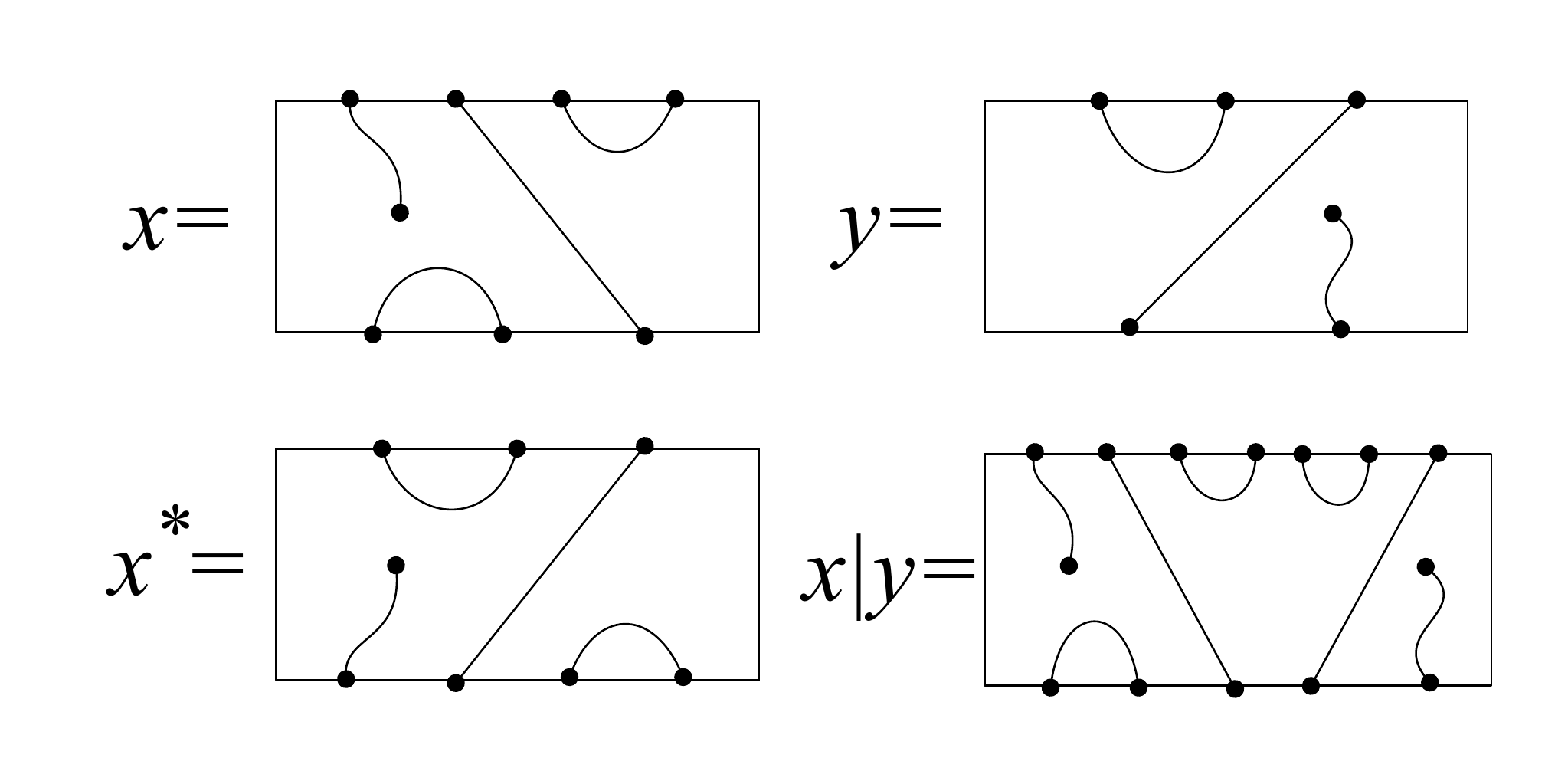}\\
  \caption{Adjoint $x^*$ and Juxtaposition $x|y$}
\end{figure}


\begin{definition}
A Motzkin $n$-diagram is a Motzkin $(n,n)$-tangle.
We denote the set of all Motzkin $n$-diagrams by $\mathbf{M}_n$ .
\end{definition}
\begin{figure}[H]
  \centering
  \includegraphics[width=7cm]{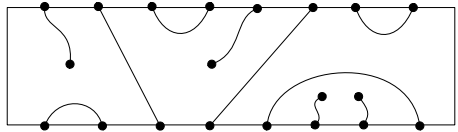}\\
  \caption{A Motzkin $8$-diagram of rank $2$}\label{}
\end{figure}
Note that each Motzkin $n$-diagram defined above has $2n$ boundary points which is a Motzkin $2n$-tangle.

Moreover, for $x \in \mathbf{M}_n$, we define $\rk(x)$ to be the number of through strings.
Let $\mathbf{M}_{n,r}$ be the set of all Motzkin $n$-diagrams of rank $r$.

\subsection{The Motzkin Algebra $M_n(D)$}

Assume $F$ is an arbitrary field and we take $D\in F$.
With the parameter $D$, we will define the algebra structure on $\mathbf{M}_n$, which is the usual algebra structure of a planar algebra \cite{J99}.

We define the product of two Motzkin $n$-diagrams $x,y$,  $xy=D^{\kappa(x,y)}z$, as follows.
\begin{enumerate}
\item Identify the bottom-row vertices of $x$ and the top-row vertices of $y$.
\item $z$ is a Motzkin $n$-diagram with its top-row vertices from top-row vertices of $x$ and its bottom-row vertices from bottom-row vertices of $y$.
\item Join the strings smoothly from the bottom-row of $y$ to the top-row of $x$.
\item $\kappa(x,y)$ is the number of loops inside.
\end{enumerate}
Obviously, the Motzkin $n$-diagram with $n$ vertical strings is the identity element in $M_n(D)$ and we denote it by $1_n$.
\begin{figure}[H]
  \centering
  \includegraphics[width=7cm]{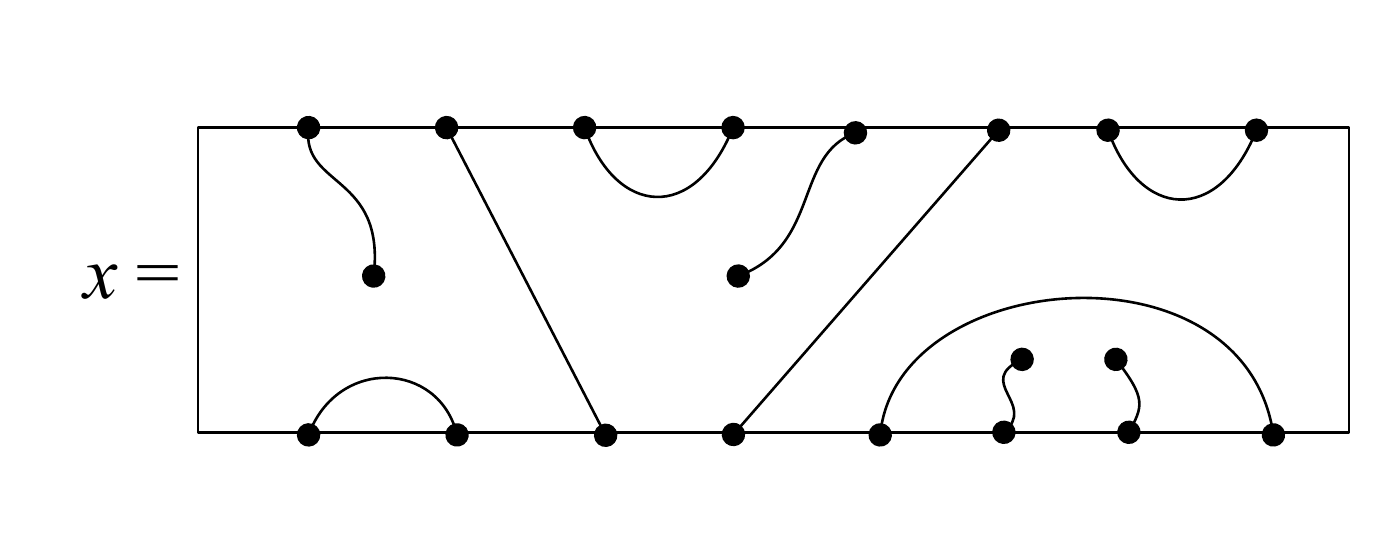}\\
  \caption{Motzkin $n$-diagram $x$}\label{}
  \includegraphics[width=7cm]{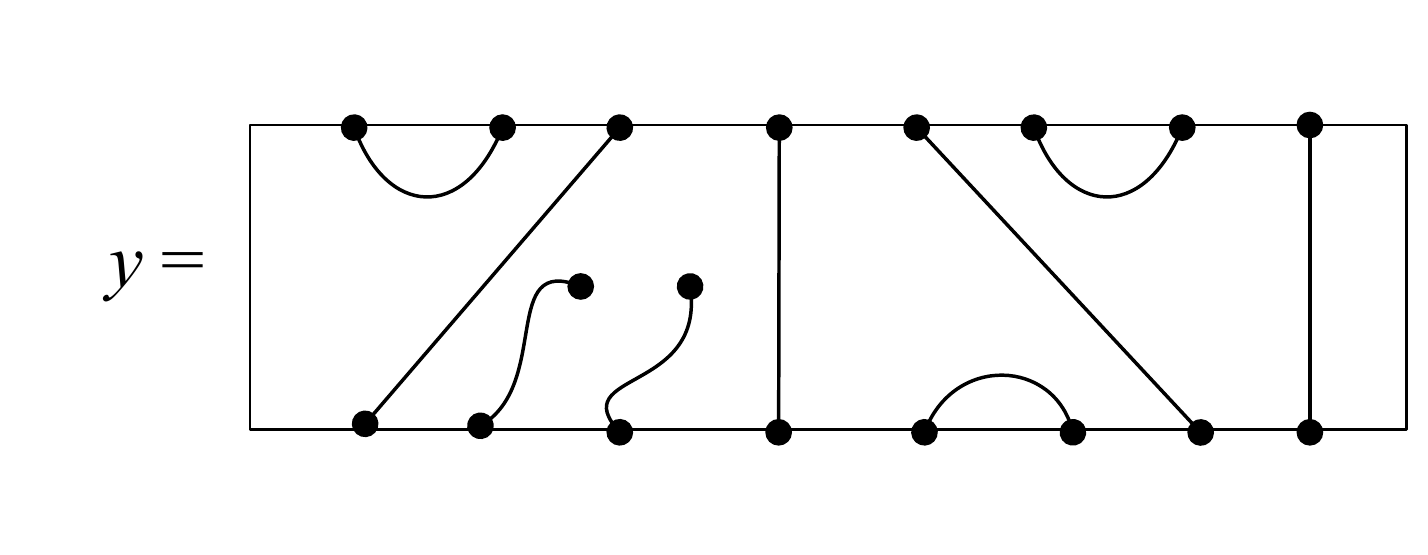}\\
  \caption{Motzkin $n$-diagram $y$}\label{}
\end{figure}
\begin{figure}[H]
  \centering
  \includegraphics[width=14cm]{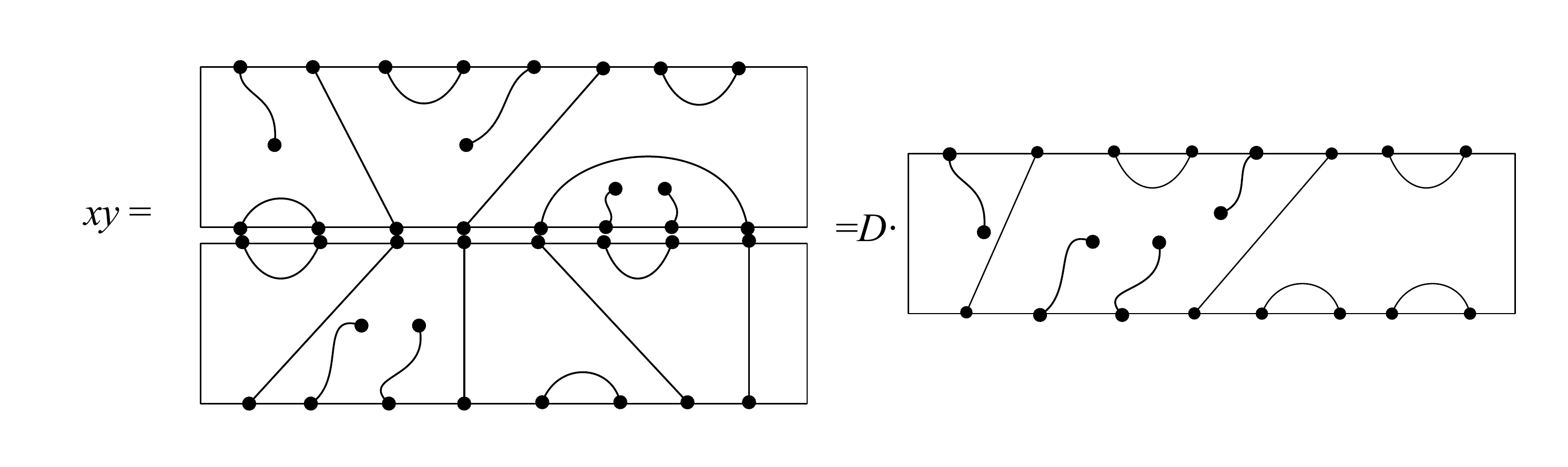}\\
  \caption{Example of a product: $xy$}\label{}
\end{figure}

\begin{definition}
Given $n\in \mathbb{N}$, the Motzkin algebra $M_n(D)$ is the unital associative algebra over $F$ generated by
the $n$-Motzkin diagrams with multiplication defined above. 

If the field $F$ has an involution $\sigma$, $M_n(D)$ becomes a $*$-algebra under the  $\sigma$-linear extension from the basis $\mathbf{M}_n$ to $M_n(D)$. 
\end{definition}

We have a natrual embedding $M_n(D)\to M_{n+1}(D)$ where we add one more through string on the right.

Note that for two Motzkin $n$-diagrams $x,y$, we have
\begin{center}
$\rk(xy)\leq \min(\rk(x),\rk(y))$.
\end{center}

Let $\mathbf{M}_{n,r}=\{x\in \mathbf{M}_n|\rk(x)\leq r\}$ be the set of $n$-diagrams with rank no greater than $r$.
We define
\begin{center}
$I_{n,r}=F\cdot \mathbf{M}_{n,r}=\spn_{F}\{x\in\mathbf{M}_n|\rk(x)\leq r\}$
\end{center}
to be the subalgebra linearly spanned by diagrams with rank no greater than $k$. It is a $2$-sided ideal in $M_n(D)$.
Also, we let $M_{n,r}$ be the $F$-linear space generated by $\{x\in \mathbf{M}_n|\rk(x)= r\}$.  we have a tower of two-sided ideals
\begin{center}
$I_{n,0}\subseteq I_{n,1} \subset \cdots \subset I_{n,n}=M_n(D)$.
\end{center}

For the algebra $M_n(D)$, let's consider the elements $r_i,l_i,e_i$ with $1\leq i\leq n-1$ defined by the following diagrams.
\begin{figure}[H]
  \centering
  \includegraphics[width=12cm]{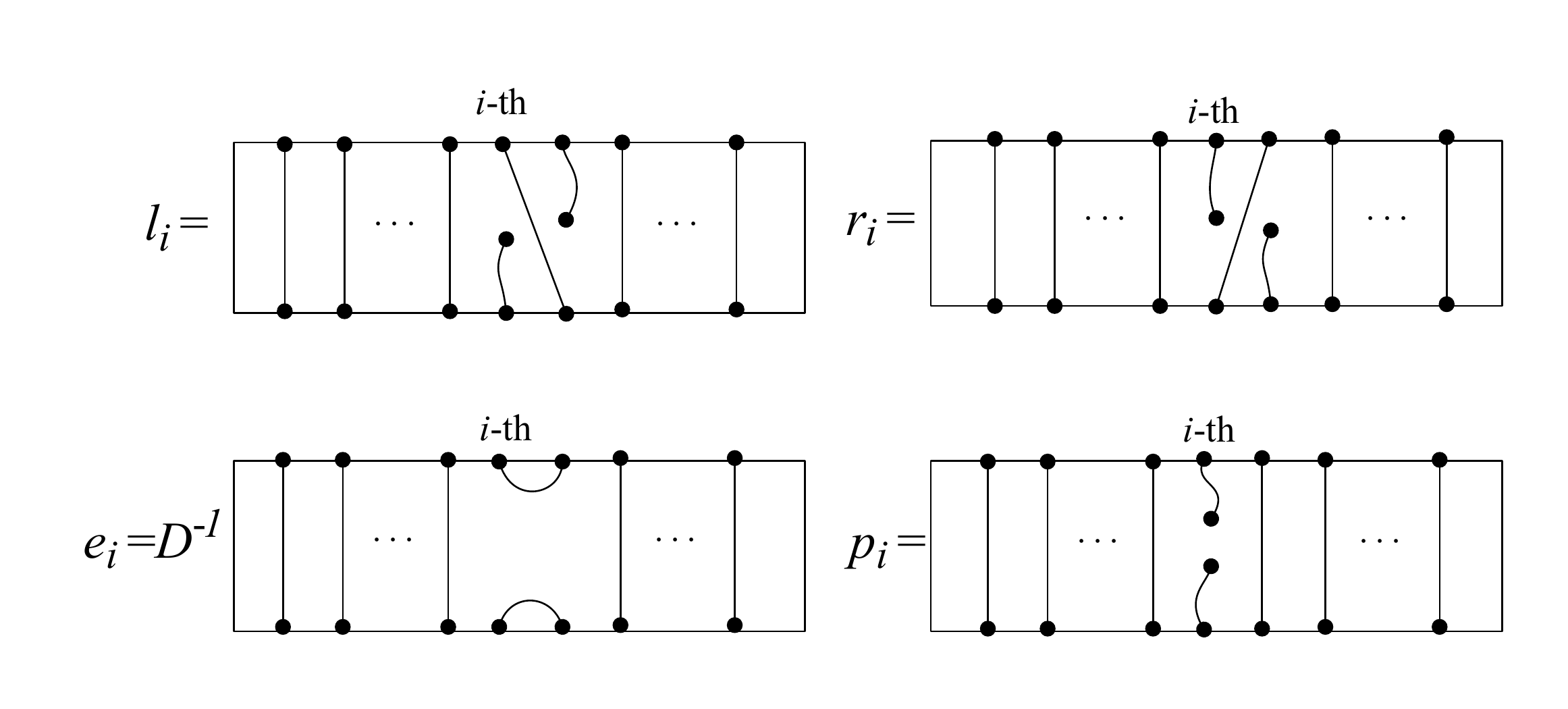}\\
  \caption{$l_i,r_i,e_i,p_i$}\label{}
\end{figure}

We let $p_i$ denote the diagram defined above, i.e. the one with $n-1$ vertical edges while its $i$-th pair of vertices are not connected.

Then we have $p_1=r_1l_1$, $p_i=r_il_i=l_{i-1}r_{i-1}$ for $2\leq i\leq n-1$ and $p_n=l_{n-1}r_{n-1}$.
One can show that $M_n(D)$ is generated by $1_n$ and the elements $e_i,l_i,r_i$ (note that we have no $p_i$) defined above:
\begin{center}
$M_n(D)=\langle 1_n,r_i,l_i,e_i|1\leq i\leq n-1\rangle$.
\end{center}

As in \cite{J99}, we have a canonical linear functional $\tr_n$ called trace on $M_n(D)$ defined by:
\begin{figure}[H]
  \centering
  \includegraphics[width=12cm]{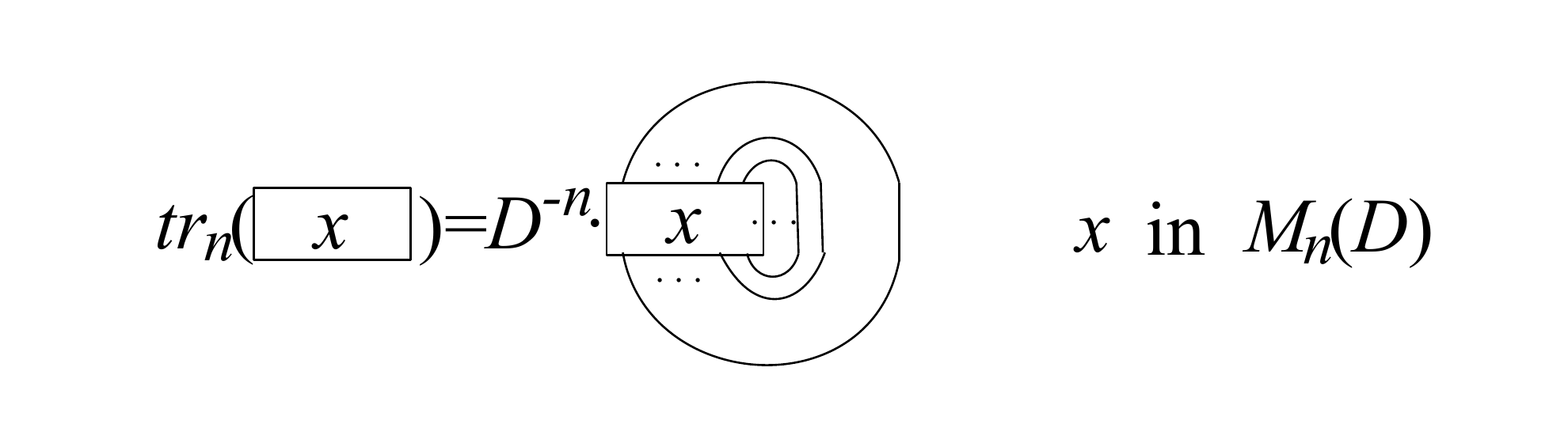}\\
\end{figure}
which satisfies $\tr(xy)=\tr(yx)$ and is also compatible with respect to the embedding $i_{n+t}:M_n(D)\hookrightarrow M_{n+t}(D)$:
\begin{center}
$\tr_n(x)=\tr_{n+t}(i_{n+t}(x))$, $x\in M_n(D)$.
\end{center}

\begin{definition}
We call the trace on $M_n(D)$ defined above
\begin{enumerate}
\item nondegenerate if the radical of $\tr$ is trivial, i.e. \begin{center}
    $\{x\in M_n(D)|\tr(xy)=0$ for all $x\in M_n(D)\}=0$,
    \end{center}
\item positive if $F=\mathbb{C}$ or $F\mathbb{R}$ and $\tr(xx^*)>0$ for all $x \in M_n(D)$
\end{enumerate}
\end{definition}

By \cite{J99}, the trace gives a general planar algebra a $C^*$-algebra structure if it satisfies some certain properties.
We will return to this in Section 3.

\subsection{The Motzkin Paths}

In this part, we mainly consider the $n$-digrams of a fixed rank $r$.
Most results here are already done in \cite{BH}.

\begin{definition}
A Motzkin $n$-path is a sequence of $p=(a_1,\dots,a_n)$ with $a_i\in \{-1,0,1\}$ such that $a_1+\cdots+a_k\geq 0$ for all $1\leq k\leq n$.
Let $\mathcal{P}_n$ denote all Motzkin $n$-paths.

Define the rank of a Motzkin $n$-path $p$ to be $a_1+\cdots+a_n$ and denote it by $\rk(p)$.
Let $\mathcal{P}_{n,r}=\{p\in \mathcal{P}_n|\rk(p)=r\}$.
\end{definition}

Also, we let $\mathcal{P}_{n,r}=\phi$ if $r<0$ or $r>n$.
Let $m_{n,r}$ be the cardinality of $\mathcal{P}_{n,r}$. 
\begin{lemma}
For $n\geq 2$, we have
\begin{enumerate}
\item $m_{n,0}=m_{n-1,0}+m_{n-1,1}$,
\item $m_{n,r}=m_{n-1,r-1}+m_{n-1,r}+m_{n-1,r+1}$ for $1\leq r\leq n-2$,
\item $m_{n,n-1}=m_{n-1,n-2}+m_{n-1,n-1}$,
\item $m_{n,n}=1$.
\end{enumerate}
Moreover, it can be counted by the following graph where each number here stands for the number of descending paths from the top point to it.
\end{lemma}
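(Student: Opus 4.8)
The plan is to establish the recursions by a direct combinatorial bijection on Motzkin $n$-paths, examining the last step $a_n$ of a path $p=(a_1,\dots,a_n)\in\mathcal{P}_{n,r}$. First I would note that deleting the last coordinate gives a map $\mathcal{P}_{n,r}\to \mathcal{P}_{n-1,r-a_n}$; conversely, appending a step $a_n\in\{-1,0,1\}$ to a path in $\mathcal{P}_{n-1,s}$ produces a path in $\mathcal{P}_{n,s+a_n}$, provided the partial-sum condition $a_1+\cdots+a_n\ge 0$ is preserved. Since all partial sums up to index $n-1$ are already $\ge 0$ by hypothesis, the only constraint that can fail is at index $n$, namely $s+a_n\ge 0$. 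Thus for a target rank $r\ge 1$ every choice $a_n\in\{-1,0,1\}$ is legal (the preimages under the three choices having ranks $r+1,r,r-1$ respectively, all $\ge 0$), giving the partition $\mathcal{P}_{n,r}=\mathcal{P}_{n-1,r-1}\sqcup\mathcal{P}_{n-1,r}\sqcup\mathcal{P}_{n-1,r+1}$ and hence statement (2) after taking cardinalities; one checks $1\le r\le n-2$ is exactly the range in which all three terms $\mathcal{P}_{n-1,r+1},\mathcal{P}_{n-1,r-1}$ are within the admissible index window $0\le\cdot\le n-1$.

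For statement (1), $r=0$: the step $a_n=-1$ would force a path in $\mathcal{P}_{n-1,1}$ followed by $-1$, landing at $0$, which is fine; $a_n=0$ comes from $\mathcal{P}_{n-1,0}$; but $a_n=+1$ cannot land at rank $0$. So $m_{n,0}=m_{n-1,0}+m_{n-1,1}$. For statement (3), $r=n-1$: a path of rank $n-1$ in length $n$ must have had rank $n-2$ or $n-1$ at step $n-1$ (rank $n$ is impossible at length $n-1$), and from rank $n-2$ we append $+1$, from rank $n-1$ we append $0$; appending $-1$ is impossible since $\mathcal{P}_{n-1,n}=\varnothing$. This gives $m_{n,n-1}=m_{n-1,n-2}+m_{n-1,n-1}$. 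Statement (4) is immediate: the only path of rank $n$ and length $n$ is $(1,1,\dots,1)$, so $m_{n,n}=1$.

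The final clause — that $m_{n,r}$ is computed by the displayed triangular array, in which each entry counts descending lattice paths from the apex — follows because the three recursions together with the boundary values $m_{1,0}=m_{1,1}=1$ (and $m_{n,r}=0$ outside $0\le r\le n$, matching the convention $\mathcal{P}_{n,r}=\varnothing$) are precisely the rule for filling in that array: each node's value is the sum of the (at most three) nodes directly above it feeding into it. Hence the number of descending paths from the top to a given node satisfies the same recursion with the same initial condition, so the two agree by induction on $n$.

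I expect no serious obstacle here; the only point requiring a little care is bookkeeping the edge cases — checking that the index ranges in (1)–(3) are exactly where one or more of the three "parent" sets in the generic recursion become empty, and verifying the base case of the induction lines up with the convention $\mathcal{P}_{n,r}=\varnothing$ for $r<0$ or $r>n$. These are routine once the bijection "forget/append the last step" is set up.
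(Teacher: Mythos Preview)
Your proposal is correct and takes essentially the same approach as the paper: both arguments set up the bijection ``delete/append the last step'' between $\mathcal{P}_{n,r}$ and $\bigsqcup_{s=r-1}^{r+1}\mathcal{P}_{n-1,s}$, with the boundary cases handled by the convention $\mathcal{P}_{n,r}=\varnothing$ for $r<0$ or $r>n$. Your treatment is slightly more explicit in separating out the edge cases $r=0$, $r=n-1$, $r=n$ and in justifying the graph interpretation, but the underlying idea is identical.
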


\begin{figure}[H]
  \centering
  \includegraphics[width=8cm]{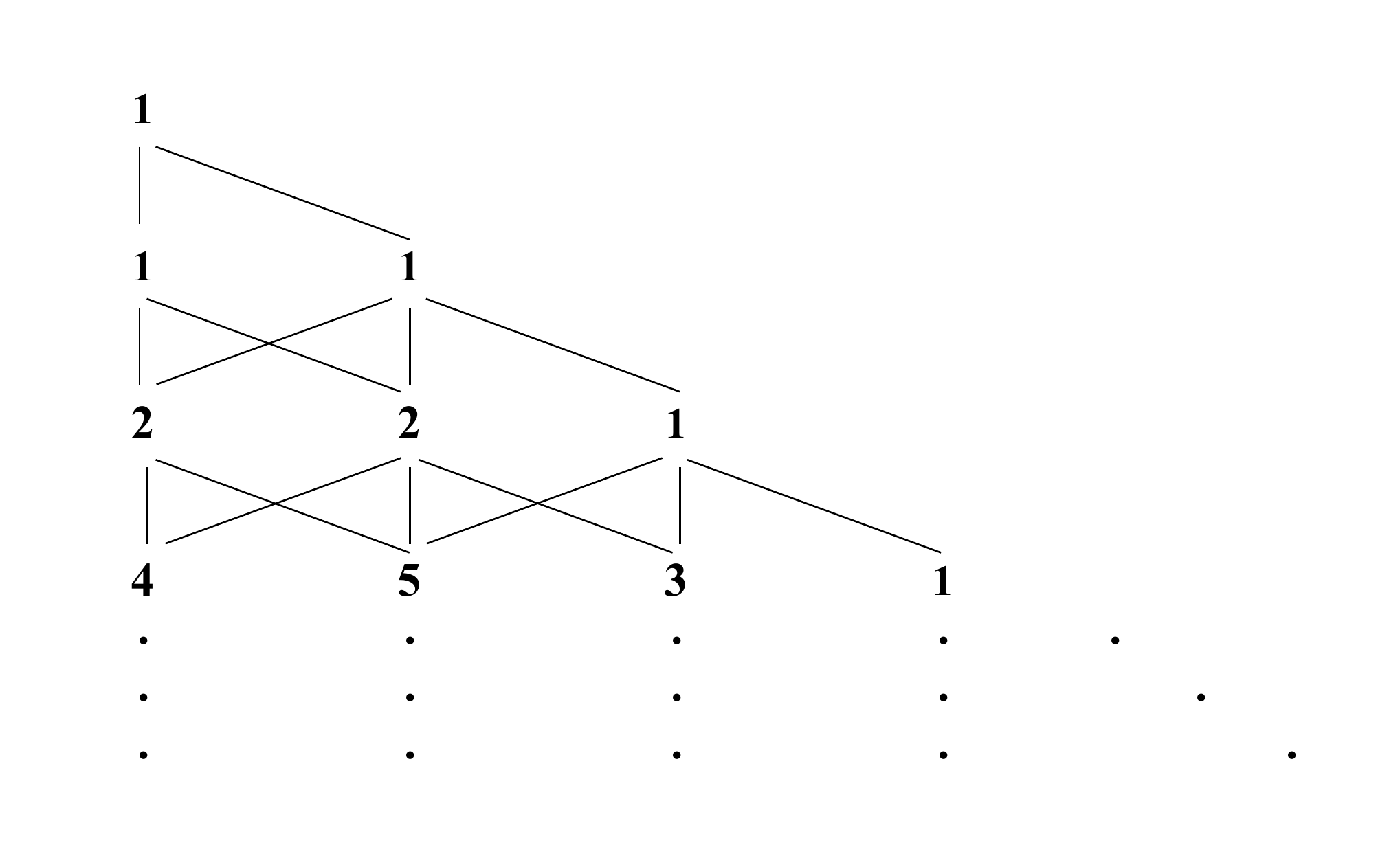}\\
  \caption{Graph for $m_{n,r}$}\label{}
\end{figure}

\begin{proof}
Since $(1,\dots,1)$ is the only path in $\mathcal{P}_{n,n}$,  we have $m_{n,n}=1$ for all $n$.

Now, take a fixed $r$ with $0\leq r\leq n$,
by definition, we have
\begin{center}
$\mathcal{P}_{n+1,r}=\{(a_1,\dots,a_{n+1})|a_k\in \{-1,0,1\},\sum\limits_{i=1}^{k}a_i \geq 0,\sum\limits_{i=1}^{n+1}a_i=r,\forall 1\leq k\leq n+1\}$.
\end{center}

Let us consider the first $n$ terms $a_1,\dots,a_n$.
As $a_{n+1}\in \{-1,0,1\}$, we have $\sum\limits_{i=1}^{n}a_i$ is $r-1,r$ or $r+1$.
So $(a_1,\dots,a_n)\in \mathcal{P}_{n,r-1},\mathcal{P}_{n,r}$ or $\mathcal{P}_{n,r+1}$.
(Note that the cases $r=0,n,n+1$ will lead to empty set).

Conversely, given a path $(a_1,\dots,a_n)\in \mathcal{P}_{n,r-1},\mathcal{P}_{n,r}$ or $\mathcal{P}_{n,r+1}$, the path $(a_1,\dots,a_{n+1})$ where $a_{n+1}=1,0,-1$ respectively is in $\mathcal{P}_{n+1,r}$.
Hence we have a bijection between sets $\mathcal{P}_{n+1,r}$
and $\bigsqcup\limits_{k=r-1}^{k=r+1}\mathcal{P}_{n,k}$, which establishes all the equalities.

\end{proof}

Then, by induction, we have the following consequence.

\begin{corollary}
$m_{n,r}=\sum\limits_{i=0}^{\lfloor (n-r)/2\rfloor}\binom{n}{r+2i}\stirlingii{r+2i}{i}$, where $\stirlingii{n}{m}=\binom{n}{m}-\binom{n}{m-1}$ for $1\leq m\leq \lfloor n/2\rfloor$.
\end{corollary}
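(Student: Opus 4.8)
The plan is to prove the closed formula for $m_{n,r}$ by induction on $n$, using the three-term recursion from the previous lemma, and to carry out the induction step by a ``Pascal-type'' manipulation on binomial coefficients together with the elementary identity $\stirlingii{n}{m}=\binom{n}{m}-\binom{n}{m-1}$. So I would first record the base case: for $n=r$ the sum has its single term $i=0$, namely $\binom{r}{r}\stirlingii{r}{0}=1$, which matches $m_{r,r}=1$ (and the small cases $m_{1,0}=1$, $m_{1,1}=1$ are checked directly from the definition of $\mathcal P_{1,r}$). Then for the inductive step I would substitute the claimed formula for $m_{n-1,r-1}$, $m_{n-1,r}$, $m_{n-1,r+1}$ into $m_{n,r}=m_{n-1,r-1}+m_{n-1,r}+m_{n-1,r+1}$ and show the result collapses to the claimed formula for $m_{n,r}$.

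Concretely, I would group the three sums by the value of the upper index of the outer binomial. Writing $N=r+2i$ for the index appearing in the target expression $\binom{n}{r+2i}\stirlingii{r+2i}{i}$, the contributions from $m_{n-1,r-1}$, $m_{n-1,r}$, $m_{n-1,r+1}$ involve $\binom{n-1}{N-1}$, $\binom{n-1}{N}$, $\binom{n-1}{N+1}$ respectively, but against the second-kind-Stirling-type factors $\stirlingii{N-1}{i}$, $\stirlingii{N}{i}$, $\stirlingii{N+1}{i-1}$. Here the key algebraic lemma I would isolate and prove is the identity
\begin{equation*}
\binom{n-1}{N-1}\stirlingii{N-1}{i}+\binom{n-1}{N}\stirlingii{N}{i}+\binom{n-1}{N+1}\stirlingii{N+1}{i-1}=\binom{n}{N}\stirlingii{N}{i},
\end{equation*}
which, after expanding every $\stirlingii{m}{j}=\binom{m}{j}-\binom{m}{j-1}$ and using Pascal's rule $\binom{n-1}{N-1}+\binom{n-1}{N}=\binom{n}{N}$ repeatedly, reduces to a cancellation of the cross terms. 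I would verify this by straightforward but careful bookkeeping of the six resulting binomial products.

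The main obstacle is not any single deep idea but the boundary bookkeeping: the summation ranges $0\le i\le\lfloor(n-1-r')/2\rfloor$ differ for $r'=r-1,r,r+1$, and one must check that the ``extra'' or ``missing'' terms at the ends of these ranges either vanish identically (because $\stirlingii{m}{j}=0$ or a binomial coefficient is zero there) or recombine correctly to give exactly the range $0\le i\le\lfloor(n-r)/2\rfloor$ for the target. I would also need to treat the three special recursions ($r=0$, $r=n-1$, $r=n$) as degenerate instances of the generic step, checking that the conventions $\mathcal P_{n,r}=\varnothing$ for $r<0$ or $r>n$ and $\stirlingii{m}{0}=1$ make the same formula go through; an alternative, cleaner route would be to verify the generating-function identity $\sum_{r}m_{n,r}t^r = \bigl(t^{-1}+1+t\bigr)\sum_{r}m_{n-1,r}t^r$ term by term, which automatically absorbs all the boundary cases, but I expect the direct binomial computation to be the one the authors intend here since it yields the explicit closed form directly.
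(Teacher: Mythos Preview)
Your overall plan---induction on $n$ using the three-term recursion of the preceding lemma, combined with Pascal's rule---is exactly what the paper intends; the paper's own proof consists of the single sentence ``Then, by induction, we have the following consequence,'' so your detailed strategy is a faithful expansion of that.

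However, the key term-by-term identity you isolate is mis-indexed and is \emph{false} as written. Take $n=3$, $r=1$, $i=1$ (so $N=3$): your left side is $\binom{2}{2}\stirlingii{2}{1}+\binom{2}{3}\stirlingii{3}{1}+\binom{2}{4}\stirlingii{4}{0}=1$, while the right side is $\binom{3}{3}\stirlingii{3}{1}=2$. The error is in the third summand: when you align the index of the $m_{n-1,r+1}$-sum with the target index $i$, the term that actually contributes to $\binom{n}{N}\stirlingii{N}{i}$ comes from $j=i-1$, with upper binomial index $(r+1)+2(i-1)=N-1$, not $N+1$. The correct identity is
\[
\binom{n-1}{N-1}\stirlingii{N-1}{i}+\binom{n-1}{N}\stirlingii{N}{i}+\binom{n-1}{N-1}\stirlingii{N-1}{i-1}=\binom{n}{N}\stirlingii{N}{i},
\]
and its proof is the two-line computation you anticipated: the ballot numbers satisfy their own Pascal rule $\stirlingii{N-1}{i}+\stirlingii{N-1}{i-1}=\stirlingii{N}{i}$ (immediate from $\stirlingii{m}{j}=\binom{m}{j}-\binom{m}{j-1}$), so the first and third terms combine to $\binom{n-1}{N-1}\stirlingii{N}{i}$, and then ordinary Pascal on the outer binomial finishes it. With this correction, your boundary bookkeeping and degenerate-case discussion go through as you describe.
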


\subsection{A Sequence of Idempotents}

We introduce a sequence of idempotents $g_k\in M_k(D)$.
The construction of these idempotents depends on the parameter $D$.

Let $\{P_n(x)\}_{n\geq 0}$ be the Chebyshev polynomials over $F$.
They are defined as follows.
\begin{center}
$P_0(x)=P_1(x)=1$, $P_{n+1}(x)=P_n(x)-x\cdot P_{n-1}(x)$.
\end{center}
Let  $d=D-1$ and $\tau=d^{-2}$.
\begin{definition}
For $D\in F$ and $\tau=(D-1)^2$, we say $D$ is $n$-generic if $P_k(\tau)\neq 0$ for $1\leq k\leq n$ and generic if $P_k(\tau)\neq 0$ for $k\in \mathbb{N}$.
\end{definition}

Suppose $D$ is $n$-generic, we define a sequence of idempotents $g_k\in M_k(D)$ for $1\leq k\leq n-1$ inductively by
\begin{center}
$g_{k+1}=g_k\cdot (1-p_{k+1})-\frac{D}{d}\frac{P_{k-1}(\tau)}{P_{k}(\tau)}g_k e_k g_k$,
\end{center}
with $g_1=1-p_1$. We set $\lambda_k=\frac{D}{d}\frac{P_{k-1}(\tau)}{P_{k}(\tau)}$.

Moreover, for each $n\geq 2$, we define $E:M_n(D)\to M_{n-1}(D)$ by the  graphical action shown in Figure 11.
\begin{figure}[H]
  \centering
  \includegraphics[width=5cm]{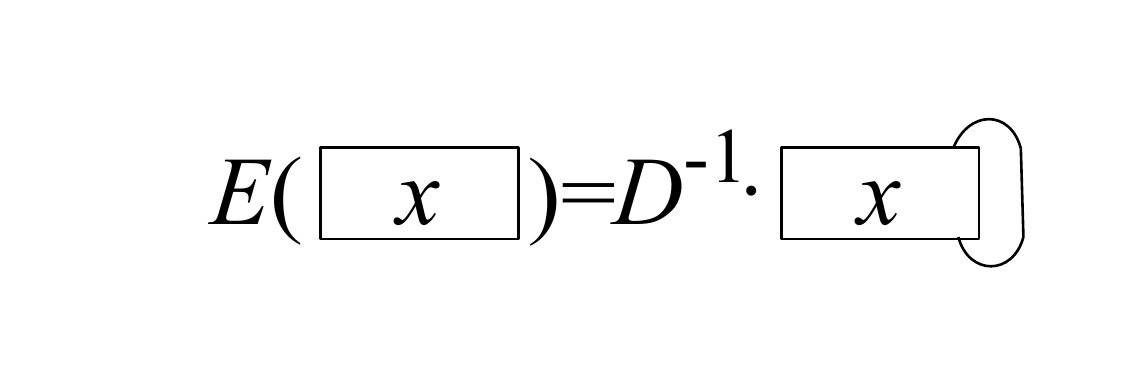}\\
  \caption{$E:M_n(D)\to M_{n-1}(D)$}\label{}
\end{figure}

\begin{proposition}
Suppose $D$ is $n$-generic, for all $2\leq k\leq n$, we have
\begin{enumerate}
\item $g_kl_i=g_kr_i=g_ke_i=l_ig_k=r_ig_k=e_ig_k=0$ for $1\leq i\leq k-1$,
\item $g_k=g_k^*=g_k^2$,
\item $E(g_k)=\frac{d\cdot P_{k}(\tau)}{D\cdot P_{k-1}(\tau)}g_{k-1}$,  
\item $g_ig_j=g_j$ if $1\leq i\leq j\leq n$,
\item $g_k=1+\sum_{i}c_i w_i$ where $c_i\in F$ and $w_i\neq 1$ is a word of $\{l_j,r_j,e_j|1\leq j \leq k-1\}$.
\end{enumerate}
\end{proposition}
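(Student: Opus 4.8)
The natural strategy is induction on $k$, establishing all five statements simultaneously since they feed into one another. The base case $k=1$ (or $k=2$, where the recursion first produces a nontrivial expression) is a direct diagrammatic check: $g_1 = 1 - p_1$, and the relations $p_1 l_i = 0$ etc. follow from the fact that $p_1$ has a "cap-cup through a $1$-box" at the first strand, so composing with any $l_i, r_i, e_i$ either creates an incompatible boundary or collapses the relevant strand. Statement (2) for $g_1$ is immediate from $p_1^* = p_1$ and $p_1^2 = D \cdot p_1 / \ldots$ — wait, one must be careful: $p_1$ is an idempotent only up to a scalar, so $1-p_1$ being idempotent requires checking $p_1^2 = p_1$, which holds because the 1-box strand carries the normalization making $p_i^2 = p_i$ (this is where $d = D-1$ enters, via the loop value of a 1-box against a 1-box). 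I would verify this normalization carefully at the outset, since everything downstream depends on it.

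For the inductive step, assume the proposition holds through level $k$ and consider $g_{k+1} = g_k(1 - p_{k+1}) - \lambda_k g_k e_k g_k$. Statement (1) at level $k+1$ splits into the cases $i \leq k-1$ and $i = k$. For $i \leq k-1$: since $g_k$ kills $l_i, r_i, e_i$ on both sides by the inductive hypothesis, and $p_{k+1}, e_k$ commute with $l_i, r_i, e_i$ for $i \leq k-1$ (disjoint supports, a standard planar-algebra locality fact), each term of $g_{k+1}$ kills these on the appropriate side. For $i = k$: here one computes $g_{k+1} e_k$ directly. Using $g_k(1-p_{k+1})e_k$ and the Temperley–Lieb–type relation $e_k p_{k+1} e_k = \text{(scalar)} \, e_k$ together with $p_{k+1} e_k = p_{k+1}$ or a similar reduction, and $g_k e_k g_k e_k = \text{(scalar from } E\text{-value)} \, g_k e_k$, the coefficient $\lambda_k = \frac{D}{d}\frac{P_{k-1}(\tau)}{P_k(\tau)}$ is exactly what makes the two terms cancel. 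This is the heart of the argument and the place where the Chebyshev recursion $P_{k+1}(\tau) = P_k(\tau) - \tau P_{k-1}(\tau)$ must be invoked — concretely, after computing $E(g_k e_k g_k)$ and $E(g_k p_{k+1})$ via statement (3) at level $k$ and the conditional-expectation property $E(x e_k y) = \text{(scalar)}\, E(x) E(y)$-style identities, the recursion collapses the bookkeeping. I expect this $i=k$ cancellation, and the parallel computation of $E(g_{k+1})$ for statement (3), to be the main obstacle: it requires getting every normalization constant ($D$ vs. $d$, the 1-box loop value, the $e_k$-loop value $D$, and the $E$-value of $g_k$) exactly right.

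Given (1) at level $k+1$, statement (2) follows formally: self-adjointness from $p_{k+1}^* = p_{k+1}$, $e_k^* = e_k$, $g_k^* = g_k$ and the fact that $g_k$ commutes with $p_{k+1}$ up to the correction term (or one rewrites $g_{k+1}$ symmetrically); idempotency $g_{k+1}^2 = g_{k+1}$ is a short computation using $g_k g_{k+1} = g_{k+1} = g_{k+1} g_k$ (which itself follows once (1) is known, since $g_{k+1}$ is built from $g_k$ times things $g_k$ absorbs) plus the $i=k$ relations to kill cross terms. Statement (3) is obtained by applying $E$ to the defining recursion and using the level-$k$ value of $E(g_k)$ and the identity for $E(g_k e_k g_k) = E(g_k e_k g_k)$; again the Chebyshev recursion produces the stated ratio $\frac{d P_{k+1}(\tau)}{D P_k(\tau)}$. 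Statement (4), $g_i g_j = g_j$ for $i \leq j$, follows by a secondary induction on $j - i$: the case $j = i+1$ is $g_i g_{i+1} = g_{i+1}$, immediate from the definition of $g_{i+1}$ as $g_i(\cdots)$ and idempotency of $g_i$; the general case chains these. Statement (5) — that $g_k = 1 + \sum c_i w_i$ with each $w_i \neq 1$ a word in the $l_j, r_j, e_j$ — is a purely formal induction: $g_1 = 1 - p_1 = 1 - r_1 l_1$ has this form, and expanding $g_{k+1} = g_k(1 - p_{k+1}) - \lambda_k g_k e_k g_k$, substituting the inductive expression for $g_k$, and using $p_{k+1} = r_{k+1} l_{k+1}$ or $l_k r_k$, every new term either is the leading $1$ (appearing exactly once, from $g_k \cdot 1$) or contains at least one generator. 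The only subtlety is confirming the coefficient of $1$ remains exactly $1$, which is clear since $p_{k+1}$ and $e_k$ are non-identity diagrams so $g_k p_{k+1}$ and $g_k e_k g_k$ contribute no $1$-term.
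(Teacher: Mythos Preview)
Your approach is the same as the paper's---simultaneous induction on $k$, with the crux being the $i=k$ case of (1) and the coefficient $\lambda_k$ arranged so that the inductive value of $E(g_k)$ from (3) forces the cancellation. A few of the specific relations you plan to use are wrong, however, and would stall the computation if taken at face value. First, $e_k$ does \emph{not} commute with $l_{k-1},r_{k-1},e_{k-1}$ (they share strand $k$); fortunately you don't need this---for $i\le k-1$ the term $g_k e_k g_k\, l_i$ vanishes because $g_k l_i=0$ by the inductive hypothesis, no commutation required. Second, $p_{k+1}e_k\neq p_{k+1}$; the identity the paper actually uses is $p_{k+1}e_k=p_k e_k$ (and its adjoint), together with $g_k p_k=0$. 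Third, the case $r_k g_{k+1}=0$ (and $l_k g_{k+1}=0$) is \emph{not} subsumed by the $e_k$ computation: the paper handles it separately via the relation $r_k=r_k p_{k+1}$, which lets you slide $p_{k+1}$ past $g_k$ and collapse both terms using $p_{k+1}(1-p_{k+1})=0$ and $p_{k+1}e_k=p_k e_k$. Finally, two minor points: $p_i^2=p_i$ holds on the nose (stacking two disconnected vertices produces neither a loop nor a floating arc), so $d=D-1$ plays no role there; and the Chebyshev recursion $P_{k+1}=P_k-\tau P_{k-1}$ is what you need to verify (3) at level $k+1$, not (1)---for (1) the cancellation is simply $\lambda_k\cdot\frac{dP_k}{DP_{k-1}}=1$ followed by $g_{k-1}g_k=g_k$.
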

\begin{proof}
Note all of these hold for $g_1,g_2$.
Suppose these also hold for all $2\leq i\leq k$ where $k\leq n-1$.

For 1, by definition, we have $g_{k+1}l_i=g_{k+1}r_i=g_{k+1}e_i=l_ig_{k+1}=r_ig_{k+1}=e_ig_{k+1}=0$ for $1\leq i\leq k-1$.
It suffices to prove they hold for $i=k$.
\begin{align*}
 r_k g_{k+1}&=r_kg_k (1-p_{k+1})-\lambda_k  r_k g_k e_k g_k\\
 &=r_kp_{k+1}g_k(1-p_{k+1})-\lambda_k r_k p_{k+1} g_k e_k g_k \\
  &=r_k g_k p_{k+1}(1-p_{k+1})-\lambda_k r_k g_k p_{k+1}e_k g_k \\
  &=0-\lambda_k r_k g_k p_{k}e_k g_k=0,
\end{align*}
where we apply $p_{k+1}g_k=g_kp_{k+1}$ and $p_{k+1}e_k=p_{k}e_k$. And for $e_k$, we have
\begin{align*}
 e_k g_{k+1}&=e_k g_k p_{k+1}-\lambda_k  e_k g_k e_k g_k\\
 &=e_k g_k p_{k+1}-\lambda_k  e_k E(g_k)g_k\\
 &=e_k( g_k p_{k+1}-g_k)=-e_k p_{k+1}g_k=-e_k p_{k}g_k=0,
\end{align*}
where we apply $E(g_k)=\frac{dP_{k}(\tau)}{P_{k-1}(\tau)}g_{k-1}$, $g_{k-1}g_k=g_k$ and $p_{k+1}e_k=p_{k}e_k$.

So we have proved $l_kg_{k+1}=e_kg_{k+1}=0$.
Then, by symmetry, we have $g_{k+1}l_k=g_{k+1}r_k=g_{k+1}e_k=l_kg_{k+1}=r_kg_{k+1}=e_kg_{k+1}=0$.

For 2, it is obvious that $g_{k+1}^*=g_{k+1}$. It suffices to prove $g_{k+1}$ is an idempotent:
\begin{align*}
 g_{k+1}^2&=(g_k (1-p_{k+1})-\lambda_k   g_k e_k g_k)^2\\
 &=g_k (1-p_{k+1})-2\lambda_k Dd^{-1} g_k e_kg_k+\lambda_k^2g_k e_k g_ke_k g_k\\
 &=g_k (1-p_{k+1})+\lambda_k  g_k e_kg_k=g_{k+1}.
\end{align*}

Affirmation for 3,4 and 5 is now easy.
\end{proof}

\begin{corollary}
For $x\in M_n(D)$ with $n$-generic $D$, $xg_k\neq 0$ if $x$ is a linear combination of diagrams whose ranks are less than $k$.
\end{corollary}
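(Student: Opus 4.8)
The substance of this corollary is that the right annihilator of $g_k$ in $M_k(D)$ is precisely the ideal $I_{k,k-1}$ of diagrams of rank $<k$; in particular $g_k\neq 0$, and $xg_k\neq 0$ as soon as the identity diagram $1$ occurs in $x$ with a nonzero coefficient. (As literally printed the hypothesis should presumably read ``$x$ is \emph{not} such a combination'', since e.g.\ $l_1g_k=0$ by part~(1) of the preceding Proposition.) I would prove the two halves of this characterisation separately.

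\emph{Non-vanishing half.} I start from part~(5) of the Proposition: $g_k=1+\sum_i c_i w_i$ with each $w_i$ a nonempty word in the generators $l_j,r_j,e_j$, $1\le j\le k-1$. Each of these generators is a single Motzkin $k$-diagram distinct from $1$, hence of rank $<k$; since a product of Motzkin diagrams is $D^{\kappa}$ times a single diagram and $\rk(ab)\le\min(\rk a,\rk b)$, every $w_i$ is a scalar multiple of a single diagram of rank $<k$. Thus $g_k=1+z_k$ with $z_k\in I_{k,k-1}$. Writing an arbitrary $x\in M_k(D)$ in the diagram basis as $x=c\cdot 1+y$ with $c\in F$ and $y\in I_{k,k-1}$, and using that $I_{k,k-1}$ is a two-sided ideal, one gets $xg_k=c\cdot 1+(cz_k+y+yz_k)$ with the bracketed term again in $I_{k,k-1}$; so the coefficient of $1$ in $xg_k$ equals $c$, whence $xg_k\neq 0$ whenever $c\neq 0$ (and $g_k\neq 0$, taking $x=1$). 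In $M_n(D)$ for $n>k$ the same computation works verbatim with $1_n$ in place of $1$, the correction terms of the embedded $g_k$ then having rank $\le n-1$.

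\emph{Vanishing half.} Here I would use part~(1) of the Proposition, $sg_k=0$ for each $s\in\{l_j,r_j,e_j:1\le j\le k-1\}$, together with the identification $I_{k,k-1}=\sum_s M_k(D)\,s$ as left ideals. One inclusion is immediate since each $s$ lies in the two-sided ideal $I_{k,k-1}$. For the other, expand a given rank-$<k$ diagram $d$ as an $F$-linear combination of words in $1,l_j,r_j,e_j$ (using $M_k(D)=\langle 1,l_j,r_j,e_j\rangle$); after deleting trailing $1$'s, the words that remain nonempty end in a non-identity generator and so lie in $\sum_s M_k(D)s$, while the words collapsing to $1$ contribute some $c'\cdot 1$; comparing with $d\in I_{k,k-1}$ and $1\notin I_{k,k-1}$ forces $c'=0$, so $d\in\sum_s M_k(D)s$. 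Then $I_{k,k-1}\,g_k=\sum_s M_k(D)(sg_k)=0$, and combined with the first half this yields $xg_k=0$ iff $x\in I_{k,k-1}$, which is the assertion. The one genuinely delicate point is this last identification: a single low-rank diagram need not factor as ``diagram $\times$ generator'' — its lower edge may carry several defects — so one must exploit that the $l_j,r_j,e_j$ generate $M_k(D)$ as an \emph{algebra} rather than as a monoid, the rank bookkeeping then disposing of the leftover multiple of $1$.
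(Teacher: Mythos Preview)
You are right that the printed statement is garbled: taken literally it is false (e.g.\ $l_1g_k=0$ by part~(1) of the Proposition), and the intended content is the characterisation $xg_k=0\iff x\in I_{k,k-1}$ in $M_k(D)$, of which the next Corollary is the special case $k=n$. The paper supplies no proof for either corollary, treating them as immediate consequences of the Proposition, so your write-up is filling in what the authors left implicit.

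Your argument is correct. The non-vanishing half is exactly the intended use of part~(5): $g_k=1+z_k$ with $z_k\in I_{k,k-1}$, so multiplication by $g_k$ preserves the coefficient of $1$ in the diagram basis. For the vanishing half your route through algebra-generation works, but there is a more direct diagrammatic shortcut, which is presumably why the authors regarded it as obvious: any diagram $d\in M_k(D)$ of rank $<k$ has at least one bottom vertex not on a through string, hence by planarity either an isolated bottom vertex at some position~$i$, giving $d=dp_i$, or an innermost bottom arc at positions $i,i{+}1$, giving $d=D^{-1}de_i$; since $p_ig_k=0$ (write $p_i=r_il_i$ or $p_k=l_{k-1}r_{k-1}$) and $e_ig_k=0$ by part~(1), one gets $dg_k=0$ immediately. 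This sidesteps the bookkeeping with the leftover multiple of~$1$.

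One caution on your final remark about $M_n(D)$ with $n>k$: the full equivalence $xg_k=0\iff x\in I_{n,k-1}$ genuinely fails there in both directions (in $M_2(D)$ one has $e_1g_1\neq 0$ although $\rk(e_1)=0<1$, and $p_1g_1=0$ although $\rk(p_1)=1\not<1$), so the corollary should be read in $M_k(D)$. Your $1_n$-coefficient computation is correct, but it only yields the weaker implication $x\notin I_{n,n-1}\Rightarrow xg_k\neq 0$, not a statement about rank~$<k$.
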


\begin{corollary}
For $x\in M_n(D)$ with $n$-generic $D$, $xg_n\neq 0$ if and only if $x$ contains a nonzero scalar, i.e $c \cdot 1_n$ with $c\neq 0$ as a summand.
\end{corollary}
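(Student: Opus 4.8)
The plan is to reduce the statement to two facts about $g_n$ and the ideal structure. First, since the only Motzkin $n$-diagram of rank $n$ is $1_n$ (if all $n$ strings are through strings, planarity forces them to be vertical), we have a vector-space decomposition $M_n(D)=F\,1_n\oplus I_{n,n-1}$; so write $x=c\,1_n+y$ with $c\in F$ and $y\in I_{n,n-1}$, and note that "$x$ contains a nonzero scalar summand'' is exactly the assertion $c\ne 0$. The plan is then to prove (a) $I_{n,n-1}\,g_n=0$ and (b) the coefficient of $1_n$ in the $\mathbf{M}_n$-expansion of $g_n$ equals $1$. Granting these, $x\,g_n=c\,g_n+y\,g_n=c\,g_n$ by (a), and the $1_n$-coefficient of $c\,g_n$ is $c$ by (b); hence $x\,g_n\ne 0\iff c\ne 0$, which is the claim.

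Fact (b) is short. By part 5 of the preceding Proposition, $g_n=1_n+\sum_i c_iw_i$ with each $w_i$ a nonempty word in $\{l_j,r_j,e_j:1\le j\le n-1\}$. As $l_j,r_j$ have rank $n-1$ and $e_j$ has rank $n-2$, and $\rk(ab)\le\min(\rk a,\rk b)$, each $w_i$ is a scalar multiple of a single diagram of rank $<n$, so $\sum_i c_iw_i\in I_{n,n-1}$ and the $1_n$-coefficient of $g_n$ equals $1$. In particular $g_n\ne 0$, and for any $c\in F$ the $1_n$-coefficient of $c\,g_n$ is $c$, as used above.

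Fact (a) is the heart of the matter. Here I would argue that every Motzkin $n$-diagram $d$ with $\rk(d)<n$ lies in the \emph{left} ideal $\sum_i\big(M_n(D)e_i+M_n(D)l_i+M_n(D)r_i\big)$; since $e_ig_n=l_ig_n=r_ig_n=0$ by part 1 of the Proposition, this forces $d\,g_n=0$ and hence, by linearity, $I_{n,n-1}\,g_n=0$. To obtain the ideal membership I would sharpen the stated generation $M_n(D)=\langle 1_n,e_i,l_i,r_i\mid 1\le i\le n-1\rangle$: tracking the usual diagram normal form, each diagram $d$ is a single monomial $D^{a}s_1\cdots s_m$ in the $s_k\in\{1_n,e_i,l_i,r_i\}$ (with $D$ invertible for $n\ge 2$; the case $n=1$ is immediate since $\dim M_1(D)=2$). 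After deleting any trailing $1_n$ factors, $\rk(d)<n$ forces the last remaining factor $s_m$ to lie in $\{e_i,l_i,r_i\}$, so $d\,g_n=D^{a}s_1\cdots s_{m-1}(s_m g_n)=0$. Diagrammatically this is transparent: $\rk(d)<n$ means some bottom vertex of $d$ belongs to a cup joining two bottom vertices or to a string ending in a $1$-box, and such a bottom-row configuration is precisely one generator $e_i$, $l_i$, or $r_i$ composed on the bottom of $d$.

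The step I expect to be the main obstacle is exactly this normal-form claim underlying (a): that a Motzkin diagram is, up to an invertible scalar, a single monomial in the generators in which a non-identity factor may be taken rightmost — equivalently, that a bottom-row defect is always realized by composing with one generator on the bottom. The delicacy is purely combinatorial (handling non-adjacent cups, and bookkeeping the loop factors $D$ that arise), and is in substance the diagram analysis of \cite{BH}. Once it is in hand, everything else in the argument is formal.
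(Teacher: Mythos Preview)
Your proposal is correct and is precisely the argument the paper has in mind: it records the statement as a bare corollary, with your fact~(a) being the content of the immediately preceding Corollary (which, modulo an evident typo there, asserts that linear combinations of diagrams of rank $<k$ annihilate $g_k$) and your fact~(b) being part~5 of the Proposition.

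One remark on the step you flag as the ``main obstacle''. The global monomial normal form is more than you need for (a); the diagrammatic observation you close with already does the job, once phrased carefully. If $\rk(d)<n$, some bottom vertex of $d$ is not the foot of a through string. Either that vertex is isolated, in which case $d=d\,p_j$ and $p_j g_n=0$ (since $p_j=r_jl_j$ or $l_{j-1}r_{j-1}$ and Proposition part~1 applies); or it lies on a bottom cup, and then by planarity the region under some such cup contains either an isolated bottom vertex (previous case) or an \emph{adjacent} bottom cup at positions $k,k{+}1$, giving $d\,e_k=D\cdot d$ and hence $d\,g_n=D^{-1}d\,e_k\,g_n=0$. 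Here $D\neq 0$ because $n$-genericity with $n\ge 2$ forces $P_2(\tau)=1-(D-1)^{-2}\neq 0$, and the case $n=1$ is trivial. So no appeal to a word normal form from \cite{BH} is required; the ``bottom defect absorbs a generator'' picture is already a complete proof of (a).
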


Here we prove a lemma that will apply in next section.

\begin{lemma}
Let $w\neq 1$ be a word of $\{e_i,l_i,r_i|1\leq i\leq n-1\}$, then $r_nw,wl_n$ are words in $\{e_i,l_i,r_i|1\leq i\leq n-1\}$ and $e_n$.
\end{lemma}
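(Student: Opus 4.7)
The plan is to establish the identity for $r_n w$ by induction on the length of $w$, and to deduce the statement for $w l_n$ by the involution: since $r_i^* = l_i$, $e_i^* = e_i$, and the allowed alphabet $\{e_i, l_i, r_i : 1 \leq i \leq n-1\} \cup \{e_n\}$ is closed under $*$, one has $w l_n = (l_n^* w^*)^* = (r_n w^*)^*$ with $w^* \neq 1$. The first step is commutativity: each generator $e_i, l_i, r_i$ with $i \leq n-2$ has local support contained in $\{i, i+1\} \subseteq \{1, \ldots, n-1\}$, disjoint from the support $\{n, n+1\}$ of $r_n$, so the corresponding Motzkin diagrams commute and $r_n$ can be moved freely past any initial segment of $w$ consisting of such low-index letters.

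The heart of the argument is a small table of base-case rewriting identities verified by direct inspection of Motzkin diagrams. For each $x \in \{e_{n-1}, l_{n-1}, r_{n-1}\}$, I would rewrite $r_n \cdot x$ as an explicit product in the allowed alphabet, the prototype being $r_n \cdot e_{n-1} = l_{n-1} \cdot e_n \cdot e_{n-1}$. This is confirmed by tracing strings through the two composite diagrams: the slanted strand of $r_n$ combines with the cup of $e_{n-1}$ at the middle row to produce the same connectivity pattern as $l_{n-1} \cdot e_n \cdot e_{n-1}$, with stray dangling $1$-boxes pairing off into closed internal arcs that contribute a trivial factor. The analogues for $x = l_{n-1}$ and $x = r_{n-1}$ are obtained the same way.

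With these identities and the commutativity observation in hand, the induction proceeds: for a non-trivial $w$, commute $r_n$ past the initial low-index letters of $w$; once $r_n$ meets the first index-$(n-1)$ letter, apply a base-case identity to absorb $r_n$ into allowed generators with exactly one new occurrence of $e_n$, and recurse on the remaining tail (the edge case in which no index-$(n-1)$ letter appears at all is handled by the same graphical reduction, since the position-$n$ leg of $r_n$ must still be matched against the through-string structure coming from the non-trivial diagram $w$). The main obstacle is the diagrammatic verification of the three base-case identities: the two sides must be matched boundary-point by boundary-point, and the closed internal arcs formed when two $1$-boxes meet must be handled consistently with the Motzkin conventions in force. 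Once these are in hand, the inductive reduction and the adjoint argument are routine bookkeeping.
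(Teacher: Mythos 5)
Your $*$-involution reduction $wl_n=(r_nw^*)^*$ is a clean observation the paper does not use (the paper instead treats $l_nw$ directly and somewhat asymmetrically), and your commutation observation and your three base-case identities of the form $r_ne_{n-1}=l_{n-1}e_ne_{n-1}$, $r_nl_{n-1}=l_{n-1}r_{n-1}e_nl_{n-1}$, $r_nr_{n-1}=r_{n-1}e_{n-1}e_nl_{n-1}r_{n-1}$ are all correct and verifiable by tracing strings. So the mechanism ``commute $r_n$ past indices $\le n-2$, hit the first index-$(n-1)$ letter, absorb $r_n$ via a base case'' works whenever such a letter exists, and it is a genuinely different organization from the paper's.

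However, there is a real gap in the edge case you dismiss. If $w\ne 1$ is a word only in $\{e_i,l_i,r_i\,:\,1\le i\le n-2\}$ --- for instance $w=e_1$ with $n=3$, or $w=e_1e_2$ with $n\ge 4$ --- then $r_n$ commutes with every letter of $w$, so your reduction terminates at $r_nw=wr_n$ with the $r_n$ still present, and $r_n$ by itself is \emph{not} in the allowed alphabet. Your claim that ``the position-$n$ leg of $r_n$ must still be matched against the through-string structure coming from the non-trivial diagram $w$'' is false here: the diagram of $w$ acts trivially on positions $n-1,n,n+1$ (they are through-strings), so the slanted strand of $r_n$ composes with those through-strings without meeting any of $w$'s defects, and no graphical cancellation occurs. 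The paper closes exactly this case by reasoning on the \emph{diagram} rather than the \emph{word}: since $\rk(w)\le n-1$, either $p_jw=w$ for some $j$ (a free top vertex) or $e_lw=w$ for some $l$ (an adjacent top cup), and it then transports that defect from position $j$ (resp.\ $l$) rightward to position $n$ via a chain of shift/cap identities, producing an $e_n$. To repair your argument you need some version of that transport step for words supported entirely in $\{1,\dots,n-2\}$; the induction on word length alone cannot reach it.
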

\begin{proof}
Without loss of generality, we will only prove this for $r_nw$.
As $w$ is not the identity of $M_n(D)$, we have $\rk(w)\leq n-1$.
There are two cases.
\begin{enumerate}
\item $\exists p_j$ with $1\leq j\leq n$ such that $p_{w}w=w$.
\item $\exists e_l$ with $1\leq l\leq n-1$ such that $e_{l}w=w$.
\end{enumerate}

For the first case, if $j\leq n-1$, we have
\begin{align*}
r_nw&=r_np_jw=(r_j\cdots r_{n-2})p_{n-1}r_n( l_{n-2}\cdots l_j)w\\
&=(r_j\cdots r_{n-2})(r_{n-1} e_{n-1} e_{n} l_{n-1})( l_{n-2}\cdots l_j)w,
\end{align*}
and if $j=n$,
\begin{align*}
r_nw&=r_np_nw=p_np_{n+1}w=l_{n-1}r_{n-1}e_n l_{n-1}r_{n-1}w.
\end{align*}
Both of them are also words in $\{e_i,l_i,r_i|1\leq i\leq n-1\}$ and $e_n$.

For the second case, if $l\leq n-2$, we have
\begin{align*}
l_nw&=l_ne_lw=(e_l\cdots e_{n-3})e_{n-2}l_n( e_{n-3}\cdots e_l)w\\
&=(e_l\cdots e_{n-3})e_{n-2}r_{n-1}e_n e_{n-1}e_{n-2}( e_{n-3}\cdots e_l)w.
\end{align*}
And if $l=n-1$, we have
\begin{align*}
l_nw&=l_ne_{n-1}w=r_{n-1}e_n e_{n-1}w.
\end{align*}
Both of them are also words in $\{e_i,l_i,r_i|1\leq i\leq n-1\}$ and $e_n$.
\end{proof}

\section{The $C^*$-Algebra Structure}

We assume $F=\mathbb{C}$ from now on and give $M_n(D)$ their $*$-algebra strucure.
This section will be mainly devoted to prove the following two theorems.
\begin{theorem}
If $D$ is $(n-1)$-generic , $M_k(D)$ is semisimple for $1\leq k\leq n-1$ and \begin{center} $M_k(D)\cong\bigoplus\limits_{i=0}^{k}\mathrm{Mat}_{m_{k,r}}(\mathbb{C})$.
\end{center}
where $m_{k,r}=\sum\limits_{i=0}^{\lfloor (k-r)/2\rfloor}\binom{k}{r+2i}(\binom{r+2i}{i}-\binom{r+2i}{i-1})$.
\end{theorem}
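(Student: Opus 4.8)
The plan is to prove, by induction on $k$, the slightly stronger statement that $(k-1)$-genericity of $D$ forces $M_k(D)$ to be semisimple, isomorphic to $\bigoplus_{r=0}^{k}\mathrm{Mat}_{m_{k,r}}(\mathbb{C})$, with $1_k$ lying in the $r=k$ summand and with the rank ideal $I_{k,r}$ equal to the sum of the summands of index $\le r$. The theorem as stated then follows at once, because $(n-1)$-genericity implies $(k-1)$-genericity for all $1\le k\le n-1$. The cases $k=1,2$ are checked by hand on the $2$- and $9$-dimensional algebras $M_1(D),M_2(D)$, using only that the Motzkin diagrams form a basis.

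For the inductive step I would exploit the rank filtration $I_{k,0}\subseteq\cdots\subseteq I_{k,k}=M_k(D)$ and its sections. Cutting a rank-$r$ Motzkin $k$-diagram along its $r$ through strings writes it uniquely as $ab^{*}$, where $a,b$ range over the set $\mathbf{H}_{k,r}$ of Motzkin $(k,r)$-tangles of rank $r$ (the half-diagrams); adjoining the $k$-th top vertex of a half-diagram on $k-1$ points either as a dot, or as a new through string, or as a cap onto the rightmost available through string yields $|\mathbf{H}_{k,r}|=|\mathbf{H}_{k-1,r}|+|\mathbf{H}_{k-1,r-1}|+|\mathbf{H}_{k-1,r+1}|$, which is exactly the recursion $m_{k,r}$ satisfies in the Lemma of \S2.3, so $|\mathbf{H}_{k,r}|=m_{k,r}$. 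Put $V_{k,r}=\mathbb{C}\,\mathbf{H}_{k,r}$. Using $\rk(xy)\le\min(\rk x,\rk y)$ and the fact that $1_r$ is the only rank-$r$ Motzkin $r$-diagram, the product of two rank-$r$ diagrams, reduced modulo $I_{k,r-1}$, has the form $(ab^{*})(cd^{*})\equiv\phi_r(b,c)\,ad^{*}$, where $\phi_r$ is a symmetric bilinear form on $V_{k,r}$ ($\phi_r(b,c)$ being the power of $D$ by which $b^{*}c$ equals $1_r$, and $0$ if $\rk(b^{*}c)<r$). Thus $I_{k,r}/I_{k,r-1}$ is the inflation $V_{k,r}\otimes_{\phi_r}V_{k,r}^{*}$, which is a copy of $\mathrm{Mat}_{m_{k,r}}(\mathbb{C})$ exactly when $\phi_r$ is nondegenerate. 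Since $M_k(D)/I_{k,k-1}=\mathbb{C}\,1_k$ and one checks directly (with no hypothesis on $D$) that $\phi_{k-1}$ is the identity matrix on the $k$-dimensional space $V_{k,k-1}$, peeling the filtration from the top shows that $M_k(D)$ is semisimple with the asserted block structure as soon as each $\phi_r$ with $0\le r\le k-2$ is nondegenerate, because any finite-dimensional algebra having a two-sided ideal that is a unital semisimple algebra and a semisimple quotient has zero Jacobson radical.

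The crucial, and I expect hardest, step is the nondegeneracy of $\phi_r$ for $0\le r\le k-2$ under $(k-1)$-genericity. I would compute $\det\phi_r$ and show that it equals a nonzero rational constant times a product of Chebyshev values $P_j(\tau)$ with $1\le j\le k-1$; aggregating over $r$, the exact requirement becomes $P_1(\tau),\dots,P_{k-1}(\tau)\neq0$ (and $1$-genericity is automatic, since $P_1\equiv1$). The computation is organized around the Motzkin Jones-Wenzl idempotents $g_j$: using $g_j^{2}=g_j=g_j^{*}$, $g_jl_i=g_jr_i=g_je_i=0$ for $i<j$, $g_ig_j=g_j$, and above all the recursion $E(g_j)=\dfrac{d\,P_j(\tau)}{D\,P_{j-1}(\tau)}\,g_{j-1}$ of the Proposition of \S2.4, one rewrites the half-diagram basis of $V_{k,r}$ along the chain $g_1,g_2,\dots$, making the Gram matrix block-triangular with the entry newly created at level $j$ a nonzero scalar multiple of $P_j(\tau)/P_{j-1}(\tau)$; the Corollaries to that Proposition record the corresponding non-triviality of the $g_j$. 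In the language of \cite{J83} this is the statement that a suitable normalization of $e_{k-1}$ implements the Jones basic construction of $M_{k-2}(D)\subseteq M_{k-1}(D)$, so that $I_{k,k-2}\cong\mathrm{End}_{M_{k-2}(D)}(M_{k-1}(D))$ is automatically semisimple, genericity being precisely what makes the Markov trace on $M_{k-1}(D)$ nondegenerate and hence this basic construction of the expected dimension.

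Once nondegeneracy is in place the remaining bookkeeping is routine: the section dimensions satisfy the recursion of the Lemma of \S2.3, hence equal $\sum_{i\ge0}\binom{k}{r+2i}\bigl(\binom{r+2i}{i}-\binom{r+2i}{i-1}\bigr)$ by its Corollary, and the identity $\sum_{r=0}^{k}m_{k,r}^{2}=\mathcal M_{2k}=\dim M_k(D)$ confirms that the successive matrix sections exhaust the algebra and that $I_{k,r}=\bigoplus_{s\le r}\mathrm{Mat}_{m_{k,s}}(\mathbb{C})$.
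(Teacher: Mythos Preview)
Your outline is sound and would yield a correct proof, but it follows a different route from the paper's main argument. The paper proves Theorem 3.1 via the basic-construction machinery of Section 3.2: one shows inductively (Proposition 3.9) that the ideal $A_{k-1}e_{k-1}A_{k-1}$ inside $M_k(D)$ realises the Jones basic construction $\langle A_{k-1},e\rangle$ of the pair $A_{k-2}\subset A_{k-1}$, and then adjoins by hand the two remaining summands (the $k\times k$ block of matrix units $E^{(k-1)}_{i,j}$ built around $g_{k-1}$, and the one-dimensional block $\mathbb{C}g_k$); Theorem 3.1 itself is then the one-line remark that under $(n-1)$-genericity every entry of the trace weight vector on each $A_k$ is nonzero, so $\tr$ is nondegenerate and the formal argument goes through exactly as in the positive-definite case. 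Your approach is instead the cellular-algebra one---rank filtration, half-diagram factorisation $ab^{*}$, sandwich forms $\phi_r$, and semisimplicity from nondegeneracy of all $\phi_r$---which is precisely the alternative the paper mentions immediately after its proof (``Theorem 3.1 can also be obtained by showing the nonsingularity of the Gram matrix \ldots\ One can refer \cite{BH} for this proof including a base change of the Motzkin paths''). Your route is purely algebraic, needs no trace or Hilbert-space representation, and makes each $P_j(\tau)$ visible as a Gram-determinant factor; the paper's route simultaneously delivers the Bratteli diagram, trace weight vectors, and $C^*$-structure needed for everything downstream. One caution on your hardest step: the idempotents $g_j$ and the recursion $E(g_j)=\tfrac{d\,P_j(\tau)}{D\,P_{j-1}(\tau)}g_{j-1}$ are only defined once $D$ is already assumed generic, so to extract $\det\phi_r$ as a polynomial identity in $D$ you should work over the function field $\mathbb{C}(D)$ or invoke Zariski density; this is routine but should be said explicitly.
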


\begin{theorem}
The trace $\tr$ on $M_{\infty}(D)=\bigcup\limits_{k\in\mathbb{N}} M_k(D)$ is positive-semidefinite (with respect to the $*$-structure defined in section 2.1) if and only if
\begin{center}
$D\in\{2\cos\frac{\pi}{n}+1|n\geq 3\}\cup [3,\infty)$.
\end{center}
The representation $\pi$ with respect to $\tr$ of all $M_k(D)$ (constructed in section 3.1) satisfies:
\begin{enumerate}
\item For $D\in [3,\infty)$, $\pi|_{M_n(D)}$ is faithful for all $n\in \mathbb{N}$. And $\pi(M_n(D))\cong \bigoplus\limits_{r=0}^{n}\mathrm{Mat}_{m_{k,r}}(\mathbb{C})$
for any $n\in \mathbb{N}$. It can be described by the Bratteli diagram shown in Figure 12.
\begin{figure}[H]
  \centering
  \includegraphics[width=12cm]{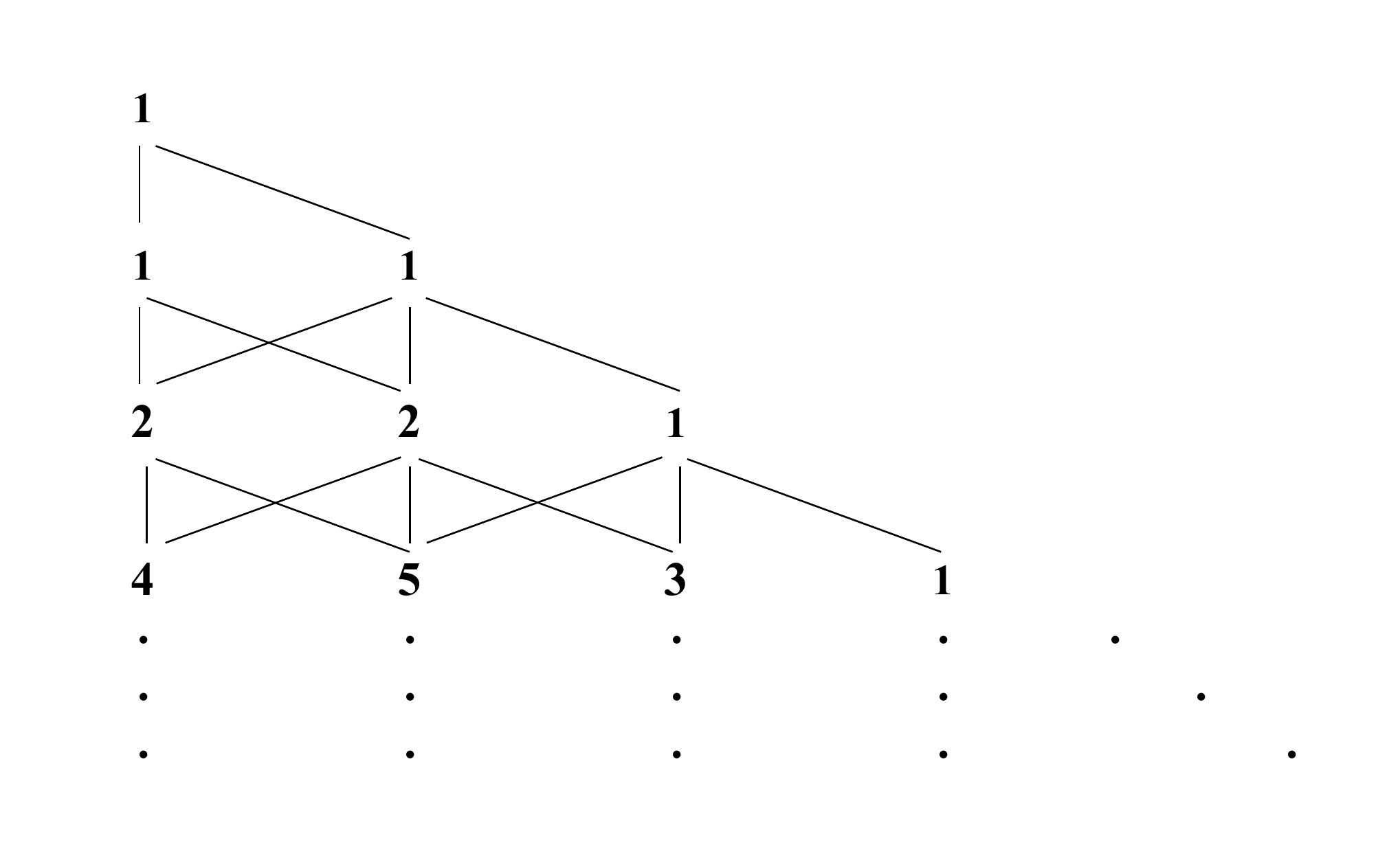}\\
  \caption{The Bratteli diagrams of $\pi(M_n(D))$ for $D\geq 3$} \label{}
  \end{figure}
\item  For $D=2\cos\frac{\pi}{n+1}+1$, $\pi|_{M_k(D)}$ is faithful if and only if $1\leq k\leq n-1$.

\begin{enumerate}
\item For $1\leq k\leq n-1$, $\pi(M_k(D))\cong \bigoplus\limits_{r=0}^{k}\mathrm{Mat}_{m_{k,r}}(\mathbb{C})$.
\item For $k\geq n$, $\pi(M_k(D))\cong \bigoplus\limits_{r=0}^{n-1}\mathrm{Mat}_{l_{k,r}}(\mathbb{C})$.
\end{enumerate}
\begin{figure}[H]
  \centering
  \includegraphics[width=12cm]{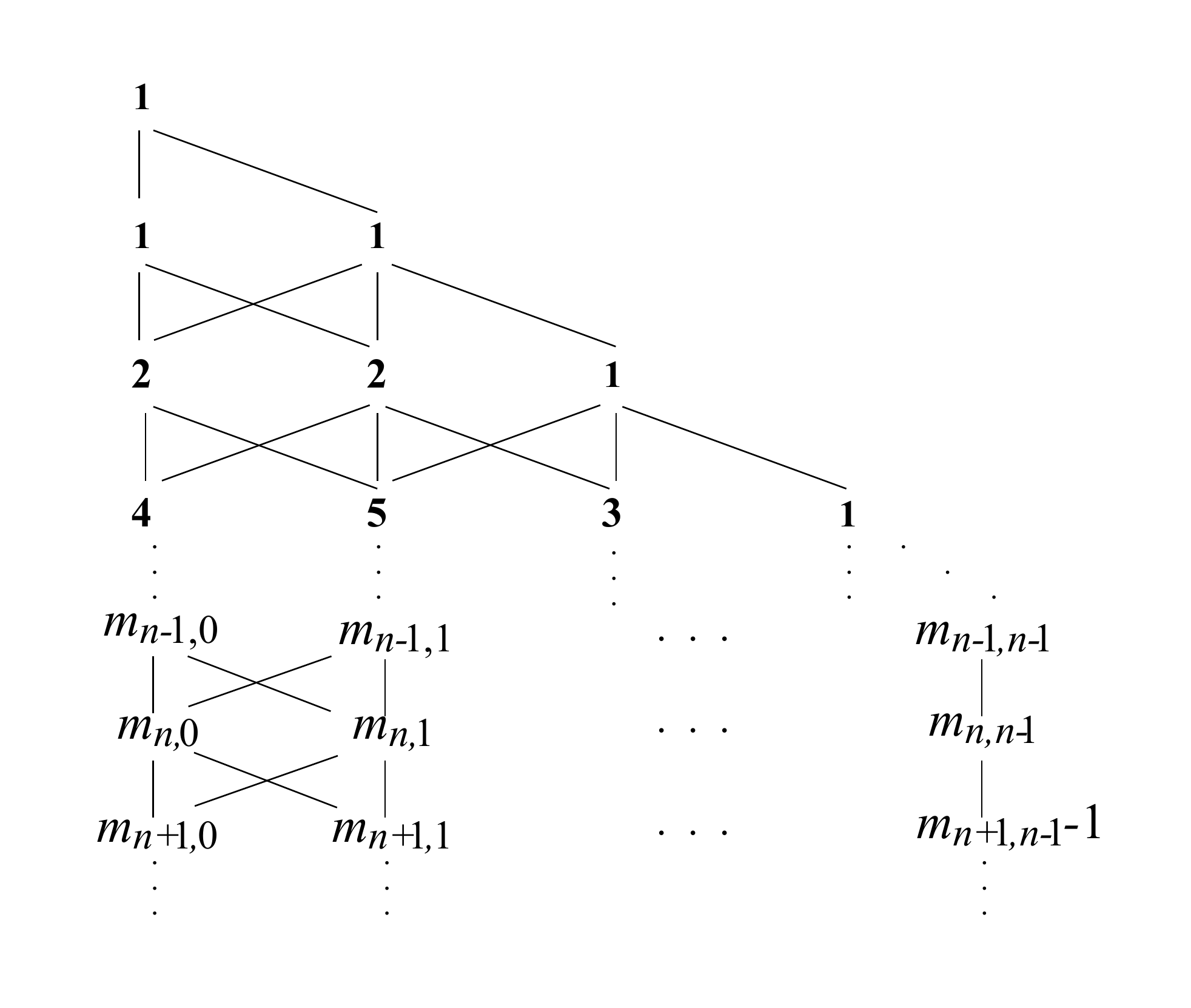}\\
  \caption{The Bratteli diagrams of $\pi(M_n(D))$ for $D=2\cos\frac{\pi}{n+1}+1$} \label{}
  \end{figure}
where the numbers ${l_{k,r}}$ are defined in Section 3.3 and can  also be described by the truncated Bratteli diagram shown in Figure 13. 

Moreover, the principal graphs for these two cases are $A_{\infty}$ and $A_{n}$ in Figure 14.
\begin{figure}[H]
  \centering
  \includegraphics[width=12cm]{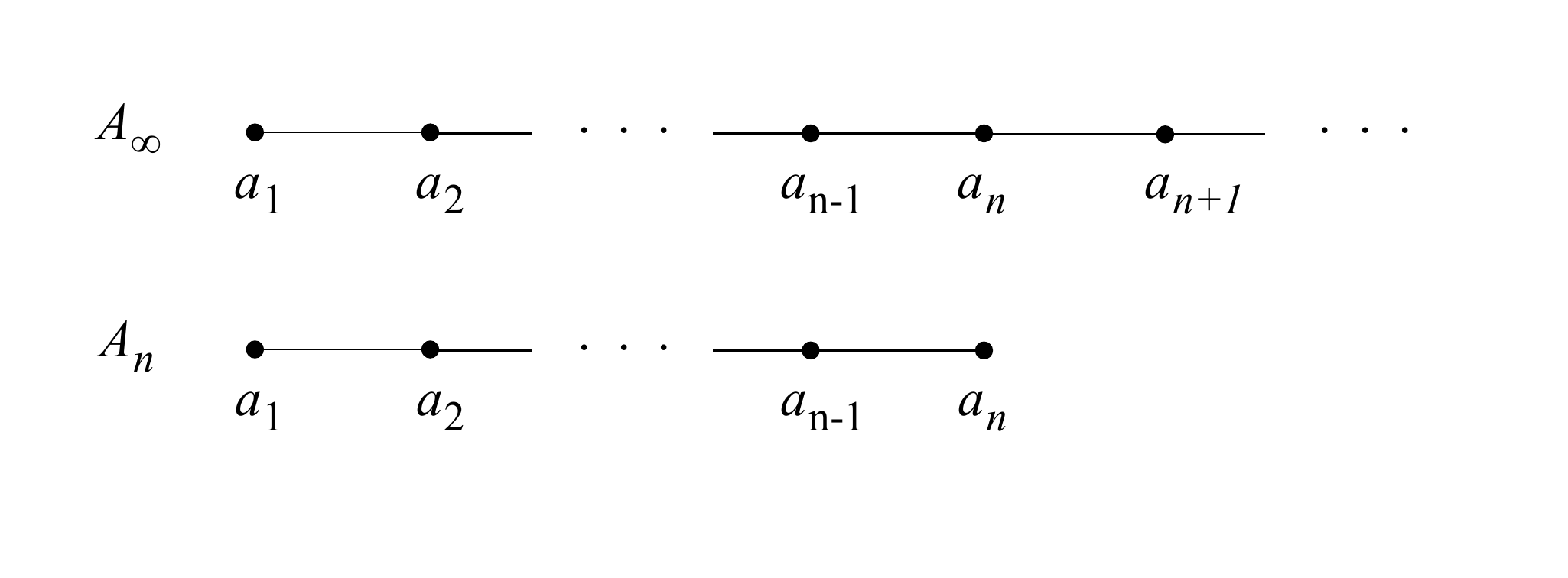}\\
  \caption{Principal graphs $A_{\infty}$ and $A_{n}$} \label{}
  \end{figure}
\end{enumerate}
\end{theorem}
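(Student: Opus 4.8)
I would build $\pi$ by the GNS construction from $\tr$ and then control it level by level using the idempotents $g_k$, the expectations $E\colon M_k(D)\to M_{k-1}(D)$, Theorem~3.1, and Jones's basic construction for finite-dimensional $C^*$-algebras. If $\tr$ is positive-semidefinite on $M_{\infty}(D)=\bigcup_k M_k(D)$, then $N=\{x:\tr(x^*x)=0\}$ is a two-sided ideal, $M_{\infty}(D)/N$ carries a faithful positive trace, and completing its GNS space gives $\pi$; conversely $N$ is an ideal only when $\tr$ is positive-semidefinite. Since each $M_k(D)$ is finite-dimensional, $\tr$ is positive-semidefinite on $M_{\infty}(D)$ iff it is so on every $M_k(D)$, and then $\pi(M_k(D))\cong M_k(D)/(N\cap M_k(D))$ is a finite-dimensional $C^*$-algebra, hence a direct sum of matrix algebras. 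So everything reduces to (i) determining the $D$ for which $\tr|_{M_k(D)}$ is positive-semidefinite for all $k$, and (ii) identifying the surviving blocks.

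The engine is the trace of the Jones--Wenzl idempotents $g_k$. From the diagrammatic definitions $\tr_k=\tr_{k-1}\circ E$ (up to normalization), and combining this with part~(3) of the Proposition collecting the properties of $g_k$, namely $E(g_k)=\frac{d\,P_k(\tau)}{D\,P_{k-1}(\tau)}g_{k-1}$, together with $\tr(g_1)=1-\tr(p_1)=d/D$, induction gives
\[
\tr(g_k)=c_k\,P_k(\tau),\qquad c_k=(d/D)^k ,
\]
with $c_k>0$ whenever $D>1$ (in the normalization $\tr(1_k)=1$). Writing $d=D-1=2\cos\theta$ gives $P_k(\tau)=\dfrac{\sin((k+1)\theta)}{(2\cos\theta)^{k}\sin\theta}$ when $d<2$, while $d=q+q^{-1}$ with $q$ real gives $P_k(\tau)>0$ for all $k$ when $d\ge 2$. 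Hence $D$ is generic exactly when $D\ge 3$, and then all $P_k(\tau)>0$; and $D$ is $(n-1)$-generic but not $n$-generic exactly when $D=2\cos\frac{\pi}{n+1}+1$, and then $P_k(\tau)>0$ for $1\le k\le n-1$ while $P_n(\tau)=0$. A parallel, standard weight computation expresses the trace of a minimal idempotent in the rank-$r$ block of $M_k(D)$ as a positive rational combination of products of the $P_j(\tau)$ with $j\le\min(r,k)$.

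For the ``only if'' direction I would show that any other $D$ makes $\tr$ indefinite. If $D\le 0$ then $\tr(p_1p_1^*)=\tr(p_1)=1/D\le 0$ (negative, or the normalized trace is not even defined). If $0<D<2$ and $D\neq 1$, then $|D-1|<1$, so $P_2(\tau)=1-\tau<0$ while $c_2>0$, hence $\tr(g_2g_2^*)=c_2P_2(\tau)<0$; here $D$ is $2$-generic, so $M_2(D)$ is semisimple and $g_2\neq0$ genuinely lies in a block, a contradiction; the single remaining value $D=1$ (where $\tau$ is undefined) is dispatched by a direct computation at level $2$ or $3$. If $2\le D<3$ but $D\notin\{2\cos\tfrac{\pi}{m}+1:m\ge3\}$, write $D-1=2\cos\theta$ with $\theta\in(0,\tfrac{\pi}{3}]$: either $\theta/\pi\notin\mathbb{Q}$, so $\sin((k+1)\theta)$ (hence $P_k(\tau)$, hence $\tr(g_kg_k^*)$) is negative for some $k$ at which $D$ is still $k$-generic; or $\theta=p\pi/q$ in lowest terms with $p\ge2$, whence $q\ge 3p$ and $mp\bmod 2q\in(q,2q)$ for some $2\le m\le q-1$, so $P_{m-1}(\tau)<0$ with $m-1\le q-2$ and $D$ still $(m-1)$-generic; in both cases $g_k\neq0$ sits in a block of the semisimple $M_k(D)$ and $\tr(g_kg_k^*)<0$. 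Finally $D>3$ lies in $[3,\infty)$. This leaves exactly $D\in\{2\cos\tfrac{\pi}{m}+1:m\ge3\}\cup[3,\infty)$.

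For the ``if'' direction and the block structure: when $D\ge 3$, $D$ is generic, so Theorem~3.1 gives $M_k(D)\cong\bigoplus_{r=0}^k\mathrm{Mat}_{m_{k,r}}(\mathbb{C})$ for all $k$, the weight computation makes every minimal-idempotent trace positive, so $\tr|_{M_k(D)}$ is faithful and positive, $\pi|_{M_k(D)}$ is faithful, and the Bratteli diagram is the full one of Figure~12 ($m_{k+1,r}=m_{k,r-1}+m_{k,r}+m_{k,r+1}$). When $D=2\cos\frac{\pi}{n+1}+1$ the same argument applies verbatim for $1\le k\le n-1$. At level $n$, $\tr(g_n)=0$ while $g_n$ is a nonzero self-adjoint idempotent which, by part~(1) of the Proposition on $g_k$, lies in the one-dimensional rank-$n$ block $\mathrm{Mat}_{m_{n,n}}(\mathbb{C})=\mathbb{C}$; so the ideal it generates is contained in $\ker\pi_n$, killing it removes exactly that block, $\pi(M_n(D))\cong\bigoplus_{r=0}^{n-1}\mathrm{Mat}_{m_{n,r}}(\mathbb{C})$, and $\pi|_{M_n(D)}$ is not faithful. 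For $k\ge n$ I would run Jones's basic construction inside the quotient, as in the analysis of $TL_k$ at roots of unity (\cite{J83,W87}): using that $\pi$ kills $g_n$, that the $E$ intertwine the $\pi(M_k(D))$, and the Jones--Wenzl-type relations of the $g_k$ (the Proposition on $g_k$ and the Lemma rewriting $r_nw,wl_n$), one shows $\pi(M_{k+1}(D))$ is the basic construction of $\pi(M_{k-1}(D))\subset\pi(M_k(D))$ with Jones projection a renormalized $e_k$; since the inclusion matrix of that pair is the symmetric tridiagonal $0$--$1$ matrix on ranks $0,\dots,n-1$ (effective loop parameter $D-1$, index $(D-1)^2<4$), induction gives $\pi(M_k(D))\cong\bigoplus_{r=0}^{n-1}\mathrm{Mat}_{l_{k,r}}(\mathbb{C})$ with $l_{k,r}$ governed by the truncated recursion $l_{k+1,r}=l_{k,r-1}+l_{k,r}+l_{k,r+1}$, wall $l_{k,r}=0$ for $r\ge n$ (Figure~13), and positivity is automatic since a basic construction of a finite-dimensional $C^*$-pair with faithful positive trace is again such. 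Hence $\tr$ is positive-semidefinite on all of $M_{\infty}(D)$, and reading the stable inclusion data off Figures~12--13 identifies the principal graph as $A_\infty$ for $D\ge3$ and $A_n$ for $D=2\cos\frac{\pi}{n+1}+1$ (Figure~14). The genuinely hard part is this last step: proving $\ker\pi_k$ is \emph{exactly} the ideal generated by $g_n$ (nothing extra collapses) and that the quotient is semisimple with the stated truncated Bratteli diagram, which requires setting up the Motzkin basic construction correctly; by comparison Theorem~3.1, the $\tr(g_k)$ formula, and the sign bookkeeping of the necessity direction are routine once $P_k(\tau)=\sin((k+1)\theta)/((2\cos\theta)^k\sin\theta)$ is in hand.
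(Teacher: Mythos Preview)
Your plan is correct and tracks the paper's own argument closely: GNS with respect to $\tr$, the formula $\tr(g_k)=(d/D)^kP_k(\tau)$, Theorem~3.1 for the generic structure, and then Jones's basic construction inside the quotient once $g_n$ dies, using precisely the lemma that rewrites $r_nw,\,wl_n$ as words in $\{e_i,l_i,r_i:i\le n-1\}$ and $e_n$ to show $\pi(M_{k+1}(D))=\langle\pi(M_k(D)),e_k\rangle$ for $k\ge n-1$. You also correctly single out this last step as the one requiring real work; the paper handles it in two inductive propositions (one showing $A_n\cong C_n\oplus\mathrm{Mat}_n(\mathbb C)\oplus\mathbb C g_n$ via explicit matrix units $E^{(n-1)}_{i,j}$, one showing $A_{n+i}$ is the basic construction for all $i\ge0$).

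The only substantive difference is the necessity direction: the paper dispatches it in one line by invoking Jones's restriction on $\tau$ (Corollary~4.2.5 of \cite{J83}) from $P_k(\tau)\ge0$ for all $k$, whereas you redo that argument by hand via the explicit formula $P_k(\tau)=\sin((k{+}1)\theta)/((2\cos\theta)^k\sin\theta)$ and a case analysis on $\theta/\pi$ rational or irrational. Your route is more self-contained; the paper's is shorter. One small correction: the weight of the rank-$r$ block at level $k$ is simply $d^rP_r(\tau)/D^k$, not a combination of products of several $P_j(\tau)$; this actually makes your positivity check cleaner than you suggest.
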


\begin{example}
If $D=2\cos\frac{\pi}{4}+1=\sqrt{2}+1$, the Batteli diagram of $\{M_k(\sqrt{2}+1)\}_{k\geq 0}$ is given by Figure 15. 
One can find the width of the diagram will not get bigger once it reaches $3$. 

\begin{figure}[H]
  \centering
  \includegraphics[width=7cm]{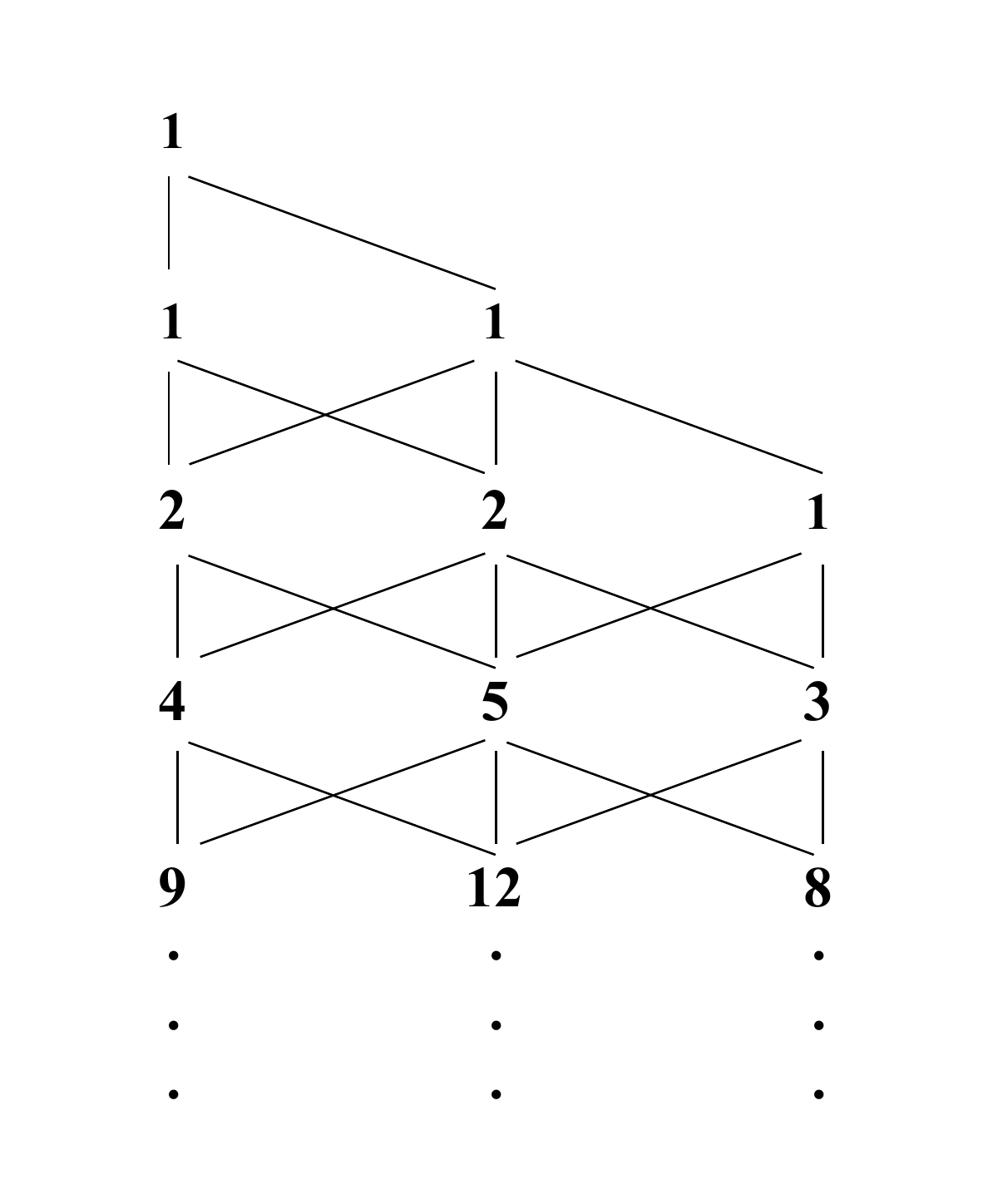}\\
  \caption{The Bratteli diagrams of $\{M_k(\sqrt{2}+1)\}_{k\geq 0}$} \label{}
  \end{figure}
\end{example}

\subsection{Hilbert Space Representations of $M_n(D)$}

In this part, we assume that $tr$ is positive semi-definite, i.e. $\tr_n(x^*x)\geq 0$ for all $x\in M_n(D)$ and for all $n$.

Let $R_n$ be the radical of the trace which is a two-sided ideal of $M_n(D)$, i.e. $R_n=\{x\in M_n(D)|\tr_n(xy)=0 \text{~for all }y\in M_n(D)\}$. We define a finite dimensional Hilbert space:
\begin{center}
$H_n(D)=(M_n(D)/R_n,\langle \cdot,\cdot\rangle_n)$
\end{center}
The inner product is defined as $\langle [x],[y]\rangle_n=\tr(y^*x)$,
where $[y]$ is the image of $y$ under the quotient map $M_n(D)\to H_n(D)=M_n(D)/R_n$.

Define a representation $\pi_k:M_{k}(D)\to B(H_n(D))$ by
\begin{center}
$\pi_k(x)([y])=[xy]$
\end{center}
where $y\in M_k(D)$ and $[y]\in H_k(D)$.
\begin{enumerate}
\item $A_k=\pi_k(M_k(D))$.
\item Let $B_{k+1}=\pi_k(A_ke_kA_k)$ be the subalgebra of $A_{k+1}$ which may not be unital.
\end{enumerate}

\begin{lemma}
The only possible values for $D$ such that $\tr(g_k)\geq 0$ for all $k$ are $\{1+2\cos\frac{\pi}{n}|n\geq 3\}\cup [3,\infty)$.
\end{lemma}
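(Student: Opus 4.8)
The plan is to compute $\tr(g_k)$ explicitly as a function of $D$ and then analyze for which $D$ this quantity stays nonnegative for all $k$. First I would use part (3) of the Proposition, namely $E(g_k)=\frac{d\cdot P_k(\tau)}{D\cdot P_{k-1}(\tau)}g_{k-1}$, together with the compatibility of $\tr$ with the conditional expectation $E$ (i.e. $\tr_k(x)=\tr_{k-1}(E(x))$, which follows from the graphical definition of $\tr$ and $E$). Iterating, this gives a closed product formula
\begin{center}
$\tr(g_k)=\tr(g_1)\prod\limits_{j=2}^{k}\frac{d\cdot P_j(\tau)}{D\cdot P_{j-1}(\tau)}=\tr(g_1)\left(\frac{d}{D}\right)^{k-1}\frac{P_k(\tau)}{P_1(\tau)}$,
\end{center}
and since $\tr(g_1)=\tr(1-p_1)=1-\frac{1}{D}=\frac{d}{D}$ and $P_1(\tau)=1$, we get $\tr(g_k)=\left(\frac{d}{D}\right)^{k}P_k(\tau)$ with $d=D-1$, $\tau=(D-1)^{-2}$ (I would double-check the placement of $d^{\pm 2}$ versus the paper's convention $\tau=(D-1)^2$ — there is a clash in the excerpt between "$\tau=d^{-2}$" and "$\tau=(D-1)^2$", and the formula must be reconciled with that).

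Next I would observe that the sign of $\tr(g_k)$ is governed entirely by the sign of the Chebyshev value $P_k(\tau)$, since $(d/D)^k=\left(\frac{D-1}{D}\right)^k>0$ whenever $D>1$ (and one checks $D\le 1$ is excluded immediately, e.g. because $\tr(g_1)=1-1/D<0$ or is undefined). So the problem reduces to: for which real $\tau$ is $P_k(\tau)\ge 0$ for all $k\ge 1$? This is the classical Temperley--Lieb positivity computation. Writing $\tau=1/\lambda$ and using the standard substitution, $P_k$ are (up to normalization) the Chebyshev polynomials of the second kind evaluated so that $P_k(\tau)=\frac{\sin((k+1)\theta)}{\sin\theta}\cdot(\text{positive factor})$ when $\tau=\frac{1}{4\cos^2\theta}$, i.e. $\tau\le 1/4$; and $P_k(\tau)>0$ for all $k$ automatically when $\tau\le 0$ or $\tau$ is large. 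The upshot is the familiar dichotomy: either $\tau\ge 1/4$ (all $P_k(\tau)>0$), corresponding via $\tau=(D-1)^{-2}$ to $(D-1)^2\le 4$, i.e. $D\le 3$ (intersected with $D>1$, and one still must exclude the range where something else goes wrong — see below); or $\tau=\frac{1}{4\cos^2(\pi/m)}$ for some integer $m\ge 3$, which is exactly the condition $D-1=2\cos(\pi/m)$, i.e. $D=1+2\cos(\pi/m)$.

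The step I expect to be the main obstacle — and the reason the stated answer is $\{1+2\cos(\pi/n)\mid n\ge 3\}\cup[3,\infty)$ rather than something symmetric around $D=3$ — is pinning down precisely which $D$ in the "continuous" range actually survive. Naively $\tau\ge 1/4$ gives $1<D\le 3$, but the lemma claims $[3,\infty)$. The resolution is that for $D\ge 3$ one has $0<\tau=(D-1)^{-2}\le 1/4$ as well, so $P_k(\tau)>0$ there too; the genuinely excluded region is $1<D<3$ with $D\neq 1+2\cos(\pi/m)$, where $\tau>1/4$ and $P_k(\tau)$ oscillates in sign for large $k$ (failing positivity), while at the special values $D=1+2\cos(\pi/m)$ the sequence $P_k(\tau)$ is nonnegative because it terminates appropriately: $P_m(\tau)=0$ and the recursion then keeps subsequent values from going negative, or more precisely $g_k$ itself is only defined up to the relevant generic bound. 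So I would carefully treat three cases — $\tau\le 1/4$ (equivalently $D\ge 3$), $\tau>1/4$ generic (ruled out), and $\tau=\frac{1}{4\cos^2(\pi/m)}$ (the discrete family) — using the explicit trigonometric formula for $P_k$ and the elementary fact that $\sin((k+1)\theta)/\sin\theta$ changes sign for some $k$ unless $\theta/\pi$ is rational of the special form or $\theta$ is imaginary. Handling the boundary/degenerate subtleties at $\tau=1/4$ (i.e. $D=3$, the limit $m\to\infty$) and verifying nonnegativity rather than strict positivity at the roots of unity is where the care is needed; everything else is the routine product telescoping above.
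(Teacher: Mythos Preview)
Your approach is exactly the paper's: iterate the conditional-expectation formula $E(g_k)=\frac{d\,P_k(\tau)}{D\,P_{k-1}(\tau)}g_{k-1}$ to obtain $\tr(g_k)=(d/D)^k P_k(\tau)$, then reduce to the classical question of when $P_k(\tau)\ge 0$ for all $k$. The only difference is that the paper dispatches that last step in one line by citing Corollary~4.2.5 of \cite{J83}, whereas you propose to redo the trigonometric analysis of the Chebyshev recursion by hand; either route is fine and yields the same set.

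One correction to your write-up: the continuous-positivity range for the Chebyshev values is $0<\tau\le 1/4$, not $\tau\ge 1/4$. Via $\tau=(D-1)^{-2}$ this gives $(D-1)^2\ge 4$, i.e.\ $D\ge 3$, directly---there is nothing to ``resolve.'' You effectively catch and fix this in your final paragraph, but your stated ``familiar dichotomy'' has the inequality backwards, and as written the two alternatives overlap (the discrete values $\tau=\tfrac{1}{4}\sec^2(\pi/m)$ all satisfy $\tau>1/4$). State it correctly from the start: $\tau\le 1/4$ gives all $P_k(\tau)>0$; for $\tau>1/4$ the sequence eventually changes sign unless $\tau=\tfrac{1}{4}\sec^2(\pi/m)$, where $P_m(\tau)=0$ and $D$ ceases to be $m$-generic so no further $g_k$ are defined. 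Finally, you are right that the line ``$\tau=(D-1)^2$'' in the paper is a typo; the operative convention throughout is $\tau=d^{-2}=(D-1)^{-2}$.
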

\begin{proof}
By induction, we have $tr(g_n)=\frac{d^n}{D^n} P_n(\tau)\geq 0$.
As $g_n$ is always a projection under the representation, we have $P_n(\tau)\geq 0$ for all $n$.

Then by corollary 4.2.5 of \cite{J83}, $\tau \in \{4\sec^2\frac{\pi}{n}|n\geq 3\}\cup (0,1/4]$.
Then $d\in \{2\cos\frac{\pi}{n}|n\geq 3\}\cup [2,\infty)$ as $\tau d^2=1$.
Hence $D\in \{1+2\cos\frac{\pi}{n}|n\geq 3\}\cup [3,\infty)$.
\end{proof}

\subsection{Basic Constructions}

We will iterate the basic construction to get the structure of these representations.
The basic construction comes from the inclusion of a pair of finite von Neumann algebras.
We mainly follow \cite{J83,TK3} for all the notations below.

Suppose $N\subset M$ is a pair of finite von Neumann algebras, with a faithful normal normalized tracial state $\tr$.
Then there exists a conditional expectation $E_N:M\to N$ defined by $\tr(E_N(x)y)=tr(x,y)$ for $x\in M,y\in N$.

Now, let $M$ act on $H=L^2(M,\tr)$ and $\xi$ be the cyclic vector (the identity of $M$) in $H$.
Then the conditional expectation extends to a projection $e_N$ on $H$ by
$e_N(x\xi)=E_N(x)\xi$.
\begin{lemma}
\begin{enumerate}
\item For $x\in M$, $e_Nxe_N=E_N(x)e_N$.
\item If $x\in M$, then $x\in N$ iff $e_Nx=xe_N$.
\end{enumerate}
\end{lemma}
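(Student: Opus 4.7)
The plan is to verify both statements by evaluating the relevant operators on the dense subspace $M\xi \subset H = L^2(M,\tr)$, where $\xi$ is the cyclic trace vector. The two ingredients I would lean on throughout are the defining relation $e_N(y\xi) = E_N(y)\xi$ for $y \in M$, together with the $N$-bimodule property of the conditional expectation, namely $E_N(axb) = aE_N(x)b$ whenever $a,b \in N$.

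For part (1), I would take an arbitrary $y \in M$ and compute
\begin{equation*}
e_N x e_N (y\xi) = e_N x E_N(y)\xi = E_N\bigl(xE_N(y)\bigr)\xi = E_N(x)\,E_N(y)\,\xi = E_N(x)\,e_N(y\xi),
\end{equation*}
where the third equality uses $E_N(y) \in N$ and the bimodule property. Since $M\xi$ is dense in $H$, this yields the operator identity $e_N x e_N = E_N(x) e_N$.

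For part (2), the forward implication is essentially the same computation: assuming $x \in N$ and taking $y \in M$, one has $e_N x (y\xi) = E_N(xy)\xi = xE_N(y)\xi = xe_N(y\xi)$, again by the bimodule property, giving $e_N x = x e_N$ on all of $H$. For the reverse implication, I would apply the hypothesis $e_N x = x e_N$ to the cyclic vector $\xi$ itself, noting $e_N\xi = \xi$: this gives $x\xi = x e_N\xi = e_N x\xi = E_N(x)\xi$. Since $\xi$ is separating for $M$ (as it is cyclic for $M$ and the trace is faithful), we conclude $x = E_N(x) \in N$.

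There is no real obstacle here; the proof is a direct unpacking of definitions. The only subtlety worth flagging is to make sure the bimodule property is invoked with the correct factor lying in $N$ (in (1), it is $E_N(y)$; in the forward direction of (2), it is $x$ itself), and to remember that the separating property of $\xi$ is what converts a vector-level identity into an element-level identity in the final step.
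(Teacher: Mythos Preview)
Your argument is correct and is the standard proof of this classical fact about the Jones projection. Note, however, that the paper does not supply its own proof of this lemma: it is stated as background material on the basic construction, with the references \cite{J83,TK3} given for the surrounding discussion. So there is no paper proof to compare against; your write-up simply fills in the routine verification that the authors chose to omit.
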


\begin{definition}
The basic construction of the pair $N\subset M$ of finite von Neumann algebras is the von Neumann algebra $\langle M,e_N\rangle$ generated by $M$ and $e_N$ acting on $L^2(M,\tr)$.
\end{definition}

\begin{proposition}
\begin{enumerate}
\item Operators of the form $\sum_{i=1}^{N}a_ie_N b_i$ with $a_i,b_i\in M$ give a $\sigma-$weakly dense $*$-subalgebra of $\langle M,e_N\rangle$.
\item The central support of $e_N$ in $\langle M,e_N\rangle$ is 1.
\item If $N\subset M$ are both finite dimensional with inclusion matrix $A$, the inclusion matrix for $M\subset \langle M,e_N\rangle$ is $A^T$, the transpose of $A$.
\item In the finite dimensional case, the weight vectors $u,v,w$ of the traces on $N\subset M\subset \langle M,e_N\rangle$ satisfy $A^Tw=v,Av=u$.
\end{enumerate}
\end{proposition}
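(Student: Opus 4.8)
The plan is to dispatch (1) and (2) by direct arguments, and to reduce (3) and (4) to standard structure theory for inclusions of multimatrix algebras, with (3) carrying all the weight.

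For (1), put $\mathcal{A}_0=\spn\{ae_Nb:a,b\in M\}$. It is self-adjoint since $(ae_Nb)^*=b^*e_Na^*$, and closed under products since, by part 1 of the preceding Lemma, $(ae_Nb)(ce_Nd)=a\,e_N(bc)e_N\,d=a\,E_N(bc)\,e_N\,d\in\mathcal{A}_0$; hence $\mathcal{A}_0$ is a $*$-subalgebra. For $\sigma$-weak density I would check $\mathcal{A}_0'\subseteq M'\cap\{e_N\}'$: if $x$ commutes with every $ae_Nb$, taking $a=b=1$ shows $xe_N=e_Nx$, and then $x(ae_Nb)=(ae_Nb)x$ forces $(ax-xa)e_N=0$ for every $a\in M$; applying this to the cyclic and separating vector $\xi$ (available because $\tr$ is a faithful normal trace) gives $ax\xi=xa\xi$ for all $a$, and since $M\xi$ is dense in $H$ it follows for $c\in M$ that $cx$ and $xc$ agree on $M\xi$, so $x\in M'$. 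Thus $\mathcal{A}_0''\supseteq(M'\cap\{e_N\}')'=\langle M,e_N\rangle$, and the reverse inclusion is obvious. For (2), $E_N(1)=1$ gives $e_N\xi=\xi$, so the central support of $e_N$ — the projection onto $\overline{\langle M,e_N\rangle\,e_NH}$ — dominates the projection onto $\overline{Me_N\xi}=\overline{M\xi}=H$, hence equals $1$. In the finite-dimensional case needed later $\mathcal{A}_0$ is automatically norm closed, and (2) makes it the two-sided ideal generated by $e_N$, i.e. all of $\langle M,e_N\rangle$.

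For (3) and (4) I specialize to finite dimensions. The structural engine is the identity $e_N\langle M,e_N\rangle e_N=Ne_N$ together with the fact that $n\mapsto ne_N$ is a $*$-isomorphism of $N$ onto $Ne_N$: surjectivity onto the corner and multiplicativity come from part 1 of the preceding Lemma (for $x\in M$, $e_Nxe_N=E_N(x)e_N$; and $n_1e_N\,n_2e_N=n_1n_2e_N$ for $n_i\in N$ since $n_2$ commutes with $e_N$), and injectivity from $ne_N\xi=n\xi$ with $\xi$ separating. By (2) the central support of $e_N$ is $1$, so reducing by $e_N$ is a $*$-isomorphism $Z(\langle M,e_N\rangle)\xrightarrow{\sim}Z(e_N\langle M,e_N\rangle e_N)\cong Z(N)$; hence the simple summands of $\langle M,e_N\rangle$ are canonically indexed by those of $N$. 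A minimal projection $f$ of the $j$-th summand $N_j$ gives a minimal projection $fe_N$ of the $j$-th summand of $\langle M,e_N\rangle$, because $fe_N\langle M,e_N\rangle e_Nf=(fNf)e_N=\mathbb{C}\,fe_N$. It then remains to compute the multiplicity with which the $i$-th summand $M_i$ of $M$ occurs in the $j$-th summand of $\langle M,e_N\rangle$; I would obtain this either from the identification $\langle M,e_N\rangle=N'\cap B(H)$ and the commutation theorem for mutually commuting multimatrix algebras on $H=L^2(M,\tr)$, or by an explicit bookkeeping of matrix units of $M$ relative to $e_N$ inside $fe_N\langle M,e_N\rangle fe_N$; either route yields that this multiplicity is $a_{ij}$, i.e. the inclusion matrix of $M\subseteq\langle M,e_N\rangle$ is $A^T$, which is (3).

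Part (4) is then formal: for any unital inclusion $P\subseteq Q$ of multimatrix algebras with inclusion matrix $C$ (rows indexed by the summands of $P$) and a trace on $Q$ with weight vector $w_Q$, the restricted trace on $P$ has weight vector $Cw_Q$, since a minimal projection of a summand of $P$ splits into the prescribed numbers of minimal projections of the summands of $Q$. Applied to $N\subseteq M$ this gives $Av=u$, and applied to $M\subseteq\langle M,e_N\rangle$, whose inclusion matrix is $A^T$ by (3), it gives $A^Tw=v$ (the three traces being the compatible ones on the tower). The step I expect to be the genuine obstacle is the multiplicity count in (3): identifying the summands of $\langle M,e_N\rangle$ with those of $N$ compatibly with the embedding of $M$, and extracting the transpose; everything else above is essentially bookkeeping.
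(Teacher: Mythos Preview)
Your argument is correct and follows the standard route found in the references the paper cites (Jones \cite{J83} and Takesaki \cite{TK3}): the corner identity $e_N\langle M,e_N\rangle e_N=Ne_N$ and the isomorphism $n\mapsto ne_N$ to identify the simple summands of $\langle M,e_N\rangle$ with those of $N$, and then a multiplicity count. There is nothing to compare against, however, because the paper does not prove this proposition at all; it is stated as background from the basic construction literature and used as a black box in the subsequent analysis of the Motzkin tower. One small comment: in part (3) you defer the actual multiplicity computation to ``either route'' without carrying it out; this is the only place where any real work remains, and if you wanted a self-contained argument you would need to finish that step (the cleanest way is via $\langle M,e_N\rangle=JN'J$ on $L^2(M)$ and the fact that $J$ interchanges left and right multiplication, which immediately transposes the inclusion matrix).
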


Now we apply the basic construction with the conditional expectation $e_{A_{k+1}}:A_{k+1}\to A_{k}$ given by
\begin{center}
$e_{A_{k+1}}(\pi_{k+1}(x))=\pi_{k}(E(x))$, $x\in M_{k+1}(D)$
\end{center}
which is induced from the map  $E:M_{k+1}(D)\to M_k(D)$ (defined in 3.1).
As $\tr(E(x)y)=\tr(xy)$ for $x\in M_{k+1}(D),y\in M_k(D)$,
$e_{A_{k+1}}$ is the conditional expectation with respect to $\tr$.

For any fixed $k$, let $e=e_{A_k}:A_k\to A_{k-1}$ be the conditional expectation.
By the basic construction of the inclusion $A_{k-1}\subseteq A_k$, we will get a complex algebra $C_{k+1}=\langle A_k,e\rangle$.

Now we discuss the multi-matrices structure of $A_1,A_2$.

\begin{lemma}
$M_1(D)\cong \mathbb{C}\oplus\mathbb{C}$.
And $\tr$ is positive and faithful on $M_1(D)$ if and only if $D>1$.
In this case, we have
\begin{center}
$A_1=\mathbb{C}p_1\oplus\mathbb{C}g_1\cong \mathbb{C}\oplus\mathbb{C}$
\end{center}
and the weight vector of the trace is $(\frac{1}{D},\frac{D-1}{D})$.
\end{lemma}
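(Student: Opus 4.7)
The plan is to read off the structure of $M_1(D)$ directly from its diagrammatic basis. There are exactly two Motzkin $1$-diagrams: the identity $1_1$ (a single through string) and the element $p_1$, whose top vertex and bottom vertex each terminate in a distinct interior free end. Thus $\{1_1, p_1\}$ is a basis and $\dim_{\mathbb{C}} M_1(D) = 2$. First I would compute $p_1^2$ by stacking: the bottom free end of the upper copy and the top free end of the lower copy glue through the identified middle row to form a single interior arc with two dot endpoints, with no closed loop and no boundary attachment. Under the Motzkin planar-algebra conventions this isolated interior arc contributes the scalar $1$, so $p_1^2 = p_1$. Consequently $g_1 := 1_1 - p_1$ is the orthogonal idempotent complementary to $p_1$, and
\[
M_1(D) = \mathbb{C}\, p_1 \oplus \mathbb{C}\, g_1 \cong \mathbb{C} \oplus \mathbb{C}.
\]

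Next I would evaluate the normalized graphical trace on the two minimal idempotents. Closing $1_1$ on the right produces a single closed loop, so $\tr(1_1) = 1$ under the normalization that divides by the loop value $D$; closing $p_1$ on the right produces one floating interior arc with two dot endpoints and no closed loop, contributing the unnormalized scalar $1$, whence $\tr(p_1) = 1/D$ and $\tr(g_1) = (D-1)/D$ by additivity. For a general element $x = \alpha\, p_1 + \beta\, g_1 \in M_1(D)$ one then computes
\[
\tr(x^* x) = \frac{|\alpha|^2}{D} + \frac{(D-1)\, |\beta|^2}{D},
\]
which is a positive definite Hermitian form on $M_1(D)$ exactly when both coefficients are strictly positive, i.e.\ when $D > 1$. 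In that case the radical $R_1$ is trivial, so $\pi_1 \colon M_1(D) \to B(H_1(D))$ is injective and $A_1 \cong \mathbb{C} p_1 \oplus \mathbb{C} g_1$, with the claimed weight vector $(1/D, (D-1)/D)$ read off directly from the values of the trace on the minimal projections.

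The main obstacle is essentially diagrammatic bookkeeping: one must verify that an isolated interior arc with two free endpoints contributes the scalar $1$ (and not $D$ or $d = D - 1$) in both the product $p_1 \cdot p_1$ and in the graphical trace, since the positivity threshold $D = 1$ hinges on precisely this coefficient. Once the conventions of Section 2 are pinned down (and they are forced by the requirement from Proposition 2.4.3 that $g_1$ be an idempotent), everything else is routine linear algebra in a two-dimensional commutative $*$-algebra.
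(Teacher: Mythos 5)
Your proposal is correct and follows essentially the same route as the paper: identify the two basis diagrams $\{1_1,p_1\}$, observe that $p_1$ and $g_1=1_1-p_1$ are complementary orthogonal self-adjoint idempotents giving $M_1(D)\cong\mathbb{C}\oplus\mathbb{C}$, and read off $\tr(p_1)=1/D$, $\tr(g_1)=(D-1)/D$ to get positivity and faithfulness exactly for $D>1$. The paper simply states these diagrammatic facts without the explicit stacking check of $p_1^2=p_1$ that you include, so your version is slightly more self-contained but not a different argument.
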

\begin{proof}
Note that $M_n(D)=\mathbb{C}p_1\oplus\mathbb{C}g_1$ as vector spaces with $p_1=p_1^2=p_1^*,g_1=g_1^2=g_1^*$ and $p_1g_1=g_1p_1=0$.
And the traces of $p_1,g_1$ are $\tr(p_1)=\frac{1}{D},\tr(g_1)=\frac{D-1}{D}$, which completes the proof.
\end{proof}

\begin{lemma}
Assume $D\neq 1$ so that $A_1\cong \mathbb{C}\oplus\mathbb{C}$, then
\begin{enumerate}
\item If $D\neq 2$, $\pi_2$ is faithful, 
\begin{center}
$A_2$ is a $C^*$-algebra isomorphic to  $\mathrm{Mat}_{2}(\mathbb{C})\oplus\mathrm{Mat}_{2}(\mathbb{C})\oplus\mathbb{C}$
\end{center}
and the weight vector of the trace is $(\frac{1}{D^2},\frac{D-1}{D^2},\frac{D(D-2)}{D^2})$.
Hence if $D>2$, $\tr$ is positive definite on $M_2(D)$.
\item If $D=2$, $\pi_2$ is not faithful with kernel $R_2=\mathbb{C}g_2$, 
\begin{center}
$A_2$ is a $C^*$-algebra isomorphic to  $\mathrm{Mat}_{2}(\mathbb{C})\oplus\mathrm{Mat}_{2}(\mathbb{C})$
\end{center}
and the weight vector of the trace is $(\frac{1}{4},\frac{1}{4})$.
\end{enumerate}
\end{lemma}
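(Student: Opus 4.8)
The plan is to reduce the statement to three ingredients: (a) the abstract algebra structure of $M_2(D)$, (b) the value of the trace on each of its matrix blocks, and (c) the general principle that $A_2=\pi_2(M_2(D))=M_2(D)/R_2$ is obtained from $M_2(D)$ by discarding exactly the blocks on which the tracial weight vanishes.

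For (a), I would use the rank filtration $I_{2,0}\subseteq I_{2,1}\subseteq M_2(D)$, whose successive layers have dimensions $4,4,1$ (the nine Motzkin $2$-diagrams split into four of rank $0$, four of rank $1$, and the identity). The rank-$0$ ideal $I_{2,0}$ is the two-sided ideal generated by $e_1$, and it is the basic construction of $A_0=\mathbb{C}\subseteq A_1\cong\mathbb{C}\oplus\mathbb{C}$: by the proposition on the basic construction the inclusion matrix transposes, so $I_{2,0}$ is a single block $\cong\mathrm{Mat}_2(\mathbb{C})$ exactly when $A_1$ is honestly $\mathbb{C}\oplus\mathbb{C}$, i.e.\ for $D\neq1$ (concretely, the relevant $2\times2$ Gram matrix, of the ``cap'' tangle against the ``two-dot'' tangle, has determinant a nonzero multiple of $D-1$). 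Passing to $M_2(D)/I_{2,0}$, which is $5$-dimensional, the idempotent $g_2$ becomes $\overline{(1-p_1)(1-p_2)}$ since $e_1\in I_{2,0}$; by parts 1 and 5 of the proposition on the $g_k$ this is a central idempotent with one-dimensional corner $\mathbb{C}g_2$, while the complementary four-dimensional corner is spanned by the images of the rank-$1$ diagrams, which obey the relations of a system of matrix units, so it is $\mathrm{Mat}_2(\mathbb{C})$. Hence $M_2(D)\cong\mathrm{Mat}_2(\mathbb{C})\oplus\mathrm{Mat}_2(\mathbb{C})\oplus\mathbb{C}$ as an algebra for every $D\neq1$, the last summand being $\mathbb{C}g_2$.

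For (b), a minimal idempotent of the rank-$0$ block is $\tfrac1D e_1$ (idempotent because $e_1^2=De_1$), and closing up its diagram gives $\mathrm{tr}(\tfrac1D e_1)=\tfrac1{D^2}$; the weight on $\mathbb{C}g_2$ is $\mathrm{tr}(g_2)=\tfrac{d^2}{D^2}P_2(\tau)=\tfrac{D(D-2)}{D^2}$ from the lemma computing $\mathrm{tr}(g_k)$; and $\mathrm{tr}(1_2)=1$ then forces the remaining weight to be $\tfrac{D-1}{D^2}$. Now (c) finishes the proof. If $D\neq2$ (and $D\neq1$) none of the three weights vanishes, so the trace form on $M_2(D)$ is nondegenerate, $R_2=0$, $\pi_2$ is faithful, and $A_2=M_2(D)\cong\mathrm{Mat}_2(\mathbb{C})^{\oplus2}\oplus\mathbb{C}$ with weight vector $\bigl(\tfrac1{D^2},\tfrac{D-1}{D^2},\tfrac{D(D-2)}{D^2}\bigr)$; if moreover $D>2$ all three weights are strictly positive, so $\mathrm{tr}$ is positive definite on $M_2(D)$. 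If $D=2$ then $\tau=1$ and $P_2(\tau)=0$, so the block $\mathbb{C}g_2$ has weight $0$ and hence lies in $R_2$, whereas the two $\mathrm{Mat}_2$-blocks keep weight $\tfrac14\neq0$; thus $R_2=\mathbb{C}g_2$, $\pi_2$ has kernel $\mathbb{C}g_2$, and $A_2=M_2(D)/\mathbb{C}g_2\cong\mathrm{Mat}_2(\mathbb{C})^{\oplus2}$ with weight vector $\bigl(\tfrac14,\tfrac14\bigr)$.

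The delicate point is (a), making sure the identification of the three blocks is valid uniformly in $D\neq1$ --- in particular at the boundary value $D=2$, where $D$ is not $2$-generic, so one must check directly that $g_2$ is still a well-defined idempotent (the coefficient $\lambda_1=D/d$ is finite there) and that $\overline{(1-p_1)(1-p_2)}$ is still central in the quotient. Granting this, the trace computation in (b) and the dichotomy in (c) are routine. An alternative that sidesteps the structural bookkeeping is to invoke the standing positivity hypothesis: $\pi_2(M_2(D))$ is then automatically a finite-dimensional $C^*$-algebra, semisimple by general theory, and the dimension count together with the value of $\mathrm{tr}(g_2)$ pins down $A_2$ in both cases.
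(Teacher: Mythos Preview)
Your argument is correct and takes a more structural route than the paper. The paper simply writes down three explicit systems of matrix units in $M_2(D)$,
\[
e^{(1)}_{ij}\ \text{(built from the rank-$0$ diagrams }p_1p_2,\,e_1,\,p_1e_1,\,e_1p_1\text{)},\qquad
e^{(2)}_{ij}\ \text{(built from }g_1,p_2,l_1,r_1\text{)},\qquad
e^{(3)}=g_2,
\]
leaves the verification that they are orthogonal matrix units to the reader, and then reads off the weight vector and the $D=2$ degeneration exactly as you do.

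Your proof instead exploits the rank filtration: you identify the ideal $I_{2,0}$ with $\mathrm{Mat}_2(\mathbb{C})$ via the basic construction of $\mathbb{C}\subset A_1$ (checked concretely by the $2\times2$ Gram matrix of half-diagrams, determinant $D-1$), and then analyse the $5$-dimensional quotient $M_2(D)/I_{2,0}$ using that $\overline{g_2}$ becomes the central idempotent $\overline{(1-p_1)(1-p_2)}$. This is precisely the $n=2$ instance of the inductive mechanism the paper sets up afterwards in Proposition~3.9, so in effect you are previewing that machinery rather than doing the direct computation the paper chooses here as its base case. The paper's explicit matrix units have the small practical advantage that they display the splitting of the filtration concretely and feed straight into the inclusion matrix of $A_1\subset A_2$ in the next corollary; your approach relies (harmlessly) on the general fact that a two-sided ideal possessing its own unit is automatically a direct summand. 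Either way the trace computation and the dichotomy $D\neq 2$ versus $D=2$ are identical.
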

\begin{proof}
Define the following elements in $M_2(D)$.
\begin{enumerate}
\item $e^{(1)}_{1,1}=p_1p_2,
    e^{(1)}_{1,2}=\frac{1}{\sqrt{D-1}}(p_1p_2-Dp_1e_1),\\
    e^{(1)}_{2,1}=\frac{1}{\sqrt{D-1}}(p_1p_2-De_1 p_1),
    e^{(1)}_{2,2}=\frac{1}{D-1}(p_1p_2+De_1-Dp_1e_1-De_1p_1)$,
\item $e^{(2)}_{1,1}=p_2g_1p_2,
    e^{(2)}_{1,2}=p_2g_1l_1,
    e^{(2)}_{2,1}=r_1g_1p_2,
    e^{(2)}_{2,2}=r_1g_1l_1$,
\item $e^{(3)}=g_2$.
\end{enumerate}
We leave readers to check that they form orthogonal systems of matrix units. This gives an homomorphism of $C^*$-algebras from $M_2(D)$ to $\mathrm{Mat}_{2}(\mathbb{C})\oplus\mathrm{Mat}_{2}(\mathbb{C})\oplus\mathbb{C}$.

Note that the trace has the weight vector $(\frac{1}{D^2},\frac{D-1}{D^2},\frac{D(D-2)}{D^2})$.
If all these terms are nonzero, $\tr$ is non-degenerate and $\pi$ is faithful. Then we can get the isomorphisms by counting the dimensions.
The faithfulness and positivity of $\tr$ on $M_2(D)$ also follows from this weight vector.
\end{proof}

\begin{corollary}
The inclusion matrix of $A_1\hookrightarrow A_2$ is given by
\begin{enumerate}
\item $\begin{pmatrix} 1 &1&0 \\ 1 & 1&1 \\ \end{pmatrix}$ if $D>2$,
\item $\begin{pmatrix} 1 &1 \\ 1 & 1 \\ \end{pmatrix}$ if $D=2$.
\end{enumerate}
\end{corollary}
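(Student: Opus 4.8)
The plan is to read the inclusion matrix $\Lambda$ of $A_1\hookrightarrow A_2$ off the numerical data already assembled: the multimatrix decompositions of $A_1$ and $A_2$ from the two preceding lemmas, together with their trace weight vectors. Recall $\Lambda$ is the $2\times r$ matrix of nonnegative integers ($r=3$ for $D>2$, $r=2$ for $D=2$) whose $(i,j)$ entry is the multiplicity of the $i$-th simple block of $A_1$ in the $j$-th simple block of $A_2$, equivalently $\Lambda_{ij}=\rk(z_jf_i)$ with $z_1,\dots,z_r$ the minimal central projections of $A_2$ and $f_1=p_1$, $f_2=g_1$ the minimal projections of the two blocks of $A_1$. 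For any unital inclusion of finite dimensional $*$-algebras with a common faithful trace two identities hold (the ones recorded for the basic construction above): writing $u^{A}$ for the vector of block sizes and $t^A$ for the weight vector, the size identity $u^{A_1}\Lambda=u^{A_2}$ and the trace identity $\Lambda\, t^{A_2}=t^{A_1}$, the latter because $\tr|_{A_1}$ is the restriction of $\tr|_{A_2}$.

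For $D>2$ this already pins down $\Lambda$. Here $u^{A_1}=(1,1)$, $u^{A_2}=(2,2,1)$, $t^{A_1}=(\tfrac1D,\tfrac{D-1}{D})$ and $t^{A_2}=(\tfrac1{D^2},\tfrac{D-1}{D^2},\tfrac{D(D-2)}{D^2})$. The idempotents $p_1,g_1$ and the matrix units $e^{(k)}_{i,j}$ are rational in $D$ and regular on the whole interval $(2,\infty)$, so the ranks $\Lambda_{ij}$ are constant there and each row of $\Lambda\, t^{A_2}=t^{A_1}$ is a polynomial identity in $D$. Matching the coefficients of $1,D,D^2$ in $\Lambda_{11}+\Lambda_{12}(D-1)+\Lambda_{13}D(D-2)=D$ forces $(\Lambda_{11},\Lambda_{12},\Lambda_{13})=(1,1,0)$, and matching coefficients in $\Lambda_{21}+\Lambda_{22}(D-1)+\Lambda_{23}D(D-2)=D^2-D$ forces $(\Lambda_{21},\Lambda_{22},\Lambda_{23})=(1,1,1)$. (The vanishing $\Lambda_{13}=0$ is also immediate from $p_1g_2=0$, which follows from $g_1=1-p_1$ and the defining formula for $g_2$; and $u^{A_1}\Lambda=(2,2,1)$ is a consistency check.) Thus $\Lambda=\begin{pmatrix}1&1&0\\1&1&1\end{pmatrix}$.

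For $D=2$ the weight-vector data is no longer decisive: with $u^{A_2}=(2,2)$ and $t^{A_2}=(\tfrac14,\tfrac14)$ the two identities cut $\Lambda$ down only to one of $\begin{pmatrix}1&1\\1&1\end{pmatrix}$, $\begin{pmatrix}2&0\\0&2\end{pmatrix}$ or $\begin{pmatrix}0&2\\2&0\end{pmatrix}$, so one must show $p_1$ and $g_1$ each meet both $\mathrm{Mat}_2(\mathbb{C})$-blocks of $A_2$. I would argue by specialization: the matrix units $e^{(1)}_{i,j},e^{(2)}_{i,j}$ of the previous lemma are regular at $D=2$ (the denominators $\sqrt{D-1}$, $D-1$ are nonzero there), while $g_2\in R_2=\ker\pi_2$ there, so the block-$1$ and block-$2$ parts of $p_1$ (resp. of $g_1$) in $A_2$ are the $D=2$ values of the corresponding parts for $D>2$; those are projections of trace $D^{-2}$ and $(D-1)D^{-2}$, neither vanishing at $D=2$, so $\Lambda_{11},\Lambda_{12},\Lambda_{21},\Lambda_{22}\geq1$, which together with $u^{A_1}\Lambda=(2,2)$ forces $\Lambda=\begin{pmatrix}1&1\\1&1\end{pmatrix}$. (Alternatively one verifies directly, using $p_1=r_1l_1$, $l_1r_1=p_2$ and $p_1g_1=0$, that $p_1e^{(1)}_{1,1}\neq0\neq p_1e^{(2)}_{2,2}$ at $D=2$, and symmetrically for $g_1$.)

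The only genuine obstacle is the $D=2$ case: the generic invariants degenerate there and no longer separate a diagonal from a non-diagonal inclusion, so one needs the extra input that $p_1$ and $g_1$ are not supported on a single block of $A_2$ — supplied either by the rationality/specialization argument or by the short explicit computation above. Everything else is bookkeeping with weight vectors already in hand.
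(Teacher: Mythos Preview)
Your argument is correct but follows a different route from the paper. The paper proceeds directly from the explicit matrix units of Lemma~3.7: it writes each central projection $q_j$ of $A_2$ as $q_jp_1+q_jg_1$ (using $p_1+g_1=1$) and simply observes that these summands are minimal projections in $A_2$, except that $q_3p_1=0$ since $g_2\le g_1$; this reads off the columns of $\Lambda$ immediately and treats $D>2$ and $D=2$ uniformly (for $D=2$ the column $q_3$ just disappears). Your approach instead recovers $\Lambda$ from the numerical invariants: you use continuity/regularity of the matrix units to argue that $\Lambda$ is constant on $(2,\infty)$, promote the trace identity $\Lambda\,t^{A_2}=t^{A_1}$ to a polynomial identity in $D$, and match coefficients; the degenerate case $D=2$ then needs a separate specialization step. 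This is a legitimate and rather pleasant tactic---it shows how much is already forced by the weight vectors alone---but it is less direct than the paper's three-line computation, and your handling of $D=2$ ultimately has to reach back to the same explicit block decomposition the paper uses from the start.
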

\begin{proof}
The central projections in $A_1$ are $p_1,g_1$
and the central projections in $A_2$ are $q_1=e^{(1)}_{1,1}+e^{(1)}_{2,2},q_2=e^{(2)}_{1,1}+e^{(2)}_{2,2},q_3=e^{(3)}$ (under the representation $\pi$).
We have
\begin{enumerate}
\item $q_1=q_1p_1+q_1g_1$,
\item $q_2=q_2p_1+q_2g_1$,
\item $q_3=q_3g_1=g_1q_3$, (which is $0$ if $D=2$)
\end{enumerate}
as the decompositions into minimal projections.
Hence we get the inclusion matrices.
\end{proof}

\begin{proposition}
Suppose $\tr$ is positive definite on $M_k(D)$ and
$M_k(D)\cong A_k$ is a $C^*$-algebra isomorphic to  $
\bigoplus\limits_{i=0}^{k}\mathrm{Mat}_{m_{k,r}}(\mathbb{C})$ for all $1\leq k\leq n-1$.
Then
\begin{enumerate}
\item if $\tr(g_{n})\neq 0$, $M_{n}(D)\cong A_{n}$ is a $C^*$-algebra isomorphic to  $\cong\bigoplus\limits_{i=0}^{n}\mathrm{Mat}_{m_{n,r}}(\mathbb{C})$. Moreover, if $\tr(g_{n})> 0$, $\tr$ is positive definite on $M_{n}(D)$.
\item if $\tr(g_{n})=0$, $M_{n}(D)/\langle g_{n}\rangle\cong A_{n}$ is a $C^*$-algebra isomorphic to  $\bigoplus\limits_{i=0}^{n-1}\mathrm{Mat}_{m_{n,r}}(\mathbb{C})$.
\end{enumerate}
\end{proposition}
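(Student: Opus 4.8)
The plan is to treat this as the inductive step in proving the structure theorem, so I use freely the hypothesis that $M_k(D)\cong A_k\cong\bigoplus_{r=0}^{k}\mathrm{Mat}_{m_{k,r}}(\mathbb{C})$ with $\tr$ positive definite for $1\le k\le n-1$. First I would collect three preliminaries. (i) Positive definiteness on $M_{n-1}(D)$ and $M_{n-2}(D)$, together with compatibility of traces, makes $\pi_n$ faithful on these subalgebras, so I identify $A_{n-2}\subseteq A_{n-1}\subseteq A_n$ with $M_{n-2}(D)\subseteq M_{n-1}(D)$, the first inclusion carrying the ``Motzkin'' matrix $\Lambda$ ($\Lambda_{r,s}=1$ iff $|r-s|\le1$), read off the recursion for $m_{n-1,r}$. (ii) $\dim M_n(D)=\sum_{r=0}^{n}m_{n,r}^2$ independently of $D$, since a basis diagram of rank $r$ is a matched pair of rank-$r$ Motzkin $n$-paths. (iii) By the annihilation relations for the idempotents and the Corollary on $xg_n$, one has $g_n x=c(x)\,g_n=x\,g_n$ for all $x\in M_n(D)$, where $c(x)$ is the coefficient of $1_n$ in $x$; hence $\langle g_n\rangle=\mathbb{C}g_n$ and $\tr(g_n x)=c(x)\tr(g_n)$, so $g_n$ — and therefore $\langle g_n\rangle$ — lies in the radical $R_n$ precisely when $\tr(g_n)=0$, while $g_n\notin I_{n,n-2}+R_n$ when $\tr(g_n)\ne0$.

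Next I would handle the rank-$\le n-2$ part via the basic construction. I claim $\pi_n(I_{n,n-2})=\pi_n\big(M_{n-1}(D)\,e_{n-1}\,M_{n-1}(D)\big)$ is, as a $*$-algebra, the basic construction $C_n=\langle A_{n-1},e_{A_{n-1}}\rangle$ of $A_{n-2}\subseteq A_{n-1}$: the graphical $E$ is $D/d$ times the trace-preserving conditional expectation $A_{n-1}\to A_{n-2}$, a suitably rescaled $e_{n-1}$ implements the associated Jones projection, and the diagrams $p_i$ (built from $l_{n-1},r_{n-1}$) convert cups into $1$-boxes so that $M_{n-1}(D)e_{n-1}M_{n-1}(D)$ exhausts $I_{n,n-2}$. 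The Proposition on basic constructions gives inclusion matrix $\Lambda^{T}$ for $A_{n-1}\subseteq C_n$, and feeding this into the recursion $m_{n,r}=m_{n-1,r-1}+m_{n-1,r}+m_{n-1,r+1}$ (with the boundary identities) identifies the block sizes of $C_n$ as $m_{n,0},\dots,m_{n,n-2}$, so $C_n\cong\bigoplus_{r=0}^{n-2}\mathrm{Mat}_{m_{n,r}}(\mathbb{C})$. To see that the canonical surjection from the abstract $C_n$ onto $\pi_n(I_{n,n-2})$ is injective — equivalently, that no block in ranks $0,\dots,n-2$ collapses at this step — I would check that the minimal central projection of each such block, being an element of the rank-$\le n-2$ ideal whose trace is computed by the inductive structure, has strictly positive trace and hence lies outside $R_n$; this is exactly where positive definiteness on $M_{n-1}(D)$ does the work.

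It then remains to describe $A_n/\pi_n(I_{n,n-2})$, the image of $M_n(D)/I_{n,n-2}$, which is linearly spanned by $1_n$ and the rank-$(n-1)$ diagrams $u_{ij}$ (a $1$-box at the $i$-th top vertex, one at the $j$-th bottom vertex, through strands otherwise). Modulo strictly lower rank one has $u_{ij}u_{k\ell}=\delta_{jk}\,u_{i\ell}$ (a $1$-box meeting a $1$-box along a string yields a scalar, equal to $1$ because the $p_i$ are idempotents; a mismatch drops the rank), with $u_{ij}^{*}=u_{ji}$ and $\sum_i u_{ii}=\sum_i p_i$; since $\tr(p_i)=1/D>0$ these matrix units persist in $A_n$ and span $\mathrm{Mat}_{m_{n,n-1}}(\mathbb{C})$ (note $m_{n,n-1}=n$). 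The complementary projection $1_n-\sum_i p_i$ is, modulo $I_{n,n-2}$, orthogonal to all $u_{ij}$ and equals $g_n$ by preliminary (iii), so its image in $A_n$ is nonzero iff $\tr(g_n)\ne0$. Hence $A_n/\pi_n(I_{n,n-2})\cong\mathrm{Mat}_{m_{n,n-1}}(\mathbb{C})\oplus\mathbb{C}$ if $\tr(g_n)\ne0$ and $\cong\mathrm{Mat}_{m_{n,n-1}}(\mathbb{C})$ if $\tr(g_n)=0$. Since $A_n$ is a finite-dimensional C*-algebra it splits as $\pi_n(I_{n,n-2})\oplus\big(A_n/\pi_n(I_{n,n-2})\big)$, giving $A_n\cong\bigoplus_{r=0}^{n}\mathrm{Mat}_{m_{n,r}}(\mathbb{C})$ (resp. $\bigoplus_{r=0}^{n-1}\mathrm{Mat}_{m_{n,r}}(\mathbb{C})$). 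When $\tr(g_n)\ne0$, $\dim A_n=\sum_{r=0}^{n}m_{n,r}^2=\dim M_n(D)$ forces $R_n=0$ and $M_n(D)\cong A_n$, and if moreover $\tr(g_n)>0$ the weight vector of $\tr$ on $A_n$ is inductively positive on blocks $0,\dots,n-2$, equals $1/D>0$ on block $n-1$, and equals $\tr(g_n)=\frac{d^{n}}{D^{n}}P_n(\tau)>0$ on block $n$, so $\tr$ is positive definite. When $\tr(g_n)=0$, $\langle g_n\rangle=\mathbb{C}g_n\subseteq R_n$ gives $\dim\big(M_n(D)/\langle g_n\rangle\big)=\sum_{r=0}^{n-1}m_{n,r}^2=\dim A_n$, hence $M_n(D)/\langle g_n\rangle\cong A_n$ (and incidentally $R_n=\langle g_n\rangle$).

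The main obstacle is the second paragraph: verifying that $e_{n-1}$ together with the $p_i$ really realizes the whole basic construction $C_n$ inside $A_n$ with the correct normalization — the factors $D/d$ and the Chebyshev ratios $\lambda_k=\frac{D}{d}\frac{P_{k-1}(\tau)}{P_k(\tau)}$ enter precisely here — and that no block in ranks $0,\dots,n-2$ degenerates at the inductive step, which rests on the graphical calculus of the Motzkin planar algebra and on the positivity inherited from $M_{n-1}(D)$.
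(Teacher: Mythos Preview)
Your proof is correct and follows essentially the same route as the paper: realize the rank-$\le n-2$ ideal as the basic construction $C_n=\langle A_{n-1},e\rangle$ of $A_{n-2}\subset A_{n-1}$ (via $e_{n-1}$), adjoin an $n\times n$ matrix block for the rank-$(n-1)$ part together with the one-dimensional block spanned by $g_n$, and finish by a dimension count. The only cosmetic difference is that the paper writes down explicit matrix units $E^{(n-1)}_{i,j}=r_i\cdots r_{n-1}\,g_{n-1}\,p_n\,l_{n-1}\cdots l_j$ inside $M_n(D)$ (the factor $g_{n-1}$ kills the lower-rank contamination), whereas you work with the bare rank-$(n-1)$ diagrams $u_{ij}$ modulo $I_{n,n-2}$ and then invoke the $C^*$-splitting of $A_n$ over the ideal $\pi_n(I_{n,n-2})$; these are the same $n\times n$ block seen two ways.

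Two small slips to fix, neither of which damages the argument. First, the trace-preserving conditional expectation $A_{n-1}\to A_{n-2}$ is $D^{-1}E$, not $(d/D)E$ (check $E(1_{n-1})=D\cdot 1_{n-2}$), so the Jones projection is $D^{-1}e_{n-1}$; the factor $D/d$ and the Chebyshev ratios $\lambda_k$ enter only in the recursive formula for $g_k$, not in the basic-construction normalization. Second, the weight of $\tr$ on the rank-$(n-1)$ block is $d^{\,n-1}D^{-n}P_{n-1}(\tau)$ (the trace of the minimal projection $p_n g_{n-1}$), not $1/D$: the number $1/D=\tr(p_i)$ is the trace of $u_{ii}=p_i$, but $p_i$ is not minimal in $A_n$, having components in all blocks $0,\dots,n-1$. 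Positivity of that weight still follows, since $P_{n-1}(\tau)>0$ is part of your inductive hypothesis (it is the top weight in $A_{n-1}$).
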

\begin{proof}
Let us consider the basic construction $C_{n}=\langle A_{n-1},e\rangle$ where $e=e_{A_{n-1}}:A_{n-1}\to A_{n-2}$.

In this finite dimensional case, we have
\begin{center}
$C_{n}=\langle A_n,e\rangle=\{\sum_{i=1}^{N}a_ie_N b_i|a_i,b_i\in A_{n-1}\}$.
\end{center}
We define a trace $\Tr$ on $C_{n}$ by $\Tr(aeb)=\tr(ae_{n-1}b)$.

By Proposition 3.5, any elements of $C_{k+1}$ can be expressed in the following form
\begin{center}
$\sum\limits_{i=1}^{N}a_ieb_i$ with $a_i,b_j\in A_k$.
\end{center}

Consider the algebra $B_{n}=\pi_{n}(A_{n-1}e_{n-1}A_{n-1})$ (may not be unital).
The multiplication here is defined by $e_{n-1}xe_{n-1}=e(x)e_{n-1}$.
Then the map $\phi_{n}:C_{n}\to B_{n}$ given by $\phi_{n}(aeb)=ae_{n-1}b$ gives us a surjective isometry for the definite hermitian products induced from $\Tr$ and $\tr$.
Hence we have $B_{n}\cong C_{n}$.

Moreover, let us consider the following elements in $A_{n}$:
\begin{enumerate}
\item $E^{(n-1)}_{i,j}=r_{i}r_{i+1}\cdots r_{n}g_{n-1}l_{n}l_{n-1}\cdots l_{j}$, $1\leq i\leq n$,
\item $E^{(n-1)}_{n,j}=p_{n}g_{n-1}l_{n}l_{n-1}\cdots l_{j}$, $1\leq j\leq n-1$,
\item $E^{(n-1)}_{i,n}=r_{i}r_{i+1}\cdots r_{n}g_{n-1}p_{n}$, $1\leq i\leq n-1$,
\item $E^{(n-1)}_{n,n}=p_{n}g_{n-1}$,
\item $E^{(n)}=g_{n}$.
\end{enumerate}
One can check $\tr(E^{(n-1)}_{i,j}E^{(n)})=0$ for all $1\leq i,j\leq n$ and $\{E^{(n-1)}_{i,j}\}_{1\leq i,j\leq n}$ form a system of matrix units.

As $M_{n-1}(D)\cong A_{n-1}$, we have $\pi_{n-1}$ is faithful on $M_{n-1}(D)$.
It also implies $\tr(g_{n-1})\neq 0$.
Let $e^{(n-1)}_{i,j}=\pi_{n}(E^{(n-1)}_{i,j})$ and $e^{(n)}=\pi_{n}(E^{(n+1)})$.
These $e^{(n-1)}_{i,j}$ also form a system of matrix units, which implies
\begin{center}
$Q_{n}=\langle e^{(n-1)}_{i,j}|1\leq i,j\leq n \rangle\cong \mathrm{Mat}_{n}(\mathbb{C})=\mathrm{Mat}_{m_{n,n-1}}(\mathbb{C})$.
\end{center}

By proposition 2.3, all these elements are orthogonal to the diagrams with rank no greater than $n-2$.
So $\tr(e^{(n-1)}_{i,j}z)=\tr(e^{(n)}z)=0$ for any $z\in B_{n}$.

Now, it remains to discuss the term $e^{(n)}=g_{n}$.
If $\tr(g_{n-1})=0$, this term is annihilated by $\pi_{n}$.
In these case, by showing $\dim(C_{n}\oplus e^{(n)})=\dim(B_{n})+(n)^2=\dim(M_{n}(D))-1$, we have
$A_{n}=M_{n}(D)/\langle g_{n}\rangle$
\begin{center}
$A_{n}\cong C_{n}\oplus \mathrm{Mat}_{n}(\mathbb{C}) \cong\bigoplus\limits_{i=0}^{n-1}\mathrm{Mat}_{m_{n,r}}(\mathbb{C})$.
\end{center}

For the case $\tr(g_{n})>0$, $\pi_{n}(g_{n})\neq 0$.
The statement follows similarly.
\end{proof}

\begin{proposition}
Suppose $\tr$ is positive definite on $M_k(D)$ and
$M_k(D)\cong A_k\cong
\bigoplus\limits_{i=0}^{k}\mathrm{Mat}_{m_{k,r}}(\mathbb{C})$ for all $1\leq k\leq n-1$. And we assume $\tr(g_{n})=0$, then for all $i\geq 0$ we have
\begin{center}
$A_{n+i}=\langle A_{n+i-1},e_{A_{n+i-1}} \rangle$
\end{center}
which is the basic construction of the pair $A_{n+i-2}\subset A_{n+i-1}$.
\end{proposition}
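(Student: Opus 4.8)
The plan is to prove that for every $k\ge n$ the algebra $A_{k+1}$ equals the basic construction $\langle A_k,e_{A_k}\rangle$ of the inclusion $A_{k-1}\subseteq A_k$, with the Jones projection $e_{A_k}$ realised by $\widehat{e}_k:=D^{-1}\pi_{k+1}(e_k)$; after the substitution $k=n+i-1$ this is the content of the statement for $i\ge 1$ (the case $i=0$, i.e., the structure of $A_n$, having been recorded in the preceding proposition). The only input from the hypothesis on $D$ is the collapse of $g_n$: since $\tr$ is positive semidefinite and $\tr(g_n^{*}g_n)=\tr(g_n)=0$ (using $g_n=g_n^{*}=g_n^{2}$ from Proposition 2.3(2)), the idempotent $g_n$ lies in the radical $R_n$, hence---by compatibility of $\tr$ with the inclusions $M_n(D)\hookrightarrow M_m(D)$---in $R_m$ for all $m\ge n$, so $\pi_m(g_n)=0$. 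Writing $g_n=1_n+\sum_i c_i w_i$ with each $w_i\ne 1$ a word in $\{e_j,l_j,r_j\mid 1\le j\le n-1\}$ (Proposition 2.3(5)) and noting that the embedding carries $1_n$ to $1_m$, we obtain in each $A_m$ with $m\ge n$ the identity
\begin{equation*}
1_m=-\sum_i c_i\,\pi_m(w_i).
\end{equation*}

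First I would show $A_{k+1}=\langle A_k,\pi_{k+1}(e_k)\rangle$ for every $k\ge n$. Since $M_{k+1}(D)=\langle 1,\,e_j,l_j,r_j\mid 1\le j\le k\rangle$ and $\pi_{k+1}(e_j),\pi_{k+1}(l_j),\pi_{k+1}(r_j)\in A_k$ for $j\le k-1$, it suffices to place $\pi_{k+1}(r_k)$ and $\pi_{k+1}(l_k)$ inside $\langle A_k,\pi_{k+1}(e_k)\rangle$. Multiplying the displayed identity (with $m=k+1$) on the left by $r_k$ gives $\pi_{k+1}(r_k)=-\sum_i c_i\,\pi_{k+1}(r_k w_i)$; each $w_i$ is a nontrivial word in $\{e_j,l_j,r_j\mid 1\le j\le n-1\}\subseteq\{e_j,l_j,r_j\mid 1\le j\le k-1\}$, so by the last lemma of Section 2 (applied at level $k$) each $r_k w_i$ is a word in $\{e_j,l_j,r_j\mid 1\le j\le k-1\}$ together with $e_k$, whence $\pi_{k+1}(r_k w_i)$, being a product of generators all of whose images lie in $\langle A_k,\pi_{k+1}(e_k)\rangle$, itself lies there; hence $\pi_{k+1}(r_k)\in\langle A_k,\pi_{k+1}(e_k)\rangle$. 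The same argument with $w_i l_k$ on the right disposes of $l_k$, so $A_{k+1}=\langle A_k,\pi_{k+1}(e_k)\rangle$.

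Next I would check that $\widehat{e}_k=D^{-1}\pi_{k+1}(e_k)$ is a Jones projection for $A_{k-1}\subseteq A_k$. From $e_k^{*}=e_k$ and $e_k^{2}=D e_k$ one gets $\widehat{e}_k=\widehat{e}_k^{*}=\widehat{e}_k^{2}$; a planar-diagram computation gives $e_k x e_k=D\cdot E(x)\,e_k$ for all $x\in M_k(D)$, where $E\colon M_k(D)\to M_{k-1}(D)$ is the trace-preserving conditional-expectation map, so that $\widehat{e}_k\,a\,\widehat{e}_k=e_{A_{k-1}}(a)\,\widehat{e}_k$ for $a\in A_k$, with $e_{A_{k-1}}\colon A_k\to A_{k-1}$ the trace-preserving conditional expectation; and $\widehat{e}_k$ commutes with $A_{k-1}$ because the diagram $e_k$ touches only the last two strands. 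Finally, the two-sided ideal of $M_{k+1}(D)$ generated by $e_k$ is $I_{k+1,k-1}$, and since $k-1\ge n-1$ while $A_{k+1}$ (for $k+1>n$) carries only the summands of rank $0,\dots,n-1$, this ideal surjects modulo $R_{k+1}$ onto all of $A_{k+1}$, so the central support of $\widehat{e}_k$ in $A_{k+1}$ is $1$. By the standard recognition of the basic construction (Proposition 3.5 and \cite{J83}), $A_{k+1}=\langle A_k,\widehat{e}_k\rangle$ is then the basic construction of $A_{k-1}\subseteq A_k$, which is the claim.

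I expect the main obstacle to be the identity $A_{k+1}=\langle A_k,\pi_{k+1}(e_k)\rangle$: the delicate part is the bookkeeping, uniform over all $k\ge n$, showing that $r_k w_i$ and $w_i l_k$ can be rewritten using only generators already available in $\langle A_k,\pi_{k+1}(e_k)\rangle$, which is precisely where the collapse relation for $g_n$ must be combined with the word-rewriting lemma of Section 2. The other point requiring care is the full central support of $\widehat{e}_k$, which hinges on the description of the ideals $I_{k+1,r}$ and on the stabilization of the width at $n$ once $g_n$ vanishes.
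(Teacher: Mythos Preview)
Your core argument---collapsing $g_n$ via $\pi_m(g_n)=0$ to write $1_m=-\sum_i c_i\pi_m(w_i)$, then multiplying by $r_k$ (resp.\ $l_k$) and invoking Lemma 2.6 at level $k$ to place $r_k w_i$ and $w_i l_k$ in the algebra generated by $A_k$ and $e_k$---is exactly what the paper does. In fact the proof of Lemma 2.6 gives more than you use: each rewritten word has the form $a\,e_k\,b$ with $a,b\in M_k(D)$, so you actually obtain $\pi_{k+1}(r_k),\pi_{k+1}(l_k)\in A_k\,\widehat{e}_k\,A_k=B_{k+1}$, not merely in $\langle A_k,\widehat{e}_k\rangle$.

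The gap is in your central-support step. You argue that the two-sided ideal of $M_{k+1}(D)$ generated by $e_k$ is $I_{k+1,k-1}$, and that since ``$A_{k+1}$ carries only the summands of rank $0,\dots,n-1$'' this ideal surjects onto $A_{k+1}$. But the statement that $A_{k+1}$ has width $n$ is precisely Proposition~3.14, whose proof \emph{uses} the present proposition; so as written the argument is circular. Note also that the collapse $1=-\sum c_i w_i$ only places $1$ in $I_{k+1,k}$ (each $w_i$ picks up $k+1-n$ extra through strings upon embedding), not in $I_{k+1,k-1}$, so the rank bookkeeping alone does not close the gap.

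The paper sidesteps this by a different identification. It first establishes (as in the proof of the preceding proposition) the trace-isometry $\phi_{k+1}\colon C_{k+1}\to B_{k+1}$, $a\,e\,b\mapsto a\,e_k\,b$, which shows $B_{k+1}\cong C_{k+1}$ without any appeal to the structure of $A_{k+1}$. The remaining task is then $A_{k+1}=B_{k+1}$; the paper reduces this to $\pi(l_k),\pi(r_k)\in B_{k+1}$ (your Lemma~2.6 argument) together with an inductive use of the previous step: once $A_k$ itself is a basic construction one has $A_k=A_{k-1}e_{k-1}A_{k-1}$, and since $e_{k-1}=e_{k-1}e_k e_{k-1}\in B_{k+1}$ this gives $A_k\subseteq B_{k+1}$, hence $A_{k+1}=B_{k+1}$. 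If you want to repair your argument without the trace isometry, this inductive mechanism is the missing ingredient; your current justification via ``stabilization of the width at $n$'' presupposes the conclusion.
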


\begin{proof}
We have already proved the case $i=0$ in the previous proposition which is $A_{n}=C_{n}$ i.e. the basic construction for $A_{n-2}\subseteq A_{n-1}$.

Let us consider $i=1$ and the basic construction for $A_{n-1}\subseteq A_{n}$.
We have shown $B_{n+1}\cong C_{n+1}$ with the proof same as the one in the previous proposition.
So it suffices to show $\pi(l_{n}),\pi(l_{n})\in B_{n+1}$.

Note that as $\tr(g_{n})=0$, we have $\pi_{n}(g_{n})=0$ under the GNS representation.
Recall Proposition 2.3.5,
$\pi(\sum_{i=1}^{N}c_i\cdot w_i)=1$ with each $w_i\neq 1$ is a word of $e_i,l_i,r_i$ ($1\leq i\leq n-1$).
Multiply it by $\pi(l_{n})$, we have
\begin{center}
$\pi(l_{n})=\pi(\sum_{i=1}^{N}c_i\cdot w_i l_{n-1} )$.
\end{center}

Then, to show $\pi(l_{n})\in B_{n+1}$, it reduces to prove $\pi(w l_{n} )\in C_{n}$ for any non-identity word $w$ of $e_i,l_i,r_i$ ($1\leq i\leq n-1$).
This has been proved by Lemma 2.6.

For $i\geq 2$, the statement follows similarly.
\end{proof}

\subsection{The $C^*$-Algebras}
We give the explicit $C^*$-structure of the representations $A_n=\pi(M_n(D))$.
\begin{lemma}
Assume $D\in\{2\cos\frac{\pi}{n}+1|n\geq 3\}\cup [3,\infty)$.
The $*$-structure of $M_k(D)$ coincides with the $*$-structure of the Hilbert space representation $A_k=\pi(M_k(D))$ for all $k$, i.e. $\pi(x^*)=\pi(x)^*$.
\end{lemma}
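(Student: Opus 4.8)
The claim is that the abstract $*$-operation on $M_k(D)$, defined $\sigma$-linearly on the diagram basis by flipping diagrams upside down, agrees under the GNS representation $\pi$ with the Hilbert space adjoint, i.e. $\pi(x^*) = \pi(x)^*$ for all $x \in M_k(D)$. The plan is to verify this directly from the definition of the inner product on $H_k(D) = M_k(D)/R_k$, which is $\langle [a],[b]\rangle_k = \tr(b^* a)$.

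First I would recall that $\pi = \pi_k$ acts by left multiplication, $\pi(x)[y] = [xy]$, and that the inner product is built from $\tr$ together with the diagrammatic $*$. The key input is that the trace is a positive (hence by hypothesis well-defined after quotienting by $R_k$), $*$-compatible, tracial functional: $\tr(ab) = \tr(ba)$ and, crucially, $\tr(x^*) = \overline{\tr(x)}$ — this last identity follows because the trace is defined by closing up a diagram, and flipping a diagram upside down and then closing it up gives the mirror image of the original closed diagram, which has the same number of loops, so the scalar $D^{(\cdot)}$ is unchanged while the coefficient gets conjugated by $\sigma$-linearity. Combining traciality with $*$-compatibility gives $\tr((xy)^*z) = \tr(y^*x^*z) = \tr(y^*(x^*z))$ for all $x,y,z$. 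I would then compute, for arbitrary $[y],[z] \in H_k(D)$,
\begin{align*}
\langle \pi(x)[y],[z]\rangle_k &= \langle [xy],[z]\rangle_k = \tr(z^*(xy)) = \tr((x^*z)^*y) \\
&= \langle [y],[x^*z]\rangle_k = \langle [y],\pi(x^*)[z]\rangle_k,
\end{align*}
which says precisely that $\pi(x^*)$ is the adjoint of $\pi(x)$ as an operator on $H_k(D)$, i.e. $\pi(x)^* = \pi(x^*)$.

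The step requiring the most care is not any of the displayed algebra but the well-definedness and compatibility underneath it: one must know that $R_k$ is a $*$-closed two-sided ideal, so that $[x^*]$ depends only on $[x]$ and the formula $\pi(x^*)[z] = [x^*z]$ makes sense on the quotient; this uses that $\tr$ is positive semi-definite (so Cauchy–Schwarz gives $R_k = \{x : \tr(x^*x) = 0\}$ is a left ideal, and by the $*$-compatibility and traciality it is two-sided and self-adjoint), which is exactly the standing hypothesis $D \in \{2\cos(\pi/n)+1 \mid n\geq 3\}\cup[3,\infty)$ established in the preceding theorem. Once that bookkeeping is in place, the identity $\tr(x^*) = \overline{\tr(x)}$ on basis diagrams — proved by the mirror-image observation above — together with traciality is all that is needed, and the rest is the one-line GNS computation. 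I would therefore structure the proof as: (i) $R_k$ is self-adjoint and two-sided; (ii) $\tr(x^*) = \overline{\tr(x)}$ and hence $\tr((xy)^*z) = \tr(y^*(x^*z))$; (iii) the displayed chain of equalities. The main obstacle, such as it is, is purely the diagrammatic verification in (ii) that closing up the upside-down diagram conjugates the trace value, which is immediate once one notes that the "mirror" operation preserves the loop count.
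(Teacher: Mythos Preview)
Your proof is correct and is the standard GNS verification that left multiplication in a tracial $*$-algebra gives a $*$-representation on the GNS Hilbert space. The only ingredients you need are $(ab)^*=b^*a^*$, traciality $\tr(ab)=\tr(ba)$, and $\tr(x^*)=\overline{\tr(x)}$, all of which are immediate from the diagrammatic definitions; the displayed chain of equalities then goes through, and your remarks on well-definedness modulo $R_k$ are accurate.

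The paper's argument is genuinely different in spirit. Rather than compute with the GNS inner product, it proceeds structurally and inductively: each $A_k$ is obtained from $A_{k-2}\subset A_{k-1}$ by the basic construction (a $C^*$-algebraic operation, so automatically $*$-compatible) together with the adjunction of one or two extra full matrix summands spanned by the explicit systems of matrix units $E^{(k-1)}_{i,j}$ and $E^{(k)}$ from Proposition~3.9, which are manifestly self-adjoint under the diagrammatic $*$. Your route is more elementary and self-contained---it does not depend on the structural Propositions~3.9 and~3.10---while the paper's route has the virtue of being a byproduct of the multi-matrix decomposition it has already worked out. As a proof of just this lemma, your direct computation is cleaner.
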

\begin{proof}

Recall the basic construction which take a pair of two $C^*$-algebra $N\subset M$ and generate a third one $M_1$. 
The Bratteli diagram at $M\subset M_1$ is just the reflection of $N\subset M$ with respect to $M$. 
Then by Proposition 3.10, to obtain the next $C^*$ algebra, we can just add one or two extra simple sumands to the $C^*$ algebra generated from the basic construction.  



\end{proof}

\begin{proposition}
Assume $D\in [3,\infty)$, then $tr$ is positive definite on all $M_n(D)$.
Hence $A_n=\pi(M_n(D))$ is a $C^*$-algebra isomorphic to  $ \bigoplus\limits_{i=0}^{n}\mathrm{Mat}_{m_{n,r}}(\mathbb{C})$
for all $n$.
\end{proposition}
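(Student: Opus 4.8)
The plan is to prove the statement by induction on $n$, invoking the inductive step Proposition~3.10(1) at each stage after settling the two smallest cases by hand. The only genuine content is the positivity of the Chebyshev values on the relevant parameter range, so I would establish that first: if $D \geq 3$ then $d = D - 1 \geq 2$, hence $\tau = d^{-2} \in (0, 1/4]$, and for every such $\tau$ one has $P_k(\tau) > 0$ for all $k \geq 0$. One way to see this is to write $\tau = (4\cosh^2\phi)^{-1}$ with $\phi \geq 0$; an induction on the recursion $P_{k+1} = P_k - \tau P_{k-1}$ then gives $P_k(\tau) = \frac{\sinh((k+1)\phi)}{(2\cosh\phi)^k \sinh\phi}$, which is strictly positive for $\phi > 0$ and equals $(k+1)2^{-k} > 0$ in the limiting case $\phi = 0$, i.e. $\tau = 1/4$, $D = 3$. (This is also Corollary~4.2.5 of \cite{J83}.) In particular $D$ is generic, so every idempotent $g_k$ is defined, and the identity $\tr(g_k) = (d/D)^k P_k(\tau)$ established in the proof of Lemma~3.4 gives $\tr(g_k) > 0$ for all $k$.

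Next I would dispose of the base cases $n = 1, 2$. Since $D \geq 3 > 1$, Lemma~3.7 shows $\tr$ is positive definite on $M_1(D)$ with $A_1 \cong \mathbb{C} \oplus \mathbb{C} = \bigoplus_{r=0}^{1}\mathrm{Mat}_{m_{1,r}}(\mathbb{C})$; since $D \geq 3 > 2$, Lemma~3.8(1) shows $\tr$ is positive definite on $M_2(D)$ with $A_2 \cong \mathrm{Mat}_2(\mathbb{C}) \oplus \mathrm{Mat}_2(\mathbb{C}) \oplus \mathbb{C} = \bigoplus_{r=0}^{2}\mathrm{Mat}_{m_{2,r}}(\mathbb{C})$.

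For the inductive step, fix $n \geq 3$ and suppose $\tr$ is positive definite on $M_k(D)$ with $M_k(D) \cong A_k \cong \bigoplus_{r=0}^{k}\mathrm{Mat}_{m_{k,r}}(\mathbb{C})$ for all $1 \leq k \leq n-1$. By the first paragraph $\tr(g_n) > 0$, so the hypotheses of Proposition~3.10(1) hold, and that proposition gives immediately that $M_n(D) \cong A_n$ is a $C^*$-algebra isomorphic to $\bigoplus_{r=0}^{n}\mathrm{Mat}_{m_{n,r}}(\mathbb{C})$, and, since $\tr(g_n) > 0$ (not merely $\neq 0$), that $\tr$ is positive definite on $M_n(D)$. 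That $\pi\colon M_n(D) \to A_n$ is a $*$-isomorphism, so that this is meaningful as a statement about $C^*$-algebras, is Lemma~3.12. This closes the induction.

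I do not anticipate a serious obstacle here. The one nontrivial ingredient is the strict positivity of all the $P_k(\tau)$ on $(0, 1/4]$, which is exactly the inequality ensuring $\tr(g_n) > 0$ for every $n$ when $D \geq 3$; with that in hand the rest is a routine application of Proposition~3.10. The point to stress is the contrast with the discrete values $D = 2\cos(\pi/m) + 1$: there $P_{m-1}(\tau) = 0$, so $\tr(g_{m-1}) = 0$, and the other branch of Proposition~3.10 is forced, which is what truncates the tower and yields a finite $A$-type principal graph instead of $A_\infty$. On the whole interval $[3, \infty)$ no such degeneration occurs.
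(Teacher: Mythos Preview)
Your proposal is correct and follows essentially the same route as the paper: establish $\tr(g_n)>0$ for all $n$ when $D\geq 3$, verify the base cases $n=1,2$ via Lemmas~3.7 and~3.8, and then induct using Proposition~3.10(1). Your write-up is simply more explicit about why $P_k(\tau)>0$ on $(0,1/4]$, which the paper leaves implicit.
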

\begin{proof}
If $D\in [3,\infty)$, $\tr(g_n)>0$ for all $n$.
The argument is true for $n=1,2$ as shown in Lemma 3.6 and 3.7.

Now, by induction, Proposition 3.9.1 applies for all $n$.
\end{proof}

Suppose $\tr(g_k)>0$ for all $1\leq k\leq n-1$ but $\tr(g_{n})=0$.
This implies $D=2\cos\frac{\pi}{n+1}+1$ and we get the following description for the representations.

\begin{proposition}
$D=2\cos\frac{\pi}{n+1}+1$, we have\begin{enumerate}
\item For $1\leq k\leq n-1$, $A_k=\pi(M_k(D))$ is a $C^*$-algebra isomorphic to  $ \bigoplus\limits_{r=0}^{k}\mathrm{Mat}_{m_{k,r}}(\mathbb{C})$.
\item For $k\geq n$, $A_k=\pi(M_k(D))$ is a $C^*$-algebra isomorphic to  $ \bigoplus\limits_{r=0}^{n-1}\mathrm{Mat}_{l_{k,r}}(\mathbb{C})$.
    Here $l_k=(l_{k,0},\dots,l_{k,n-1})^\intercal=
    X^{k-1}(m_{n-1,0},\dots,m_{n-1,n-1})^\intercal$
where $X\in \mathrm{Mat}_{n}(\mathbb{C})=[a_{i,j}]_{1\leq i,j\leq n}$ is the inclusion matrix of $A_{n-1}\subseteq A_{n}$ given by $a_{i,j}=1$ if $|i-j|\leq 1$ and $0$ elsewhere.
\end{enumerate}
\end{proposition}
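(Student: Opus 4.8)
The plan is to treat the two cases separately, since for $1\le k\le n-1$ the trace is positive definite (so $\pi|_{M_k(D)}$ is faithful) and we are exactly in the situation of Theorem 3.1/Proposition 3.11, whereas for $k\ge n$ we are in the regime governed by Proposition 3.10(2) and the iterated basic construction of Proposition 3.10. The key observation driving everything is the hypothesis $\tr(g_n)=0$: by Lemma 3.8 we have $\tr(g_k)=\tfrac{d^k}{D^k}P_k(\tau)$, so $\tr(g_k)>0$ for $1\le k\le n-1$ forces $P_k(\tau)>0$ for those $k$, while $P_n(\tau)=0$ pins down $\tau=4\sec^2\frac{\pi}{n+1}$, i.e. $D=2\cos\frac{\pi}{n+1}+1$, via Corollary 4.2.5 of \cite{J83}. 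This justifies the stated value of $D$ and is the starting point.

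For part 1, I would simply invoke Proposition 3.11 (or equivalently induct using Proposition 3.9(1)): since $D=2\cos\frac{\pi}{n+1}+1 > 2$ and $\tr(g_k)>0$ for $1\le k\le n-1$, the trace is positive definite on each such $M_k(D)$, hence $R_k=0$ and $A_k=\pi(M_k(D))\cong M_k(D)\cong\bigoplus_{r=0}^k\mathrm{Mat}_{m_{k,r}}(\mathbb C)$ by Theorem 3.1. For part 2, the heart of the matter is $k=n$: here $\tr(g_n)=0$, so Proposition 3.9(2) applies and gives $A_n\cong M_n(D)/\langle g_n\rangle\cong\bigoplus_{r=0}^{n-1}\mathrm{Mat}_{m_{n,r}}(\mathbb C)$, which is the base case of the recursion with $l_{n,r}=m_{n,r}$ for $0\le r\le n-1$ (note $\mathcal{P}_{n,r}$ for $r<n$ is visibly unaffected — $m_{n,n}=1$ is the summand that dies). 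For $k\ge n+1$, Proposition 3.10 tells us $A_{n+i}=\langle A_{n+i-1},e_{A_{n+i-1}}\rangle$ is the basic construction of $A_{n+i-2}\subset A_{n+i-1}$. By Proposition 3.5(3), the inclusion matrix of $A_{n+i-1}\subset A_{n+i}$ is the transpose of that of $A_{n+i-2}\subset A_{n+i-1}$; and the inclusion matrix of $A_{n-1}\subset A_n$ — both sides having $\min(n, n)$ simple summands indexed by ranks $0,\dots,n-1$ — is the $n\times n$ matrix $X=[a_{i,j}]$ with $a_{i,j}=1$ iff $|i-j|\le 1$, which one reads off from the Bratteli recursion $m_{n,r}=m_{n-1,r-1}+m_{n-1,r}+m_{n-1,r+1}$ of Lemma 2.10 truncated at $r=n-1$. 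Since $X=X^\intercal$, every subsequent inclusion matrix is again $X$, the Bratteli diagram stabilises, and the dimension vectors satisfy $l_{n+i}=X\,l_{n+i-1}$, hence $l_k=X^{k-1}(m_{n-1,0},\dots,m_{n-1,n-1})^\intercal$ after reindexing (using $l_n = X\,(m_{n-1,0},\dots,m_{n-1,n-1})^\intercal$, the inclusion step $A_{n-1}\subset A_n$, as the anchor of the recursion).

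The main obstacle is not the algebra of inclusion matrices — that is routine once the basic-construction machinery is in place — but rather making sure the chain of hypotheses needed to invoke Proposition 3.10 actually holds, namely that $A_{n+i}$ really is the basic construction of $A_{n+i-2}\subset A_{n+i-1}$ for \emph{all} $i\ge 0$. Proposition 3.10 establishes this by an induction whose inductive step requires showing $\pi(l_{n+i-1}),\pi(r_{n+i-1})\in B_{n+i}=\pi(A_{n+i-1}e_{n+i-1}A_{n+i-1})$, which in turn rests on Proposition 2.3(5) ($g_{n}=1+\sum c_i w_i$ with $w_i\ne 1$) together with Lemma 2.6 (the fact that $r_n w$ and $w l_n$ are words in the lower-index generators and $e_n$). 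So the real content is already discharged in Section 3.2; in the write-up I would state clearly that part 2 is an immediate consequence of Proposition 3.9(2), Proposition 3.10, and Proposition 3.5(3)–(4), and then spend the remaining words verifying that the inclusion matrix at the first truncated level is exactly $X$ and that symmetry of $X$ propagates it forever, yielding the closed form for $l_k$.
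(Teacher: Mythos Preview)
Your proposal is correct and follows essentially the same route as the paper's proof: apply Proposition 3.9(1) inductively for $1\le k\le n-1$, Proposition 3.9(2) at $k=n$, and then Proposition 3.10 together with Proposition 3.5(3)--(4) for $k\ge n$, reading off the stabilised inclusion matrix $X=X^\intercal$. One small caution: Proposition 3.11 as stated is only for $D\ge 3$ and does not apply here; your parenthetical alternative (induction via Proposition 3.9(1)) is the correct citation, and is exactly what the paper does.
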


\begin{proof}
As $D=2\cos\frac{\pi}{n+1}+1$, we have $\tr(g_k)>0$ for all $1\leq k\leq n-1$ but $\tr(g_{n})=0$.
We can apply Proposition 3.9.1 until $k=n$ where we get $A_{n}$ by Proposition 3.9.2.

Now, for $k\geq n$, Proposition 3.10 says each $A_k$ is obtained from the basic construction of $A_{k-1}\subseteq A_k$.
The desired result follows Proposition 3.5-4.
\end{proof}

\begin{proof} [Proof of Theorem 3.1 and 3.2.]
Theorem 3.2 is just a corollary of Proposition 3.11 and 3.12.

For Theorem 3.1, if $D$ is $n$-generic, one can check by induction that all the weight vectors on $A_k$ ($1\leq k\leq n-1$) contains no zero terms.
Hence $\tr$ is always non-degenerate and the structure remains the same as the positive definite case.
\end{proof}

Theorem 3.1 can also be obtained by showing the nonsigularity of the Gram matrix of the Motzkin diagram or paths. One can refer \cite{BH} for this proof including a base change of the Motzkin paths.

By induction, we can also obtain the following results from the basic construction above.
\begin{corollary}
For $D=2\cos\frac{\pi}{n+1}+1$, the weight vectors $w_k$ of each $A_k$ are given by:
\begin{enumerate}
\item For $1\leq k\leq n-1$, $w_k=(\frac{1}{D^k}P_0(\tau),\dots,\frac{d^k}{D^k}P_k(\tau))$.
\item For $k\geq n$, $w_k=(\frac{1}{D^k}P_0(\tau),\dots,\frac{d^{n-1}}{D^k}P_{n-1}(\tau))$.
\end{enumerate}
\end{corollary}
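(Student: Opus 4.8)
The plan is to run an induction on $k$ in which, at each stage, the top one or two components of the weight vector $w_{k}=(w_{k,0},w_{k,1},\dots)$ are read off from explicit projections and the remaining components are recovered by a single downward recursion that collapses to a one‑line Chebyshev identity. Here $w_{k,r}$ denotes the trace of a minimal projection of the rank‑$r$ summand of $A_{k}$, for $0\le r\le\min(k,n-1)$, and the assertion is $w_{k,r}=d^{\,r}D^{-k}P_{r}(\tau)$. The only genuine base case is $k=1$, which is Lemma~3.6 together with $P_{0}(\tau)=P_{1}(\tau)=1$.

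For the inductive step with $2\le k\le n-1$, pin down the two top components directly. The rank‑$k$ summand is $\mathbb{C}g_{k}$, so $w_{k,k}=\tr(g_{k})=d^{\,k}D^{-k}P_{k}(\tau)$, the value already produced in the proof of Lemma~3.4. For the rank‑$(k-1)$ summand, recall from the proof of Proposition~3.9 that $p_{k}g_{k-1}$ is the diagonal matrix unit $E^{(k-1)}_{k,k}$, hence a minimal projection there; and capping a diagram of $M_{k-1}(D)$ by one extra $1$‑box strand multiplies its trace by $D^{-1}$, i.e.\ $\tr(p_{k}x)=D^{-1}\tr(x)$ for $x\in M_{k-1}(D)\subset M_{k}(D)$ (the extra strand closes up to a loop of weight $1$), so $w_{k,k-1}=D^{-1}\tr(g_{k-1})=d^{\,k-1}D^{-k}P_{k-1}(\tau)$. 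Now invoke the inclusion $A_{k-1}\subset A_{k}$, whose inclusion matrix is the tridiagonal $0$--$1$ matrix by Theorem~3.2; compatibility of the trace (Proposition~3.5(4)) reads $w_{k-1,r}=w_{k,r-1}+w_{k,r}+w_{k,r+1}$. Rewriting this as $w_{k,r-1}=w_{k-1,r}-w_{k,r}-w_{k,r+1}$ and substituting the induction hypothesis $w_{k-1,r}=d^{\,r}D^{-(k-1)}P_{r}(\tau)$, the identity to verify becomes, after using $D=1+d$, $\tau d^{2}=1$ and $P_{r}(\tau)-P_{r+1}(\tau)=\tau P_{r-1}(\tau)$,
\begin{equation*}
d^{\,r-1}P_{r-1}(\tau)=d^{\,r+1}\bigl(P_{r}(\tau)-P_{r+1}(\tau)\bigr),
\end{equation*}
which is immediate. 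Running the recursion downward from $r=k-1$ fixes every $w_{k,r}$, and the unused instance at $r=0$, namely $w_{k-1,0}=w_{k,0}+w_{k,1}$, holds because the Chebyshev recursion $P_{1}=P_{0}-\tau P_{-1}$ forces $P_{-1}(\tau)=0$.

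It remains to treat $k\ge n$, i.e.\ $D=2\cos\frac{\pi}{n+1}+1$, where by Proposition~3.10 the algebra $A_{k}$ carries summands of ranks $0,\dots,n-1$ only. The single top value needed is $w_{k,n-1}$, realized by the capped projection $p_{k}p_{k-1}\cdots p_{n}\,g_{n-1}$, whose trace is $D^{-(k-n+1)}\tr(g_{n-1})=d^{\,n-1}D^{-k}P_{n-1}(\tau)$ by the same capping computation. The crucial point is the vanishing $P_{n}(\tau)=0$: this is exactly the defining condition for $D=2\cos\frac{\pi}{n+1}+1$ (from $\tau=\tfrac14\sec^{2}\frac{\pi}{n+1}$ and Corollary~4.2.5 of \cite{J83}, as used in Lemma~3.4). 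Consequently the boundary instance $r=n-1$ of the three‑term relation, $w_{k-1,n-1}=w_{k,n-2}+w_{k,n-1}$, gives $w_{k,n-2}=(D-1)\,d^{\,n-1}D^{-k}P_{n-1}(\tau)=d^{\,n}D^{-k}P_{n-1}(\tau)$, which equals $d^{\,n-2}D^{-k}P_{n-2}(\tau)$ precisely because $P_{n}(\tau)=0$ yields $d^{2}P_{n-1}(\tau)=d^{2}\tau P_{n-2}(\tau)=P_{n-2}(\tau)$; from there the downward recursion proceeds exactly as above and reproduces the truncated weight vector of part~(2).

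The part I expect to be the real work, rather than arithmetic, is justifying that $p_{k}p_{k-1}\cdots p_{n}\,g_{n-1}$ — and, more generally, any "capped Jones--Wenzl" element $p_{k}\cdots p_{r+1}g_{r}$ — is a minimal projection sitting in the correct block of $A_{k}$ for all $k$, not only in the range where the explicit matrix units of Proposition~3.9 are available. This is where one must use the full Bratteli‑diagram structure of Theorem~3.2 / Proposition~3.10, or equivalently propagate a matrix‑unit system through the iterated basic construction; alternatively, one can run the whole argument through Proposition~3.5(4) applied to the basic construction $C_{k}=\langle A_{k-1},e\rangle$, at the cost of tracking carefully the normalization of the canonical (Markov) trace at each step. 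Everything else is the Chebyshev bookkeeping above.
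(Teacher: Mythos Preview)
Your argument is correct and considerably more explicit than the paper's, which disposes of the corollary in a single line (``By induction, we can also obtain the following results from the basic construction above''). The Chebyshev bookkeeping you carry out---pinning down $w_{k,k}=\tr(g_k)$ and $w_{k,k-1}=\tr(p_kg_{k-1})$ from Proposition~3.9's matrix units, then running the tridiagonal compatibility $w_{k-1,r}=w_{k,r-1}+w_{k,r}+w_{k,r+1}$ downward and reducing it to $P_{r+1}=P_r-\tau P_{r-1}$ together with $d^2\tau=1$---is clean and valid for all $k\le n$ (note that Proposition~3.9, case~(2), still supplies the matrix unit $E^{(n-1)}_{n,n}=p_ng_{n-1}$ at $k=n$, so the top weight is available there too). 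One small wording slip: the extra strand in $\tr(p_kx)$ does not close to a \emph{loop}; it becomes a segment joining two $1$-boxes, and that evaluates to $1$ in the Motzkin planar algebra---the numerical conclusion $\tr(p_kx)=D^{-1}\tr(x)$ is unaffected.

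You are right that the honest work sits at $k\ge n+1$, where Proposition~3.9 no longer hands you matrix units and the minimality of $g_{n-1}p_n\cdots p_k$ must be argued separately. Your proposed alternative---going through the basic construction via Proposition~3.5(4)---is exactly what the paper's one-liner intends, and it closes the gap cleanly: by Proposition~3.10, $A_k=\langle A_{k-1},e\rangle$ is the basic construction of $A_{k-2}\subset A_{k-1}$ for $k\ge n+1$, the Jones projection is $D^{-1}e_{k-1}$ with $\tr(x\cdot D^{-1}e_{k-1})=D^{-2}\tr(x)$ for $x\in A_{k-2}$, and the block correspondence (rank $r\leftrightarrow$ rank $r$) gives $w_{k,r}=D^{-2}w_{k-2,r}$ directly. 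Combined with $w_{n-1}$ and $w_n$, this yields part~(2) without ever needing the minimality of the capped projection. Equivalently, one can observe that the claimed vector $(P_0,dP_1,\dots,d^{n-1}P_{n-1})$ is the Perron--Frobenius eigenvector of the $n\times n$ tridiagonal inclusion matrix with eigenvalue $D$ (the boundary check at $r=n-1$ is precisely your identity $P_{n-2}=d^2P_{n-1}$ coming from $P_n(\tau)=0$), so $w_k=D^{-k}v$ automatically satisfies $Qw_k=w_{k-1}$.
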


\begin{corollary}
For $D=2\cos\frac{\pi}{n+1}+1$, the representatives of the equivalence classes of minimal projections in $A_k$ are given by the image of following elements under $\pi$:
\begin{enumerate}
\item $q_{k,0}=p_1\cdots p_k,~q_{k,1}=g_1p_2\cdots p_k,~q_{k,2}=g_2p_3\cdots p_k,~\dots,q_{k,k}=g_{k}$ if $1\leq k\leq n-1$,
\item $q_{k,0}=p_1\cdots p_k,~q_{k,1}=g_1p_2\cdots p_k,~q_{k,2}=g_2p_3\cdots p_k,~\dots,q_{k,n-1}=g_{n-1}p_{n}\cdots p_k$ if $k\geq n$.
\end{enumerate}
\end{corollary}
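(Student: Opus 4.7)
The plan is to show that each $q_{k,r}$ is a self-adjoint idempotent, that the family $\{q_{k,r}\}_r$ is pairwise orthogonal, and that each $\pi(q_{k,r})$ is minimal in the rank-$r$ matrix block of $A_k$. Since the number of $q_{k,r}$'s ($k+1$ in case (1), $n$ in case (2)) equals the number of simple summands of $A_k$ in Theorem 3.2, this will give a complete set of class representatives.

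Self-adjointness and idempotency are immediate: $g_r \in M_r(D) \hookrightarrow M_k(D)$ is supported on strands $1,\ldots,r$ while each $p_i$ with $i > r$ is supported on strand $i$, so all factors pairwise commute; then $g_r^2 = g_r = g_r^*$ from Proposition 2.3(2) together with $p_i^2 = p_i = p_i^*$ gives $q_{k,r}^2 = q_{k,r} = q_{k,r}^*$. For orthogonality with $r < s$: Proposition 2.3(1) yields $r_{s-1}g_s = 0$, so $p_s g_s = l_{s-1}r_{s-1}g_s = 0$. Commuting $g_s$ leftward past $p_{s+1},\ldots,p_k$ (disjoint supports) until it strikes $p_s$ gives $q_{k,r}q_{k,s}=0$, and taking adjoints handles $r > s$.

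For the trace I exploit disjoint supports diagrammatically: the Markov closure of $q_{k,r}$ factors as (closure of $g_r$ in $M_r(D)$) times $1^{k-r}$, since each pair of 1-boxes contributes the trivial factor upon closure. Dividing by $D^k$ yields $\tr(q_{k,r}) = \tr_r(g_r)/D^{k-r} = d^r P_r(\tau)/D^k$, which is exactly the $r$-th entry of the weight vector $w_k$ from Corollary 3.13. Consequently $q_{k,r}\neq 0$ whenever $P_r(\tau)\neq 0$, which holds precisely in the allowed index range in each case, and the truncation at $r = n-1$ in case (2) matches the vanishing of $P_n(\tau)$.

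\textbf{Main obstacle.} The subtlest step is pinning $\pi(q_{k,r})$ into block $r$ with no stray mass in lower-rank blocks. My approach is through the ideal filtration: by Proposition 2.3(5), $g_r = 1_r + \sum_i c_i w_i$ with each $w_i$ of rank strictly less than $r$, so $q_{k,r} = 1_r p_{r+1}\cdots p_k + (\text{rank}<r)$ and $q_{k,r}\in I_{k,r}\setminus I_{k,r-1}$. Using the correspondence $\pi(I_{k,r})= \bigoplus_{s\leq r} \mathrm{Mat}_{m_{k,s}}(\mathbb{C})$, which follows from Theorem 3.1 by matching dimensions block by block, the image of $q_{k,r}$ in $\pi(I_{k,r})/\pi(I_{k,r-1})\cong \mathrm{Mat}_{m_{k,r}}(\mathbb{C})$ is a nonzero projection. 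The trace equality in the previous paragraph, combined with pairwise orthogonality and an induction on $k$ using the basic-construction presentation of $A_k$ from Proposition 3.10, then rules out lower-block contributions and identifies $\pi(q_{k,r})$ as a minimal projection in block $r$. Case (2) proceeds identically, with the understanding that all blocks beyond rank $n-1$ collapse once $P_n(\tau) = 0$.
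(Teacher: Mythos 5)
Your proof is sound and fills in essentially the argument the paper leaves implicit (the paper merely says ``by induction from the basic construction''). The pieces you assemble --- self-adjoint idempotency of $q_{k,r}$ from commuting $g_r$ with $p_{r+1},\ldots,p_k$, pairwise orthogonality from $r_{s-1}g_s=0$ hence $p_s g_s=0$, the Markov-trace factorization giving $\tr(q_{k,r})=d^r P_r(\tau)/D^k=w_{k,r}$, and the rank filtration $I_{k,r}$ to localize $\pi(q_{k,r})$ --- are exactly what is needed, and the trace computation dovetails correctly with Corollary 3.13.

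One step deserves tightening. You justify $\pi(I_{k,r})=\bigoplus_{s\le r}\mathrm{Mat}_{m_{k,s}}(\mathbb C)$ ``by matching dimensions block by block,'' and say ``Case (2) proceeds identically.'' For $1\le k\le n-1$ the dimension count works because $\pi$ is faithful and the chain $\pi(I_{k,0})\subset\cdots\subset\pi(I_{k,k})=A_k$ adds exactly one matrix block of size $m_{k,r}$ per step (each containment is strict, and there are exactly $k+1$ blocks in total). But for $k\ge n$ the representation $\pi$ has a kernel, $\dim\pi(I_{k,r})$ need not equal $\sum_{s\le r}m_{k,s}^2$, and the dimension argument as stated does not directly pin the ideal chain to the block decomposition. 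The cleaner route there --- and the one the paper is gesturing at --- is to use the explicit rank-$r$ matrix units of Proposition~3.9 (note $E^{(n-1)}_{n,n}=p_n g_{n-1}=q_{n,n-1}$ and $E^{(n)}=g_n=q_{n,n}$ already are two of your $q$'s), together with the trace-orthogonality of $g_r$ to rank $<r$ diagrams from Proposition~2.3, which simultaneously shows that the component of $\pi(q_{k,r})$ in blocks of rank $<r$ vanishes and that the rank-$r$ block survives, uniformly for all $k$. With that substitution the argument closes in both cases.
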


\subsection{Odd Motzkin Spaces}

We show there is a positive definite inner product on odd Motzkin tangles as the even case. 
Then we get the Hilbert space from these odd tangles. 

Let $\mathbf{M}(k)$ be the complex space spanned by Motzkin $k$-tangles. 
As the vector spaces of a unital $*$-planar algebra with $1$-dimensional zero-box space \cite{J99}, the Motzkin vector spaces $\mathbf{M}(k)$ have a sesquilinear form $\langle x,y\rangle$ given diaframmatically by: 
\begin{figure}[H]
  \centering
  \includegraphics[width=12cm]{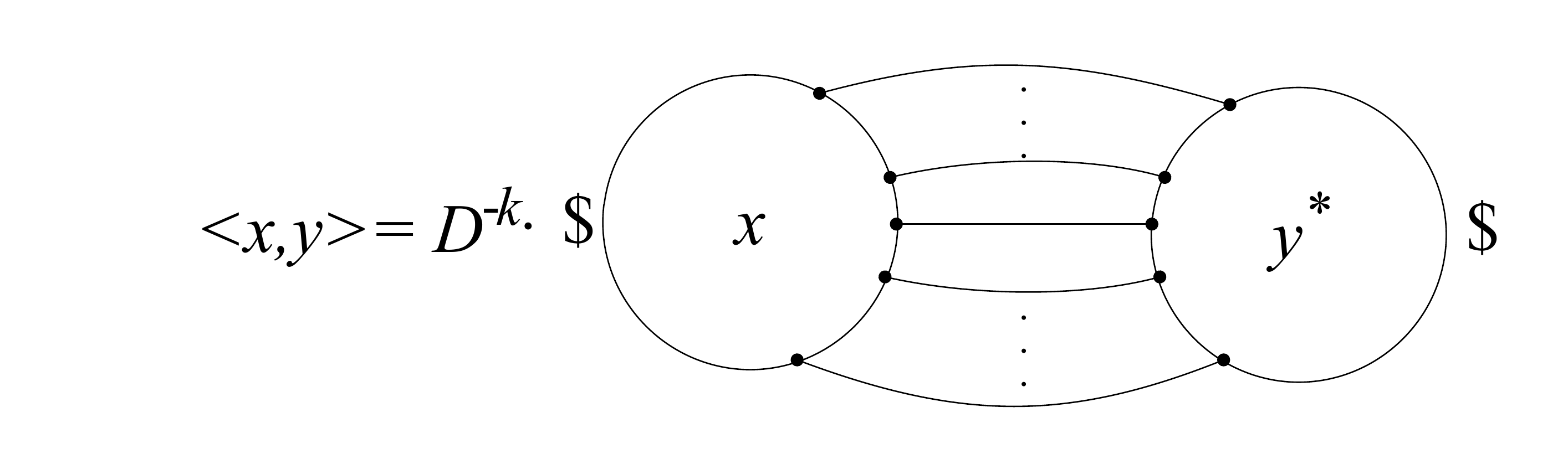}\\
\end{figure}
Here $M_0$ is canonically identified with $\mathbb{C}$, the empty tangle being $1$. 
The Motzkin planar algebra $M_n(D)$ is given by the quotient
\begin{center}
$M_n(D)=\mathbf{M}(n)/\{x\in \mathbf{M}(n)|\langle x,y\rangle=0~\forall y\}$.
\end{center}
We have shown that for even $n$, say $n=2k$,  $\langle x,y\rangle=\tr(y^*x)$ which gives a positive definite inner product on $\mathbf{M}(2k)$ whenever $D\geq 3$ or $D=2\cos\frac{\pi}{n}+1$, $n=3,4,5,\dots$. 

In this section we will see that this implies that $\mathbf{M}(2k+1)$ is also a Hilbert space under $\langle ~,~\rangle$, by identifying $\mathbf{M}(2k+1)$ with $p_1\mathbf{M}(2k+2)$ where $p_1$ is the projection defined in Section 2. 
We also give the dimesions of these spaces, at least in principle, whenever $\langle ~,~\rangle$ is positive definite on $\mathbf{M}_{2k+1}$ for all $k$.

Now, for any odd number $2k+1$, we take two Motzkin $2k+1$ tangles $x,y\in \mathbf{M}(2k+1)$. 
Recall that if $m_1+n_1=m_2+n_2$, $\mathbf{M}(m_1,n_1)$ can be identified with $\mathbf{M}(m_2,n_2)$ up to isotopy. 
So, without loss of generality, we assume $x,y\in \mathbf{M}(k,k+1)$. 
We define the inner product
\begin{center}
$\langle x,y\rangle=\tr(y^*x)$.
\end{center}
as above. 
Here $\tr$ is the trace on Motzkin algebras defined in Section 2.2 and $y^*x \in M_{k+1}(D)$. 
For example, let $k=3$ and consider the following two Motzkin $(3,4)$-tangles $x,y$. 
\begin{figure}[H]
  \centering
  \includegraphics[width=12cm]{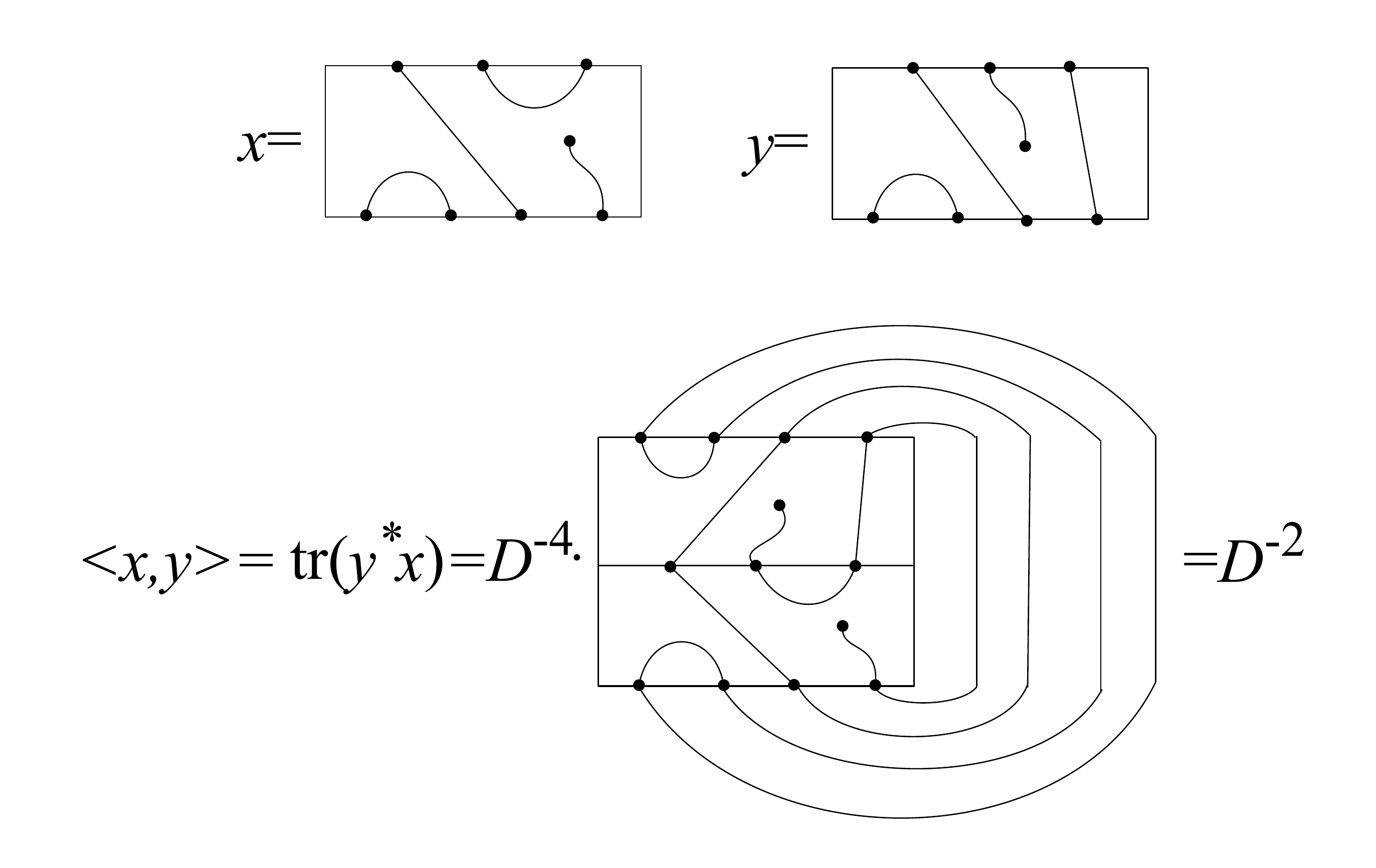}\\
  \caption{the inner product of two Motzkin $(3,4)$-tangles} \label{}
\end{figure}

\begin{lemma}
Assume $D\in\{2\cos\frac{\pi}{n}+1|n\geq 3\}\cup [3,\infty)$. 
The inner product defined above on $\mathbf{M}(m)$ with $m$ odd is positive definite. 
\end{lemma}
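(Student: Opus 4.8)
The plan is to fold the odd Motzkin space into an even one and then transport the positivity already established for the algebras $M_k(D)$. Writing an odd tangle $x\in\mathbf{M}(m)$, $m=2k+1$, in the normalised shape $x\in\mathbf{M}(k,k+1)$, I would define a map $\iota\colon\mathbf{M}(k,k+1)\to M_{k+1}(D)$ by adjoining one new \emph{isolated} (one-box) vertex at the top-left corner, so that $x$ becomes a $(k+1,k+1)$-diagram whose leftmost top vertex is a dot. Because $p_1\in M_{k+1}(D)$ is exactly the diagram with a dot at the first top and first bottom vertex and vertical strings everywhere else, $\iota$ sends the basis of Motzkin $(k,k+1)$-tangles bijectively onto the basis of $(k+1,k+1)$-diagrams whose leftmost top vertex is isolated, i.e. onto a basis of $p_1M_{k+1}(D)$; so $\iota$ is a linear isomorphism of $\mathbf{M}(2k+1)$ onto $p_1M_{k+1}(D)$, and $\iota(y)^{*}$ is $y^{*}\in\mathbf{M}(k+1,k)$ with an isolated vertex adjoined at the bottom-left.

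The key point is that $\iota$ is an isometry for the sesquilinear forms. When one stacks $\iota(y)^{*}$ over $\iota(x)$, the two isolated one-box endpoints at the leftmost interior vertex meet and simply cancel — producing no edge and, crucially, no closed loop — while the gluing at the remaining $k$ interior vertices is precisely the one defining the product $y^{*}x$ used in the odd inner product. Hence $\iota(y)^{*}\iota(x)=y^{*}x$ in $M_{k+1}(D)$, with no extra power of $D$, and therefore
\begin{equation*}
\langle\iota(x),\iota(y)\rangle_{k+1}=\tr\!\big(\iota(y)^{*}\iota(x)\big)=\tr(y^{*}x)=\langle x,y\rangle .
\end{equation*}

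Now the result follows from the even case. For $D\in\{2\cos\frac{\pi}{n}+1\mid n\ge 3\}\cup[3,\infty)$ the trace form $\tr(\,\cdot^{*}\,\cdot\,)$ on $M_{k+1}(D)$ is positive semidefinite with radical $R_{k+1}$, and it is positive definite (indeed faithful) on the $C^{*}$-algebra $A_{k+1}=M_{k+1}(D)/R_{k+1}$, by Theorem 3.2 and the analysis of Section 3.3. Pulling back along the injective isometry $\iota$ shows the odd form is positive semidefinite; moreover, since every $z$ in the image of $\iota$ satisfies $z=p_1z$ and $R_{k+1}$ is a two-sided ideal, the radical of the odd form corresponds under $\iota$ to $p_1M_{k+1}(D)\cap R_{k+1}=p_1R_{k+1}$, so $\iota$ descends to an isometric identification of the odd Motzkin space (modulo its radical) with the subspace $p_1A_{k+1}$ of $A_{k+1}$. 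Since the trace on $A_{k+1}$ is positive definite, this proves the claim; when $D\ge 3$ one has $R_{k+1}=0$ and the form is positive definite already on the tangle space.

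I expect the only genuinely delicate step to be the isometry identity $\tr(\iota(y)^{*}\iota(x))=\tr(y^{*}x)$: one must make sure that gluing two one-box endpoints in the interior of a composition creates no closed loop (hence no spurious factor of $D$) and that the corner conventions really make the image of $\iota$ equal to $p_1M_{k+1}(D)$; granting those, the rest is diagram bookkeeping together with a direct appeal to the even-case positivity of Section 3.
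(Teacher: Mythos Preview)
Your proposal is correct and follows essentially the same route as the paper: the paper's proof also embeds $\mathbf{M}(2k+1)$ isometrically into $\mathbf{M}(2k+2)$ by adjoining a single isolated boundary vertex, landing in $p_1\mathbf{M}(2k+2)$ (this is exactly the identification announced just before the lemma and recorded in Proposition~3.18 as $H_{2k+1}\cong A_{k+1}(D)p_1$), and then invokes the even-case positivity. Your write-up is in fact more careful than the paper's on one point: you distinguish positive semidefiniteness on $M_{k+1}(D)$ from positive definiteness on the quotient $A_{k+1}$ and track the radical $p_1R_{k+1}$, whereas the paper's one-line proof glosses over this (the lemma as stated should really read ``positive semidefinite'' in the non-generic case, with definiteness only after passing to $H_m$).
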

\begin{proof}
We have proved that the inner product on  $\mathbf{M}(2k+2)$ is positive definite in Section 3.2. 
It suffices to prove $\mathbf{M}(2k+1)$ is a subspace of $\mathbf{M}(2k+2)$ with the same inner product. 

Consider the following map $\phi:\mathbf{M}(2k+1)=\mathbf{M}(k,k+1) \to \mathbf{M}(2k+2)$ given by:
\begin{figure}[H]
  \centering
  \includegraphics[width=14cm]{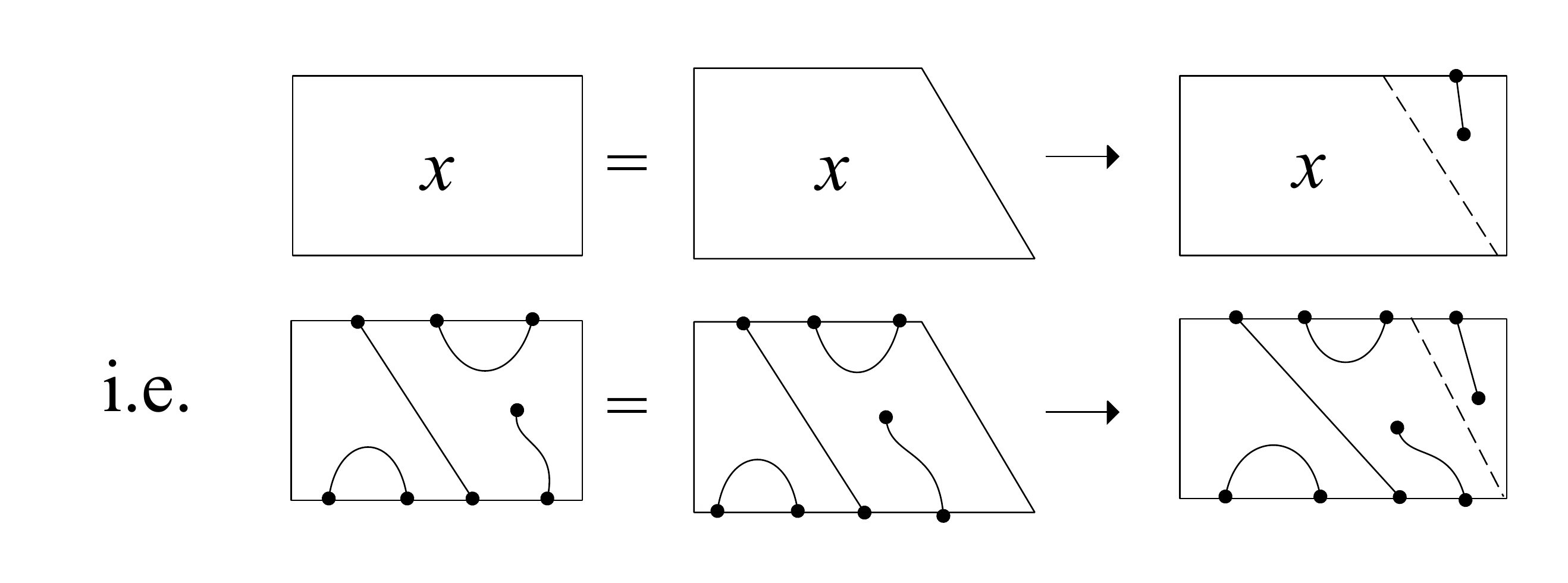}\\
  \caption{the map $\phi$} \label{}
\end{figure}
, which is an isometry. 
\end{proof}

Let $m$ be an positive integer, 
we let $R_m$ be the radical of this inner product
\begin{center}
$R_m=\{x \in \mathbf{M}(m)|\langle x,x\rangle=0\}$.    
\end{center} 
By taking the quotient space of the complex linear span $\mathbf{M}(m)$ of all Motzkin $m$-tangles by this radical, we obtain a Hilbert space. 
\begin{definition}
For $m\geq 1$, we call the finite dimensional Hilbert space 
\begin{center}
$H_m=\mathbf{M}(m)/R_m$   
\end{center}
the $m$-Motzkin space. 
\end{definition}

\begin{proposition}
As complex vector spaces, we have
\begin{enumerate}
    \item $H_{2k}\cong A_k(D)$,
    \item $H_{2k+1}\cong A_{k+1}(D)p_i$ for any $1\leq i\leq k+1$. 
\end{enumerate}
where $A_k=\pi(M_k(D))$'s are the $C^*$-algebras in Theorem 3.2. 
\end{proposition}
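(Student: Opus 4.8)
The plan is to realize every odd Motzkin space as a corner of an even one, and then to push the identification through the quotient by the radical of $\langle\,,\,\rangle$.

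\emph{Part (1).} Under the isotopy identification $\mathbf{M}(2k)\cong\mathbf{M}(k,k)$ the space $\mathbf{M}(2k)$ is literally the linear span of the Motzkin $k$-diagrams, i.e. $M_k(D)$ as a vector space, and the planar-algebra form $\langle x,y\rangle$ becomes $\tr(y^{*}x)$ --- this is exactly the computation already recorded for the even case. Since $*$ is a linear bijection of $M_k(D)$ and $\tr$ is tracial, the radical $R_{2k}=\{x:\tr(y^{*}x)=0\ \forall y\}$ equals the trace radical $R_k=\{x:\tr(xy)=0\ \forall y\}$. Hence
\[
H_{2k}=\mathbf{M}(2k)/R_{2k}=M_k(D)/R_k=\pi(M_k(D))=A_k(D).
\]

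\emph{Part (2).} I would use the isometry $\phi:\mathbf{M}(2k+1)=\mathbf{M}(k,k+1)\to\mathbf{M}(2k+2)=M_{k+1}(D)$ built in the proof of the preceding lemma, whose image is the subspace $p_1M_{k+1}(D)$ (this is the identification $\mathbf{M}(2k+1)\cong p_1\mathbf{M}(2k+2)$ announced at the start of the subsection). Because $\phi$ is an isometric embedding, $\phi^{-1}(R_{2k+2})=R_{2k+1}$, so $\phi$ descends to a linear isomorphism $\bar\phi:H_{2k+1}\to p_1M_{k+1}(D)\big/\big(p_1M_{k+1}(D)\cap R_{2k+2}\big)$. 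Now $R_{2k+2}=R_{k+1}$ is a two-sided ideal of $M_{k+1}(D)$, so $p_1M_{k+1}(D)\cap R_{k+1}=p_1R_{k+1}$: the inclusion $\supseteq$ is clear, and if $p_1z\in R_{k+1}$ then $p_1z=p_1(p_1z)\in p_1R_{k+1}$. Therefore
\[
H_{2k+1}\ \cong\ p_1M_{k+1}(D)/p_1R_{k+1}\ =\ p_1\big(M_{k+1}(D)/R_{k+1}\big)\ =\ p_1A_{k+1}(D).
\]

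\emph{From $p_1$ to an arbitrary $p_i$.} The map $x\mapsto x^{*}$ is a conjugate-linear bijection $p_1A_{k+1}(D)\to A_{k+1}(D)p_1$, so these have the same complex dimension; and $A_{k+1}(D)p_1\cong A_{k+1}(D)p_i$ as left $A_{k+1}(D)$-modules because the $p_i$ are mutually Murray--von Neumann equivalent in $M_{k+1}(D)$: with $r_i^{*}=l_i$ one has $r_i^{*}r_i=l_ir_i=p_{i+1}$ and $r_ir_i^{*}=r_il_i=p_i$, so $p_i\sim p_{i+1}$ for every $i$. Combining, $H_{2k+1}\cong A_{k+1}(D)p_i$ for each $1\leq i\leq k+1$.

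\emph{Main obstacle.} The only step that is not routine algebra is the diagrammatic claim underlying the choice of $\phi$: that bending the extra boundary strand sends Motzkin $(k,k+1)$-tangles, up to isotopy, bijectively onto exactly those Motzkin $(k+1,k+1)$-diagrams whose first top vertex is an isolated $1$-box, i.e. that $\phi$ is injective and $\operatorname{image}(\phi)=p_1M_{k+1}(D)$. Once this is granted, matching the radicals, the ideal-intersection identity $p_1M_{k+1}(D)\cap R_{k+1}=p_1R_{k+1}$, and the equivalence of the $p_i$ are all straightforward.
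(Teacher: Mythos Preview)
Your argument is correct and follows essentially the same route as the paper's own proof: use the isometric embedding $\phi$ of Lemma~3.17 to identify $\mathbf{M}(2k+1)$ with $p_1\mathbf{M}(2k+2)$, pass to the quotient by the radical, and then invoke the Murray--von Neumann equivalence of the $p_i$ (via the partial isometries $r_i$) to move between the various corners. The paper states this in two sentences; you have supplied the bookkeeping (that $\phi^{-1}(R_{2k+2})=R_{2k+1}$ by positivity, that $p_1M_{k+1}(D)\cap R_{k+1}=p_1R_{k+1}$ because $R_{k+1}$ is an ideal, and the explicit equivalence $r_i^{*}r_i=p_{i+1}$, $r_ir_i^{*}=p_i$) that the paper leaves implicit.
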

\begin{proof}
The even case is clear as shown in Section 3.2. 

For the odd case, it follows from the embedding constructed in Lemma 3.17 and the fact that $A_{k+1}(D)p_i\cong A_{k+1}(D)p_j$ for all $1\leq i,j\leq k+1$. 
\end{proof}

Let us consider the dimensions of these Hilbert spaces. 
The even ones are just the $C^*$-algebras in Section 3.3 whose dimensions are clear. 

For the odd case, we first consider two examples. 
\begin{example}
Assume $D>3$ and the Bratteli diagrams are given in Theorem 3.2. 
We just proved that $H_{2k+1}\cong A_{k+1}(D)p_i$. Now we let $i=1$. 
\begin{enumerate}
    \item Note $A_1=\mathbb{C}\oplus \mathbb{C}$, $p_1= 1\oplus 0$ in $A_1$ and its range is of dimension $1$. So $\dim H_1=1$. 
    \item $A_2=\mathrm{Mat}_2(\mathbb{C})\oplus\mathrm{Mat}_2(\mathbb{C})\oplus \mathbb{C}$. By the Bratteli diagram, $p_1= \begin{pmatrix} 1 &0 \\ 0& 0 \\ \end{pmatrix}\oplus\begin{pmatrix} 1 &0 \\ 0& 0 \\ \end{pmatrix}\oplus0$ in $A_2$ whose range is of dimension $1\cdot 2+1\cdot 2=4$. So $\dim H_3=4$. 
    \item $A_3=\mathrm{Mat}_4(\mathbb{C})\oplus\mathrm{Mat}_5(\mathbb{C})\oplus \mathrm{Mat}_3(\mathbb{C})\oplus \mathbb{C}$.
    \begin{center}
        $p_1=\begin{pmatrix} 1 &0&0&0 \\ 0&1&0& 0 \\0 &0&0&0\\0 &0&0&0\\ \end{pmatrix}\oplus\begin{pmatrix} 1 &0&0&0&0 \\ 0&1&0&0& 0 \\0 &0&0&0&0\\0 &0&0&0&0\\ 0 &0&0&0&0\\\end{pmatrix}\oplus\begin{pmatrix} 1 &0&0 \\ 0&0&0 \\0 &0&0\\ \end{pmatrix}\oplus 0$,
    \end{center}
    whose range is of dimension $2\cdot 4+2\cdot 5+1\cdot 3=21$. $\dim H_5=21$. 
\end{enumerate}
In next section, we can further show that $\dim H_k=\mathcal{M}_k$, the $k$-th Motzkin number. 
\end{example}
\begin{example}
Assume $D=2\cos\frac{\pi}{4}+1$ and the Bratteli diagrams are given in Example 3.3. 

The dimensions of $H_k$ are the same as Example 3.19 until $k=7$. 
Consider the algebra $A_4$. We have $A_4=\mathrm{Mat}_9(\mathbb{C})\oplus\mathrm{Mat}_{12}(\mathbb{C})\oplus \mathrm{Mat}_8(\mathbb{C})$. 
Note $p_1= I_4\oplus I_5\oplus I_3$ whose range is of dimension $4\cdot 9+5\cdot 12+3\cdot 8=120$. 

So $\dim H_7=120$, which is less than $127$, the $7$-th Motzkin number. 
\end{example}

\subsection{Explicit Dimension Formulas}

Now we will get the explicit formula for the Motzkin spaces in Section 3.4. 

\begin{theorem}
The dimensions of the Motzkin spaces and the generating functions are given by
\begin{enumerate}
    \item for $D=2\cos\frac{\pi}{n}+1$ with $n\geq 3$, we have
    \begin{center}
     $\dim H_k=\frac{2}{n}\sum_{j=1}^{n-1}(2\cos\frac{j\pi}{n}+1)^k\sin^2\frac{j\pi}{n}$.
    \end{center}
    and the generating function is $\sum_{k=1}^{\infty}\dim H_k\cdot x^k=\frac{P_{n-1}(x)}{P_{n}(x)}$.  
    \item for $D\geq 3$, we have $\dim H_k=\mathcal{M}_k$, the $k$-th Motzkin number so that the generating function is $\frac{1-x-\sqrt{1-2x-3x^2}}{2x^2}$.  
\end{enumerate}
\end{theorem}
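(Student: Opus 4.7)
The plan is to convert the question about Motzkin spaces into a question about the finite-dimensional $C^*$-algebras $A_k=\pi(M_k(D))$ using Proposition 3.18, and then to exploit the Bratteli diagram structure from Theorem 3.2.

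For part (2), with $D\geq 3$, the argument is short. By Lemma 3.17 the sesquilinear form on $\mathbf{M}(k)$ is already positive definite, so the radical $R_k$ vanishes and $H_k=\mathbf{M}(k)$. Its dimension is therefore the number of Motzkin $k$-tangles, which Lemma 2.2 identifies with the Motzkin number $\mathcal{M}_k$; the generating function is then exactly the one in Lemma 2.2.

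The substance of the proof lies in part (1), with $D=2\cos(\pi/n)+1$. First I would apply Proposition 3.18 to obtain $\dim H_{2k}=\dim A_k$ and $\dim H_{2k+1}=\dim A_{k+1}p_1$. By Theorem 3.2(2) (applied after shifting its parameter by one), once the Bratteli diagram has reached its maximal width, every inclusion $A_{k-1}\subset A_k$ is governed by the $(n-1)\times(n-1)$ symmetric tridiagonal matrix $X$ with $X_{ij}=1$ iff $|i-j|\leq 1$. Writing $e_0=(1,0,\ldots,0)^T$ for the rank-$0$ vertex of $A_0$, the dimension vector $m_k$ of $A_k$ satisfies $m_k=X^{k}e_0$ uniformly in $k$, because this formula simultaneously encodes the initial embedding $A_0\hookrightarrow A_1$ and the bulk embeddings. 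On the other hand, $p_1$ is the minimal projection at the rank-$0$ vertex of level $1$, and its rank in the $r$-th simple summand of $A_{k+1}$ equals the number of Bratteli paths from $(1,0)$ to $(k+1,r)$, namely $(X^{k}e_0)_r$. Pairing the two vectors yields the uniform identity
\[
\dim H_k=(X^{k})_{0,0}\qquad (k\geq 1).
\]

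Next I would diagonalise $X=I+Y$, where $Y$ is the adjacency matrix of the path graph on $n-1$ vertices. The eigenvalues of $Y$ are $2\cos(j\pi/n)$ with orthonormal eigenvectors $\hat v_j$ whose $r$-th coordinate is $\sqrt{2/n}\,\sin((r+1)j\pi/n)$, so $X$ has eigenvalues $\lambda_j=1+2\cos(j\pi/n)$, and the spectral expansion of the $(0,0)$-entry produces precisely the stated closed form
\[
\dim H_k=\frac{2}{n}\sum_{j=1}^{n-1}\bigl(2\cos(j\pi/n)+1\bigr)^k\sin^2(j\pi/n).
\]
Summing the resulting geometric series then identifies the generating function with $[(I-xX)^{-1}]_{0,0}$, a rational function with poles $\lambda_j^{-1}$ and prescribed residues; one then matches it against $P_{n-1}(x)/P_{n}(x)$ by checking the common Chebyshev-type recursion $P_{k+1}=P_{k}-xP_{k-1}$ against the characteristic polynomial of the truncated tridiagonal $X$.

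I expect the main obstacle to be the careful bookkeeping for small $k$, where the Bratteli diagram has not yet reached maximal width: one must verify that the formula $m_k=X^{k}e_0$ (using the full $(n-1)\times(n-1)$ matrix) really agrees with the actual dimension vectors produced by Lemmas 3.6--3.7 and the inductive constructions of Propositions 3.9--3.10, and that the rank vector of $p_1$ is correctly $X^{k}e_0$ at every level. Once this calibration is in place the remainder of the argument is routine linear algebra (diagonalisation of a Jacobi matrix) together with a Chebyshev polynomial identity.
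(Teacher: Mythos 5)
Your overall strategy matches the paper's: you reduce everything to the formula $\dim H_k=(Q_n^k)_{0,0}$ via Proposition 3.18 and the Bratteli-diagram inclusion matrix, and then diagonalise the symmetric tridiagonal matrix to get the spectral sum. That is exactly what the paper does in Lemma 3.23, Proposition 3.24 and Corollary 3.25, so the closed-form part of (1) is fine. For part (2), your argument is actually a little more direct than the paper's Proposition 3.22: rather than verifying $\sum_r m_{k,r}m_{k+1,r}=\mathcal{M}_{2k+1}$ from the Bratteli data, you simply observe that positive definiteness (Theorem 3.2(1) for even $m$, Lemma 3.17 for odd $m$) forces $R_m=0$, so $H_m\cong\mathbf{M}(m)$ and $\dim H_m=\mathcal{M}_m$; that short-circuit is valid.

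Where the proposal does not go through is the last step, the identification of the generating function with $P_{n-1}(x)/P_n(x)$. You propose to ``match it against $P_{n-1}(x)/P_n(x)$ by checking the common Chebyshev-type recursion $P_{k+1}=P_k-xP_{k-1}$ against the characteristic polynomial of the truncated tridiagonal $X$,'' but these two families do not satisfy the same recursion. Writing $D_m(x)=\det(I_m-xQ_{m+1})$ for the $m\times m$ truncation (so $[(I-xQ_n)^{-1}]_{0,0}=D_{n-2}(x)/D_{n-1}(x)$), cofactor expansion gives $D_m=(1-x)D_{m-1}-x^2D_{m-2}$ with $D_0=1,D_1=1-x$, whereas the Jones--Wenzl polynomials obey $P_{m+1}=P_m-xP_{m-1}$. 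The two coincide for $m\le 2$, which is why the small cases $n=2,3$ look right, but already $D_3=(1-x)(1-2x-x^2)=1-3x+x^2+x^3\neq P_4=1-3x+x^2$. Concretely, for $D=\sqrt 2+1$ ($n=4$) the dimension sequence is $1,2,4,9,21,51,\dots$ with $\sum_{k\ge0}\dim H_k\,x^k=\frac{1-2x}{1-3x+x^2+x^3}$, while $\frac{P_3(x)}{P_4(x)}=\frac{1-2x}{1-3x+x^2}=1+x+2x^2+5x^3+13x^4+\dots$, which already disagrees at $x^3$. (There is also an off-by-one to watch: $[(I-xQ_n)^{-1}]_{0,0}=\sum_{k\ge 0}(Q_n^k)_{0,0}x^k$, which includes the constant term $1$, not $\sum_{k\ge 1}$.) So the ``routine Chebyshev identity'' you invoke does not hold, and the generating-function identity you would need to finish the proof fails; in fact the paper's own Corollary~3.26 asserts $\det(I_{n-1}-xQ_n)=P_n(x)$, which is the same slip. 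The correct rational form is $D_{n-2}(x)/D_{n-1}(x)$ with the $D_m$ recursion above.
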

First we prove it for the generic case $D\geq 3$. 
\begin{proposition}
For $D\geq 3$, we have $\dim H_k=\mathcal{M}_k$ (the $k$-th Motzkin number ) for all $k$. 
\end{proposition}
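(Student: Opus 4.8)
The plan is to push the whole computation through Proposition~3.18, which identifies the Motzkin spaces $H_m$ with (left ideals inside) the finite-dimensional $C^*$-algebras $A_k(D)=\pi(M_k(D))$, and then to read the dimension straight off the Motzkin diagram basis.

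First I would fix $D\in[3,\infty)$ and record that, by the proposition establishing positivity in the generic range (Proposition~3.11), $\tr$ is positive definite on every $M_k(D)$; hence the GNS representation $\pi$ is faithful and $A_k(D)\cong M_k(D)$ as complex vector spaces. Since $M_k(D)$ is, by construction, spanned by the set $\mathbf M_k$ of Motzkin $k$-diagrams, and these are linearly independent precisely because the form $\langle x,y\rangle=\tr(y^*x)$ is nondegenerate, we get $\dim A_k(D)=|\mathbf M_k|=\mathcal M_{2k}$ (the cardinality count of Section~2). The even case is then immediate: Proposition~3.18(1) gives $H_{2k}\cong A_k(D)$, so $\dim H_{2k}=\mathcal M_{2k}$.

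For the odd case I would use Proposition~3.18(2): $H_{2k+1}\cong A_{k+1}(D)p_1\cong M_{k+1}(D)p_1$, and compute the dimension of this left ideal diagrammatically. For any diagram $x\in\mathbf M_{k+1}$ the product $xp_1$ produces no closed loops — the top vertices of $p_1$ are either through strings or the single isolated stub, never a cap — so $xp_1=z$ for a single Motzkin diagram $z$ whose left-most bottom boundary vertex is isolated; conversely every such $z$ satisfies $zp_1=z$. Hence $M_{k+1}(D)p_1$ is the linear span of $S=\{z\in\mathbf M_{k+1}:\text{the left-most bottom vertex of }z\text{ is isolated}\}$, a subset of the basis $\mathbf M_{k+1}$ and therefore a basis of the ideal. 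Erasing the isolated vertex sets up a bijection between $S$ and the set of Motzkin $(k+1,k)$-tangles $\mathbf M(k+1,k)$, which has cardinality $\mathcal M_{(k+1)+k}=\mathcal M_{2k+1}$ by the count of Section~2. Therefore $\dim H_{2k+1}=|S|=\mathcal M_{2k+1}$, and together with the even case this gives $\dim H_k=\mathcal M_k$ for all $k$.

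The only step needing genuine care is the diagram bookkeeping in the odd case: checking that $M_{k+1}(D)p_1$ is exactly the span of the diagrams $z$ with $zp_1=z$, and that these are precisely the diagrams with an isolated left-most bottom vertex; everything else is a direct application of results already in hand. As a sanity check this reproduces $\dim H_1=1$, $\dim H_3=4$, $\dim H_5=21$ from Example~3.19, and — since the Motzkin numbers have generating function $\tfrac{1-x-\sqrt{1-2x-3x^2}}{2x^2}$ (Lemma~2.1) — it also yields the generating function asserted in Theorem~3.21(2). One could instead evaluate $\dim\big(A_{k+1}(D)p_1\big)=\sum_r m_{k+1,r}\,m_{k,r}$ from the Bratteli diagram together with the identity $\sum_r m_{k+1,r}m_{k,r}=\mathcal M_{2k+1}$, but the diagrammatic count avoids having to prove that combinatorial identity.
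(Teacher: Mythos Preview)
Your argument is correct, and for the odd case it takes a genuinely different route from the paper. The paper computes $\dim\bigl(A_{k+1}(D)p_1\bigr)$ via the Bratteli decomposition: it writes $p_1=\bigoplus_{r=0}^{k}I_{m_{k,r}}\oplus 0$ inside $A_{k+1}=\bigoplus_{r=0}^{k+1}\mathrm{Mat}_{m_{k+1,r}}(\mathbb C)$, reads off the range dimension as $\sum_{r}m_{k,r}\,m_{k+1,r}$, and then asserts this equals $\mathcal M_{2k+1}$ without further justification. You instead stay on the diagram side: using faithfulness of $\pi$ for $D\ge 3$ you identify $A_{k+1}(D)p_1$ with $M_{k+1}(D)p_1$, show that $\{z\in\mathbf M_{k+1}: zp_1=z\}$ is a basis, and put this in bijection with $\mathbf M(k+1,k)$. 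Your route is more elementary and self-contained, since it bypasses the combinatorial identity $\sum_r m_{k,r}m_{k+1,r}=\mathcal M_{2k+1}$ that the paper leaves implicit (and which is most naturally proved by exactly the half-tangle bijection underlying your count). The paper's route, on the other hand, makes explicit how $p_1$ sits in the multimatrix decomposition, which feeds directly into the later non-generic computations. You even flag this trade-off yourself in the final paragraph, which is a nice touch.
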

\begin{proof}
The case $k\leq 5$ are proved in Example 3.19. 
In Section 3.3, we already know that it is true for even $k$. 

The induction goes similarly as Example 3.19. 
Consider $H_{2k+1}$. 
Note that $p_1$ in $A_{2k+2}$ can be presented by 
\begin{center}
$p_1=\oplus_{r=0}^{k}I_{m_{k,r}}\oplus 0 \in A_{2k+2}= \oplus_{r=0}^{k+1}\mathrm{Mat}_{m_{k+1,r}}$. 
\end{center}
By Proposition 3.18, $\dim{H_{2k+1}}$ is just the dimension of the range of $p_1$ in $A_{2k+2}$ that can be given as
$\sum\limits_{r=0}^{k}m_{k,r}m_{k+1,r}=\mathcal{M}_{2k+1}$.  
\end{proof}

Next we turn to the non-generic case, say $D=2\cos\frac{\pi}{n}+1$. 
Let $Q_n\in \mathrm{Mat}_{n-1}(\mathbb{C})$ be given by 
\begin{equation}
Q_n=[q_{i,j}^{(n)}]_{(n-1)\times(n-1)}
=\begin{bmatrix}
1&1&0&0&\cdots\ &0\\
1&1&1&0&\cdots &0\\
0&1&1&1&\cdots &0\\
\vdots&\vdots &\vdots &\vdots&\ddots&\vdots\\
0&\cdots&0&1&1 &1\\
0&\cdots&0&0&1 &1
\end{bmatrix}
\end{equation}
in which $q_{i,j}^{(n)}=1$ if $|i-j|\leq 1$ and $0$ otherwise.
Let $\xi_n=(1,0,\dots,0)^T \in \mathbb{C}^{n-1}$. 
\begin{lemma}
For $D=2\cos\frac{\pi}{n}+1$, the dimensions of the irreducible summands in $A_i$ can be given by $Q_n^i\xi_n$.
\end{lemma}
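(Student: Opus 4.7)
The plan is to prove the formula by induction on $i$, interpreting $Q_n^i \xi_n \in \mathbb{C}^{n-1}$ as the vector whose $r$-th entry is the dimension of the rank-$r$ simple summand of $A_i$ (with $r = 0, 1, \dots, n-2$, and a zero entry when the corresponding summand is absent). The base case corresponds to $A_0 = \mathbb{C}$, which has a single rank-$0$ summand of dimension $1$, giving precisely the initial vector $\xi_n = (1, 0, \dots, 0)^T$. The inductive step then reduces to showing that the inclusion matrix of $A_{i-1} \subset A_i$, viewed as an endomorphism of $\mathbb{C}^{n-1}$ that evolves dimension vectors, equals $Q_n$ at every level.

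For $i \geq n$, this is immediate from Proposition 3.10 and Proposition 3.5.3: $A_i$ is the basic construction of the pair $A_{i-2} \subset A_{i-1}$, so the inclusion matrix of $A_{i-1} \subset A_i$ is the transpose of the one for $A_{i-2} \subset A_{i-1}$; since $Q_n$ is symmetric, once the inclusion matrix is $Q_n$ at one step it persists for all subsequent steps, so the induction carries through automatically in this range.

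For $1 \leq i \leq n-1$ the inclusion must be analyzed directly. Using the explicit minimal central projections $q_{i-1,r} = g_r p_{r+1} \cdots p_{i-1}$ from Corollary 3.14 together with the Motzkin-path structure of the rank filtration, each rank-$r$ minimal central projection at level $i-1$ splits in $A_i$ into minimal projections of ranks $r-1$, $r$, and $r+1$ with multiplicity one each, corresponding exactly to the three Motzkin-path steps $(-1, 0, +1)$; at the boundary ranks the nonexistent contributions are simply absent. These multiplicities assemble into the tridiagonal pattern of $Q_n$.

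The main obstacle is the transition at $i = n-1$: a rank-$(n-2)$ component at level $n-2$ would naively contribute to rank $n-1$ at level $n-1$, but this summand vanishes because $\tr(g_{n-1}) = 0$ at $D = 2\cos(\pi/n) + 1$. One must verify that this truncation is precisely what the absence of an $(n-1, n)$-entry in $Q_n$ encodes, so that the symmetric $Q_n$ continues to describe the inclusion faithfully across this critical level; from $i = n$ onward the basic-construction argument via Proposition 3.10 and the transpose symmetry take over and close the induction.
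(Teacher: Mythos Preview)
Your proof is correct and follows the same inductive scheme as the paper: show that the dimension vector evolves by $Q_n$ at every step, with the small-$i$ range handled combinatorially and the large-$i$ range via the basic construction (your Propositions~3.10 and~3.5(3) are exactly what the paper packages as Proposition~3.14). The one simplification the paper makes is that for $i\leq n-2$ it bypasses any analysis of how the minimal projections $q_{i-1,r}$ split and instead cites Lemma~2.2 directly: the Motzkin-path recursion $m_{i+1,r}=m_{i,r-1}+m_{i,r}+m_{i,r+1}$ (with its boundary cases) \emph{is} the action of $Q_n$ on the zero-padded vector $\beta_i=(m_{i,0},\dots,m_{i,i},0,\dots,0)$, so once the summand dimensions are identified with the $m_{i,r}$ there is nothing further to check, and your discussion of the truncation at level $n-1$ becomes automatic. (Minor point: the minimal projections you invoke are in Corollary~3.16, not~3.14.)
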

\begin{proof}
It suffices to prove for $i\leq n-2$. The cases $i\geq n-1$ reduce to the calculation of $l_{k,r}$'s which is given in Proposition 3.14. 

For $i\leq n-2$, consider the following two vectors in $\mathbb{C}^{n-1}$:
\begin{center}
$\beta_i=(m_{i,0},\dots,m_{i,i},0,\dots,0)$ and $\beta_{i+1}=(m_{i+1,0},\dots,m_{i+1,i+1},0,\dots,0)$. 
\end{center}
By Lemma 2.2, one can directly get $\beta_{i+1}=Q_n\beta_i$. 
Then it follows by $\beta_0=(1,0,\dots,0)^T=\xi_n$. 
\end{proof}

\begin{proposition}
For $D=2\cos\frac{\pi}{n}+1$ with $n\geq 3$, the dimensions are given by
\begin{center}
$\dim H_k=\langle Q_{n}^{k}\xi_n,\xi_n \rangle$    
\end{center}

\end{proposition}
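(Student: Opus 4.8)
The plan is to derive the formula from two results already in hand. Proposition 3.18 identifies, as complex vector spaces, $H_{2k}$ with $A_k$ and $H_{2k+1}$ with the left ideal $A_{k+1}p_1$; Lemma 3.23 says that the vector $v_i:=Q_n^i\xi_n$ records the block dimensions of $A_i$, i.e.\ $A_i\cong\bigoplus_r\mathrm{Mat}_{(v_i)_r}(\mathbb{C})$. Everything else is bookkeeping that exploits the symmetry $Q_n=Q_n^T$. I would split into the even and odd cases.

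For $m=2k$, Proposition 3.18 gives $\dim H_{2k}=\dim A_k=\sum_r(v_k)_r^2$. Since $Q_n$ is symmetric, $\langle Q_n^{2k}\xi_n,\xi_n\rangle=\langle Q_n^{k}\xi_n,Q_n^{k}\xi_n\rangle=\|v_k\|^2=\sum_r(v_k)_r^2$, so the two sides coincide; this case is immediate.

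For $m=2k+1$, Proposition 3.18 gives $\dim H_{2k+1}=\dim(A_{k+1}p_1)$. Decomposing $A_{k+1}=\bigoplus_r\mathrm{Mat}_{(v_{k+1})_r}(\mathbb{C})$ and $p_1=\bigoplus_r p_1^{(r)}$ accordingly, and using that the left ideal $\mathrm{Mat}_d(\mathbb{C})e$ of a matrix algebra has dimension $d\cdot\mathrm{rank}(e)$, we get $\dim(A_{k+1}p_1)=\sum_r(v_{k+1})_r\,\mathrm{rank}(p_1^{(r)})$. Thus the proposition reduces to the rank identity $\mathrm{rank}(p_1^{(r)})=(v_k)_r$; granting this, $\dim H_{2k+1}=\sum_r(v_k)_r(v_{k+1})_r=\langle v_k,v_{k+1}\rangle=\langle Q_n^{k}\xi_n,Q_n^{k+1}\xi_n\rangle=\langle Q_n^{2k+1}\xi_n,\xi_n\rangle$, again by $Q_n=Q_n^T$.

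The substantive step, and the one I expect to be the real obstacle, is proving $\mathrm{rank}(p_1^{(r)})=(v_k)_r$. I would argue as follows. The projection $p_1$ is the unit of one of the two blocks of $A_1\cong\mathbb{C}^2$, and since $\tr(p_1)=\tfrac1D$ equals the first entry $\tfrac1D P_0(\tau)$ of the weight vector of $A_1$ (Lemma 3.7), $p_1$ lies in the $r=0$ summand. Regard $A_1\subset A_2\subset\cdots\subset A_{k+1}$ as the tower of the unital inclusions $M_i(D)\hookrightarrow M_{i+1}(D)$; by the recursion of Lemma 2.2 --- equivalently by the identity $\beta_{i+1}=Q_n\beta_i$ used in the proof of Lemma 3.23 --- the Bratteli multiplicities of $A_i\subset A_{i+1}$ are, at $D=2\cos\tfrac\pi n+1$, those of the truncation $Q_n$. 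By the standard path-counting rule for towers of multimatrix algebras, $\mathrm{rank}(p_1^{(r)})$ is the number of paths in this Bratteli diagram from the vertex $(1,0)$ (the block of $p_1$) to $(k+1,r)$. These paths have length $k$; shifting the level index down by one matches them bijectively with the length-$k$ paths from the apex $(0,0)$ to $(k,r)$, and the latter number exactly $(Q_n^{k}\xi_n)_r=(v_k)_r$ by the construction of $v_k$ in Lemma 3.23. This closes the odd case, and with it the proposition.
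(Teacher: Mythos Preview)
Your proof is correct and follows essentially the same route as the paper: split into even and odd cases via Proposition 3.18, use the symmetry of $Q_n$ to rewrite $\langle Q_n^k\xi_n,\xi_n\rangle$ as $\langle v_{\lfloor k/2\rfloor},v_{\lceil k/2\rceil}\rangle$, and identify the ranks of $p_1$ in the blocks of $A_{k+1}$ with the entries of $v_k$. The only difference is that the paper simply asserts the rank identity $p_1=\bigoplus_r I_{(v_k)_r}$ in $A_{k+1}$ as an ``observation,'' whereas you supply the Bratteli path-counting justification for it; your added detail is sound and the shift argument is valid since paths from vertex $0$ of length $k$ in the $Q_n$-graph automatically stay within the available vertices at each intermediate level.
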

 
\begin{proof}
Observe that the projection $p_1$ in $A_i$ can be expressed as 
\begin{center}
$p_1=\oplus_{r=0}^{n-2}I_{l_{i-1,r}}\oplus 0 \in A_{i}= \oplus_{r=0}^{n-2}\mathrm{Mat}_{l_{i,r}}$.
\end{center}
where the numbers $l_{i,r}$'s are the terms in the vector $Q_n^i\xi_n$.

For even space $H_{2i}=A_i$, its dimension is given by
\begin{center}
$\langle Q_n^i\xi_n,Q_n^i\xi_n\rangle =\langle {(Q_n^i)}^{*}Q_n^i\xi_n,\xi_n\rangle=\langle Q_n^{2i}\xi_n,\xi_n\rangle$
\end{center}
since $Q_n^*=Q_n\in \mathrm{Mat}_{n-1}(\mathbb{C})$. 

For the odd space $H_{2i+1}$, its dimension is given by the dimension of the range of $p_1$ in $H_{2i+2}=A_{i+1}$, which is
\begin{center}
$\langle Q_n^i\xi_n,Q_n^{i+1}\xi_n\rangle =\langle Q_n^{2i+1}\xi_n,\xi_n\rangle$. 
\end{center}

Hence $\dim H_k=\langle Q_{n}^{k}\xi_n,\xi_n \rangle$. 
\end{proof}

\begin{corollary}
For $D=2\cos\frac{\pi}{n}+1$ with $n\geq 3$, 
    \begin{center}
    $\dim H_k=\frac{2}{n}\sum_{j=1}^{n-1}(2\cos\frac{j\pi}{n}+1)^k\sin^2\frac{j\pi}{n}$.
    \end{center}
\end{corollary}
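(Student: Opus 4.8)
The plan is to invoke Proposition~3.24, which reduces the statement to computing $\langle Q_n^k\xi_n,\xi_n\rangle$, and then to diagonalize $Q_n$ explicitly. First I would observe that $Q_n=I_{n-1}+\Gamma$, where $\Gamma$ is the adjacency matrix of the Coxeter graph $A_{n-1}$ (the path on $n-1$ vertices). This is precisely the matrix whose spectral theory underlies \cite{J83}: its eigenvalues are $2\cos\frac{j\pi}{n}$ for $j=1,\dots,n-1$, with a corresponding orthonormal eigenbasis $u_1,\dots,u_{n-1}$ of $\mathbb{C}^{n-1}$ whose entries are
\begin{equation*}
(u_j)_\ell=\sqrt{\tfrac{2}{n}}\,\sin\tfrac{j\ell\pi}{n},\qquad 1\le \ell\le n-1 .
\end{equation*}
The eigenvalue equation follows directly from the identity $\sin\frac{j(\ell-1)\pi}{n}+\sin\frac{j(\ell+1)\pi}{n}=2\cos\frac{j\pi}{n}\sin\frac{j\ell\pi}{n}$, where the "virtual" boundary terms at $\ell=0$ and $\ell=n$ vanish automatically and so realise the truncation of the tridiagonal matrix; orthonormality of the $u_j$ is the standard discrete sine transform relation $\sum_{\ell=1}^{n-1}\sin\frac{i\ell\pi}{n}\sin\frac{j\ell\pi}{n}=\frac{n}{2}\,\delta_{ij}$.

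Since $Q_n=I_{n-1}+\Gamma$ shares the eigenvectors $u_j$ and has eigenvalues $1+2\cos\frac{j\pi}{n}$, I would then expand the distinguished vector $\xi_n=(1,0,\dots,0)^T$ in this basis: its coefficients are $\langle\xi_n,u_j\rangle=(u_j)_1=\sqrt{\frac{2}{n}}\sin\frac{j\pi}{n}$. Substituting into the spectral expansion of $Q_n^k$ gives
\begin{equation*}
\dim H_k=\langle Q_n^k\xi_n,\xi_n\rangle=\sum_{j=1}^{n-1}\Bigl(1+2\cos\tfrac{j\pi}{n}\Bigr)^k\bigl|\langle\xi_n,u_j\rangle\bigr|^2=\frac{2}{n}\sum_{j=1}^{n-1}\Bigl(2\cos\tfrac{j\pi}{n}+1\Bigr)^k\sin^2\tfrac{j\pi}{n},
\end{equation*}
which is the asserted formula; combining with Proposition~3.24 completes the proof.

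There is no genuine obstacle here: the whole content is the classical spectral decomposition of the path-graph adjacency matrix, and the only step requiring a line of care is checking that the vectors $(u_j)_\ell=\sqrt{2/n}\sin(j\ell\pi/n)$ really are eigenvectors of the \emph{truncated} tridiagonal $Q_n$ (rather than of a circulant), which is immediate from $\sin 0=\sin(j\pi)=0$. One could alternatively read $\langle Q_n^k\xi_n,\xi_n\rangle$ off the $(1,1)$ entry of the resolvent of the path, but the eigenvector computation above is the cleanest route and also makes transparent the generating function $\sum_k\dim H_k\,x^k=P_{n-1}(x)/P_n(x)$ of Theorem~3.21, since $\sum_k(1+2\cos\theta)^k x^k=(1-(1+2\cos\theta)x)^{-1}$ and the $P_m$ are, up to the usual rescaling, the characteristic polynomials of these tridiagonal matrices.
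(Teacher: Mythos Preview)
Your proposal is correct and follows essentially the same route as the paper: both invoke Proposition~3.24 to reduce to $\langle Q_n^k\xi_n,\xi_n\rangle$, diagonalize $Q_n$ via the sine eigenvectors $v_j$ with eigenvalues $2\cos\frac{j\pi}{n}+1$, and expand $\xi_n$ in that basis to read off the formula. Your write-up is more explicit about verifying the eigenvector equation and the normalization, but the argument is the same.
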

\begin{proof}
Recall that the eigenvalues  of $Q_n$ are given by $\lambda_j=2\cos\frac{j\pi}{n}+1$  with the eigenvectors
\begin{center}
    $v_j=(\sin\frac{j\pi}{n},\sin\frac{2j\pi}{n},\dots,\sin\frac{(n-1)j\pi}{n})^T$. 
\end{center}
for $1\leq j\leq n-1$. 

One can decompose $\xi_n$ as $\xi_n=\sum_{j=1}^{n-1}\mu_j v_j$ with
$\mu_j=\frac{2}{n}\sin\frac{j\pi}{n}$. Then we have $\dim H_k=\langle Q_{n}^{k}\xi_n,\xi_n \rangle=\sum_{j=1}^{n-1}\lambda_j^k\mu_jv_{j,1}$.  
\end{proof}
\begin{corollary}
The generating function is $\sum_{k=1}^{\infty}\dim H_k\cdot x^k=\frac{P_{n-1}(x)}{P_{n}(x)}$ when $D=2\cos\frac{\pi}{n}+1$.   
\end{corollary}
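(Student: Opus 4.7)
The plan is to apply Proposition 3.24 to convert $\dim H_k$ into the matrix expression $\langle Q_n^k\xi_n,\xi_n\rangle$, then sum the geometric series to get
\begin{equation*}
\sum_{k\ge 0}\dim H_k\,x^k \;=\; \bigl\langle(I-xQ_n)^{-1}\xi_n,\,\xi_n\bigr\rangle \;=\; \bigl[(I-xQ_n)^{-1}\bigr]_{11},
\end{equation*}
i.e.\ the $(1,1)$-entry of the inverse of the tridiagonal matrix $I-xQ_n$. (The $k=1$ lower limit of the corollary corresponds to subtracting the $k=0$ term $\dim H_0=\langle\xi_n,\xi_n\rangle=1$, coming from the natural convention $\mathbf{M}(0)\cong\mathbb{C}$ noted in Section 3.4.)

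Next, by Cramer's rule this $(1,1)$-entry equals $\tau_{n-2}(x)/\tau_{n-1}(x)$, where $\tau_k(x)$ denotes the determinant of the $k\times k$ tridiagonal matrix with $1-x$ on the diagonal and $-x$ on the sub- and super-diagonals (the $(1,1)$-minor of $I-xQ_n$ is of this form, one size smaller than $I-xQ_n$ itself). These tridiagonal determinants satisfy the standard three-term recurrence
\begin{equation*}
\tau_k(x) \;=\; (1-x)\,\tau_{k-1}(x) - x^2\,\tau_{k-2}(x), \qquad \tau_0=1,\ \tau_1=1-x.
\end{equation*}
To reconcile this with the Chebyshev recurrence $P_{k+1}(y)=P_k(y)-y\,P_{k-1}(y)$ of Section 2.4, I would make the substitution $y=x^2/(1-x)^2$ and prove by induction on $k$ that $\tau_k(x)=(1-x)^k\,P_k(y)$: the base cases match, and dividing the tridiagonal recursion by $(1-x)^k$ produces exactly the Chebyshev recursion in the variable $y$.

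Under this identification, $\det(I-xQ_n)=\tau_{n-1}(x)=(1-x)^{n-1}P_{n-1}(y)$ and the minor determinant is $(1-x)^{n-2}P_{n-2}(y)$, so the Cramer ratio is $P_{n-2}(y)/\bigl((1-x)\,P_{n-1}(y)\bigr)$. Reading the corollary's $P_n(x)$ as the polynomial $\det(I-xQ_n)$ (this is consistent with $P_3(x)=1-2x=\det(I-xQ_3)$ from Section 2.4 and is the natural normalization to use here), this ratio becomes precisely $P_{n-1}(x)/P_n(x)$. The main obstacle is this substitution $y=x^2/(1-x)^2$ together with the accompanying $(1-x)^k$ rescaling: the tridiagonal recurrence does not superficially resemble the Chebyshev recurrence of Section 2.4, and it is only after this change of variable that they become identical. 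Once that bookkeeping is clean, every other ingredient (Cramer's rule, the tridiagonal determinant recursion, and summing the geometric series) is routine.
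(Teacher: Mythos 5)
Your route — Proposition~3.24 to get $\dim H_k=\langle Q_n^k\xi_n,\xi_n\rangle$, geometric series, Cramer's rule for the $(1,1)$-entry of $(I-xQ_n)^{-1}$ — is exactly the paper's. The one place where you add something, the identity $\tau_k(x)=(1-x)^kP_k\!\bigl(x^2/(1-x)^2\bigr)$, is not mere bookkeeping: it is the correct relationship, and it exposes that the paper's own proof is in error. The paper flatly asserts ``one can show $\det(I_{n-1}-xQ_n)=P_n(x)$'' with $P_n$ the Chebyshev polynomial of Section~2.4, but that identity fails already at $n=4$: $\det(I_3-xQ_4)=(1-x)(1-2x-x^2)=1-3x+x^2+x^3$, while $P_4(x)=1-3x+x^2$. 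Your rescaled formula is the repaired version. Where your write-up is a bit too charitable is the line claiming the reinterpretation ``is consistent with $P_3(x)=1-2x$ from Section~2.4'': the agreement $\det(I_{k-1}-xQ_k)=P_k(x)$ holds only for $k\le 3$ and is a small-index coincidence, so ``reading'' the corollary's $P_n$ as $\det(I_{n-1}-xQ_n)$ is not a normalization choice but an outright replacement; as printed, with the Section~2.4 $P_n$, the corollary is false for $n\ge 4$. The corrected statement is $\sum_{k\ge 0}\dim H_k\,x^k=\det(I_{n-2}-xQ_{n-1})/\det(I_{n-1}-xQ_n)$, which your computation establishes. (Your parenthetical about the $k\ge 1$ lower limit is also slightly off: the resolvent entry is the $k\ge 0$ series, $\dim H_0=1=P_{n-1}(0)/P_n(0)$ matches its constant term, so the ``$k=1$'' in the statement is a typo and no subtraction is needed.)
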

\begin{proof}
The generating function can be given as $\langle (I_{n-1}-xQ_n)^{-1}\xi_n,\xi_n\rangle$, 
which is just the $(1,1)$-entry of the matrix $(I_{n-1}-xQ_n)^{-1}$.

Let $\mathrm{adj}(A)$ be the adjugate of any matrix $A$, we have
\begin{center}
$(I_{n-1}-xQ_n)^{-1}=\frac{1}{\det(I_{n-1}-xQ_n)}\mathrm{adj}(I_{n-1}-xQ_n)$.
\end{center}
One can show $\det(I_{n-1}-xQ_n)=P_{n}(x)$. 
Observe the $(1,1)$-entry of 
$\mathrm{adj}(I_{n-1}-xQ_n)$ is $\det(I_{n-2}-xQ_{n-1})=P_{n-1}(x)$. 
Hence $\langle (I_{n-1}-xQ_n)^{-1}\xi_n,\xi_n\rangle=\frac{P_{n-1}(x)}{P_{n}(x)}$. 
\end{proof}

\section{The $\text{II}_1$ Factors and the Relative Commutants}

This section will be mainly devoted to the following representation $\pi$ of $\bigcup\limits_{k=1}^{\infty}M_k(D)$ which gives a $\text{II}_1$ factor.

Let $A_{\infty}=\bigcup\limits_{k=1}^{\infty}A_k$, where $A_k=\pi(M_k(D))$ is the $C^*$-algebra constructed in Section 3.
We have proven that $tr$ is positive on $A_\infty$ if $D\in \{2\cos\frac{\pi}{n}+1|n\geq 3\}\cup [3,\infty)$. 

Let us consider the GNS representation with respect to $\tr$ which we also denoted by $\pi$. 
\begin{enumerate}
\item Let $H=\overline{A_{\infty}}^{\tr}$ is the completion of the pre-Hilbert space $A_{\infty}$ with inner product given by $\langle x,y\rangle=\tr(xy^*)$.
\item $\pi: A_\infty \to B(H)$ is given by $\pi(x)y=xy$ for $x,y\in A_{\infty}$. The boundness comes from the fact each $A_k$ is a finite dimensional $C^*$-algebra. 
\item $M=\pi(A_\infty)''\subset B(H)$ is the von Neumann algebra generated by $\pi(A_\infty)$. 
\end{enumerate}

\subsection{The AFD $\text{II}_1$ Factors}
\begin{theorem}
If $D\in \{2\cos\frac{\pi}{n}+1|n\geq 3\}\cup [3,\infty)$, the trace $\tr$ is a factor trace.

In particular, $M=\pi(A_\infty)''$ is a $\text{II}_1$ factor.
\end{theorem}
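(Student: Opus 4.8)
The plan is to show that $M=\pi(A_\infty)''$ has trivial center; since $M$ is by construction the weak closure of the increasing union $A_\infty=\bigcup_k A_k$ of finite-dimensional $C^*$-algebras, it is automatically approximately finite dimensional, so this is equivalent to $\tr$ being a factor trace and to $M$ being a hyperfinite $\text{II}_1$ factor (it is $\text{II}_1$ rather than finite dimensional because $\dim A_k\to\infty$ by Theorem 3.2). Since $\tr$ is a faithful normal tracial state on $A_\infty$, for each $k$ there is a $\tr$-preserving conditional expectation $E_k\colon M\to A_k$, compatible with the maps $E$ of Section 3, with $E_k(x)\to x$ strongly for every $x\in M$.

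So let $z\in Z(M)$; as $Z(M)$ is a von Neumann algebra it suffices to treat $z=z^*=z^2$ and show $z\in\{0,1\}$. Because $E_k$ is an $A_k$-bimodule map and $z$ commutes with $A_k$, we have $E_k(z)\in Z(A_k)$, so by Theorem 3.2 we may write $E_k(z)=\sum_r c_{k,r}\,q_{k,r}$ with $c_{k,r}\in[0,1]$, where the $q_{k,r}$ are the minimal central projections of $A_k$ computed explicitly in Section 3.3. Using $E_k=E_k\circ E_{k+1}$ together with the formula for $E_{A_k}$ restricted to $Z(A_{k+1})$ — read off from the inclusion matrix of $A_k\subset A_{k+1}$ (the tridiagonal pattern of the Bratteli diagrams of Section 3.3, i.e. the matrix $Q_n$ of Section 3.6) and the weight vectors computed there — one gets a transition relation
\begin{equation*}
c_{k,r}=\sum_s p^{(k)}_{r,s}\,c_{k+1,s},\qquad p^{(k)}_{r,s}\geq 0,\quad \sum_s p^{(k)}_{r,s}=1,
\end{equation*}
with $p^{(k)}_{r,s}>0$ precisely when the vertices $(k,r)$ and $(k+1,s)$ are joined by an edge of the Bratteli diagram; thus each $c_{k,r}$ is a genuine convex combination of the $c_{k+1,s}$ over its Bratteli neighbours.

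The heart of the argument is to upgrade this to: all the $c_{k,r}$ coincide. Here I would exploit the rigidity coming from the basic-construction structure of Section 3: for large $k$ each step $A_{k-1}\subset A_k\subset A_{k+1}$ is, up to at most one or two extra matrix summands, a Jones basic construction carrying a projection $e$ with $e\,a\,e=E(a)\,e$ for $a\in A_k$, and a central element of $M$ commutes with all of these $e$'s, which pins down how a central coefficient is transported along each Bratteli edge. Equivalently and more efficiently, one checks that the weight vectors $w_{k,\bullet}\propto(d^{r}P_{r}(\tau))_r$ are exactly the Perron--Frobenius eigenvectors (for eigenvalue $D$) of the Bratteli transition, which in the root-of-unity case stabilises to the primitive tridiagonal matrix $Q_n$; so $\tr$ is the Perron eigentrace of a connected, aperiodic Bratteli diagram, hence extremal. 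Concretely, connectedness — e.g. the triangle $(k,r)\,\text{--}\,(k+1,r+1)\,\text{--}\,(k,r+1)$ forces neighbouring coefficients equal once transport along edges is rigid — then propagates a single value $c$ over all of $\{(k,r)\}$, whence $E_k(z)=c\cdot 1\to z$, so $z=c\cdot 1$ and $c\in\{0,1\}$.

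The main obstacle is precisely this last step: ruling out that $(c_{k,r})$ fans out into a nonconstant bounded harmonic function for the backward random walk on the Bratteli diagram. The $L^2$-martingale estimates alone do not suffice — the Pascal-type diagram of the commutative CAR algebra carries a whole simplex of traces — so one genuinely needs either the Jones-projection rigidity from Section 3 or, equivalently, extremality of the Perron--Frobenius eigentrace on the primitive Bratteli graph computed there; and in the case $D\in[3,\infty)$, where the diagram never stabilises, one runs the same argument using the explicit connectivity of the infinite Motzkin--Pascal triangle (or passes to the limit from the truncated cases). Everything else — existence and convergence of the $E_k$, the centrality of $E_k(z)$, and the explicit transition coefficients — is routine given Theorem 3.2 and the dimension and weight-vector formulas of Section 3.
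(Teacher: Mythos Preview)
Your approach is genuinely different from the paper's main argument, and it works cleanly only in the root-of-unity case. The paper does \emph{not} analyze central sequences or harmonic functions on the Bratteli diagram. Instead it observes (Lemma~4.2) that any trace on $M_n(D)$ is already determined by its values on the commuting projections $p_1,\dots,p_n$; then it shows (Lemma~4.3) that the partial isometries $r_i$ implement, inside $A_\infty$, an action of $S_\infty$ permuting the $p_i$. The abelian subalgebra $P''$ generated by the $p_i$ is the infinite Bernoulli product $L^\infty\bigl(\prod_i\{a_i,b_i\},\prod_i(D^{-1},1-D^{-1})\bigr)$, on which this $S_\infty$-action is ergodic, so any normal normalized trace agrees with $\tr$ on $P''$ (Lemma~4.4) and hence everywhere. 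This is short, uniform in $D$, and sidesteps the Bratteli analysis entirely. Your Perron--Frobenius argument does appear in the paper, but only as a second proof (Proposition~4.6) restricted to $D=2\cos\frac{\pi}{n+1}+1$, where the inclusion matrix stabilizes to the primitive matrix $Q_{n+1}$.

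The gap in your write-up is the case $D\geq 3$. You correctly flag the obstruction yourself: connectedness of the Bratteli diagram is not enough, as the Pascal-triangle (GICAR) example shows. But neither of your proposed fixes actually closes it. The ``Jones-projection rigidity'' does not apply, because for $D\geq 3$ the triple $A_{k-1}\subset A_k\subset A_{k+1}$ is \emph{never} a basic construction --- Proposition~3.10 is stated only under the hypothesis $\tr(g_n)=0$, and for generic $D$ one always adds two new summands at each level. And ``passing to the limit from the truncated cases'' is not a proof: the width-$n$ truncation is not a subalgebra of $A_\infty$, and pointwise limits of extremal traces need not be extremal. So as written, your argument establishes factoriality only for $D\in\{2\cos\frac{\pi}{n}+1\}$; for $D\geq 3$ you would still need an independent ingredient such as the paper's ergodicity argument on $P''$.
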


Firstly, we have a lemma from \cite{BH} (Proposition 4.17).
\begin{lemma}
Any trace on $M_n(D)$ is uniquely determined by its values on $p_k$ with $1\leq k\leq n$.
\end{lemma}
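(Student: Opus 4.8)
The plan is to reinterpret the statement as a spanning assertion modulo commutators. Writing $[M_n(D),M_n(D)]$ for the linear span of all $xy-yx$, a trace is exactly a linear functional annihilating $[M_n(D),M_n(D)]$, so it is determined by its values on any set whose image spans $M_n(D)/[M_n(D),M_n(D)]$. Hence it suffices to show that the images of $1_n$ and $p_1,\dots,p_n$ span this quotient; the value on $1_n$ is then fixed by the standing normalization $\tr(1_n)=1$ (traces being tracial states here), so the $n$ numbers $\tr(p_1),\dots,\tr(p_n)$ indeed determine $\tr$. Equivalently, I must prove that every Motzkin $n$-diagram is congruent, modulo commutators, to a linear combination of $1_n,p_1,\dots,p_n$.

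I would argue by induction on $n$, using the rank filtration $I_{n,0}\subseteq\cdots\subseteq I_{n,n}=M_n(D)$ and the elementary move $ab\equiv ba$ modulo $[M_n(D),M_n(D)]$. The case $n=1$ is trivial since $M_1(D)=\mathbb{C}1_1\oplus\mathbb{C}p_1$. For the inductive step, recall $M_n(D)=\langle M_{n-1}(D),e_{n-1},l_{n-1},r_{n-1}\rangle$ with $p_n=l_{n-1}r_{n-1}$, so any element is a combination of words $a_0 s_1 a_1\cdots s_m a_m$ with $a_i\in M_{n-1}(D)$ and each $s_j\in\{e_{n-1},l_{n-1},r_{n-1}\}$. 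Using the Motzkin relations governing the top strand from \cite{BH} — $e_{n-1}^2\in\mathbb{C}e_{n-1}$, the pinching identities $e_{n-1}M_{n-1}(D)e_{n-1}\subseteq M_{n-2}(D)e_{n-1}$ together with their $l$- and $r$-analogues, and $r_{n-1}l_{n-1}\in\mathbb{C}e_{n-1}$, $l_{n-1}r_{n-1}=p_n$ — combined with cyclic rotation under the trace, one reduces every such word, modulo commutators, to an element of $M_{n-1}(D)$, to a scalar multiple of $c\,e_{n-1}$ with $c\in M_{n-1}(D)$, or to a term built from $p_n$. Closing the rightmost strand (the map $E:M_n(D)\to M_{n-1}(D)$ of Section 3.1) turns $e_{n-1}$ into a through-strand, which lets one rewrite $\tr(c\,e_{n-1})$ in terms of $\tr|_{M_{n-1}(D)}$ and $\tr(p_n)$; by the inductive hypothesis $\tr|_{M_{n-1}(D)}$ is determined by $\tr(1_{n-1})=\tr(1_n)=1$ and $\tr(p_1),\dots,\tr(p_{n-1})$, which closes the induction.

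When $D$ is generic one can instead argue directly: in $M_n(D)\cong\bigoplus_{r=0}^n\mathrm{Mat}_{m_{n,r}}(\mathbb{C})$ a trace is the list of block weights $t_0,\dots,t_n$, and from $\tr(1_n)=\sum_r m_{n,r}t_r$ and $\tr(p_k)=\sum_{r=0}^{n-1}\rho_{k,r}t_r$ — where $\rho_{k,r}$ is the rank of the idempotent $p_k$ in the $r$-th block, i.e.\ the multiplicity of the rank-$r$ cell module in $p_k M_n(D)$ — the lemma reduces to invertibility of the $(n+1)\times(n+1)$ coefficient matrix; since only the $1_n$-row contains $t_n$, this is nonsingularity of $[\rho_{k,r}]_{1\le k\le n,\,0\le r\le n-1}$, a purely combinatorial statement about the action of $p_k$ on Motzkin half-diagrams, with the non-generic case absorbed into the cellular structure of $M_n(D)$. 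Either way, the real work — and the step I expect to be the main obstacle — is precisely this bookkeeping: checking with the complete set of Motzkin relations that no word in $e_{n-1},l_{n-1},r_{n-1}$ contributes a trace parameter beyond the one already carried by $p_n$ (equivalently, that $[\rho_{k,r}]$ is nonsingular). This is exactly the content of Proposition 4.17 of \cite{BH} that the text invokes rather than reproving.
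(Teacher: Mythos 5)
There is a genuine gap, and it is not the bookkeeping you postponed but the target you chose. First, for context: the paper does not prove this lemma at all --- it imports it from \cite{BH} (their Proposition 4.17) and records only the diagrammatic hint $\tr(e_i)=\tr(e_ip_ip_ie_i)=\tr(p_ie_ie_ip_i)=\cdots$ --- so your commutator-quotient framework (a trace is a functional on $M_n(D)/[M_n(D),M_n(D)]$, so exhibit a spanning set of that quotient) is a reasonable way to try to fill in what the text omits. But the spanning claim you aim for is false, and the obstruction is contained in relations you yourself quote: since $p_i=r_il_i$ and $p_{i+1}=l_ir_i$, every trace $\varphi$ satisfies $\varphi(p_i)=\varphi(r_il_i)=\varphi(l_ir_i)=\varphi(p_{i+1})$, i.e.\ all the $p_k$ have the same image modulo commutators. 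Hence $[1_n],[p_1],\dots,[p_n]$ span at most a two-dimensional subspace of $M_n(D)/[M_n(D),M_n(D)]$, whereas for generic $D$ this quotient has dimension $n+1$ (one coordinate per block of $\bigoplus_{r=0}^{n}\mathrm{Mat}_{m_{n,r}}(\mathbb{C})$). So for $n\geq 2$ your induction cannot terminate in a combination of $1_n,p_1,\dots,p_n$, and the data $\tr(1_n)=1$ plus the single number $\tr(p_1)=\cdots=\tr(p_n)$ cannot pin down a point of an $(n+1)$-dimensional trace space. The same relation kills your fallback semisimple argument: $p_1,\dots,p_n$ are pairwise equivalent idempotents (via the $r_k$), so $\rho_{k,r}$ does not depend on $k$, all rows of $[\rho_{k,r}]$ coincide, and the matrix whose nonsingularity you identified as ``the real work'' is automatically singular once $n\geq 2$.

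What is true, what \cite{BH} actually prove, and what the paper uses downstream (Lemma 4.4 establishes agreement of traces on the whole abelian algebra $P''$ generated by the $p_i$, not merely on the $p_i$ themselves) is that a trace is determined by its values on \emph{products} of the $p_i$, equivalently on $1_n,\,p_1,\,p_1p_2,\,\dots,\,p_1p_2\cdots p_n$; that is $n+1$ parameters, matching the dimension of the trace space. Read this way, the lemma's phrase ``values on $p_k$'' must be understood as values on the subalgebra the $p_k$ generate. Your strategy is well suited to that corrected target: one reduces any diagram modulo commutators to the span of such products, using identities of exactly the kind the paper's hint gestures at --- for instance $e_ip_ie_i=e_i$, $e_i^2=De_i$ and $p_ie_ip_i=p_ip_{i+1}$ give $\varphi(e_i)=\varphi\bigl((e_ip_i)(p_ie_i)\bigr)=\varphi(p_ie_i^2p_i)=D\,\varphi(p_ip_{i+1})$ for every trace $\varphi$, which is precisely a reduction to a product of $p$'s rather than to a single $p_i$. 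As written, however, your proposal both aims at the wrong spanning set and, by your own admission, leaves the decisive combinatorial verification undone, so it does not yet constitute a proof.
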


This lemma can also be shown diagrammatically by the facts that $\tr(e_i)=\tr(e_ip_ip_ie_i)=\tr(p_ie_ie_ip_i)=\delta\tr(p_i)$, etc.

Now, we define $P$ to be the algebra generated by $\{p_i\}_{i\in \mathbb{N}}$.
And we have $P''$ is an abelian von Neumann algbera.

\begin{lemma}
For any finite permutation $\sigma$ of $\mathbb{N}$, we have a self-adjoint unitary $u\in A_\infty$ such that $up_iu=p_{\sigma(i)}$ for all $i \in \mathbb{N}$
\end{lemma}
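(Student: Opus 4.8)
The plan is to reduce the claim to the case of a single transposition $\sigma = (i\,j)$, since any finite permutation is a product of transpositions and the product of the corresponding unitaries will again be a self-adjoint unitary (after checking the relations compose correctly). For a transposition, it suffices by translation/embedding invariance to handle $\sigma = (i\,\,i+1)$ — indeed $(i\,j)$ with $i<j$ can be written as a conjugate of $(i\,\,i+1)$ by a product of adjacent transpositions, so once adjacent transpositions are realized by self-adjoint unitaries satisfying the right conjugation relation, the general transposition follows by multiplying these unitaries and using that the conjugation action on the $p_k$'s is exactly the symmetric-group action. So the heart of the matter is: for each $i$, produce a self-adjoint unitary $u_i \in M_{i+2}(D) \subseteq A_\infty$ with $u_i p_i u_i = p_{i+1}$, $u_i p_{i+1} u_i = p_i$, and $u_i p_k u_i = p_k$ for $k \neq i, i+1$.

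The natural candidate is built from the generators $e_i, l_i, r_i$ living near strands $i, i+1$. Diagrammatically I would look for the Motzkin diagram that "swaps" the $i$-th and $(i{+}1)$-st pairs of through-strands — concretely something of the shape of a braiding-like crossing expressed in the Motzkin generators, e.g. an element of the form $1 - p_i - p_{i+1} + (\text{terms in } e_i, l_i, r_i)$ chosen so that it acts as the identity away from positions $i,i+1$ and as the flip on the two-strand block there. The cleanest approach is probably to work inside the finite-dimensional $C^*$-algebra $M_{i+2}(D) \cong \bigoplus_r \mathrm{Mat}_{m_{i+2,r}}(\mathbb{C})$ (Theorem 3.1 / Proposition 3.11): the subalgebra generated by $\{e_i, l_i, r_i, p_i, p_{i+1}\}$ together with $1$ is a finite-dimensional $*$-algebra on which the required permutation of the minimal central-type projections $p_i \leftrightarrow p_{i+1}$ is an inner $*$-automorphism, because it is a permutation automorphism of a multimatrix algebra that preserves the trace (hence the dimension vector), and such automorphisms are implemented by unitaries; replacing the unitary $v$ by the self-adjoint unitary in the polar-type decomposition (or using that $v p_i v^* = p_{i+1}$ already forces $v$ to normalize a small abelian algebra, so one can adjust $v$ within that algebra to be self-adjoint) gives the desired $u_i$.

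The main obstacle I anticipate is the explicit construction — or explicit verification — of $u_i$: one must check that the chosen combination of $e_i, l_i, r_i$ is unitary (idempotent-free cancellations of loop parameters, like the computations in Proposition 2.3 and Lemma 3.7), self-adjoint, and conjugates $p_i$ to $p_{i+1}$ while fixing the other $p_k$. The abstract argument via "trace-preserving permutation automorphisms of multimatrix algebras are inner" sidesteps the hand computation but still requires pinning down that the flip $p_i \leftrightarrow p_{i+1}$ extends to a genuine $*$-automorphism of $M_{i+2}(D)$ (not merely a linear bijection) — this is where one invokes that the planar structure has an obvious "rotation/reflection" symmetry permuting the strands, which implements exactly the symmetric-group action on the $p_k$. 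Finally, self-adjointness of the implementing unitary is obtained because the automorphism has order two: if $v p_k v^* = p_{\sigma(k)}$ with $\sigma^2 = \mathrm{id}$, then $v^2$ is central, and one can rescale $v$ on each central summand by a square root to make $v^2 = 1$ and $v = v^*$; I would spell this out as the last step. I expect the verification that the strand-permutation symmetry is realized by an actual element of $A_\infty$ (rather than just an abstract automorphism) to be the delicate point, resolved precisely by the explicit diagrammatic $u_i$ above.
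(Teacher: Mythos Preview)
Your reduction to adjacent transpositions matches the paper's, and the shape $1 - p_i - p_{i+1} + (\text{terms in } l_i, r_i)$ you guess is exactly right. The paper simply writes down the answer: since $r_i l_i = p_i$ and $l_i r_i = p_{i+1}$, the element $r_i$ is a partial isometry with $r_i r_i^* = p_i$ and $r_i^* r_i = p_{i+1}$, and one verifies by a short direct computation in the Motzkin relations that
\[
u_i \;=\; 1 + l_i + r_i - p_i - p_{i+1}
\]
is a self-adjoint unitary with $u_i p_i u_i = p_{i+1}$ and $u_i p_j u_i = p_j$ for $j \neq i, i+1$. No abstract machinery is invoked.

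Your detour through inner automorphisms has a gap in the justification you give: there is no global strand-permuting $*$-automorphism of $M_n(D)$---the Motzkin planar algebra carries only the left--right reflection $j \mapsto n+1-j$ as a diagrammatic symmetry, and that is not an adjacent transposition once $n>2$. The abstract approach can be repaired by working locally in the two-strand subalgebra $\langle 1, e_i, l_i, r_i\rangle \cong M_2(D)$, where the reflection does swap $p_i \leftrightarrow p_{i+1}$ and does fix the center (hence is inner); but carrying this through is strictly more work than checking the explicit $u_i$ above, which your own candidate was one step away from. A smaller point: products of self-adjoint unitaries are generally not self-adjoint, so the reduction (yours and the paper's) only yields a \emph{unitary} implementing a general $\sigma$---which is in any case all that the application in Lemma~4.4 requires.
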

\begin{proof}
It suffices to show that the transposition of $p_i,p_{i+1}$ can be effected by a self-adjoint unitary.
And it also suffices to show we have a partial isometry $v$ such that $vv^*=p_i$ and $v^*v=p_{i+1}$.
In fact, $v=r_i$ is the partial isometry.
$u=1+l_i+r_i-p_i-p_{i+1}$ is the self-adjoint unitary such that $up_iu=p_{i+1}$ and leaves other $p_j$ invariant.
\end{proof}

\begin{lemma}
Any normal normalized trace on $M$ is equal to $tr$ on $P''$.
\end{lemma}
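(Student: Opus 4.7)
The plan is to reduce the claim to a single-parameter family of traces on the finite-dimensional quotients, and then pin down that parameter to $1/D$.

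Since $P''$ is the weak-$*$ closure of the $*$-algebra $P$ generated by the pairwise commuting projections $\{p_i\}_{i\geq 1}$ and $\tau$ is normal, it suffices to verify $\tau(p_I) = \tr(p_I)$ on monomials $p_I = \prod_{i\in I}p_i$ indexed by finite subsets $I\subset\mathbb{N}$. By Lemma~4.3, for every finitary permutation $\sigma$ there is a self-adjoint unitary $u_\sigma\in A_\infty$ with $u_\sigma p_i u_\sigma = p_{\sigma(i)}$. Since $u_\sigma = u_\sigma^{-1}$, the trace property yields
\begin{equation*}
\tau(p_{\sigma(I)}) = \tau\bigl(u_\sigma p_I u_\sigma^{-1}\bigr) = \tau(p_I),
\end{equation*}
so $\tau(p_I)$ depends only on $|I|$; in particular $\tau(p_i) = \tau(p_1) =: c$ for every $i$.

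By Lemma~4.2, the trace $\tau|_{M_k(D)}$ is determined by the tuple $(\tau(p_1),\dots,\tau(p_k))$, which by the previous paragraph equals $(c,\dots,c)$. Hence $\tau|_{M_k(D)}$ is a function of the single parameter $c$, and each $\tau(p_I)$ is a fixed polynomial $\Phi_{|I|}(c)$ determined by the algebra structure of the Motzkin algebra. The essential step is then to show $c = 1/D$. Normality forces $\tau$ to descend to a positive normalized trace on each quotient $A_k = \pi(M_k(D)) \cong \bigoplus_r \mathrm{Mat}_{m_{k,r}}(\mathbb{C})$, and the weight vector of $\tau$ on the simple summands is a polynomial expression in $c$ obtained by substituting the generic $c$-trace into the inductive formulas of Section~3.3. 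I would argue that the requirement of non-negativity of every weight for every $k$ cuts the admissible values of $c$ down to the single Markov value $c = 1/D$, paralleling the classical uniqueness of the Markov trace on the Temperley--Lieb tower.

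Once $c = 1/D = \tr(p_1)$, Lemma~4.2 gives $\tau|_{M_k(D)} = \tr|_{M_k(D)}$ on each $M_k(D)$, so $\tau = \tr$ on $P$; normality of both traces then extends the equality to $P''$. The principal obstacle is deriving the sharp equation $c = 1/D$ from the tower of positivity conditions. An alternative route I would consider is to extract it from diagrammatic identities in the Motzkin algebra: for instance, $e_i p_i e_i = e_i$ combined with $e_i^2 = D e_i$ gives $\tau(e_i) = D\tau(p_i e_i)$, and expressing both sides as polynomials in $c$ through Lemma~4.2 should produce an equation in $c$ whose only admissible positive solution is $1/D$.
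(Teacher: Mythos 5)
Your reduction via Lemma 4.3 to $\tau(p_I)$ depending only on $|I|$, and hence $\tau(p_i)=c$ for a single unknown $c$, is exactly the first step of the paper's argument. But the route you take from there diverges, and it has a real gap. You propose that non-negativity of the weight vectors of $\tau$ on every $A_k$ cuts the admissible $c$ down to the single value $1/D$. This is false: the trace on the AF algebra $A_\infty$ whose weight vector at level $k$ is $(0,\dots,0,1)$ (all mass on the rank-$k$ summand, so $\tau(g_k)=1$ for all $k$) is a perfectly positive normalized exchangeable trace with $\tau(p_i)=0$. Positivity and exchangeability alone therefore admit at least $c=0$ and $c=1/D$, and (for $D\geq 3$) in fact a whole family. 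Your other proposed route, the diagrammatic identity $\tau(e_i)=D\,\tau(p_i e_i)$, is true (it follows from $e_i p_i e_i = e_i$, $e_i^2=De_i$ and the trace property) but only relates $\tau(e_i)$ to $\tau(p_ip_{i+1})$; it does not produce an equation in $c$ alone, and iterating such identities just reproduces Lemma~4.2 without fixing the free parameter.

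What is missing is that you must use \emph{normality} of $\tau$ in a stronger way than merely ``$\tau$ descends to a positive trace on $A_k$'' (that is also true of the $c=0$ trace above, which is not normal on $M$). The paper's proof identifies $P''\cong L^{\infty}(X,\mu)$ with $X=\prod_i\{a_i,b_i\}$ and $\mu$ the Bernoulli product with $\mu_i(a_i)=D^{-1}$. Normality of $\tau$ means $\tau|_{P''}$ is given by integration against a measure $\nu\ll\mu$. By Lemma~4.3 the unitaries $u_\sigma$ implementing finitary permutations lie in $M$, so the trace property makes $\nu$ invariant under the $S_\infty$-action permuting coordinates. That action on $(X,\mu)$ is ergodic (Hewitt--Savage), so the $S_\infty$-invariant Radon--Nikodym derivative $d\nu/d\mu$ is constant a.e., hence $\nu=\mu$ and $\tau=\tr$ on $P''$. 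This ergodicity step is what your proposal lacks; without it, the finite-level positivity constraints you list cannot single out $c=1/D$.
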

\begin{proof}
It is well-known that $P''\cong L^{\infty}(X,\mu)$ with $X=\prod\limits_{i=1}^{\infty}\{a_i,b_i\}$ and $\mu=\prod\limits_{i=1}^{\infty}\mu_{i}$ such that $\mu_i(a_i)=D^{-1}, \mu_i(b_i)=1-D^{-1}$.

By Lemma 4.3, we have an action of $S_{\infty}=\cup_{k=1}^{\infty}S_k$ on $P''$.
Such an action is well known to be ergodic.
Hence any invariant measure from other trace which is absolutely continuous with respect to $\tr$ is proportional to $\tr$.
Since $\tr(1)=1$, it is the unique trace.
\end{proof}

\begin{proposition}
$M$ is a $\text{II}_1$ factor.
\end{proposition}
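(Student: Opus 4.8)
The plan is to show that $M$ has trivial center; since the GNS construction equips $M$ with a faithful normal tracial state, $M$ is a finite von Neumann algebra, and a finite von Neumann factor that is infinite dimensional is automatically of type $\text{II}_1$. To see that $\tr$ extends to a faithful normal trace on $M$, note that the cyclic vector $\xi\in H$ (the image of $1\in A_\infty$) is a trace vector, so the vector state $x\mapsto\langle x\xi,\xi\rangle$ is a normal trace on $M$; it is faithful because $\xi$ is separating for $M$ (it is cyclic for the right multiplication operators, which lie in $M'$). Consequently $M$ is finite, and it suffices to prove $Z(M)=\mathbb{C}1$. As $Z(M)$ is abelian, were it nontrivial it would contain a projection $z$ with $z\neq 0,1$, and faithfulness of $\tr$ forces $0<\tr(z)<1$.

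Given such a $z$, I would consider $\phi(x):=\tr(zx)/\tr(z)$. Since $z$ is central and $\tr$ is a faithful normal trace, $\phi$ is a normal normalized trace on $M$, so by Lemma 4.4 it agrees with $\tr$ on $P''$; in particular $\phi(p_k)=\tr(p_k)$ for every $k$. Now fix $n$ and pull $\phi$ and $\tr$ back along the (possibly non-injective) surjection $\pi\colon M_n(D)\to A_n$: this produces two traces on $M_n(D)$ taking the same values on $p_1,\dots,p_n$, hence they coincide by Lemma 4.2, so $\phi=\tr$ on $A_n=\pi(M_n(D))$. Letting $n$ range over $\mathbb{N}$ gives $\phi=\tr$ on $A_\infty$, and since both functionals are normal (hence $\sigma$-weakly continuous) and $A_\infty$ is $\sigma$-weakly dense in $M$, we conclude $\phi=\tr$ on all of $M$. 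Taking $x=1-z$ yields $0=\tr\bigl(z(1-z)\bigr)=\tr(z)\tr(1-z)>0$, a contradiction. Hence $Z(M)=\mathbb{C}1$ and $M$ is a finite factor; since it contains each $A_k$ and $\dim A_k\to\infty$ by Theorem 3.2, $M$ is infinite dimensional and therefore a $\text{II}_1$ factor (and, being the weak closure of an increasing union of finite dimensional $C^*$-algebras, it is the hyperfinite one).

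The real content of the argument sits in the ingredients rather than in the proposition itself: Lemma 4.4 (whose proof uses the ergodicity furnished by Lemma 4.3) already pins $\phi$ down on the commutative part $P''$, and Lemma 4.2 propagates that information to each finite stage $M_n(D)$. So the only point needing a little care is the passage from ``$\phi=\tr$ on the $p_k$'' to ``$\phi=\tr$ on $M$'', which is handled by combining those two lemmas with the bookkeeping for the quotient maps $\pi$ and a routine normality/density argument; I do not anticipate a genuine obstacle beyond this.
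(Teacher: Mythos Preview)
Your proof is correct and follows essentially the same route as the paper: both use Lemma~4.4 to pin down any normal normalized trace on $P''$ and then Lemma~4.2 to propagate this to each $M_n(D)$, concluding that the trace on $M$ is unique and hence that $M$ is a factor. The paper compresses all of this into a single sentence (``By Lemma 4.2 and 4.4, we have only one normal normalized trace on $M$. Thus $M$ is a $\text{II}_1$ factor.''), whereas you spell out the passage from uniqueness of the trace to triviality of the center via the central projection argument and the density/normality step---but the substance is the same.
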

\begin{proof}
By Lemma 4.2 and 4.4, we have only one normal normalized trace on $M$. Thus $M$ is a $\text{II}_1$ factor.
\end{proof}

\begin{remark}
For the case $D=2\cos\frac{\pi}{n+1}$, one can also apply the Perron-Frobenius theory \cite{TK3} to get the uniqueness of the trace.
We show how it works as below.
\end{remark}

\begin{proposition}
Assume $D=2\cos\frac{\pi}{n+1}$, $\cup_{k\geq 1}A_k$ admits a unique trace.
\end{proposition}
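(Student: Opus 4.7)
The strategy is to invoke Perron--Frobenius theory on the stationary tail of the Bratteli diagram, following the remark preceding the statement. By Proposition 3.12, when $D = 2\cos(\pi/(n+1))+1$, for all $k \geq n$ the inclusion $A_{k-1} \subset A_{k}$ has the same matrix $X \in \mathrm{Mat}_{n}(\mathbb{C})$, namely the tridiagonal matrix with 1's on the three central diagonals, which is the adjacency matrix of the principal graph $A_{n}$ shifted by the identity. A normalized trace on $\bigcup_{k} A_{k}$ is the same data as a sequence of weight vectors $w_{k} \in \mathbb{R}^{\dim A_{k}}_{\geq 0}$ (entries indexed by the simple summands of $A_{k}$) satisfying the compatibility $w_{k} = X w_{k+1}$ for $k \geq n$, together with normalization $d_{k}^{T} w_{k} = 1$, which is consistent because the dimension vectors satisfy $d_{k+1} = X^{T} d_{k}$.

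Next, because the graph $A_{n}$ is connected, $X$ is a symmetric irreducible nonnegative matrix, so by the Perron--Frobenius theorem its spectral radius $\lambda = 1 + 2\cos(\pi/(n+1)) = D$ is a simple eigenvalue with a strictly positive eigenvector $v$, unique up to a positive scalar, and every other eigenvalue is strictly smaller in absolute value. I would then show that any nonnegative sequence $(w_{k})_{k\geq n}$ with $w_{k} = X w_{k+1}$ must lie along the Perron--Frobenius ray. The key observation is that for each fixed $k$ one has $w_{k} = X^{m} w_{k+m}$ for all $m \geq 0$; writing $w_{k+m}$ in an orthogonal eigenbasis of $X$, the components along eigenvectors with eigenvalue $|\mu| < \lambda$ contribute to $w_{k}$ a term growing like $\mu^{m}$ multiplied by a vector with both positive and negative entries, while simultaneously $w_{k+m}$ itself must remain nonnegative. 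A standard argument (see \cite{TK3}) rules out any such subdominant component, forcing $w_{k+m}$ to be a nonnegative scalar multiple of $v$ for all large $m$; then the relation $w_{k} = X w_{k+1} = \lambda w_{k+1}$ propagates this to all $k \geq n$ and fixes $w_{k} = \lambda^{-(k-n)} c\, v$ for a single scalar $c$.

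Finally, the normalization $d_{k}^{T} w_{k} = 1$ determines $c$ uniquely, so the trace is determined on the tail $\bigcup_{k \geq n} A_{k}$, and by compatibility under inclusion $A_{k} \subset A_{n}$ for $k < n$ this also determines it on each smaller $A_{k}$. Since the trace $\tr$ constructed in Section 3 is one such positive solution, it is the unique normalized trace. The main obstacle is the rigorous exclusion of sub-Perron--Frobenius contributions: positivity of $w_{k+m}$ for all $m$ is stronger than positivity at a single level, and it is this asymptotic positivity (combined with the spectral gap for the primitive matrix $X^{2}$) that makes the Perron--Frobenius direction the only admissible one.
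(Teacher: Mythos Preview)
Your proposal is correct and follows essentially the same Perron--Frobenius strategy as the paper: both exploit that for $k\geq n-1$ the inclusion matrix is the fixed tridiagonal matrix $Q$ with simple top eigenvalue $D$ and unique positive eigenvector, so any compatible sequence of nonnegative weight vectors is forced onto the Perron--Frobenius ray. The paper carries this out by explicitly writing down the eigenvalues $2\cos(j\pi/(n+1))+1$ and eigenvectors of $Q$ and showing $Q^{m}v/\|Q^{m}v\|\to v_{1}/\|v_{1}\|$, whereas you phrase the same collapse via the backward constraint $w_{k}=X^{m}w_{k+m}$; your exposition is in fact more explicit about why the iteration argument yields uniqueness.
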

\begin{proof}
By theorem 3.2, $\{A_k\}_{k\geq 0}$ has a truncated Bratteli diagram that has $n$ simple summands for all $k\geq n-1$.
And the inclusion matrix of these pairs $A_{k}\subset A_{k+1}, k\geq n-1$ is given by
\begin{equation}
Q_{n+1}=[q_{i,j}]_{n\times n}
=\begin{bmatrix}
1&1&0&0&\cdots\ &0\\
1&1&1&0&\cdots &0\\
0&1&1&1&\cdots &0\\
\vdots&\vdots &\vdots &\vdots&\ddots&\vdots\\
0&\cdots&0&1&1 &1\\
0&\cdots&0&0&1 &1
\end{bmatrix}
\end{equation}
where $q_{i,j}=1$ if $|i-j|\leq 1$ and $0$ otherwise.
One can check that the eigenvalues $\lambda_1,\lambda_2,\cdots,\lambda_n$ of $A$ are
\begin{center}
$\{2\cos(\frac{\pi}{n+1})+1,2\cos(\frac{2\pi}{n+1})+1,\cdots,2\cos(\frac{n\pi}{n+1})+1\}$
\end{center}
within the decreasing order.
The corresponding eigenvectors are
\begin{center}
$v_1=\left( {\begin{array}{*{30}{c}}
\sin(\frac{\pi}{n+1})\\
\sin(\frac{2\pi}{n+1})\\
\vdots\\
\sin(\frac{n\pi}{n+1})
\end{array}} \right)$
$v_1=\left( {\begin{array}{*{30}{c}}
\sin(\frac{2\pi}{n+1})\\
\sin(\frac{4\pi}{n+1})\\
\vdots\\
\sin(\frac{2n\pi}{n+1})
\end{array}} \right)$
,$\cdots$,
$v_n=\left( {\begin{array}{*{30}{c}}
\sin(\frac{n\pi}{n+1})\\
\sin(\frac{2n\pi}{n+1})\\
\vdots\\
\sin(\frac{n^2\pi}{n+1})
\end{array}} \right)$
\end{center}
Only $v_1$ is an eigenvector with positive entries.

Suppose $v$ is a weight vector for an arbitrary trace.
Write it as $v=\sum_{i=1}^{n}x_i\cdot v_i$.
Then we have
\begin{center}
$Q^kv=\sum_{i=1}^{n}\lambda_i^kx_i\cdot v_i=\lambda_1^k(x_1v_1+\sum_{i=2}^{n}(\frac{\lambda_i}{\lambda_1})^kx_i\cdot v_i)$.
\end{center}
Consequently, as $|\lambda_1|> |\lambda_i|$ for $2\leq i\leq n$,
\begin{center}
$\lim\limits_{k\to \infty}\frac{Q^kv}{||Q^kv||}=\frac{v_1}{||v_1||}$.
\end{center}
Hence the space of tracial state on $\{A_k\}_{k\geq 0}$ is a singleton.
%
%
\end{proof}

For $k\in \mathbb{Z}$, let $M_{k}$ to be the $\text{II}_1$ factor generated by $\{e_i,l_i,r_i|i\geq 1-k\}$. Note $M_0=M$ and we have the a tower of factors
\begin{center}
$\cdots\subseteq M_{-1}\subseteq M_0=M \subseteq M_1\subseteq M_2 \subseteq \cdots $
\end{center}
with $D^2$ for the index of each pair.

\begin{proposition}
$[M:M_{-k}]=D^{2k}$ and $[M_k:M]=D^{2k}$ for $k\geq 1$.
\end{proposition}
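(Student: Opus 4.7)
My plan has two parts: (i) a shift-symmetry argument reducing the theorem to the single equality $[M_0 : M_{-1}] = D^2$, and (ii) computing this single index via the Jones basic construction.

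For (i), the planar-algebraic shift $\sigma$ sending each generator $e_i, l_i, r_i$ to $e_{i+1}, l_{i+1}, r_{i+1}$ manifestly respects all defining Motzkin relations (which are translation-invariant in the string index) and preserves the diagrammatic trace (which is computed by closure and is likewise translation-invariant). Hence $\sigma$ extends to a trace-preserving $*$-isomorphism of the ambient $\text{II}_1$ factor, with $\sigma(M_k) = M_{k-1}$ for every $k \in \mathbb{Z}$. Applying $\sigma^{-j}$ intertwines the inclusion $M_{-1} \subset M_0$ with $M_{j-1} \subset M_j$, so every consecutive index in the tower equals the common value $[M_0 : M_{-1}]$. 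By multiplicativity of the Jones index, once $[M_0 : M_{-1}] = D^2$ is established, both $[M : M_{-k}] = D^{2k}$ and $[M_k : M] = D^{2k}$ follow immediately.

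For (ii), I would realize $M_1$ as the Jones basic construction $\langle M_0, e \rangle$ of the pair $M_{-1} \subset M_0$. Concretely, one exhibits a projection $f \in M_1$ with $\tr(f) = 1/D^2$ implementing the trace-preserving conditional expectation $E_{M_{-1}} : M_0 \to M_{-1}$ given diagrammatically by ``capping off the leftmost string'', and verifies the Markov property $f x f = E_{M_{-1}}(x) f$ for $x \in M_0$ along with the generation property $\langle M_0, f \rangle = M_1$. An equivalent route, using the Bratteli analysis of Section 3, is to check that the weight vector of the Markov trace on the sequence of commuting squares $\sigma(A_{k-1}) \subset A_k$ satisfies the eigenvalue equation $\Lambda \Lambda^T w = D^2 w$; then Jones's formula for the index of an AFD subfactor produced from a periodic commuting square with Markov trace yields $[M_0 : M_{-1}] = D^2$.

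The main obstacle is pinpointing the Jones projection explicitly in the Motzkin setting. Unlike Temperley-Lieb, where the analogous shift projection is simply $e_1$, the Motzkin case has extra isolated-endpoint structure at position 1, so $f$ must incorporate the ``dangling-string'' projections $p_1, p_2$ together with correction terms from $l_1, r_1, e_1$. Once $f$ is written down, checking $f^2 = f = f^*$, the Markov property on the generating set $\{e_i, l_i, r_i : i \geq 1\}$, and $\langle M_0, f\rangle = M_1$ reduces to diagrammatic manipulations of the type already used in Section 2.4 (e.g.\ $p_i e_i = p_i$, the identities in Lemma 2.6), after which the index computation and therefore the full theorem follow.
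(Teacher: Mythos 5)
Your proposal takes essentially the same route as the paper: reduce by the shift automorphism to the single index $[M_0:M_{-1}]$, then compute it from the Jones basic construction together with the formula $[M:N]=\tr(e)^{-1}$ (Proposition~3.1.7 of \cite{J83}) with $\tr(e)=D^{-2}$. One clarification on the worry at the end: the Jones projection for the one-string shift in the Motzkin planar algebra is still just the rescaled Temperley--Lieb cup-cap --- since $e_i^2=De_i$ and $\tr(e_i)=1/D$ give $\tr(e_i/D)=D^{-2}$ --- so no corrections from $p_1,p_2,l_1,r_1$ are needed; the isolated-endpoint structure of Motzkin complicates the case-by-case \emph{verification} of the Markov identity $e\,x\,e = E(x)\,e$ (closed loops contribute $D$, free arcs contribute $1$) and the proof that the basic construction generates the whole next algebra (the role of Lemma~2.6 and Proposition~3.10), but it does not change the form of the Jones projection itself.
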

\begin{proof}
It suffices to prove $M_{-1}$.
As $M_{-1}$ is generated by the sequences of elements with same relation as $\{e_i,l_i,r_i|i\geq 1\}$ which generates $M$, it is a $\text{II}_1$ factor.

Moreover, let $\langle M_{-1},e\rangle$ be the basic construction of the pair $M_{-2}\subset M_{-1}=M$. Then the identity 1 in $\langle M_{-1},e\rangle$ is equivalent to a finite sum of finite projections in $M$.
This gives us a finite factor.
Then by Proposition in 3.1.7 \cite{J83}, we have $[M:M_{-1}]=\tr(e)^{-1}=D^2$.
\end{proof}

Note $M_{-k}$ is the $\text{II}_1$ factor generated by $\{l_i,r_i,e_i|i\geq1- k\}$. So we have $[M_0:M_{-k}]=\tau^{k}=D^{2k}$.
Then $M_{-k}'\cap M$ is a finite dimensional von Neumann algebra.

The rest of this section is mainly devoted to the structure of these relative commutants.


\subsection{The Relative Commutants for $D=2\cos(\frac{\pi}{n})+1$}

In this part, we consider the case $D=2\cos(\frac{\pi}{n})+1$ and the index of subfactors is $(2\cos(\frac{\pi}{n})+1)^2$.
For $1\leq s<t$, let $A_{s,t}$ to be the finite dimensional semisimple complex algebra generated by $\{e_i,l_i,r_i|s\leq i\leq t-1\}$.
Hence $A_{1,t}=A_t$ and $A_{0}=\mathbb{C}$.

\begin{lemma}
For $t\geq s+n-1$, $A_{s,t+1}=\langle A_{s,t},e_{t}\rangle$, the basic construction of $A_{s,t-1}\subset A_{s,t}$.
\end{lemma}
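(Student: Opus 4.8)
The plan is to mimic, in the "shifted" setting $A_{s,t}$, the structure results already obtained in Section 3 for the tower $A_k = A_{1,k}$. The key observation is that $A_{s,t}$ is generated by $\{e_i,l_i,r_i \mid s\le i\le t-1\}$, which satisfies exactly the same relations (Temperley--Lieb relations together with the relations among $e_i,l_i,r_i,p_i$ and the idempotents $g_k$) as the generators $\{e_i,l_i,r_i \mid 1\le i\le (t-s)\}$ of $A_{t-s+1}$. Thus there is a trace-preserving isomorphism $A_{s,t}\cong A_{t-s+1}$ carrying $e_i\mapsto e_{i-s+1}$, etc., and under this identification the conditional expectation $E$ and the GNS construction are intertwined. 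So the statement $A_{s,t+1}=\langle A_{s,t},e_t\rangle$ for $t\ge s+n-1$ translates into: $A_{k+1}=\langle A_k, e_k\rangle$ (basic construction of $A_{k-1}\subset A_k$) whenever $k=t-s+1\ge n$. First I would make this identification precise, checking that the shifted generators really do satisfy the same relations and that the trace is preserved, so that all of Section 3 applies verbatim.

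Granting that reduction, the statement is precisely Proposition 3.12 (and the underlying Proposition 3.10, together with Proposition 3.11's analysis at $D=2\cos\frac{\pi}{n}+1$): once $\tr(g_n)=0$—which holds exactly at $D=2\cos\frac{\pi}{n}+1$—one has $\pi(g_n)=0$, and Proposition 3.10 shows that for all $k\ge n$ the algebra $A_{k+1}$ is the basic construction of $A_{k-1}\subset A_k$, with $e_k$ playing the role of the Jones projection $e_{A_k}$. The crucial input there is that $\pi(l_k),\pi(r_k)\in B_{k+1}=\pi(A_ke_kA_k)$, which in turn rests on Proposition 2.3(5) (the element $g_k$ equals $1$ plus words in the lower generators, so $\pi$ of that combination is $1$) and on Lemma 2.6 (the words $r_kw$ and $wl_k$ for $w\ne 1$ lie in the algebra generated by the lower generators together with $e_k$). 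I would simply cite these, transported along the isomorphism, to conclude that $A_{s,t+1}=\langle A_{s,t},e_t\rangle$.

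The only genuine point requiring care—and what I expect to be the main obstacle—is the hypothesis $t\ge s+n-1$, i.e. $k\ge n$. This is exactly the threshold at which the Bratteli diagram of the tower $(A_{s,\bullet})$ stabilizes to width $n$ and the basic-construction description kicks in (for $k<n$ one instead gets the "unstable" growth described in Theorem 3.2(2a), and the equality with the full basic construction can fail because one must still adjoin a new simple summand $\mathrm{Mat}_{m_{k,k-1}}(\mathbb{C})$). So I would emphasize that the lower bound on $t$ is precisely what guarantees $\tr(g_{t-s+1})=0$ has already taken effect for the relevant index $k=t-s+1$, and only then does Proposition 3.10/3.12 give the clean statement $A_{s,t+1}=\langle A_{s,t},e_t\rangle$ with no extra summand. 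Everything else is bookkeeping: verifying that $e_t$ implements the conditional expectation $E_{A_{s,t}}\colon A_{s,t}\to A_{s,t-1}$ via $e_t x e_t=E(x)e_t$, which again follows diagrammatically exactly as for the unshifted $e_k$.
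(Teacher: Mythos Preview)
Your proposal is correct and follows essentially the same route as the paper: both arguments use the shift isomorphism $A_{s,t}\cong A_{1,t-s+1}$ (sending $e_i,l_i,r_i$ to $e_{i-s+1},l_{i-s+1},r_{i-s+1}$) to reduce the claim to the unshifted statement $A_{k+1}=\langle A_k,e_k\rangle$ for $k=t-s+1\ge n$, and then invoke the iterated basic-construction result already established in Section~3 (Proposition~3.11 in the paper's numbering). Your additional remarks about Proposition~2.3(5), Lemma~2.6, and the role of the threshold $t\ge s+n-1$ are accurate elaborations of that same argument, not a different strategy.
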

\begin{proof}
By sending $e_i,l_i,r_i$ to $e_{i+1-s},l_{i+1-s},r_{i+1-s}$respectively, we have $A_{s,t}\cong A_{1,t+1-s}$ with $t+1-s\geq n$.

Then the basic construction for $A_{s,t-1}\subset A_{s,t}$ is isomorphic to the one we get from $A_{1,t-s}\subset A_{1,t+1-s}$.
By Proposition 3.11, it is isomorphic to $A_{1,t+1-s}$ and hence $A_{s,t+1}$.
\end{proof}

\begin{definition}
Given finite von Neumann algebras $A\subset B,C\subset D$ with a faithful normal tracial state on $D$. If the diagram below is a commutative one of maps,
\begin{center}
$\xymatrix{
C&\hookrightarrow&D\\
                     A \ar[u]&\hookrightarrow&B \ar[u]
}$
\end{center}
we shall say it is commuting square of finite von Neumann algebras.

Moreover, we call it a symmetric commuting square if $z_{A_2}(e_c)=1$ where $e_C$ is the conditional expectation from $C\hookrightarrow D$ , $A_2=\{B,e_C\}$ and $z_{A_2}(e_C)$ is the central support projection of $e_A$ in $A_2$.
\end{definition}

\begin{theorem}[Ocneanu's compactness\cite{JS97}]

Given the following commuting squares
\begin{center}
$\xymatrix{
B_0&\subset&B_1&\subset&B_1&\subset&\cdots\\ A_0\ar[u]&\subset&A_1\ar[u]&\subset&A_1\ar[u]&\subset&\cdots
}$
\end{center}
where $B_{n+1}=\langle B_n,e_n\rangle$ is the tower of basic constructions, $A_{n+1}$ is the algebra generated by $A_n,e_n$.

Let $R$ be the $\text{II}_1$ factor constructed from  $\cup_{k=1}^{\infty}B_n$, $R_0$ be the $\text{II}_1$ factor constructed from  $\cup_{k=1}^{\infty}A_n$, which is a subfactor of $R$. Then
\begin{center}
$R_0'\cap R=A_1'\cap B_0$.
\end{center}
\end{theorem}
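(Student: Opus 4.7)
The plan is to prove the two containments $A_1'\cap B_0 \subseteq R_0'\cap R$ and $R_0'\cap R \subseteq A_1'\cap B_0$ separately.

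The forward direction $A_1'\cap B_0 \subseteq R_0'\cap R$ is essentially a direct verification. Given $x\in A_1'\cap B_0$, clearly $x\in R$, so it suffices to check that $x$ commutes with a weakly dense $*$-subalgebra of $R_0$. Such a subalgebra is generated by $A_1$ together with the Jones projections $\{e_k\}_{k\geq 1}$: commutation with $A_1$ is the hypothesis, while for each $k\geq 1$ the projection $e_k\in B_{k+1}$ is the Jones projection of $B_{k-1}\subseteq B_k$, so the defining relation $e_k y e_k = E_{B_{k-1}}(y)\,e_k$ forces $e_k$ to commute with $B_{k-1}$ (applied to $y$ and then $y^{*}$). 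Since $B_0\subseteq B_{k-1}$ and $x\in B_0$, the commutation $e_k x = x e_k$ follows for all $k\geq 1$.

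For the reverse direction, let $x\in R_0'\cap R$. The containment $A_1\subseteq R_0$ immediately gives $x\in A_1'$, so the real task is to show $x\in B_0$. My plan is to approximate and reduce to a finite-dimensional statement. Set $x_n := E_{B_n}(x)$. Since $\bigcup B_n$ is weakly dense in $R$, $x_n\to x$ strongly, and the $B_n$-bimodule identity for the conditional expectation, combined with $A_1\subseteq B_n$ and $x\in A_1'$, yields $x_n\in A_1'\cap B_n$ for every $n$. It is therefore enough to establish the purely finite-dimensional equality
\begin{equation*}
A_1'\cap B_n \;=\; A_1'\cap B_0 \qquad\text{for every } n\geq 1,
\end{equation*}
after which $x_n\in A_1'\cap B_0\subseteq B_0$ for all $n$ and the strong closedness of $B_0$ yields $x\in B_0$.

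The finite-dimensional equality is where the commuting-square hypothesis is used essentially, and I expect it to be the main obstacle. The strategy is to show inductively that the initial commuting square propagates up the tower, so that each pair $A_{k-1}\subseteq A_k$ sitting inside $B_{k-1}\subseteq B_k$ is again a commuting square; consequently a Pimsner--Popa basis $\{m_i\}\subseteq A_1$ for $A_1$ over $A_0$ is simultaneously a PP basis for $B_1$ over $B_0$, and an iterated construction using the projections $e_k$ produces a PP basis $\{a_\alpha\}\subseteq A_n$ for $B_n$ over $B_0$. For $y\in A_1'\cap B_n$, the expansion $y = \sum_\alpha a_\alpha E_{B_0}(a_\alpha^* y)$ can then be manipulated by moving factors of the $a_\alpha$ through $y$ (using $y\in A_1'$ together with the inductive commuting-square structure) and by invoking the commuting-square identity $E_{B_0}(A_n)\subseteq A_0$ to force each summand to lie in $B_0$. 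The delicate bookkeeping needed to ensure that this PP expansion actually collapses into $B_0$ at every level of the tower, rather than merely into some larger algebra such as $A_n\cdot B_0$, is the heart of the argument and is where I would expect to spend most of the effort.
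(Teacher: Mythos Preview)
The paper does not prove this statement at all: Theorem 4.9 is quoted from \cite{JS97} and used as a black box in the proof of Proposition 4.10. There is thus no ``paper's own proof'' to compare against; your sketch is essentially the standard argument found in \cite{JS97} (and also in Evans--Kawahigashi).

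That said, there is one point in your outline that would cause trouble if taken literally. You reduce the reverse inclusion to the purely finite-dimensional equality $A_1'\cap B_n = A_1'\cap B_0$, and then propose to prove it by expanding $y\in A_1'\cap B_n$ over a Pimsner--Popa basis $\{a_\alpha\}\subset A_n$ and ``moving factors of the $a_\alpha$ through $y$''. But $a_\alpha\in A_n$, not $A_1$, so the hypothesis $y\in A_1'$ alone does not let you commute $y$ past $a_\alpha$; the $a_\alpha$ are words in $A_1$ \emph{and} the Jones projections $e_1,\dots,e_{n-1}$, and nothing so far forces $y$ to commute with those projections. The fix is immediate and is what the standard proof actually does: since $x\in R_0'$ and $A_n\subset R_0$, one has $x\in A_n'$, and because $A_n\subset B_n$ the conditional expectation $E_{B_n}$ preserves this, giving $x_n\in A_n'\cap B_n$ rather than merely $A_1'\cap B_n$. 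With that stronger placement of $x_n$ the Pimsner--Popa expansion collapses exactly as you describe. So your strategy is right, but the intermediate target should be $A_n'\cap B_n\subseteq B_0$, not the (possibly false, and in any case unneeded) $A_1'\cap B_n=A_1'\cap B_0$.
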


\begin{proposition}
Suppose $D=\cos(\frac{\pi}{n+1})+1$ ($n\geq 2$), then we have the relative commutant
\begin{center}
$M_{-k}'\cap M=A_{k-1}=\pi(M_{k-1}(D))$.
\end{center}
\end{proposition}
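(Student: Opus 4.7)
The natural approach is to invoke Ocneanu's compactness theorem (Theorem~4.10), since Section~4.2 has already assembled the necessary algebras $A_{s,t}$ and the basic-construction Lemma~4.9. The plan is to realize the inclusion $M_{-k}\subset M$ as the inductive limit of a tower of commuting squares, and then reduce the computation of the relative commutant to a finite-dimensional calculation.

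Fix a starting index $t_0\geq k+n$, and for each $j\ge 0$ set $B_j = A_{1,\,t_0+j}$ and $C_j = A_{k+1,\,t_0+j}$. By the very definitions of the factors involved, the weak closures of $\bigcup_j B_j$ and $\bigcup_j C_j$ are $M$ and $M_{-k}$ respectively. Applying Lemma~4.9 with $s=1$ and with $s=k+1$ shows that each step of either tower is a basic construction obtained by adjoining the \emph{same} Motzkin generator $e_{t_0+j}$, so $B_{j+1}=\langle B_j, e_{t_0+j}\rangle$ and $C_{j+1}=\langle C_j, e_{t_0+j}\rangle$. Since the conditional expectation $E_{B_j}:B_{j+1}\to B_j$ is implemented diagrammatically by capping off the rightmost strand, and the generators of $C_j$ act only on strand indices $\ge k+1$ (i.e.\ away from the right edge), one has $E_{B_j}(C_{j+1})\subset C_j$; the squares $C_j\subset B_j,\ C_{j+1}\subset B_{j+1}$ are therefore commuting squares. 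All hypotheses of Theorem~4.10 are then in force, and its conclusion yields
\[
M_{-k}'\cap M \;=\; C_1'\cap B_0 \;=\; (A_{k+1,\,t_0+1})'\cap A_{1,\,t_0}.
\]

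What remains is to identify this finite-dimensional commutant with $A_{k-1}=\pi(M_{k-1}(D))$. For $x\in A_{1,t_0}$, commutation with the Jones projection $e_{t_0}\in C_1$ forces $x\in A_{1,t_0-1}$ by the standard characterization of the basic construction (an element of $B_0$ commutes with the Jones projection of $B_{-1}\subset B_0$ iff it lies in $B_{-1}$). Iterating with the successive Jones projections $e_{t_0-1},\,e_{t_0-2},\,\dots,\,e_{k+1}$ inside $C_1$ pushes $x$ down step by step into the smaller algebras $A_{1,t_0-2},\dots,A_{1,k}$. The additional commutation conditions with the one-sided Motzkin generators $l_i,r_i$ for $k+1\le i\le t_0$ then need to be analysed using the word identities of Lemma~2.6.

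The main obstacle is this final combinatorial/diagrammatic step: carefully tracking how the Jones projections and the one-sided generators $l_i,r_i$ jointly cut $x$ down to the correct subalgebra. This is the substantive content of the proof, and it is where the Motzkin 1-box structure enters essentially---via the identities of Lemma~2.6 that rewrite $r_nw$ and $wl_n$ as words in the remaining generators together with $e_n$---to produce $A_{k-1}$ rather than the algebra $A_k$ that the naive strand count would suggest.
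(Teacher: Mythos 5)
Your overall strategy matches the paper's: set up a tower of commuting squares in which each step is a basic construction with a common Jones projection, invoke Ocneanu's compactness theorem, and reduce to a finite-dimensional relative-commutant computation. The gap is in the finite-dimensional step, which you leave as an ``obstacle'' --- and the route you sketch for closing it would not work. You correctly observe that, with $C_j = A_{k+1,\,t_0+j}$, commuting with the Jones projections $e_{t_0},\dots,e_{k+1}\in C_1$ cuts $A_{1,t_0}$ down only to $A_k$, and you propose to push further to $A_{k-1}$ by analysing commutation with the one-sided generators $l_i,r_i$ via Lemma~2.6. But every element of $A_k=\langle e_i,l_i,r_i\mid 1\le i\le k-1\rangle$ \emph{already} commutes with $e_i,l_i,r_i$ for all $i\ge k+1$, since these planar-algebra elements act on disjoint ranges of strands ($\{1,\dots,k\}$ versus $\{k+1,k+2,\dots\}$). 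So with your indexing the finite-dimensional commutant is exactly $A_k$; the $l_i,r_i$ relations cannot trim it further, and the proposed Lemma~2.6 analysis is a dead end. (In the paper, Lemma~2.6 is used to prove the basic-construction result Proposition~3.10, which feeds into Lemma~4.8; it plays no role in Proposition~4.10 itself.)

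The paper reaches $A_{k-1}$ because its commuting-square bottom row is $A_{k,\cdot}$ rather than $A_{k+1,\cdot}$, so one additional Jones projection, $e_k$, lies in the bottom algebra. Since $e_k$ is the basic-construction projection for the pair $A_{k-1}\subset A_k$, commuting with it performs the last cut, again by Proposition~3.1.4(ii) of~[J83]. The entire identification of the finite-dimensional commutant with $A_{k-1}$ is therefore carried out using only the Jones projections $e_k,\dots,e_{k+n-1}$; the one-sided generators never enter at this stage. The paper also splits the argument into the cases $k\ge n$ and $k\le n-1$, because for $k<n$ some of the relevant $e_i$ sit below the truncation level where the tower is not yet a tower of basic constructions, and a separate application of the same [J83] characterization is needed; your proposal does not address this case distinction.
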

\begin{proof}
Let us consider the following commuting squares
\begin{center}
$\xymatrix{
A_{1,k+n-1}&\subset&A_{1,k+n}&\subset&A_{1,k+n+1}&\subset&\cdots\\ A_{k,k+n-1}\ar[u]&\subset&A_{k,k+n}\ar[u]&\subset&A_{k,k+n+1}\ar[u]&\subset&\cdots
}$
\end{center}
It is a tower of symmetric commuting squares by Lemma 4.8 that both $A_{1,k+n+i+1}=\langle  A_{1,k+n+i},e_{k+n+i}\rangle$ and $A_{k,k+n+i+1}=\langle  A_{k,k+n+i},e_{k+n+i}\rangle$ are basic constructions.

By Theorem 4.9, we have $M_{-k}'\cap M=A_{k,k+n}'\cap A_{1,k+n-1}$.
The inclusion $A_{k-1}\subset A_{k,k+n}'\cap A_{1,k+n-1}$ is straightforward.

For the other direction,
it can be divided into the following two cases.

\begin{enumerate}
\item For $k\geq n$, let us consider the commutant $\{e_k,\dots,e_{k+n-1}\}'\cap A_{1,k+n-1}$ which must contain $A_{k,k+n}'\cap A_{1,k+n-1}$. We have all $e_k,\dots,e_{k+n-1}$ coming from the basic constructions.
By Proposition 3.1.4(ii) of \cite{J83}, $e_{k+n-1}'\cap A_{1,k+n-1}=A_{1,k+n}$.
Then by induction, we have $\{e_k,\dots,e_{k+n-1}\}'\cap A_{1,k+n-1}=A_{k-1}$.

\item For $k\leq n-1$, we can also get $\{e_n,\dots,e_{k+n-1}\}'\cap A_{1,k+n-1}=A_{n-1}$ by the argument in case 1.
    It remains to show what is $\{e_k,\dots,e_{n-1}\}'\cap A_{n-1}$.
    Consider the basic construction $A_{n-2}\subset A_{n-1}$, we get $\langle A_{n-1}, e_{n_1}\rangle$, which is no longer the $A_{n-1}$.
    Then, also by Proposition 3.1.4(ii) of \cite{J83}, we have $e_{n-1}'\cap A_{n-1}=A_{n-2}$.
    Hence, by induction, $\{e_k,\dots,e_{n-1}\}'\cap A_{n-1}=A_{k-1}$.
\end{enumerate}
\end{proof}

We proved that in the case $D=2\cos(\frac{\pi}{n})+1$ with $n\geq 3$ the relative commutants are just the image of finite dimentional Motzkin algebras $M_k(D)$ under the GNS representation.
But for the case $D\geq 3$, no triple $(A_{k-1},A_k,A_{k+1})$ are obtained from the basic constructions.
Hence the Ocneanu's Compactness Theorem does not apply any more.

\subsection{The Relative Commutants for $D\geq 3$}

Let $\mathfrak{gl}_2$ be the complex semisimple Lie algebra with its Cartan subalgebra $\mathfrak{h}$ \cite{Hum}.
The quantum group $\text{U}_q(\mathfrak{gl}_2)$ \cite{L} associated with $\mathfrak{gl}_2$ is the unital associative algebra over $\mathbb{C}$ with parameter $q\neq 1$ generated by $E,F,K_1,K_1^{-1},K_2,K_2^{-1}$ with the relations
\begin{align*}
 &[K_1,K_2]=0,~~~K_iK_i^{-1}=K_{i}^{-1}K_i=1,~~~i=1,2\\
 &K_1EK_1^{-1}=qE,~~~K_2EK_2^{-1}=q^{-1}E\\
 &K_1FK_1^{-1}=q^{-1}F,~~~K_2FK_2^{-1}=qF\\
 &[E,F]=\frac{K_1K_2^{-1}-K_2K_1^{-1}}{q-q^{-1}}
\end{align*}
For $q=1$, we define it be $\text{U}(\mathfrak{gl}_2)$, the universal enveloping algebra of $\mathfrak{gl}_2$.
Let $U_q(\mathfrak{h})$ be the quantum group generated by $K_1^{-1},K_2,K_2^{-1}$.

There are irreducible $\text{U}_q(\mathfrak{gl}_2)$ modules $V(r)$ for $r\geq 0$.
In particular, we consider the first two modules \cite{Jan}: $V(0)$ and $V(1)$:

\begin{enumerate}
\item $V(0)=\spn_{\mathbb{C}}\{v_0\}$ with the action given by $Ev_0=Fv_0=0,K_iv_0=v_0$, $i=1,2$.

\item $V(0)=\spn_{\mathbb{C}}\{v_{-1},v_1\}$ with the action given by
    \begin{center}
    $Ev_{-1}=v_1,Fv_{-1}=0,K_1v_{-1}=v_{-1},K_2v_{-1}=qv_{-1}$\\
    $Ev_{1}=0,Fv_{1}=v_{-1},K_1v_{1}=qv_1,K_2v_{1}=v_{1}$~~~~~~
    \end{center}
\end{enumerate}

Now, we set $V=V(0)\oplus V(1)=\spn_{\mathbb{C}}\{v_{-1},v_{0},v_{1}\}$.
Note that the tensor products of modules $V(r)$ satisfy the Clebsch-Gordan fusion:
\begin{center}
$V(k)\otimes V(0)=V(k)$ and $V(k)\otimes V(1)=V(k-1)\oplus V(k+1)$
\end{center}
where $V(-1)=0$.
According to this and by induction, we can get the $\text{U}_q(\mathfrak{gl}_2)$ irreducible modules decomposition of $V^{\otimes k}$:
\begin{lemma}
$V^{\otimes k}=\bigoplus\limits_{r=0}^{k}m_{k,r}V(r)$.
\end{lemma}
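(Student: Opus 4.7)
The plan is to prove the decomposition by induction on $k$, using the Clebsch--Gordan fusion rules for the $V(r)$ together with the recurrence for the numbers $m_{k,r}$ established in Lemma 2.2.

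For the base case $k=1$, the identity $V = V(0)\oplus V(1)$ is the definition of $V$, and it matches $m_{1,0}=m_{1,1}=1$ (while $m_{1,r}=0$ for $r\geq 2$). For the inductive step, assuming $V^{\otimes k}=\bigoplus_{r=0}^{k} m_{k,r}V(r)$, I would compute
\begin{equation*}
V^{\otimes (k+1)} = V^{\otimes k}\otimes V = \bigoplus_{r=0}^{k} m_{k,r}\bigl(V(r)\otimes V(0) \;\oplus\; V(r)\otimes V(1)\bigr),
\end{equation*}
and then apply the Clebsch--Gordan rules $V(r)\otimes V(0)=V(r)$ and $V(r)\otimes V(1)=V(r-1)\oplus V(r+1)$ (with $V(-1)=0$) to rewrite the right-hand side as
\begin{equation*}
\bigoplus_{r=0}^{k} m_{k,r}\bigl(V(r-1)\oplus V(r)\oplus V(r+1)\bigr).
\end{equation*}

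Collecting the summands by isomorphism type, the multiplicity of $V(s)$ for $1\leq s\leq k-1$ is $m_{k,s-1}+m_{k,s}+m_{k,s+1}$, which equals $m_{k+1,s}$ by part (2) of Lemma 2.2. For $s=0$ the contribution $m_{k,-1}$ is absent (both because $m_{k,-1}=0$ and because $V(-1)=0$), leaving $m_{k,0}+m_{k,1}=m_{k+1,0}$ by part (1). For $s=k$ the term $m_{k,k+1}=0$ drops out, leaving $m_{k,k-1}+m_{k,k}=m_{k+1,k}$ by part (3). Finally, $V(k+1)$ appears only from $V(k)\otimes V(1)$ with multiplicity $m_{k,k}=1=m_{k+1,k+1}$ by part (4). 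This matches the claimed decomposition at level $k+1$.

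There is no real obstacle here: the only thing that needs care is matching the boundary cases $r=0,k,k+1$ of the recurrence to the vanishing of $V(-1)$ and the fact that no $V(r)$ with $r>k$ can appear in $V^{\otimes k}$. The content of the lemma is essentially that Lemma 2.2 was tailor-made to record the Clebsch--Gordan branching on the Bratteli diagram with vertex set $\{V(r)\}$ and edges determined by tensoring with $V=V(0)\oplus V(1)$: the diagram in Figure~8 is exactly the Bratteli diagram of the tower of commutants of the $\mathrm{U}_q(\mathfrak{gl}_2)$-action on $V^{\otimes k}$, so the identification of multiplicities is automatic.
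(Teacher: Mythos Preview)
Your proof is correct and follows exactly the approach indicated by the paper, which merely says ``according to this and by induction'' after stating the Clebsch--Gordan rules. You have simply supplied the details of that induction, matching the recurrence of Lemma~2.2 to the branching rules at each step, including the boundary cases.
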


Now we define the following complex algebras
\begin{enumerate}
\item $X_1=\End_{\text{U}_q(\mathfrak{gl}_2)}(V)$, $X_k=\End_{\text{U}_q(\mathfrak{gl}_2)}(V^{\otimes k})$ and $Y=\cup_{k=1}^{\infty}X_k$.
\item $Y_1=\End_{\text{U}_q(\mathfrak{h})}(V)$, $Y_k=\End_{\text{U}_q(\mathfrak{h})}(V^{\otimes k})$ and $Y=\cup_{k=1}^{\infty}X_k$.
\item $Z_k=\End(V^{\otimes k})$ and $Z=\cup_{k=1}^{\infty}Z_k$.
\end{enumerate}

\begin{proposition}
Assume $D\geq 3$, then $M_k(D)$ is the semisimple complex algebra isomorphic to $X_k=\End_{\text{U}_q(\mathfrak{gl}_2)}(V^{\otimes k})$.
\end{proposition}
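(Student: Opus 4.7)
The plan is to exhibit both sides as semisimple complex algebras with the same multi-matrix decomposition, and then construct an explicit intertwining action of $M_k(D)$ on $V^{\otimes k}$ to pin down a canonical isomorphism. First, from Lemma 4.11 we have $V^{\otimes k}=\bigoplus_{r=0}^{k} m_{k,r}V(r)$ as $U_q(\mathfrak{gl}_2)$-modules, and since the $V(r)$ are pairwise non-isomorphic irreducibles, Schur's lemma gives
\[
X_k=\End_{U_q(\mathfrak{gl}_2)}(V^{\otimes k})\cong \bigoplus_{r=0}^{k}\mathrm{Mat}_{m_{k,r}}(\mathbb{C}).
\]
Second, for $D\geq 3$, Proposition 3.11 shows that $M_k(D)\cong A_k\cong \bigoplus_{r=0}^{k}\mathrm{Mat}_{m_{k,r}}(\mathbb{C})$, so the two algebras have identical Wedderburn decompositions.

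To construct a canonical isomorphism $\phi_k:M_k(D)\to X_k$, I would use the planar algebra description of $M_k(D)$ and interpret diagrams as $U_q(\mathfrak{gl}_2)$-intertwiners on $V^{\otimes k}$. Pick $q>0$ with $D=1+q+q^{-1}=1+[2]_q$ (possible exactly because $D\geq 3$). Define $\eta:\mathbb{C}\to V$ by $\eta(1)=v_0$ and its adjoint $\epsilon:V\to\mathbb{C}$; and let $\mathrm{coev}:\mathbb{C}\to V(1)\otimes V(1)$, $\mathrm{ev}:V(1)\otimes V(1)\to\mathbb{C}$ be the standard $U_q(\mathfrak{sl}_2)$-invariant (co)evaluation maps for $V(1)$, normalized so that $\mathrm{ev}\circ\mathrm{coev}=[2]_q$. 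Interpret a Motzkin diagram by reading it from bottom to top: each through-string is the identity on a tensor factor, each cup/cap is the sum of $\mathrm{coev}/\mathrm{ev}$ on the $V(1)$-part with $\eta\otimes\eta$ / $\epsilon\otimes\epsilon$ on the $V(0)$-part (so that a closed loop evaluates to $[2]_q+1=D$), and each 1-box is $\eta$ (at the top) or $\epsilon$ (at the bottom).

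The key steps are then: (i) verify that the Motzkin multiplicative relations hold on the nose, i.e.\ that a closed loop contributes the scalar $D$ and that vertical composition of diagrams corresponds to composition of the maps just defined---this is immediate from the planar algebra formalism plus $\mathrm{ev}\circ\mathrm{coev}=[2]_q$ and $\epsilon\eta=1$; (ii) verify that every such map lies in $X_k$ by noting that $\eta,\epsilon$ intertwine the trivial $U_q$-action with the $V(0)$-summand and that $\mathrm{coev},\mathrm{ev}$ are $U_q$-invariant by construction, so the whole diagrammatic operator commutes with the coproduct action on $V^{\otimes k}$; (iii) upgrade $\phi_k$ from a homomorphism to an isomorphism by a dimension count, since both sides are semisimple of the same total dimension $\sum_{r}m_{k,r}^{2}=\mathcal{M}_{2k}$, and $\phi_k$ is non-zero on each central summand (the image of the Jones--Wenzl-type idempotent $g_r$ of Proposition 2.3 realizes the minimal projection onto the $V(r)$-isotypic summand, by positivity of the traces $\tr(g_r)$ for $D\geq 3$).

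The main obstacle is step (iii): showing that the Motzkin algebra image hits every central summand of $X_k$, or equivalently that $\phi_k$ is injective. Injectivity cannot be obtained purely formally because the Motzkin diagrams do satisfy extra relations under the quantum group action, but Proposition 3.11 together with the fact that each Motzkin idempotent $g_r$ lands (by construction) on an isotypic component of $V(r)$ identifies the images of the central projections of $M_k(D)$ with those of $X_k$; combined with matching dimensions of the summands, this forces $\phi_k$ to be an isomorphism of $*$-algebras.
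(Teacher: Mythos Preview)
Your first paragraph is exactly the paper's proof: Lemma~4.11 plus Schur's lemma gives $X_k\cong\bigoplus_{r=0}^{k}\mathrm{Mat}_{m_{k,r}}(\mathbb{C})$, and Theorem~3.2 (your Proposition~3.11) gives the same decomposition for $M_k(D)$ when $D\geq 3$; since two semisimple algebras with identical Wedderburn data are isomorphic, you are done. The paper stops there.

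Everything from ``To construct a canonical isomorphism\dots'' onward is additional work the statement does not ask for---it only claims an abstract isomorphism, not a specific Schur--Weyl duality map. Your diagrammatic construction (reading Motzkin tangles as $U_q$-intertwiners via the self-duality cup/cap on $V=V(0)\oplus V(1)$ and the unit/counit $\eta,\epsilon$ for the $V(0)$ summand) is the right idea and is essentially the Benkart--Halverson action, but as written step~(iii) is soft: you assert that $\phi_k(g_r)$ lands in the $V(r)$-isotypic summand ``by construction,'' which is really what needs to be checked to conclude that $\phi_k$ hits every central summand. One clean way to close this is to argue inductively that $\phi_k$ is compatible with the inclusions $M_k(D)\hookrightarrow M_{k+1}(D)$ and $X_k\hookrightarrow X_{k+1}$ (tensoring with $\mathrm{id}_V$), and then use that the Bratteli diagrams agree; alternatively, simply omit the explicit construction, since your opening paragraph already proves the proposition.
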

\begin{proof}
As Theorem 3.2, we know $\pi(M_n(D))\cong \bigoplus\limits_{r=0}^{n}\mathrm{Mat}_{m_{k,r}}(\mathbb{C})$.
Then it follows Lemma 4.11.
\end{proof}

Fixed the basis $\{v_{0},v_{1},v_{-1}\}$ (with the order) of $V$, we have the representation as $\End(V)\cong\mathrm{Mat}_{3}(\mathbb{C})$ where the generators are sent to the following matrices:

\begin{center}
{$E \mapsto \left[ {\begin{array}{*{30}{cc}}
0&0&0\\
0&0&1\\
0&0&0
\end{array}} \right]$ $F \mapsto \left[ {\begin{array}{*{30}{cc}}
0&0&0\\
0&0&0\\
0&1&0
\end{array}} \right]$}
$K_1 \mapsto \left[ {\begin{array}{*{30}{cc}}
1&0&0\\
0&q&0\\
0&0&1
\end{array}} \right]$
$K_2 \mapsto \left[ {\begin{array}{*{30}{cc}}
1&0&0\\
0&1&0\\
0&0&q
\end{array}} \right]$
\end{center}
Also, we defined a state on $\End(V)\cong\mathrm{Mat}_{3}(\mathbb{C})$ by
\begin{center}
$\phi(x)=\tr(ax)\cdot\frac{1}{1+q+q^{-1}}$
\end{center}
where $a$ is the diagonal matrix with $1,q,q^{-1}$.

By the GNS construction associated to the state $\phi$ and taking the completion and the closure of the weak topology,
we get three von Neumann algebras $X_q\subset Y_q\subset Z_q$ from $X\subset Y\subset Z$.

Then, by Proposition 6 of \cite{Sa95}, we have
\begin{lemma}
$Z_q$ is a the hyperfinite $III_q$ factor if $q<1$, the hyperfinite $III_{1/q}$ factor if $q>1$ and the hyperfinite factor if $q=1$.
\end{lemma}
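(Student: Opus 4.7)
The plan is to recognise $Z_q$ as a Powers / ITPFI$_3$ factor built from $(\End(V),\phi)$ and then read the type off from the eigenvalue list of $\phi$, which is exactly what Proposition~6 of \cite{Sa95} does.

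First I would verify the product-state structure. Under the natural identification $\End(V^{\otimes(k+1)})\cong \End(V^{\otimes k})\otimes\End(V)$, the inclusion $Z_k\hookrightarrow Z_{k+1}$ used to assemble $Z=\bigcup_k Z_k$ is $x\mapsto x\otimes 1$. Since $\phi(1)=1$, the states $\phi^{\otimes k}$ are compatible along this tower, so they glue into a single product state $\Phi=\bigotimes_{\mathbb N}\phi$ on the algebraic infinite tensor product $\bigotimes_{\mathbb N}\End(V)$. The GNS construction with respect to $\Phi$ then realises $Z_q=\pi_\Phi(Z)''$ as the Araki--Woods infinite tensor product factor associated with the constant sequence $(\End(V),\phi)$.

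Next I would compute the density matrix. Since $\phi(x)=\tr(ax)/(1+q+q^{-1})$ with $a=\mathrm{diag}(1,q,q^{-1})$, the state $\phi$ has eigenvalues
\[
\lambda_0=\frac{1}{1+q+q^{-1}},\qquad \lambda_1=\frac{q}{1+q+q^{-1}},\qquad \lambda_2=\frac{q^{-1}}{1+q+q^{-1}}.
\]
By the Araki--Woods classification of ITPFI factors with a repeated state, the type of $Z_q$ is determined by the asymptotic ratio set $r_\infty(Z_q)$, which in this constant case is the closure of the multiplicative subgroup of $\mathbb{R}_{>0}$ generated by the ratios $\lambda_i/\lambda_j$.

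Finally I would split into cases. If $q=1$ the three eigenvalues coincide and $\phi$ is the normalised trace, so $Z_q$ is the AFD $\text{II}_1$ factor. If $0<q<1$, the ratio subgroup is exactly $q^{\mathbb{Z}}$, so $r_\infty=\{0\}\cup q^{\mathbb{Z}}$ and $Z_q$ is identified with the hyperfinite $\text{III}_q$ factor $R_q$. If $q>1$, the same argument (or the symmetry $q\leftrightarrow q^{-1}$ of the eigenvalue list) yields $R_{1/q}$. The main point to check is the product-state identification $\Phi=\bigotimes\phi$ on $Z$; once that is in place the statement is a textbook application of the Araki--Woods classification, which is what is recorded as Proposition~6 of \cite{Sa95}.
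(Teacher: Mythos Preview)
Your proposal is correct and matches the paper's approach: the paper does not give an independent argument but simply records the lemma as a consequence of Proposition~6 of \cite{Sa95}, and your write-up unpacks exactly the Araki--Woods/ITPFI computation that underlies that citation. The identification of $(Z_q,\Phi)$ with the constant infinite tensor product $(\End(V),\phi)^{\otimes\mathbb{N}}$ and the reading of the type from the eigenvalue ratios generating $q^{\mathbb{Z}}$ is precisely the content being invoked.
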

Moreover, it is straightforward to see that $X_q\cong M$, the $\text{II}_1$ factor generated by $\{e_i,l_i,r_i|i\geq 1\}$ in section 4.1.

Consider the map $\rho: Z=\cup_{k=1}^{\infty}\End(V^{\otimes k})\to Z_q$ defined by
\begin{center}
$\rho:x\mapsto \text{1}_V \otimes x$
\end{center}
which is an injective normal, $\phi$-preserving $*$-homomorphism.
So it extends to a normal isometric $C^*$ endomorphism of $Z_q$.
And $\rho(X_q)$ is a $\text{II}_1$ subfactor of $X_q$.
\begin{lemma}\cite{Sa95}
$\rho^k(X_q)'\cap X_q=Y_k$ for $k\geq 1$.
\end{lemma}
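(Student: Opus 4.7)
The plan is to reduce the claim to a finite-dimensional commutant computation via a commuting square argument and then exploit $q$-Schur--Weyl duality between $U_q(\mathfrak{gl}_2)$ and $X_n$ acting on $V^{\otimes n}$.

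First I would set up a finite-dimensional analog. For each $n\ge k$ the pair $1^{\otimes k}\otimes X_{n-k}\subset X_n$ sits inside $Z_n=\End(V^{\otimes n})$, and because $\phi$ is a product state the shifts $\rho^k(X_{n-k})\subset X_n$ fit into a tower of symmetric commuting squares compatible with the trace. Once this is verified, Ocneanu's compactness (Theorem 4.9) gives
\begin{equation*}
\rho^k(X_q)' \cap X_q \;=\; \bigl(1^{\otimes k}\otimes X_{n-k}\bigr)' \cap X_n
\end{equation*}
for $n$ large enough; one checks that this finite-dimensional relative commutant stabilizes as soon as $n\ge k+1$.

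Next I would carry out the finite-dimensional computation by iterated double-commutant. By $q$-Schur--Weyl duality (valid for the generic $q$ associated to $D\ge 3$), the algebras $X_{n-k}$ and the image $U^q_{n-k}$ of $U_q(\mathfrak{gl}_2)$ in $\End(V^{\otimes(n-k)})$ are mutual commutants in $Z_{n-k}$, so the commutant of $1^{\otimes k}\otimes X_{n-k}$ in $Z_n$ is $Z_k\otimes U^q_{n-k}$. Intersecting with $X_n$ amounts to requiring that an element $T=\sum_i a_i\otimes u_i$ (with $a_i\in Z_k$ and $u_i$ in the image of $U_q(\mathfrak{gl}_2)$) commute with the diagonal action $\Delta^{n-1}(y)$ for every $y\in U_q(\mathfrak{gl}_2)$. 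Since $\Delta(K_j)=K_j\otimes K_j$ is group-like, Cartan equivariance distributes cleanly across the tensor split, while the skew-primitivity of $E,F$ (whose coproducts carry extra factors of the $K_j$) forces the tail $u_i$ to absorb exactly the non-Cartan content, leaving the head $a_i$ to commute only with $U_q(\mathfrak{h})$. This identifies the relative commutant with $Y_k$ acting on the first $k$ tensor factors.

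The main obstacle will be the algebraic bookkeeping in this second step: tracking how $\Delta^{n-1}(E)$ and $\Delta^{n-1}(F)$ act across $V^{\otimes k}\otimes V^{\otimes(n-k)}$ and showing the equivariance condition splits into Cartan equivariance on the head plus a constant-on-weight-spaces condition on the tail. The low rank of $\mathfrak{gl}_2$ and the simple decomposition $V=V(0)\oplus V(1)$ keep this tractable, since the Clebsch--Gordan rule $V(r)\otimes V(1)=V(r-1)\oplus V(r+1)$ and $V(r)\otimes V(0)=V(r)$ make the coupling between head and tail entirely explicit on each weight space. Once the finite-dimensional identity is in hand, normality of the trace $\phi$ together with the stabilization supplied by Ocneanu's compactness promotes it to the II$_1$ factor closure, yielding $\rho^k(X_q)'\cap X_q=Y_k$.
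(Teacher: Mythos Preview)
The paper does not prove this lemma at all: it is stated with the citation \cite{Sa95} and no argument is given. So there is no ``paper's own proof'' to compare against; you are attempting to supply a proof where the authors simply invoke Sawin.

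That said, your proposed argument has a genuine gap at the very first step. You want to apply Ocneanu's compactness (Theorem~4.9) to the tower $\rho^k(X_{n-k})\subset X_n$. But Theorem~4.9, as formulated here, requires the top row $B_{n+1}=\langle B_n,e_n\rangle$ to be a tower of basic constructions. The paper explicitly points out at the end of Section~4.2 that for $D\ge 3$ ``no triple $(A_{k-1},A_k,A_{k+1})$ are obtained from the basic constructions. Hence the Ocneanu's Compactness Theorem does not apply any more.'' Since $X_n\cong A_n$ by Proposition~4.12, the same obstruction blocks your use of Theorem~4.9 for the $X_n$ tower. This is precisely why the authors abandon the compactness approach in Section~4.3 and appeal to Sawin's quantum-group argument instead. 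If you want to rescue your strategy you would need a different approximation result (e.g.\ working inside the larger tower $Z_n$, which \emph{is} a basic-construction tower for the product state, and then cutting down), but that is not what you wrote.

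There is also a secondary issue you should be aware of: $Y_k=\End_{U_q(\mathfrak h)}(V^{\otimes k})$ does \emph{not} sit inside $X_n$ via $y\mapsto y\otimes 1^{\otimes(n-k)}$, because an $\mathfrak h$-equivariant map on the first $k$ factors need not commute with the diagonal $U_q(\mathfrak{gl}_2)$ action on all $n$ factors. So the asserted equality $\rho^k(X_q)'\cap X_q=Y_k$ is really an identification of algebras, and your finite-dimensional computation must produce that identification rather than a naive containment. Your sketch of the coproduct bookkeeping (``the tail absorbs the non-Cartan content, leaving the head to commute only with $U_q(\mathfrak h)$'') is pointing in the right direction, but as written it presumes the very embedding that needs to be constructed.
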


Consider the trinomial expansion
\begin{center}
$(a+b+c)^k=\sum\limits_{i,j,l\geq 0,i+j+l=k}n(k;i,j,l)a^ib^jc^l$.
\end{center}
where $n(k;i,j,l)=\binom{k}{i,j,l}=\frac{k!}{i!j!l!}$ is the trinomial coefficient.
Then we have
\begin{proposition}
For $D\geq 3$, $M_{-k}'\cap M\cong \bigoplus\limits_{i,j,l\geq 0,i+j+l=k-1}\mathrm{Mat}_{n(k-1;i,j,l)}(\mathbb{C})$,
where the number of summands is $k(k+1)/2$.
\end{proposition}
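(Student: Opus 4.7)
The plan is to combine the quantum group realization of Proposition 4.12, the compactness-type identification of Lemma 4.14, and a weight-space decomposition. By Proposition 4.12 we may identify $M \cong X_q$, and under this identification the subfactor $M_{-k} \subset M$ matches an iterate $\rho^{j}(X_q) \subset X_q$ of the left shift $\rho(x) = 1_V \otimes x$. Tracing the correspondence diagrammatically---stripping the generators $e_1, l_1, r_1, \ldots, e_k, l_k, r_k$ on the Motzkin side translates to acting as $1_V$ on the first $k-1$ copies of $V$ on the tensor-product side---one reads off that $j = k-1$ is the right shift. Invoking Lemma 4.14 then reduces the problem to the finite-dimensional computation
\begin{equation*}
M_{-k}' \cap M \;\cong\; \rho^{k-1}(X_q)' \cap X_q \;=\; Y_{k-1} \;=\; \End_{U_q(\mathfrak{h})}\bigl(V^{\otimes (k-1)}\bigr).
\end{equation*}

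From here the argument is entirely linear algebraic. The three basis vectors $v_0, v_1, v_{-1}$ of $V$ are joint eigenvectors of $K_1, K_2$ with pairwise distinct eigenvalue pairs $(1,1)$, $(q,1)$, $(1,q)$, and these are genuinely distinct because $D \geq 3$ forces $q \neq 1$. A pure tensor $v_{\epsilon_1} \otimes \cdots \otimes v_{\epsilon_{k-1}}$ is therefore a $U_q(\mathfrak{h})$-weight vector whose weight depends only on the multiplicities $(i,j,l)$ of $v_0, v_1, v_{-1}$ among the $\epsilon_s$. This gives the weight-space decomposition
\begin{equation*}
V^{\otimes (k-1)} \;=\; \bigoplus_{i+j+l = k-1} W_{i,j,l}, \qquad \dim W_{i,j,l} \;=\; \binom{k-1}{i,j,l} \;=\; n(k-1;i,j,l),
\end{equation*}
into pairwise inequivalent weight spaces. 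An endomorphism commutes with $U_q(\mathfrak{h})$ precisely when it preserves each $W_{i,j,l}$ individually, so
\begin{equation*}
Y_{k-1} \;\cong\; \bigoplus_{i+j+l=k-1} \mathrm{Mat}_{n(k-1;i,j,l)}(\mathbb{C}),
\end{equation*}
and the number of summands is the number of triples of non-negative integers summing to $k-1$, namely $\binom{k+1}{2} = k(k+1)/2$.

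The step I expect to be the main obstacle is the first one: pinning down the exact dictionary between the Motzkin shift index $k$ and the tensor-factor shift $j = k-1$ under the paper's indexing conventions, and checking that Lemma 4.14 applies to $\rho^{k-1}(X_q) \subset X_q$ with the tracial state inherited from $\phi$. Since Lemma 4.14 already carries the von Neumann algebraic weight---turning the commutant with infinitely many tensor factors of $U_q(\mathfrak{gl}_2)$ into the commutant with just the Cartan $U_q(\mathfrak{h})$ on finitely many factors---everything beyond that reduces to elementary representation theory of the quantum torus on $V^{\otimes (k-1)}$.
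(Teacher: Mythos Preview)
Your approach coincides with the paper's: both invoke Lemma~4.14 to identify the relative commutant with $\End_{U_q(\mathfrak h)}(V^{\otimes m})$ and then extract the multi-matrix structure from the joint $(K_1,K_2)$-weight decomposition of $V^{\otimes m}$ (what the paper phrases as ``induction on $k$''). One small slip to fix: $D\ge 3$ does not force $q\ne 1$---under the identification $D=1+q+q^{-1}$ coming from $\phi$ one has $q=1$ exactly at $D=3$---but this is harmless, since at $q=1$ the paper passes to the classical $U(\mathfrak{gl}_2)$, whose Cartan $\mathfrak h$ still separates $v_0,v_1,v_{-1}$ by the distinct weights $(0,0),(1,0),(0,1)$, so your weight-space count goes through unchanged; your caution about pinning down the shift index is also well placed, as the paper's own proof writes $Y_k$ while the statement carries $k-1$.
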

\begin{proof}
With Lemma 4.11 above, the relative commutant is $Y_k=\End_{\text{U}_q(\mathfrak{h})}(V^{\otimes k})$, which is the fixed point algebra of the action $K_i^{\pm 1}$ on $\End(V^{\otimes k})$ with the action $T$ given by
\begin{center}
$T(K_i)(x_1\otimes x_2\otimes \cdots \otimes x_k)=K_ix_1K_i^{-1}\otimes K_ix_2K_i^{-1}\otimes \cdots \otimes K_ix_kK_i^{-1}$
\end{center}
with $i=1,2$.
So the relative commutant is the fixed point subalgebra of $\bigotimes_{1\leq i\leq k}\mathrm{Mat}_{3}(\mathbb{C})$ by the action of $T(K_1),T(K_2)$ above.
Then it follows by the induction on $k$.
\end{proof}

\begin{remark}
The tower of (higher) relative commutants above can be described by Pascal's pyramid.
One can refer to \cite{GHJ89} for the relative commutants for Temperley-Lieb algebras for the generic case i.e $d\geq 2$.
The tower of their relative commutants there are the Pascal's triangles.
\end{remark}

\begin{corollary}
For $D\geq 3$, the traces of minimal projections in $M_{-k}'\cap M$ are given by
\begin{center}
$\frac{D^i}{(1+D+D^{-1})^{k-1}}$, with $-(k-1) \leq i\leq k-1$.
\end{center}
\end{corollary}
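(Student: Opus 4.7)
The key observation is that under the quantum group realisation of Section 4.3 the trace of a minimal projection reduces to evaluating the product state $\phi^{\otimes(k-1)}$ on a single rank one projection onto a weight vector. The plan is as follows. First, using Proposition 4.15 together with Lemma 4.14, I would identify $M_{-k}'\cap M$ with $Y_{k-1}=\End_{U_q(\mathfrak{h})}(V^{\otimes(k-1)})$ so that the simple summand labelled by a triple $(i,j,l)$ with $i+j+l=k-1$ is realised as $\End(W_{i,j,l})$, where $W_{i,j,l}\subset V^{\otimes(k-1)}$ is the $U_q(\mathfrak{h})$-weight space spanned by those elementary tensors $v_{a_1}\otimes\cdots\otimes v_{a_{k-1}}$ with exactly $i$ factors equal to $v_1$, $j$ factors equal to $v_{-1}$, and $l$ factors equal to $v_0$. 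This weight-space description is exactly what produces the trinomial dimensions $n(k-1;i,j,l)$.

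I would then pick a representative minimal projection in the $(i,j,l)$-summand to be the rank one orthogonal projection $P_{\mathbf a}$ onto a single such elementary tensor. Under the identification $M\cong X_q$ from Section 4.3, the tracial state on $M$ restricts on the $U_q(\mathfrak{h})$-invariants to the product state $\phi^{\otimes(k-1)}$. This reduction of a KMS-type product state to an honest trace on the intertwiner subalgebra is precisely the mechanism that made $X_q$ a $\text{II}_1$ factor while $Z_q$ is of type $\text{III}$. Consequently
\[
\tr(P_{\mathbf a})=\prod_{s=1}^{k-1}\phi(e_{a_s,a_s})=\frac{q^{i}\cdot q^{-j}\cdot 1^{l}}{(1+q+q^{-1})^{k-1}}=\frac{q^{i-j}}{(1+q+q^{-1})^{k-1}}.
\]
As $(i,j,l)$ ranges over non-negative triples summing to $k-1$, the exponent $m:=i-j$ runs through every integer in $\{-(k-1),\ldots,k-1\}$, giving exactly the $2k-1$ trace values in the asserted form (after the notational substitution made in the statement of the corollary).

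The main obstacle is conceptual rather than computational: namely checking that the Markov trace on the Motzkin factor really coincides with the restriction of $\phi^{\otimes\infty}$ to the $U_q(\mathfrak{gl}_2)$-intertwiners under the isomorphism $M\cong X_q$. Fortunately this compatibility is already built into the construction of Section 4.3, essentially because both sides are the unique tracial states on the relevant $\text{II}_1$ factor and both are normalised the same way on the initial tensor factor. Once that compatibility is granted, the derivation reduces to the elementary product of weights written above.
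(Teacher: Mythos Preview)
The paper states this corollary without proof, leaving it as an immediate consequence of Proposition~4.15 and the quantum-group framework set up in Section~4.3. Your argument supplies exactly the computation the paper intends: identify $M_{-k}'\cap M$ with the $U_q(\mathfrak h)$-fixed points in $\End(V^{\otimes(k-1)})$, realise a minimal projection in the $(i,j,l)$-summand as the rank-one projection onto an elementary tensor in the corresponding weight space, and evaluate the product state $\phi^{\otimes(k-1)}$ on it. The crucial point you isolate---that $\phi^{\otimes\infty}$ restricts to the unique normalised trace on $X_q\cong M$, hence the trace on $Y_{k-1}\subset X_q$ is simply the restriction of the product state---is exactly why the computation is a one-line product of diagonal weights. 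Your justification that $\phi^{\otimes(k-1)}$ is tracial on $Y_{k-1}$ (because $a^{\otimes(k-1)}$ acts as a scalar on each weight space) is the right mechanism and is implicit in the paper's use of Sawin's construction.

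One remark on the ``notational substitution'' you flag: the paper's Section~4.3 does not pin down the $q$--$D$ dictionary, but the natural identification coming from the loop value is $D=1+q+q^{-1}$, under which the denominator in your formula becomes $D^{k-1}$ rather than $(1+D+D^{-1})^{k-1}$. So the symbol $D$ in the corollary as printed is really playing the role of the quantum parameter $q$; you are right to treat it that way, and it would be worth saying so explicitly rather than leaving it as a parenthetical.
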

We denote a minimal projection of such a trace by $p_{k,i}$.

\section{Bimodules and the Fusion Rule $A_n$}

We review the concept of bimodules over $\text{II}_1$ factors \cite{B97} and construct a family of bimodules over the $\text{II}_1$ factor $M$ that we obtained in Section 4. 
We get a tensor category with the fusion rule $A_n$. 

\subsection{Bimodules over $\text{II}_1$ factors}

Let $A$ and $B$ be $\text{II}_1$ factors.
An $A-B$ bimodule $\prescript{}{A}{H}_B$ is a pair of commuting normal (unital) representations $\pi_L,\pi_R$ of $A$ and $B^{\text{op}}$ respectively on the Hilbert space $H$.
Here $B^{\text{op}}$ is the opposite algebra of $B$, i.e $b_1\cdot b_2=b_2b_1$, which is also a $\text{II}_1$ factor.
Note that $\prescript{}{A}{H}_B$ is a left $A$-module and right $B$-module with the action denoted as $\pi_L(a)\pi_R(b)\xi=a\cdot \xi \cdot b$ with $a\in A,b\in B,\xi\in H$.

We say $\prescript{}{A}{H}_B$ is bifinite if the left dimension $\dim_A^L{H}<\infty$ and right dimension $\dim_B^R{H}<\infty$.

\begin{definition}
Let $H,K$ be two $A-B$ bimodules.
We say $H,K$ are equivalent if we have a unitary $u:H\to K$ such that $u(a\cdot \xi \cdot b)=a\cdot u(\xi) \cdot b$ for all $a\in A,b\in B,\xi\in H$ and denoted by $\prescript{}{A}{H}_B\cong\prescript{}{A}{K}_B$.

Moreover, we denote by
\begin{center}
$\Hom_{A,B}(H,K)=\{T\in B(H,K)|T(a\cdot \xi \cdot b)=a\cdot T(\xi) \cdot b\text{~for all~}a\in A,b\in B,\xi\in H\}$
\end{center}
the space of $A-B$ intertwiners from $H$ to $K$.
Let $\Hom_{A,B}(H)=\Hom_{A,B}(H,H)$
And we call an $A-B$ bimodule $H$ irreducible if $\Hom_{A,B}(H)=\mathbb{C}$.
\end{definition}

Note that $\Hom_{A,B}(H)\subset B(H)$ is a von Neumann algebra. 

Recall that for a $A$-module $H$, $v\in H$ is called $A$-bounded if we have a positive constant $c_v$ such that
\begin{center}
$\|xv\|\leq c_v\|x\|_2$ for all $x\in A$,
\end{center}
where $\|x\|_2=\tr(x^*x)^{1/2}$. We write $H^{\text{bdd}}$ for the set of all $A$-bounded vectors in $H$, which is a dense subspace of $H$ and also invariant under the action of $A$ and $A'$ \cite{EK98}\cite{J08}.

\begin{lemma}\cite{J08}
If $\prescript{}{A}{H}_B$ is bifinite, then a vector is $A$-bounded if and only if it is $B$-bounded.
\end{lemma}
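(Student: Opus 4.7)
The plan is to leverage the bifinite hypothesis via Pimsner--Popa bases on both sides. Since $\dim_A^L H < \infty$, there is a finite collection of $A$-bounded vectors $\xi_1,\dots,\xi_n \in H$ (a left $A$-basis) with the property that every $v \in H$ expands uniquely as $v = \sum_j a_j \xi_j$ with $a_j \in L^2(A)$, and $v$ is $A$-bounded precisely when each $a_j \in A$ (with operator-norm control). Symmetrically, $\dim_B^R H < \infty$ yields a right $B$-basis $\{\eta_i\}_{i=1}^m$ of $B$-bounded vectors with the analogous expansion property.

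The strategy is to upgrade one basis so that its members are \emph{simultaneously} $A$- and $B$-bounded. Granted this, if $v$ is $A$-bounded then $v = \sum_j a_j \xi_j$ with $a_j \in A$ and each $\xi_j$ also $B$-bounded (say with constant $c_{\xi_j}$), whence for every $b\in B$,
$$
\|vb\| \;\leq\; \sum_j \|a_j\|_\infty\, \|\xi_j b\| \;\leq\; \Bigl(\sum_j \|a_j\|_\infty c_{\xi_j}\Bigr)\,\|b\|_2,
$$
proving $v$ is $B$-bounded. The reverse implication follows by symmetry, exchanging the roles of $(A,\{\xi_j\})$ and $(B,\{\eta_i\})$.

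To produce the simultaneously bounded basis, I would start from the right $B$-basis $\{\eta_i\}$, which is $B$-bounded by construction, and note that the space $H_0 := H^{A\text{-bdd}} \cap H^{B\text{-bdd}}$ is an $A$-$B$-sub-bimodule: if $\xi \in H_0$ and $a\in A$, $b\in B$, then $a\xi b$ is again $A$-bounded (since $b$ acts on the right boundedly in the $A$-linear sense) and $B$-bounded (since $a$ acts on the left boundedly in the $B$-linear sense). Using the finite $\dim_A^L H$, one extracts from $H_0$ a finite left $A$-basis for $H$: concretely, one can take approximations of the $\xi_j$ by elements of $H_0$ via polar decomposition of the $A$-valued inner product, relying on density of $H_0$ in $H$.

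The main obstacle is the density claim $\overline{H_0} = H$ (and the extraction of a \emph{Pimsner--Popa} basis from $H_0$, not merely a generating set). This is where bifiniteness enters essentially: it guarantees that the conditional expectation $\pi_L(A)' \to \pi_R(B^{\mathrm{op}})$ has finite index, from which one deduces that the space of simultaneously left-$A$-bounded and right-$B$-bounded vectors is dense, and in fact contains a finite Pimsner--Popa basis on each side. Once this density and basis-extraction step is established, the rest of the argument is the short linear-combination estimate above.
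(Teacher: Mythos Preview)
The paper does not prove this lemma; it is cited from \cite{J08} with no argument supplied, so there is no in-paper proof to compare your proposal against.

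On its own merits: the reduction is correct. If a left Pimsner--Popa $A$-basis $\{\xi_j\}$ can be chosen inside $H^{B\text{-bdd}}$, then any $A$-bounded $v=\sum_j a_j\xi_j$ (with $a_j\in A$) is $B$-bounded, because $H^{B\text{-bdd}}$ is invariant under $\pi_R(B^{\mathrm{op}})'\supset\pi_L(A)$; your displayed estimate goes through, and symmetry gives the converse.

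The genuine gap is that you never construct such a basis. The appeal to ``finite index of the conditional expectation $\pi_L(A)'\to\pi_R(B^{\mathrm{op}})$'' is a gesture, not a deduction: you do not explain how finite index produces a Pimsner--Popa basis of simultaneously bounded vectors. Worse, the step you are deferring is equivalent to the lemma itself. If $H^{A\text{-bdd}}=H^{B\text{-bdd}}$ then trivially $H_0$ contains every basis on either side; conversely, you have shown that a basis in $H_0$ forces the equality. So the reduction gains no ground --- you have restated the problem rather than solved it. To make the argument go through you would need an independent reason the $\xi_j$ (or the $\eta_i$) are bounded on the other side, for instance by analysing the bounded intertwiners $R_{\xi_j}\colon L^2(A)\to H$ directly against the right $B$-module structure; that analysis is exactly what is missing here.
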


\begin{lemma}
If $\prescript{}{A}{H}_B$ is bifinite, then $\Hom_{A,B}(H)$ is a finite dimensional von Neumann algebra.
\end{lemma}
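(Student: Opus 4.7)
The plan is to identify $\Hom_{A,B}(H)$ with the relative commutant of a finite-index subfactor inclusion, and then invoke Jones' bound on relative commutants.

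First I would note that $\Hom_{A,B}(H) = \pi_L(A)' \cap \pi_R(B^{\text{op}})'$ as a subspace of $B(H)$: the intertwining condition $T(a\cdot\xi\cdot b) = a\cdot T(\xi)\cdot b$ for all $a,b,\xi$ is exactly the requirement that $T$ commute with both $\pi_L(A)$ and $\pi_R(B^{\text{op}})$. As the intersection of two commutants, $\Hom_{A,B}(H)$ is a (weakly closed) von Neumann algebra, so only its finite-dimensionality needs proof.

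Next I would exploit left bifiniteness. Because $A$ is a $\text{II}_1$ factor and $\dim_A^L H < \infty$, the standard structure theorem for left modules over a $\text{II}_1$ factor realises $H$ as $p\,L^2(A)^{\oplus n}$ for some $n$ and some projection $p \in M_n(A)$, and identifies the commutant $\pi_L(A)'$ with $p M_n(A) p$. In particular $\pi_L(A)'$ is itself a $\text{II}_1$ factor, carrying a canonical finite trace. The subalgebra $\pi_R(B^{\text{op}})$ is contained in $\pi_L(A)'$ (the two actions commute), and since $B$ is a factor and $\pi_R$ is normal unital, $\pi_R(B^{\text{op}})$ is a subfactor of $\pi_L(A)'$.

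The crux is then to convert right bifiniteness into finiteness of the Jones index $[\pi_L(A)' : \pi_R(B^{\text{op}})]$. Using Connes' fusion formalism (or, equivalently, realising $H$ from the other side as $q\, L^2(B)^{\oplus m}$), the right dimension $\dim_B^R H$ and this Jones index differ only by a factor depending on $\dim_A^L H$ that is already finite; so $\dim_B^R H < \infty$ forces $[\pi_L(A)' : \pi_R(B^{\text{op}})] < \infty$. Once finite index is in hand, Jones' theorem for finite-index inclusions of $\text{II}_1$ factors (see, e.g., \cite{J83}) gives that the relative commutant $\pi_R(B^{\text{op}})' \cap \pi_L(A)'$ is finite dimensional, with dimension bounded above by $[\pi_L(A)' : \pi_R(B^{\text{op}})]^2$. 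This is exactly the algebra $\Hom_{A,B}(H)$, completing the proof.

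The main obstacle is the index-translation step: verifying cleanly that right bifiniteness implies finiteness of the Jones index of $\pi_R(B^{\text{op}}) \subset \pi_L(A)'$ rather than only of some cruder invariant. Once that normalisation issue is handled, the remaining inputs are standard, and one could alternatively package the whole argument by invoking a single known result that for bifinite bimodules between $\text{II}_1$ factors the intertwiner space is finite dimensional, citing \cite{B97, EK98, J08}.
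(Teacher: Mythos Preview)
Your proposal is correct and follows essentially the same route as the paper: identify $\Hom_{A,B}(H)$ with the relative commutant of a finite-index inclusion of $\text{II}_1$ factors and invoke Jones' bound. The only cosmetic difference is that the paper works with the inclusion $\pi_L(A)\subset \pi_R(B^{\text{op}})'$ while you pass to its commutant inclusion $\pi_R(B^{\text{op}})\subset \pi_L(A)'$; these have the same index and the same relative commutant, so the arguments are equivalent.
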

\begin{proof}
Note that $\Hom_{A,B}(H)=A'\cap(B^{\text{op}})'\cap B(H)$ is of course a von Neumann algebra.
If $\prescript{}{A}{H}_B$ is bifinite, we have
$A\subset (B^{\text{op}})'\cap B(H)$ by the commuting action.
This imlies an inclusion of $\text{II}_1$ factors where
\begin{center}
$[(B^{\text{op}})'\cap B(H):A]=\frac{\dim_{B^{\text{op}}}(H)}{\dim_A(H)}=\frac{1}{\dim_A(H)\dim_B(H)}<\infty$.
\end{center}
Hence $\Hom_{A,B}(H)=A'\cap(B^{\text{op}})'\cap B(H)$ is a relative commutant of a pair of factors with finite index.
So by \cite{J83}, it is finite dimensional.
\end{proof}

\begin{corollary}
If $\prescript{}{A}{H}_B$ is bifinite and $p$ is a projection in $\Hom_{A,B}(H)$, then $Hp$ is an irreducible $A-B$ bimodule if and only if $p$ is minimal.
\end{corollary}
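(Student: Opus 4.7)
The plan is to reduce the irreducibility of $Hp$ as an $A$-$B$ bimodule to the one-dimensionality of the compressed algebra $p\,\Hom_{A,B}(H)\,p$, and then apply the standard structure theorem for finite-dimensional $C^*$-algebras given by Lemma 5.3. First I would note that since $p \in \Hom_{A,B}(H)$ commutes with both the left $A$-action and the right $B$-action, the range $Hp$ is a closed subspace stable under both actions and so inherits the structure of an $A$-$B$ bimodule.

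Next I would establish a $*$-algebra isomorphism
$$\Phi : \Hom_{A,B}(Hp) \;\longrightarrow\; p\,\Hom_{A,B}(H)\,p$$
as follows. Viewing $p$ as a projection $H \to Hp$ and writing $i$ for the inclusion $Hp \hookrightarrow H$, set $\Phi(S) = i \circ S \circ p$ for $S \in \Hom_{A,B}(Hp)$. Because $p$ is itself an $A$-$B$ intertwiner of $H$, the composition $\Phi(S)$ commutes with both actions on $H$, and by construction $p\,\Phi(S)\,p = \Phi(S)$. Conversely, any $T \in p\,\Hom_{A,B}(H)\,p$ carries $H$ into $Hp$ and acts trivially on the orthogonal complement, so its restriction $T|_{Hp}$ lies in $\Hom_{A,B}(Hp)$, giving a two-sided inverse. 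A routine check confirms that $\Phi$ respects composition and adjoints.

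Once $\Phi$ is in place, the conclusion is immediate. By Lemma 5.3 the algebra $N := \Hom_{A,B}(H)$ is a finite-dimensional $C^*$-algebra, and for such algebras a nonzero projection $p \in N$ is minimal if and only if $pNp = \mathbb{C}\cdot p$. Therefore $Hp$ is irreducible exactly when $\Hom_{A,B}(Hp) = \mathbb{C}\cdot \mathrm{id}_{Hp}$, equivalently (via $\Phi$) $pNp = \mathbb{C}\,p$, equivalently $p$ is a minimal projection in $N$. There is no real obstacle here — the only subtlety is verifying the well-definedness of $\Phi$ and of its inverse, in particular that extension-by-zero of a sub-bimodule intertwiner does land in $\Hom_{A,B}(H)$ — but this is essentially formal once one observes that $p$ itself commutes with the two actions.
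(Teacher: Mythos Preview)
Your proof is correct and follows essentially the same approach as the paper: both identify $\Hom_{A,B}(Hp)$ with the corner $p\,\Hom_{A,B}(H)\,p$ of the finite-dimensional von Neumann algebra $\Hom_{A,B}(H)$ and use that minimality of $p$ is equivalent to this corner being $\mathbb{C}p$. Your version is in fact more careful than the paper's, which writes $\Hom_{A,B}(Hp)=p\Hom_{A,B}(H)$ (missing the right $p$) and handles the non-minimal direction by an explicit decomposition $p=p_1+p_2$ rather than via the corner characterization.
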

\begin{proof}
If $p$ is minimal, $\Hom_{A,B}(Hp)=p\Hom_{A,B}(H)=\mathbb{C}p\cong \mathbb{C}$.
Otherwise, assume $p=p_1+p_2$ is a decomposition into two subprojections, then $Hp=Hp_1\oplus Hp_2$, which is a direct sum of $A-B$ bimodules.
\end{proof}

\subsection{Construction of the Bimodules}

Let $m,k$ be two non-negative integers.  
Take any element $p$ from the Motzkin $(k,k)$-tangles (may also the Motzkin algebra $M_k(D)$). 
Consider the following element which is a Motzkin $(2m+k)$-tangle. 

\begin{figure}[H]
  \centering
  \includegraphics[width=7cm]{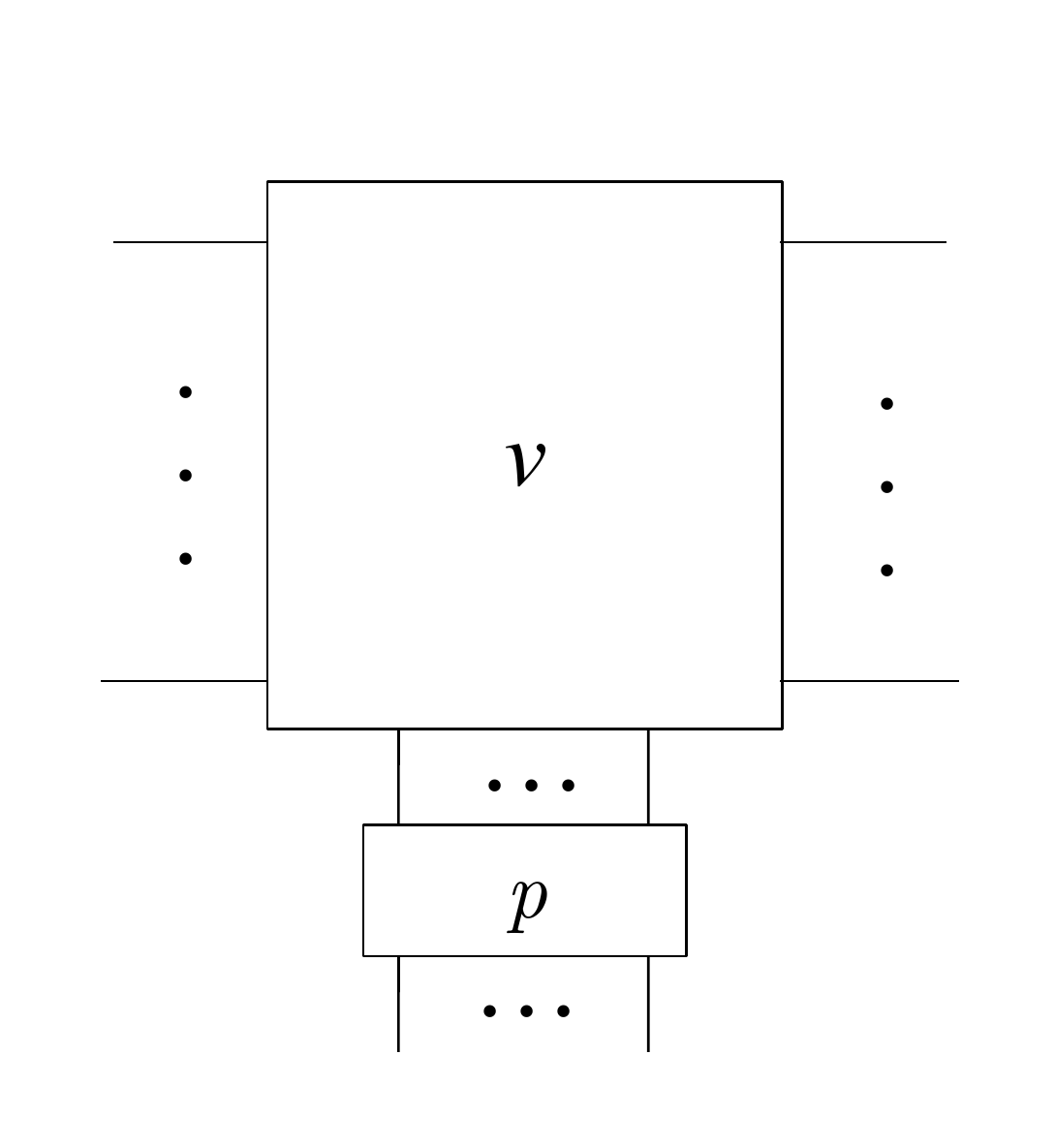}\\
\end{figure}
\begin{enumerate}
\item There are $m$ vertices on the left side and $m$ on the right side.
\item The $k$ vertices on the bottom are connected to the top $k$ vertices of the Motzkin $(2m+k)$-tangle $p$.
\item $v$ in the rectangle is an arbitrary Motzkin $(2m+k)$-tangle. 
\end{enumerate}

Let $V_m(p)$ the complex span of the elements in such a form as above. 
Let $p$ run through all the Motzkin $k$-tangles and denote the union of all these spaces by $V_{m,k}=\cup_{p\in \mathbf{M}(k,k)}V_m(p)$. 
Also, we define an inner product on $V_{m,k}$ as the following graph. 
\begin{figure}[H]
  \centering
  \includegraphics[width=12cm]{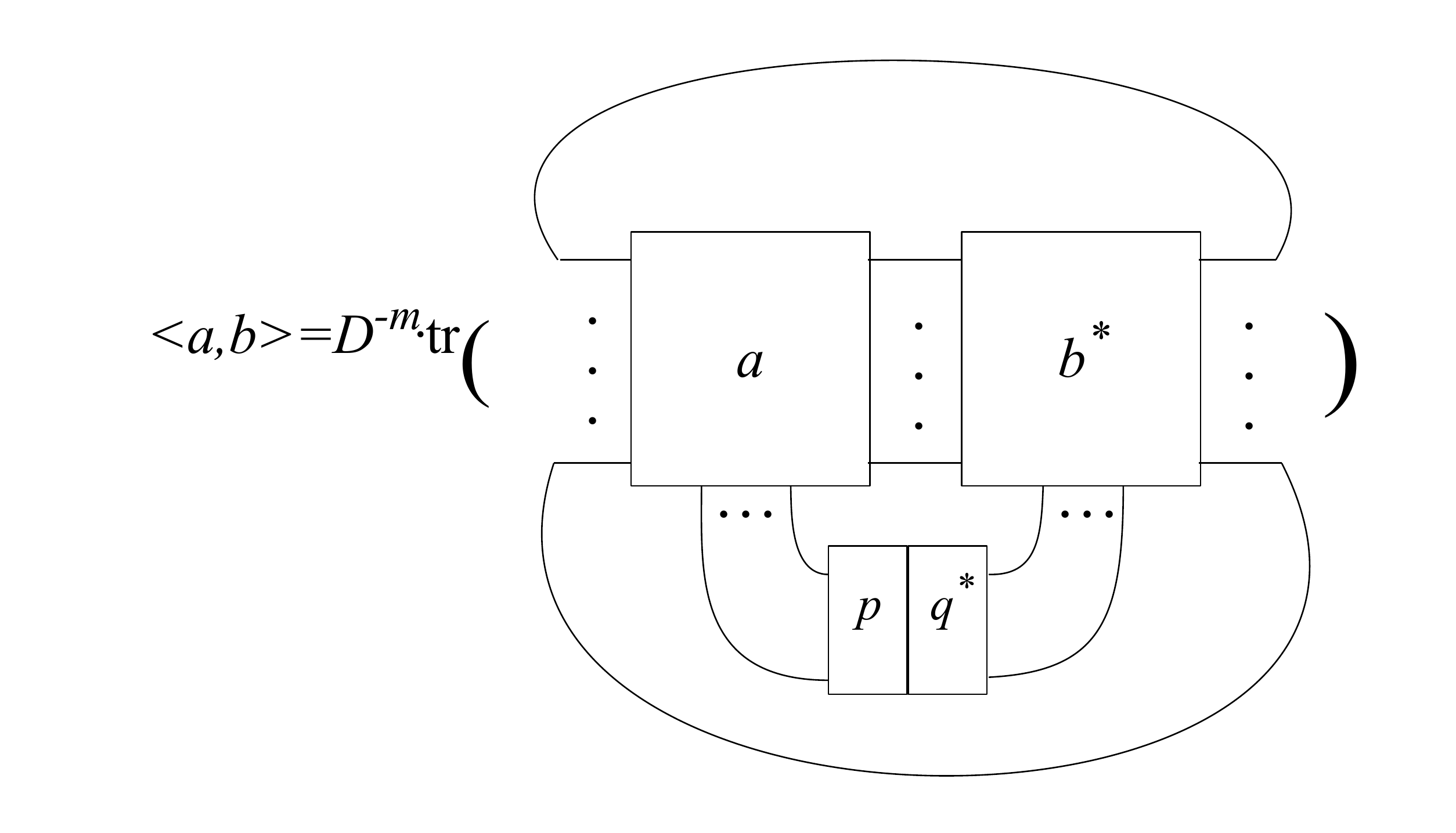}\\
  \caption{The inner product of $a \in V_m(p),b\in V_m(q)$} \label{}
\end{figure}

\begin{lemma}
The inner product defined above is positive.
\end{lemma}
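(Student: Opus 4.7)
The plan is to realise $V_{m,k}$ as a subspace of a Motzkin tangle space whose sesquilinear form has already been shown positive definite in Section 3, and then observe that the diagrammatic inner product defined here is simply the restriction of that form.

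First I would note that any element of $V_m(p)$ is, by construction, a Motzkin planar tangle with $m$ boundary points on the left, $m$ on the right, and $k$ on the bottom, for a total of $2m+k$ boundary points. Up to smooth planar isotopy (which identifies all the rectangular boundary configurations with the same total number of boundary points, as remarked before Lemma 3.17), we therefore obtain a linear map
\begin{equation*}
\iota\colon V_{m,k}\hookrightarrow \mathbf{M}(2m+k).
\end{equation*}
Because the diagrams in $V_m(p)$ and $V_m(q)$ for different bottom patterns $p,q$ remain distinct Motzkin tangles in $\mathbf{M}(2m+k)$, the map $\iota$ is injective on each $V_m(p)$ and the various images are linearly independent, so $\iota$ is an injection of vector spaces.

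Next I would verify that the diagrammatic inner product on $V_{m,k}$ shown in the figure agrees, under $\iota$, with the sesquilinear form on $\mathbf{M}(2m+k)$ introduced in Section 3.4. Both are defined by the same planar operation: place $a$ against the reflection $b^*$, match up all $2m+k$ boundary points (the $m$ left, $m$ right, and $k$ bottom points), close up the resulting strings in the standard planar-algebra way, and evaluate with the loop parameter $D$. This is exactly the formula $\langle a,b\rangle=\tr(b^*a)$ used in Section 3.4 to define the form on $\mathbf{M}(n)$, so
\begin{equation*}
\langle a,b\rangle_{V_{m,k}}=\langle \iota(a),\iota(b)\rangle_{\mathbf{M}(2m+k)}.
\end{equation*}

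Finally, by Lemma 3.17 the inner product on $\mathbf{M}(2m+k)$ is positive definite whenever $D\in\{2\cos(\pi/n)+1\mid n\geq 3\}\cup[3,\infty)$, regardless of the parity of $2m+k$ (the odd case was reduced there to the even case via the isometric embedding $\phi$ using $p_1$, and the even case is the $C^*$-structure of $M_{m+k/2}(D)$ from Theorem 3.2). Since positivity is inherited by subspaces, the form on $V_{m,k}$ is positive as well. The main point requiring care is the second step — checking that the diagrammatic inner product drawn on $V_{m,k}$ literally matches the planar-algebra form on $\mathbf{M}(2m+k)$ once one flattens the tangle boundary — but this is forced by the planar algebra formalism of \cite{J99} and requires no new computation.
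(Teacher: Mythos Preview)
Your approach is essentially identical to the paper's: identify $V_{m,k}$ with (a subspace of) the Motzkin tangle space $\mathbf{M}(2m+k)$, check that the diagrammatic pairing is the planar-algebra form, and then invoke Lemma~3.17 (odd case) together with the $C^*$-structure of Section~3.3 (even case). One small correction: the images $\iota(V_m(p))$ for different $p$ are \emph{not} linearly independent in $\mathbf{M}(2m+k)$ --- for instance $p=1_k$ already gives everything --- but this is irrelevant, since positivity (in the semi-definite sense used here) only requires that the form on $V_{m,k}$ be the pullback of the positive form on $\mathbf{M}(2m+k)$, not that $\iota$ be injective.
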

\begin{proof}
Note that $V_{m,k}$ is just the complex span of Motzkin $2m+k$-tangles. 
Then it follows Lemma 3.17 for odd $2m+k$ and Section 3.3.  
\end{proof}

Now we assume $p$ be a self-adjoint idempotent in $M_k(D)$. 
Let $H_m=V_m(p)/R_m(p)$ be the quotient by the radical of the inner product restricted on $V_m(p)$. 
\begin{corollary}
$H_m(p)$ is a finite dimensional Hilbert space.
\end{corollary}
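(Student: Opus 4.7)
The plan is to reduce the statement to the fact that a finite-dimensional vector space carrying a positive semi-definite inner product descends, after quotienting by its radical, to a genuine finite-dimensional Hilbert space (completeness being automatic in finite dimensions). So the real content is just finite-dimensionality plus the positivity already established in the previous lemma.

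First I would observe that $V_m(p)$ sits inside the larger space $V_{m,k} = \bigcup_{q \in \mathbf{M}(k,k)} V_m(q)$, which by definition is the complex linear span of all Motzkin $(2m+k)$-tangles (with one row of $k$ marked endpoints attached to the top of the inserted tangle $q$, and $m$ endpoints on each side). Since the number of Motzkin $(2m+k)$-tangles is the Motzkin number $\mathcal{M}_{2m+k}$, the ambient space $V_{m,k}$ is finite-dimensional, and therefore so is its subspace $V_m(p)$.

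Next, by the previous lemma, the sesquilinear form defined diagrammatically on $V_{m,k}$ is positive (semi-definite); restricting to $V_m(p)$ preserves this property, so the radical $R_m(p) = \{\xi \in V_m(p) \mid \langle \xi,\xi\rangle = 0\}$ is a well-defined subspace, and $\langle \cdot,\cdot\rangle$ descends to a positive definite inner product on the quotient $H_m(p) = V_m(p)/R_m(p)$. Since quotients of finite-dimensional spaces remain finite-dimensional, $H_m(p)$ has finite dimension, and any finite-dimensional inner product space is automatically complete, hence a Hilbert space.

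There is essentially no obstacle here: the only subtle point is verifying that the positivity established for the full space $V_{m,k}$ (odd and even boundary cases treated in Lemma 3.17 and Section 3.3 respectively) genuinely restricts to $V_m(p)$, which is immediate since the inner product is intrinsic to the ambient space and does not depend on which generating tangle $p$ is used to describe the subspace.
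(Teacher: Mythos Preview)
Your proposal is correct and matches the paper's approach: the paper states this as a corollary with no proof, since it follows immediately from the preceding lemma (positivity of the form on $V_{m,k}$) together with the obvious finite-dimensionality of $V_m(p)$. Your write-up simply makes these implicit steps explicit.
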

Observe $V_m(p)$ is a left $M_m(D)$ module and also a right $M_m(D)$ module that the two actions commute. 
As the quotient space, $H_m(p)$ inherits these actions. 
For $v\in H_m(p)$ and $x,y\in M_m(D)$, we denote $x\cdot v \cdot y$ by the action of $x,y$ on $v$ respectively.

\begin{figure}[H]
  \centering
  \includegraphics[width=7cm]{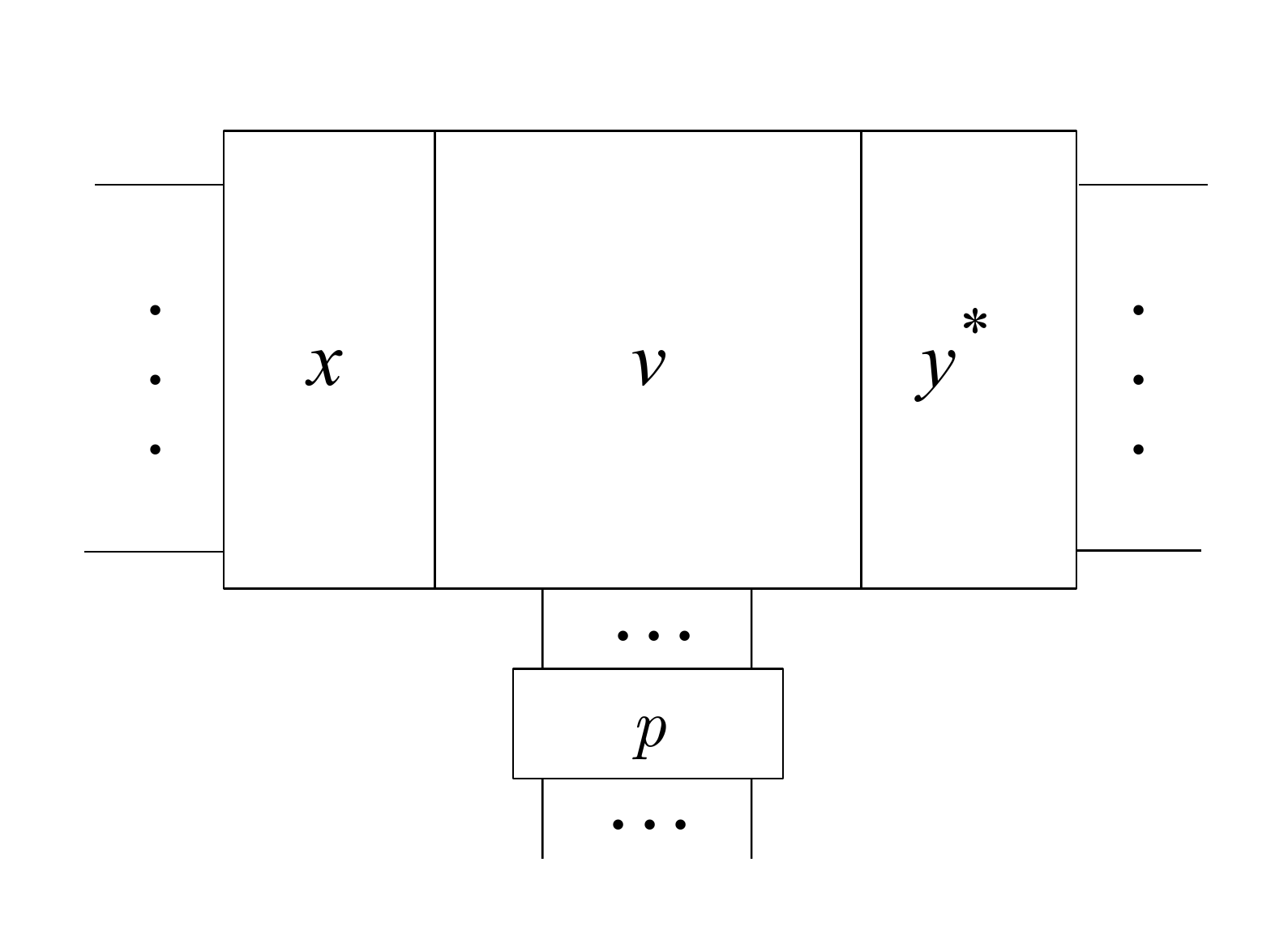}\\
  \caption{the vector $x\cdot v \cdot y$} \label{}
\end{figure}

\begin{remark}
One can get the dimensions of these $H_m(p)$ as what we did in Section 3.4 and by the Bratteli diagrams. 
\end{remark}

\begin{lemma}
$H_m(p)$ is a subrepresentation of $M_m(D)$ on $M_{m+\lceil k/2 \rceil}(D)$. 
\end{lemma}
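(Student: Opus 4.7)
The plan is to exhibit an explicit $M_m(D)$-linear isometric embedding $\Psi : H_m(p) \hookrightarrow M_{m+\lceil k/2\rceil}(D)$, where $M_{m+\lceil k/2\rceil}(D)$ carries the left action of $M_m(D)$ via the standard inclusion $M_m(D)\hookrightarrow M_{m+\lceil k/2\rceil}(D)$ that adjoins $\lceil k/2\rceil$ through strings on the right. Once such a $\Psi$ is produced, its image is a closed $M_m(D)$-invariant subspace, which is exactly what it means for $H_m(p)$ to be a subrepresentation of $M_m(D)$ on $M_{m+\lceil k/2\rceil}(D)$.

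I would first define $\Psi$ on $V_m(p)$ diagrammatically. A generator of $V_m(p)$ is a Motzkin $(2m+k)$-tangle with $m$ vertices on the left side, $m$ on the right side, and $k$ vertices at the bottom feeding into $p$. I would reshape such a tangle into a rectangular $(m+\lceil k/2\rceil,m+\lceil k/2\rceil)$-tangle by rotating the right-hand vertices down and splitting the $k$ bottom vertices of $p$ between the new top and bottom edges. When $k$ is even this lands directly in $M_{m+k/2}(D)$. When $k$ is odd there is a one-vertex parity mismatch, which I would fix by applying the isometry $\phi$ of Lemma 3.17 that attaches a $p_1$-type 1-box on the far right, producing an element of $M_{m+\lceil k/2\rceil}(D)$.

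Next I would verify three things. First, that $\Psi$ is isometric: the inner product on $V_m(p)$ is the trace of $b^*a$ closed up through $p$, which after the reshaping agrees with $\tr$ on $M_{m+\lceil k/2\rceil}(D)$ applied to $\Psi(b)^*\Psi(a)$, and for odd $k$ Lemma 3.17 guarantees $\phi$ preserves the inner product; combined with positive-definiteness of $\tr$ on $M_{m+\lceil k/2\rceil}(D)$ from Theorem 3.2 this forces $\ker\Psi=R_m(p)$, so $\Psi$ descends to an isometry out of $H_m(p)$. Second, that $\Psi$ intertwines the left $M_m(D)$-action: concatenation on the $m$ left strands in $V_m(p)$ becomes precisely left multiplication by the embedded $M_m(D)\subset M_{m+\lceil k/2\rceil}(D)$ after reshaping, while the $\phi$-decoration in the odd case is placed on the far-right strand, far from the action of $M_m(D)$. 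Third, that the image is $M_m(D)$-invariant: by construction every element of the image contains an extended copy of $p$ along the bottom (plus the $p_1$-decoration when $k$ is odd), a form preserved by left multiplication by elements of $M_m(D)$.

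The main obstacle is the bookkeeping in the odd-$k$ case. One needs to check simultaneously that the $\phi$-attachment of the 1-box is isometric for the trace in $M_{m+\lceil k/2\rceil}(D)$ and that it can be placed on the rightmost strand in a way that commutes with left concatenation by $M_m(D)$; the even-$k$ case is essentially just a redrawing of the tangle. Once the odd parity is correctly handled, the remaining content is straightforward diagrammatic rewriting together with the trace/inner-product compatibility already established in Section 3.
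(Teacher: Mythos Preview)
Your proposal is correct and follows essentially the same route as the paper. The paper's argument consists only of a figure together with the two-case description of the embedding: for even $k=2l$ one bends the $l$ leftmost and $l$ rightmost bottom vertices of $p$ around to the left and right edges of $v$, and for odd $k=2l-1$ one first juxtaposes $p$ with $p_1$ on the right to reduce to the even case; your reshaping and your use of the isometry $\phi$ from Lemma~3.17 for the odd parity are the same maneuvers described from a slightly different vantage point. Your explicit verification of isometry, $M_m(D)$-equivariance, and invariance of the image goes a bit beyond what the paper spells out, but no new idea is involved.
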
 

There is a natual embedding of $V_m(p)$ into $M_{m+\lceil k/2 \rceil}(D)$. 
The embedding can be constructed in the following way:
\begin{figure}[H]
  \centering
  \includegraphics[width=12cm]{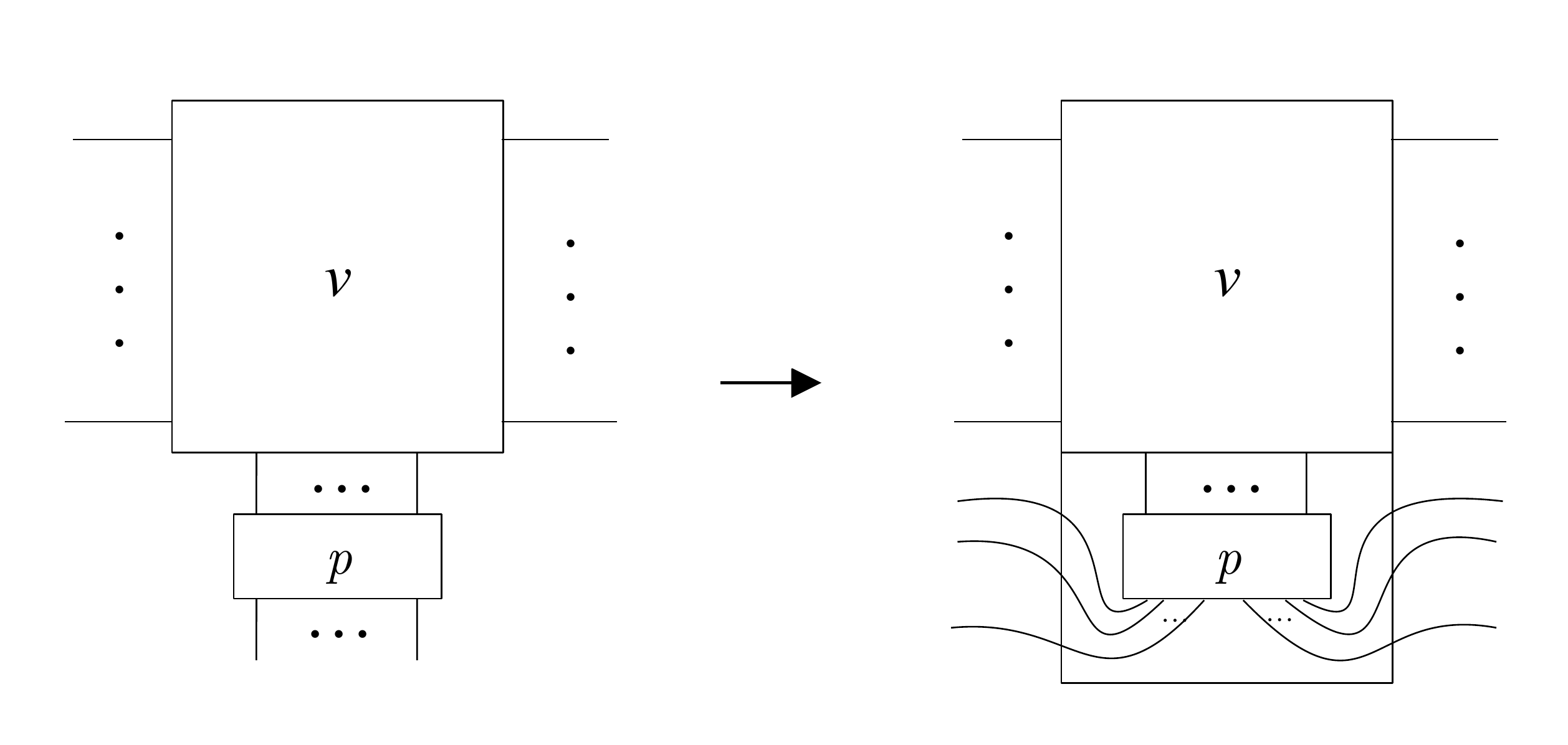}\\
  \caption{the embedding:
  $V_m(p)\to M_{m+\lceil k/2 \rceil}(D)$} \label{}
\end{figure}

\begin{enumerate}
\item If $k$ is even, say $k=2l$, we can move the left $l$ top vertices of $p$ to the left edge of $v$ and  move the right $l$ top vertices of $p$ to the right edge of $v$. 
\item If $k$ is odd, say $k=2l-1$, we first insert a $p_1$ on the right of $p$ which makes $p$ into $p\cdot p_{2l}\in M_{2l}(D)$ which is also a self-adjoint idempotent. Then the process is the same as when $k$ is even. 
\end{enumerate}

\begin{lemma}
$H_m(1_k)$ is faithful $A_m-A_m$-bimodule.
\end{lemma}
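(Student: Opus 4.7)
The plan is to push faithfulness through the embedding $\iota : V_m(1_k) \hookrightarrow M_{m+\lceil k/2\rceil}(D)$ from the preceding lemma. Since the sesquilinear form on $V_m(1_k)$ is the restriction of the form on $M_{m+\lceil k/2\rceil}(D)$, $\iota$ descends to an isometric embedding $\bar\iota : H_m(1_k) \hookrightarrow A_{m+\lceil k/2\rceil}$, and by construction $\bar\iota$ intertwines the left (resp.\ right) $M_m(D)$-action on $H_m(1_k)$ with multiplication by the image of $M_m(D)$ under the natural inclusion $\phi : M_m(D) \hookrightarrow M_{m+\lceil k/2\rceil}(D)$ obtained by appending $\lceil k/2\rceil$ through-strings on the appropriate side. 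By the tower analysis in Section~3, these inclusions descend to injective $*$-homomorphisms $A_m \hookrightarrow A_{m+\lceil k/2\rceil}$.

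Next I would exhibit a reference vector realising a suitably non-degenerate element. For even $k = 2l$, the Motzkin-number identity gives $\dim V_m(1_k) = \mathcal{M}_{2m+k} = \dim M_{m+l}(D)$, so $\iota$ is a linear isomorphism; in particular there exists $v_0 \in V_m(1_k)$ with $\iota(v_0) = 1_{m+l}$. For odd $k = 2l-1$, the right-stabilisation $1_k \mapsto 1_k \cdot p_{k+1}$ realises $V_m(1_k)$ inside $V_m(1_k \cdot p_{k+1}) \hookrightarrow M_{m+l}(D)$ with image the one-sided ideal $p_{m+l}\cdot A_{m+l}$, and one picks $v_0$ with $\iota(v_0) = p_{m+l}$.

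Suppose now that $x \in A_m$ annihilates $H_m(1_k)$ from the left, so $\phi(x)\cdot\bar\iota(v_0) = 0$ in $A_{m+\lceil k/2\rceil}$. In the even case this reads $\phi(x) = 0$, hence $x = 0$ by injectivity of $\phi$ on $A_m$. In the odd case, $\phi(x)$ commutes with $p_{m+l}$ (they act on disjoint strand-ranges), and a direct computation of the Markov trace gives the factorisation
\begin{equation*}
\tr(\phi(x^*x)\cdot p_{m+l}) \;=\; \tr(x^*x),
\end{equation*}
so the hypothesis $\phi(x)\,p_{m+l} = 0$ forces $\tr(x^*x) = 0$ and positive-definiteness on $A_m$ yields $x = 0$. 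Right-faithfulness follows by the symmetric argument, establishing that $H_m(1_k)$ is a faithful $A_m$--$A_m$ bimodule.

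The main technical point is the explicit identification of $\mathrm{im}(\iota)$ in the odd case, together with the trace factorisation above; both are routine diagrammatic/combinatorial calculations using the Motzkin closure, but they are the only ingredients beyond the tower structure that require verification.
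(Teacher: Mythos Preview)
Your approach is correct but takes a different route from the paper's. The paper gives no separate argument for this lemma; the proof displayed immediately after the next lemma (faithfulness of the $A_m$-action on $H_m(p)$ whenever $\tr(pp^*)\neq 0$) covers $p=1_k$ as the special case $\tr(1_k1_k^*)=1$. That argument is a single direct diagrammatic computation: one exhibits an explicit vector $v_0\in H_m(p)$ and reads off
\[
\langle xv_0,\,xv_0\rangle \;=\; \tfrac{1}{D^{k}}\,\tr(xx^*)\,\tr(pp^*),
\]
so $x\neq 0$ in $A_m$ forces $x\cdot v_0\neq 0$. No embedding into $A_{m+\lceil k/2\rceil}$, no parity split, and no identification of $\mathrm{im}(\iota)$ is needed.

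Your route instead transports the problem through $\bar\iota$ into $A_{m+\lceil k/2\rceil}$ and reduces faithfulness to injectivity of the tower inclusion $A_m\hookrightarrow A_{m+\lceil k/2\rceil}$ (even $k$) or a trace factorisation against $p_{m+l}$ (odd $k$). This is sound, and invoking the preceding subrepresentation lemma for the intertwining is legitimate. One small correction: in the odd case your displayed identity should read $\tr(\phi(x^*x)\,p_{m+l})=D^{-1}\tr(x^*x)$, since $\tr(p_{m+l})=D^{-1}$; the conclusion is of course unaffected. The paper's computation is shorter and handles arbitrary idempotents $p$ uniformly, whereas your argument trades the diagram chase for structural facts about the tower already established in Section~3; either is acceptable here.
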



Consider the action of $A_{m+k}$ on $H_{m+k}(p)$. 
As $A_m\subset A_{m+k}$, $A_m$ has a restricted action on $H_{m+k}(p)$. 

\begin{lemma}
The restricted action of $A_m$ on $H_{m+k}(p)$ is faithful if $\tr(pp^*)\neq 0$. 
\end{lemma}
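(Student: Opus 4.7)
The plan is to reduce faithfulness to the existence of a single separating vector: if we can exhibit $\xi_0 \in H_{m+k}(p)$ such that the map $x \mapsto x \cdot \xi_0$ from $A_m$ into $H_{m+k}(p)$ is injective, then of course no nonzero $x \in A_m$ can annihilate all of $H_{m+k}(p)$, and faithfulness follows. The natural choice is to take $\xi_0$ to be the image in $H_{m+k}(p)$ of the tangle $v_0 = 1_m \,|\, p \in V_{m+k}(p)$ obtained by juxtaposing the $m$-string identity tangle on the left with a copy of $p$ on the right, so that under the inclusion $A_m \subset A_{m+k}$ the restricted action operates on the first $m$ strands while the $k$-block containing $p$ is left untouched.

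Using the embedding of the preceding lemma, $V_{m+k}(p) \hookrightarrow M_{m+k+\lceil k/2\rceil}(D)$, the vector $\xi_0$ is sent to the juxtaposition of $1_m$ with a suitably embedded copy of $p$, and the action of $x \in A_m$ corresponds to ordinary left multiplication by $x\,|\,1$. Then
\begin{equation*}
\langle x \cdot \xi_0,\, x \cdot \xi_0 \rangle_{H_{m+k}(p)} \;=\; \tr\bigl((x\,|\,p)^{*}(x\,|\,p)\bigr) \;=\; \tr\bigl((x^{*}x)\,|\,pp^{*}\bigr),
\end{equation*}
since the juxtaposition is multiplicative. Because the Motzkin trace on a juxtaposition factors (up to the contribution of closed loops produced by the closure, which each contribute a positive factor of $D$), this evaluates to
\begin{equation*}
D^{N} \cdot \tr_{m}(x^{*}x) \cdot \tr(pp^{*})
\end{equation*}
for some nonnegative integer $N$ coming from the bookkeeping of the embedding.

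Now by Theorem 3.2, the trace is positive definite on $M_{m}(D)$ in our standing regime $D \in \{2\cos(\pi/n)+1 : n \geq 3\}\cup[3,\infty)$, so $\tr_{m}(x^{*}x) > 0$ for every nonzero $x \in A_{m}$. Combined with the hypothesis $\tr(pp^{*}) \neq 0$ and the positivity of $D^{N}$, we obtain $\|x \cdot \xi_0\|^{2} > 0$ whenever $x \neq 0$. Hence $x \mapsto x \cdot \xi_0$ is injective on $A_{m}$, and the restricted action of $A_{m}$ on $H_{m+k}(p)$ is faithful.

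The main obstacle is the diagrammatic verification of the factorization of the Motzkin trace across the juxtaposition $v_0 = 1_m\,|\,p$, in particular the bookkeeping of the closed loops that are produced when the inner product is closed up and that get absorbed into the harmless factor $D^{N}$. A secondary technical point is that Lemma 3.17 and its analogue handle even and odd parity of $k$ separately, but in both cases the juxtaposition structure of $v_0$ is preserved by the embedding, so the same factorization and the same conclusion apply uniformly.
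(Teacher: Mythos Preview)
Your approach matches the paper's: both exhibit a specific vector $v_0$ and verify diagrammatically that $\langle x v_0, x v_0\rangle$ factors as a positive scalar times $\tr(xx^*)\,\tr(pp^*)$, so that the vanishing of $\langle x v, x v\rangle$ for all $v$ forces $x=0$. The paper carries out the computation directly on the inner product (without routing through the embedding into $M_{m+k+\lceil k/2\rceil}(D)$) and gets $\langle x v_0, x v_0\rangle = \tfrac{1}{D^{k}}\tr(xx^{*})\tr(pp^{*})$, so your normalization constant is actually $D^{-k}$ rather than a nonnegative power of $D$; this is harmless since $D>0$.
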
  

It suffices to prove that if $\langle xv,xv\rangle=0$ for all $v\in H_m(p)$ then we have $x=0$. 

Indeed, let us consider the left action of $x$ on the following vector $v_0$. 
\begin{figure}[H]
  \centering
  \includegraphics[width=9cm]{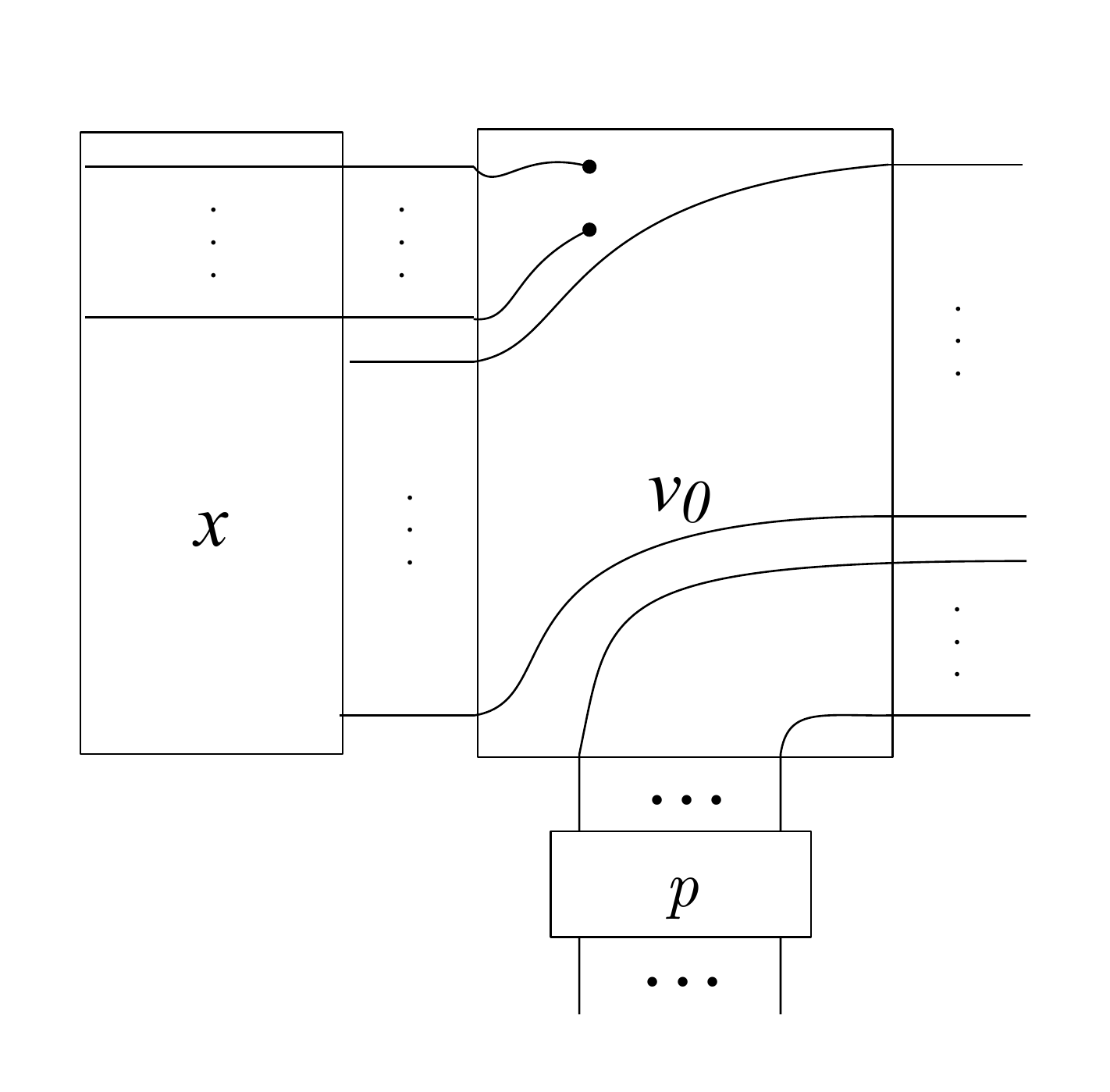}\\
  \caption{$x$'s action on the vector $v_0$} \label{}
\end{figure}
We can check that
\begin{center}
$\langle xv_0,xv_0\rangle=\frac{1}{D^k}\tr(xx*)\tr(pp^*)\neq 0$
\end{center}
Hence the restricted action is faithful.
 
One can embed $H_m(p)$ to $H_{m+1}(p)$ by adding a horizontal string on the top. 
Note this embedding $H_m(p)\to H_{m+1}(p)$ is compatible with the inner products.  
We take the direct limit
\begin{center}
${H_{\infty}(p)}=\varinjlim {H_m(p)}$,
\end{center}
which also admits an inner product. 

By taking the completion with respect to the inner product,  we get a Hilbert space $H_{p}=\overline{H_{\infty}(p)}^{||\cdot ||}$. 
Note we have left and right actions $\pi^L_{m},\pi^R_{m}$ of $M_m(D)$ on  $H_m(p)$ for all $m\in \mathbb{N}$. 
This induces the left and right actions
$\pi^L_{\infty},\pi^R_{\infty}$ of $M_{\infty}(D)$ on $H_{\infty}(p)$
by the direct limit: 
\begin{center}
   $\{M_\infty(D),\pi^L_{\infty},\pi^R_{\infty};{H_{\infty}(p)}\}=\varinjlim \{M_m(D),\pi^L_{m},\pi^R_{m};{H_{m}(p)}\}$ 
\end{center}

\begin{lemma}
$\pi^L_{\infty}(M_\infty(D)),\pi^R_{\infty}(M_\infty(D))$
extend to bounded operators on $V_{p}$. 
\end{lemma}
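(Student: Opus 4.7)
The plan is to reduce boundedness of the action on $H_p$ to the fact that left and right multiplications by elements of a $\mathrm{II}_1$ factor are bounded operators on its standard $L^2$ space. The key tool is the embedding $V_m(p) \hookrightarrow M_{m+\lceil k/2 \rceil}(D)$ from the preceding lemma, which should turn the abstract action on $H_m(p)$ into ordinary multiplication inside the hyperfinite factor $M$ built in Section 4.

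First, any $x \in M_\infty(D)$ lies in $M_m(D)$ for some finite $m$, so it suffices to produce a bound on $\pi^L_{m'}(x)$ for every $m' \geq m$ that is uniform in $m'$. For this I would verify that the embedding $V_{m'}(p) \hookrightarrow M_{m'+\lceil k/2 \rceil}(D)$ is, up to a scalar factor depending only on $p$ and $k$, isometric with respect to the natural inner products: the inner product $\langle a,b\rangle$ on $V_{m'}(p)$ is computed by closing the diagram on the right, and under the embedding this closure becomes (a constant multiple of) $\tr((b')^* a')$ in $M_{m'+\lceil k/2 \rceil}(D)$, where $a',b'$ are the images of $a,b$ and the constant absorbs the loops/strings produced by $pp^* = p$. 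In particular, the factor does not depend on $m'$.

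Next, under the embedding, the left action of $M_m(D)$ on $V_{m'}(p)$ is just the restriction of left multiplication inside $M_{m'+\lceil k/2 \rceil}(D)$ (via the natural inclusion $M_m(D) \hookrightarrow M_{m'+\lceil k/2 \rceil}(D)$). Since every $A_k = \pi(M_k(D))$ embeds into the $\mathrm{II}_1$ factor $M$ and left multiplication by any $y \in M$ on $L^2(M)$ has operator norm equal to $\|y\|_M$, the previous step yields
\begin{equation*}
\|\pi^L_{m'}(x)\, v\|_{H_{m'}(p)} \;\leq\; \|x\|_M \cdot \|v\|_{H_{m'}(p)}
\end{equation*}
with a bound independent of $m'$, and similarly for $\pi^R_{m'}$. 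Passing to the direct limit gives a uniformly bounded action on $H_\infty(p)$, which then extends by continuity to the completion $H_p$.

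The main obstacle is the isometry claim in the first step. The inner product on $V_{m'}(p)$ is defined by a diagrammatic closure that sandwiches $p$ with its adjoint, while the trace inner product in the ambient Motzkin algebra closes only the outer strings; one must check that the extra contribution from the $pp^*$ portion of the diagram contributes a constant depending only on $p$ (for example, a fixed power of $D$ from the closed-off loops), independent of the ``outer'' diagram size $m'$. Once this normalization is pinned down, the rest of the argument is standard $\mathrm{II}_1$-factor bookkeeping.
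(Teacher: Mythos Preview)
Your approach is essentially the same as the paper's, only spelled out in more detail. The paper's proof is a single sentence: it asserts that $V_p$ sits as a closed subspace of $L^2(M_{\lceil k/2\rceil},\tr)$ via the embedding of the preceding lemma, and that the action of $M_\infty(D)$ on that $L^2$ space is already known to be bounded; you have correctly unpacked this into the isometric-embedding step, the identification of the diagrammatic action with left/right multiplication, and the passage to the direct limit and completion. Your flagged ``main obstacle'' (that the embedding is isometric up to a fixed scalar independent of $m'$) is exactly what the paper is tacitly using and is straightforward to verify from the diagrammatic definitions.
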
 
\begin{proof}
This is guaranteed by the fact that $V_{p}$ is a closed subspace of $L^2(M_{\lceil k/2 \rceil},\tr)$ (as Claim 2) on which $M_{\infty}(D)$'s action is bounded.
\end{proof}

Take the strong operator topology of $\pi^L_{\infty}(M_\infty(D)),\pi^R_{\infty}(M_\infty(D))$, we get a von Neumann algebra $M$. 
One can prove $M$ is a $\text{II}_1$ factor as we showed in Section 4.1. 
Moreover, $M$ is isomorphic to the factor $M$ we constructed in Section 4.1.

\begin{proposition}
$V(p)$ is an $M-M$ bimodule where $M$ is the $\text{II}_1$ factor generated by $\{e_i,l_i,r_i|i\geq 1\}$.
\end{proposition}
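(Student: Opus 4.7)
The plan is to verify the three defining features of an $M$--$M$ bimodule for the Hilbert space $V(p)$ (the completion of $H_\infty(p)$): commuting normal unital actions of $M$ on the left and the right. The boundedness of the generating actions of $M_\infty(D)$ on $V(p)$ has already been supplied by the preceding lemma, so the work lies in promoting these to normal actions of the enveloping von Neumann algebra $M$ of Section 4.1 and in checking that the two actions commute.

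First, I would make precise the left and right actions. The left action $\pi^L_m$ of $M_m(D)$ on $H_m(p)$ is defined diagrammatically by stacking on the left of the rectangle containing $v$; the right action $\pi^R_m$ stacks on the right. These commute on $H_m(p)$ because stacking on the left of $v$ does not interact with stacking on the right of $v$ (the two operations happen in disjoint regions of the rectangle and the relation is just planar isotopy). Passing to the direct limit preserves this commutativity, giving commuting $*$-representations $\pi^L_\infty,\pi^R_\infty$ of $M_\infty(D)$ on $H_\infty(p)$, which extend uniquely to bounded commuting $*$-representations of $M_\infty(D)$ on $V(p)$ by the previous lemma.

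Next, I would verify that each of these extended actions is normal and hence extends to a representation of the enveloping von Neumann algebra. For this I use the fact (from Section 4) that the trace $\tr$ is positive and tracial on $M_\infty(D)$, and that $V(p)$ is isometrically embedded as a closed subspace of $L^2(M_{\lceil k/2\rceil +\cdot},\tr)$ as in the preceding lemma; since $M_\infty(D)$ acts normally on its own $L^2$ space, the restriction to the invariant subspace $V(p)$ is normal. Therefore the left action extends from $M_\infty(D)$ to its weak closure, which by the remark immediately preceding the proposition is isomorphic to the $\mathrm{II}_1$ factor $M$ constructed in Section 4.1, and similarly on the right. The two extended actions still commute because strong closures of commuting sets of operators commute.

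The main obstacle will be the identification of the weak closure of $\pi^L_\infty(M_\infty(D))$ acting on $V(p)$ with the abstract factor $M$ of Section 4.1 (and likewise on the right). To handle this I would invoke the construction of the canonical GNS representation: the cyclic vector corresponds to the identity diagram in $H_m(p)$ (with $p$ as the distinguished idempotent below), and the restricted trace obtained from $\langle \pi^L(x)v_0,v_0\rangle$ agrees with the canonical trace on $M_\infty(D)$ up to the nonzero factor $\tr(pp^*)/D^k$ computed in the lemma establishing faithfulness of the restricted action. Faithfulness plus trace preservation identifies the weak closure with $M$. The commutativity of the left and right $M$-actions then upgrades from the dense subalgebra to $M$ by a standard Kaplansky-type density argument, completing the verification that $V(p)$ is an $M$--$M$ bimodule.
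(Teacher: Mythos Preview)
Your proposal is correct and follows essentially the same route as the paper. In fact the paper does not give a separate proof of this proposition at all: it treats the statement as an immediate consequence of the preceding lemmas (faithfulness of the restricted action via the vector $v_0$, boundedness of $\pi^L_\infty,\pi^R_\infty$ via the embedding into $L^2(M_{\lceil k/2\rceil},\tr)$) together with the remark just before the proposition that the strong closure of $\pi^{L,R}_\infty(M_\infty(D))$ is a $\mathrm{II}_1$ factor isomorphic to the $M$ of Section~4.1; your write-up simply makes these steps explicit.
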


\begin{figure}[H]
  \centering
  \includegraphics[width=10cm]{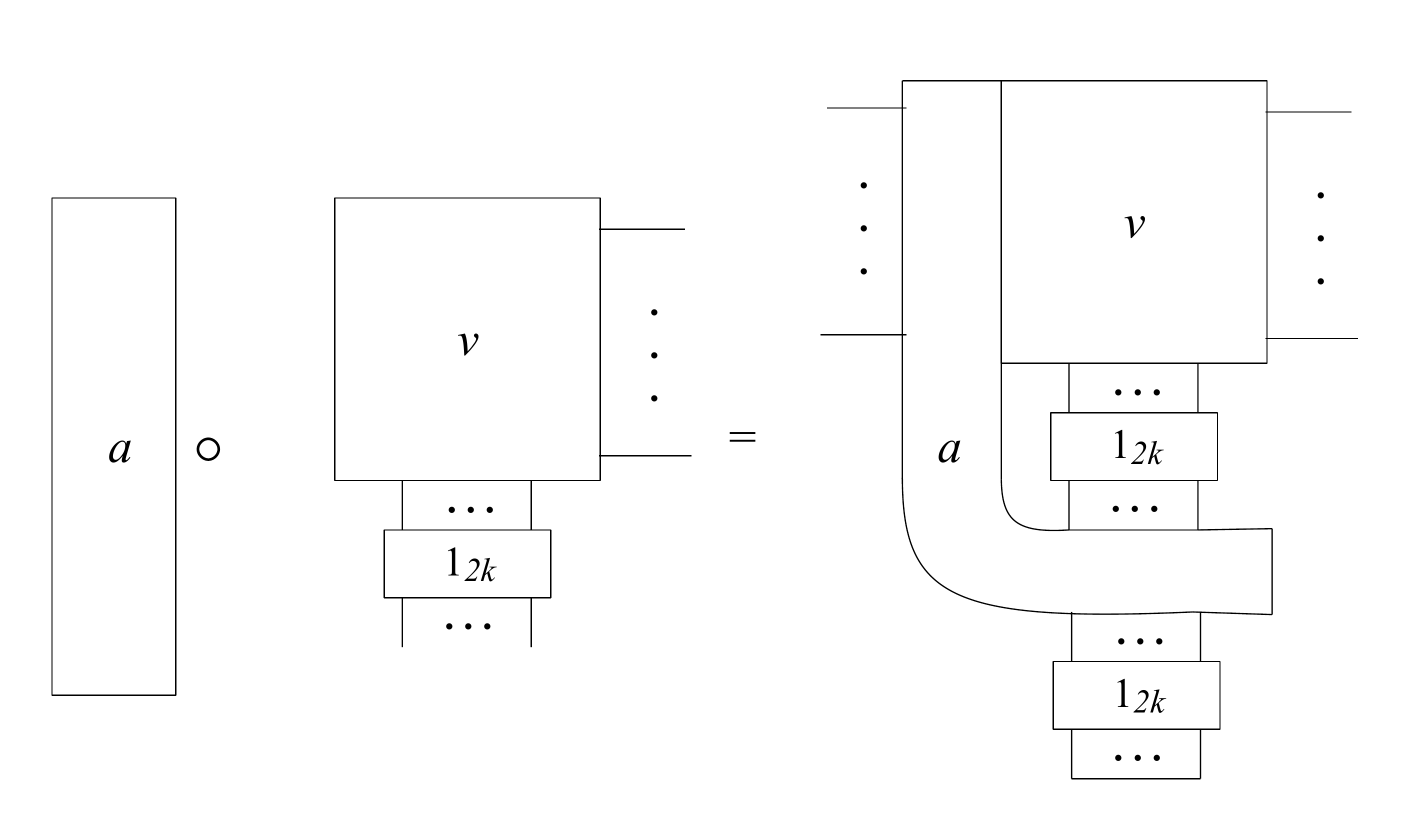}\\
  \caption{An action $\rho_L$ of $M_{2k}$ on $H(1_{2k})$ } \label{}
\end{figure}

\begin{example}
Let $1_{2k}\in \mathbf{M}_{2k}(D)$ be the identity element of $M_{2k}(D)$.
Then $H(1_{2k})$ is nothing but the space $L^2(M_k,\tr)$.
And we have $\dim_M^L H(1_{2k})=\dim_M^R H(1_{2k})=\dim_M L^2(M_k,\tr)=[M_k:M]=D^{2k}$.
\end{example}

And there a representation $\rho_L$ of the $\text{II}_1$ factor $M_{2k}$ on $H(1_{2k})$ which is given by extending the action
\begin{center}
$\rho_L:M_{m+2k}(D)\to B(H_m(1_{2k}))$
\end{center}
to $H(1_{2k})$ and also extending to the weak closure $M_{2k}$ of $\cup_{m=1}^{\infty}M_{m+2k}(D)$.

Observe that $\rho_L|_{M}=\pi_L$, we will use $\pi_L$ instead of $\rho_L$.
Similarly, we have $\pi_R:M_{m+2k}(D)\to B(H_m(1_{2k}))$.

By the construction above, we have that the action of $\pi_L(M_{2k})$ and $\pi_R(M)$ commutes.
So $\pi_L(M)\subset\pi_L(M_{2k})\subset \pi_R(M)'\cap B(H(1_{2k}))$ are inclusions of $\text{II}_1$ factors.
Moreover,
\begin{center}
$\dim_{\pi_R(M)'}(H(1_{2k}))=(\dim_{\pi_R(M)}(H(1_{2k})))^{-1}=[M_k:M]^{-1}=D^{-2k}$.
\end{center}
And as $[\pi_L(M_{2k}):\pi_L(M)]=D^{4k}$, we have
\begin{center}
$\dim_{\pi_L(M_{2k}}(H(1_{2k}))=\dim_{\pi_L(M)}(H(1_{2k}))[\pi_L(M_{2k}):\pi_L(M)]^{-1}=D^{-2k}$.
\end{center}
Hence $[\pi_R(M)'\cap B(H(1_{2k})):\pi_L(M_{2k})]=1$ and $\pi_R(M)'=\pi_L(M_{2k})$.

As a result, we get
\begin{proposition}
$\Hom_{M,M}(H(1_{2k}))=\pi_L(M)'\cap \pi_L(M_{2k})$.
\end{proposition}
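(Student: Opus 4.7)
The proposition is essentially an immediate packaging of the dimension calculation carried out just above its statement. My plan is as follows.

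First I would unfold the definition: by Definition~5.1, a bounded operator $T \in B(H(1_{2k}))$ lies in $\Hom_{M,M}(H(1_{2k}))$ precisely when it commutes with every $\pi_L(a)$ and every $\pi_R(b)$ for $a,b \in M$. Equivalently,
\[
\Hom_{M,M}(H(1_{2k})) \;=\; \pi_L(M)' \cap \pi_R(M)' \cap B(H(1_{2k})).
\]
So the proposition reduces to identifying the second factor $\pi_R(M)' \cap B(H(1_{2k}))$ with $\pi_L(M_{2k})$.

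Next I would invoke the inclusion $\pi_L(M_{2k}) \subseteq \pi_R(M)' \cap B(H(1_{2k}))$, which follows from the construction: the extension $\rho_L: M_{m+2k}(D)\to B(H_m(1_{2k}))$ commutes with the right action $\pi_R$ (this is the observation that left and right actions of Motzkin algebras in the bimodule construction commute), and passing to the weak closures preserves this. Both algebras are $\text{II}_1$ factors (the containing one being the commutant of a $\text{II}_1$ factor on a bifinite bimodule, hence a factor).

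Now I would match the Murray--von Neumann dimensions, exactly as already recorded before the proposition: on one hand, $\dim_{\pi_R(M)'} H(1_{2k}) = (\dim_{\pi_R(M)} H(1_{2k}))^{-1} = [M_k:M]^{-1} = D^{-2k}$, where the index $[M_k:M]=D^{2k}$ comes from Section~4.1. On the other hand, since $[\pi_L(M_{2k}):\pi_L(M)] = D^{4k}$ and $\dim_{\pi_L(M)} H(1_{2k}) = D^{2k}$, one gets $\dim_{\pi_L(M_{2k})} H(1_{2k}) = D^{-2k}$. These two coincide, so the relative index $[\pi_R(M)' \cap B(H(1_{2k})):\pi_L(M_{2k})] = 1$ and therefore $\pi_R(M)' = \pi_L(M_{2k})$. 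Substituting back yields $\Hom_{M,M}(H(1_{2k})) = \pi_L(M)' \cap \pi_L(M_{2k})$.

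The only step that requires any care is the dimension matching; everything else is formal. The potential subtlety is verifying that $\pi_R$ and the extended left action $\rho_L = \pi_L|_{M_{2k}}$ genuinely commute on the completion $H(1_{2k})$ (not just on the dense subspace $H_\infty(1_{2k})$), but this is immediate from continuity of both actions on the Hilbert space and density of $H_\infty(1_{2k})$.
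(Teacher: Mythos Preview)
Your proposal is correct and follows exactly the paper's approach: the proposition is stated as the immediate consequence of the dimension computation in the preceding paragraph, showing $\pi_R(M)'=\pi_L(M_{2k})$ via the index-$1$ argument, together with the tautology $\Hom_{M,M}(H(1_{2k}))=\pi_L(M)'\cap\pi_R(M)'$. Your write-up is in fact more explicit than the paper's in unfolding the definition and flagging the continuity point for the commutation of $\rho_L$ and $\pi_R$ on the completion.
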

This is the (higher) relative commutant that are known in section 4.

\begin{remark}
Assume $\tr_{M'}$ is the unique trace on the $\text{II}_1$ factor $M'\cap B(H(1_k))$ whenever $k$ is odd or even.
The trace $\tr$ defined above on the planar graphs must be this trace.
\end{remark}

\begin{example}
Let $1_{2k-1}\in \mathbf{M}_{2k-1}(D)$ be the identity element of $M_{2k-1}(D)$.
we have a natural $M-M$-bilinear surjective isometry
\begin{center}
$\phi:H(1_{2k-1})\to H(p_{2k})$
\end{center}
by adding a pair of disconnected vertices on the right of bottom side.
Observe $p_2k\in \pi_L(M)'\cap \pi_L(M_{2k})$.
We have
\begin{center}
$\dim_M^L(H(1_{2k-1}))=\dim_M^L(H(p_{2k})=\tr_{M'}(p_2k)\dim_M^L(H(1_{2k})=D^{-1}\cdot D^{2k}=D^{2k-1}$,
\end{center}
and also $\dim_M^R(H(1_{2k-1}))=D^{2k-1}$.
\end{example}

As the even case above, we obtain
\begin{corollary}
$\Hom_{M,M}(H(1_{2k-1}))=\pi_L(M)'\cap \pi_L(M_{2k-1})$.
\end{corollary}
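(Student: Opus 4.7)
The plan is to mirror the argument used for the even case in Proposition 5.11, adapting the index and dimension computations to odd parity. In the even case the key step was to identify $\pi_R(M)'\cap B(H(1_{2k}))$ with $\pi_L(M_{2k})$ by matching two coupling constants; the same strategy should work verbatim in the odd case, now using Example 5.12 in place of Example 5.10.

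First I would construct, just as for $H(1_{2k})$, a representation $\pi_L$ of the II$_1$ factor $M_{2k-1}$ on $H(1_{2k-1})$ obtained by extending the natural graphical action $M_{m+2k-1}(D)\to B(H_m(1_{2k-1}))$ to the direct limit, then to the norm completion, and finally to the weak closure. Boundedness is guaranteed by the embedding of $H_m(1_{2k-1})$ into $L^2(M_{m+k},\tr)$ exactly as in Lemma 5.10. One checks as before that $\pi_L(M_{2k-1})$ commutes with $\pi_R(M)$, producing a chain of II$_1$ factors
\begin{equation*}
\pi_L(M)\ \subset\ \pi_L(M_{2k-1})\ \subset\ \pi_R(M)'\cap B(H(1_{2k-1})).
\end{equation*}

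The heart of the proof is the coupling-constant calculation. From Example 5.12 one has $\dim_M^L H(1_{2k-1})=\dim_M^R H(1_{2k-1})=D^{2k-1}$, and from Proposition 4.8 one has $[M_{2k-1}:M]=D^{2(2k-1)}=D^{4k-2}$. Therefore
\begin{align*}
\dim_{\pi_R(M)'}\bigl(H(1_{2k-1})\bigr) &= \bigl(\dim_{\pi_R(M)} H(1_{2k-1})\bigr)^{-1} = D^{-(2k-1)},\\
\dim_{\pi_L(M_{2k-1})}\bigl(H(1_{2k-1})\bigr) &= \dim_{\pi_L(M)}\bigl(H(1_{2k-1})\bigr)\cdot [M_{2k-1}:M]^{-1} = D^{2k-1}\cdot D^{-(4k-2)} = D^{-(2k-1)}.
\end{align*}
Since these agree, the inclusion $\pi_L(M_{2k-1})\subset \pi_R(M)'\cap B(H(1_{2k-1}))$ has index one, hence is an equality. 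Conclude
\begin{equation*}
\Hom_{M,M}\bigl(H(1_{2k-1})\bigr) = \pi_L(M)'\cap \pi_R(M)' = \pi_L(M)'\cap \pi_L(M_{2k-1}).
\end{equation*}

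The step I expect to be the main nuisance is the construction of $\pi_L$ on $M_{2k-1}$: the "shift" that sends $M_{m+2k-1}(D)$-diagrams into operators on $H_m(1_{2k-1})$ has to be shown to assemble consistently across $m$ and to extend normally to the factor $M_{2k-1}$, with the left and right actions genuinely commuting. This is done in exactly the same way as for the even case, but one must be slightly careful because the number of strands added on the left is odd; once it is established that $H_m(1_{2k-1})$ embeds isometrically and bimodularly into $L^2(M_{m+k},\tr)$ (via $\phi$ from Example 5.12, shifted across rows), the boundedness and normal extension follow and the dimension argument above closes the proof.
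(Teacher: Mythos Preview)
Your argument is correct and is exactly the approach the paper intends: it simply says ``As the even case above, we obtain'' the corollary, meaning one repeats the coupling-constant computation preceding Proposition~5.12 with $D^{2k}$ replaced by $D^{2k-1}$ (using the dimension formula from the preceding Example) and $[M_{2k-1}:M]=D^{4k-2}$ to force $\pi_L(M_{2k-1})=\pi_R(M)'$. Your caveat about assembling the $\pi_L$-action of $M_{2k-1}$ consistently is the right place to be careful, and the paper's implicit fix is precisely the one you suggest---transport the whole picture through the $M$--$M$-bilinear isometry $\phi:H(1_{2k-1})\to H(p_{2k})$, so that the odd case is literally a corner of the even case.
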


Now w take the self-adjoint idempotent $p\in M_k(D)$ and consider the $M-M$ bimodule $H(p)$.
Note $p\in M_k(D)$ and $\pi_L(p)\subset \pi_L(M)'\cap \pi_L(M_{k})=\Hom_{M,M}(H(1_{k}))$.

\begin{proposition}
$\dim_M^L(H(p))=\dim_M^R(H(p))=\tr_{M'}(p)\dim_M(H(1_k))=D^k\tr(p)$.
In particular, $\dim_M^L(H(g_k))=\dim_M^R(H(g_k))=d^k\cdot P_k(\tau)$.
\end{proposition}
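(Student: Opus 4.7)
The plan is to realize $H(p)$ as the range of the projection $\pi_L(p)$ acting on $H(1_k)$, and then apply the standard scaling formula for Murray-von Neumann dimensions with respect to a projection in the relative commutant.

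First I would make the identification $H(p) \cong \pi_L(p) H(1_k)$ of $M$-$M$-bimodules. Diagrammatically, an element of $V_m(p)$ is a tangle $v$ with the self-adjoint idempotent $p$ glued to its bottom $k$ strings. Since $p^2=p=p^*$, attaching $p$ at the bottom of a diagram in $V_m(1_k)$ is the same as applying $\pi_L(p)$ (or equivalently $\pi_R(p)$, after we pass to the completion where the sides of $p$ meet the bimodule actions only through the outer $m$ strings). Passing to the quotient by the radical of the inner product and then to the direct limit/completion yields $H(p) = \pi_L(p) H(1_k)$, with the left and right $M$-actions restricted from $H(1_k)$, because $p$ lies in $M_k(D)$ and the $M$-actions on $H(1_k)$ are generated by $\{e_i, l_i, r_i\}$ indexed so as to commute with $\pi_L(p)$.

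Next I would appeal to Proposition 5.10 and its odd analogue Corollary 5.12 to see that $\pi_L(p)$ is a projection in
\[
\Hom_{M,M}(H(1_k)) = \pi_L(M)' \cap \pi_L(M_k).
\]
By the standard scaling formula for $\text{II}_1$ factor bimodules, $\dim_M^L(\pi_L(p)H(1_k)) = \tau_{M'}(p) \cdot \dim_M^L(H(1_k))$, where $\tau_{M'}$ is the canonical trace on the commutant. By the uniqueness of the normal normalized trace on $M$ (Lemma 4.4 and Proposition 4.5), the restriction of $\tau_{M'}$ to $\pi_L(M_k)$ agrees with the Motzkin trace $\tr$ computed diagrammatically. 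Combining with $\dim_M^L(H(1_k)) = D^k$ from Examples 5.8 and 5.11 (which covers both parities of $k$), this gives $\dim_M^L(H(p)) = D^k \tr(p)$, and the identical argument with left and right swapped yields the same for $\dim_M^R(H(p))$. For the specialization, recall from the induction in Section 3 that $\tr(g_k) = \frac{d^k}{D^k}P_k(\tau)$, so $D^k \tr(g_k) = d^k P_k(\tau)$.

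The main obstacle is the first step, namely verifying that the diagrammatic identification $H(p) \cong \pi_L(p)H(1_k)$ really is an isomorphism of $M$-$M$-bimodules and not just of Hilbert spaces. One must check both that the inner product on $V_m(p)$ agrees with the one induced from viewing $V_m(p) \subset V_m(1_k)$ via attachment of $p$ (so the quotient by the radical gives the stated subspace rather than something smaller) and that matching the Motzkin trace $\tr$ with the commutant trace $\tau_{M'}$ is compatible with the natural normalization of both traces on $1$. The odd-$k$ case needs the extra identification of $H(1_{2k-1})$ with $H(p_{2k})$ from Example 5.11; the rest of the argument then goes through identically.
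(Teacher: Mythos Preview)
Your proposal is correct and follows essentially the same route as the paper: the paper does not write out a formal proof but immediately precedes the proposition with the observation that $\pi_L(p)\in \pi_L(M)'\cap \pi_L(M_k)=\Hom_{M,M}(H(1_k))$, relying on Examples~5.8 and~5.11 for $\dim_M(H(1_k))=D^k$ and on the standard scaling formula for dimensions under a projection in the commutant, exactly as you outline. Your additional care in flagging the bimodule-level identification $H(p)\cong \pi_L(p)H(1_k)$ and the matching of $\tr$ with $\tau_{M'}$ (covered by the Remark after Proposition~5.10) is appropriate and only makes explicit what the paper leaves implicit.
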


\subsection{The Tensor Map}

Let $A,B,C$ be $\text{II}_1$ factors.
Given an $A-B$ bimodule $\prescript{}{A}{H}_B$ and a $B-C$ bimodule $\prescript{}{B}{K}_C$, we define the $A-C$ bimodule of their tensor as \cite{EK98}, which is given by the completion of the algebraic tensor product $\prescript{}{A}{H}^{\text{bdd}}_B\otimes \prescript{}{B}{K}^{\text{bdd}}_C$ of bounded subspace with respect to the inner product defined by
\begin{center}
$\langle v_1\otimes u_1,v_2\otimes v_2\rangle=\langle v_1\langle u_1,u_2 \rangle_B,v_2 \rangle$
\end{center}
Here $\langle u_1,u_2 \rangle_B \in B$ is uniquely determined by
\begin{center}
$\tr(x\langle u_1,u_2 \rangle_B)=\langle xu_1,u_2 \rangle_B$ for all $x\in B$.
\end{center}

It is easy to check the following properties \cite{EK98}:
\begin{enumerate}
\item $\langle \lambda u_1+\mu u_2,u_3 \rangle_B=\lambda \langle u_1,u_3 \rangle_B+\mu \langle u_2,u_3 \rangle_B$,
\item $\langle u_1,u_2 \rangle_B=\langle u_2,u_1 \rangle_B^*$,
\item $\langle xu_1,u_2 \rangle_B=x\langle u_1,u_2 \rangle_B$ and $\langle u_1,xu_2 \rangle_B=\langle u_1,u_2 \rangle_B x^*$.
\end{enumerate}

Now we consider the tensor of $M-M$ bimodules $H(p),H(q)$ where $p\in M_k(D),q\in M_l(D)$ are self-adjoint idempotent.
Let $p|q$ be the self-adjoint idempotent in $M_{k+l}(D)$.
The goal of the following part is mainly contributed to prove the following theorem.
\begin{theorem}
Define a linear map
\begin{center}
$T_0:\cup_{m=1}^{\infty}H_m(p)\otimes \cup_{m=1}^{\infty}H_m(q)\to \cup_{m=1}^{\infty}H_m(p|q)$
\end{center}
which is given by
\begin{figure}[H]
  \centering
  \includegraphics[width=10cm]{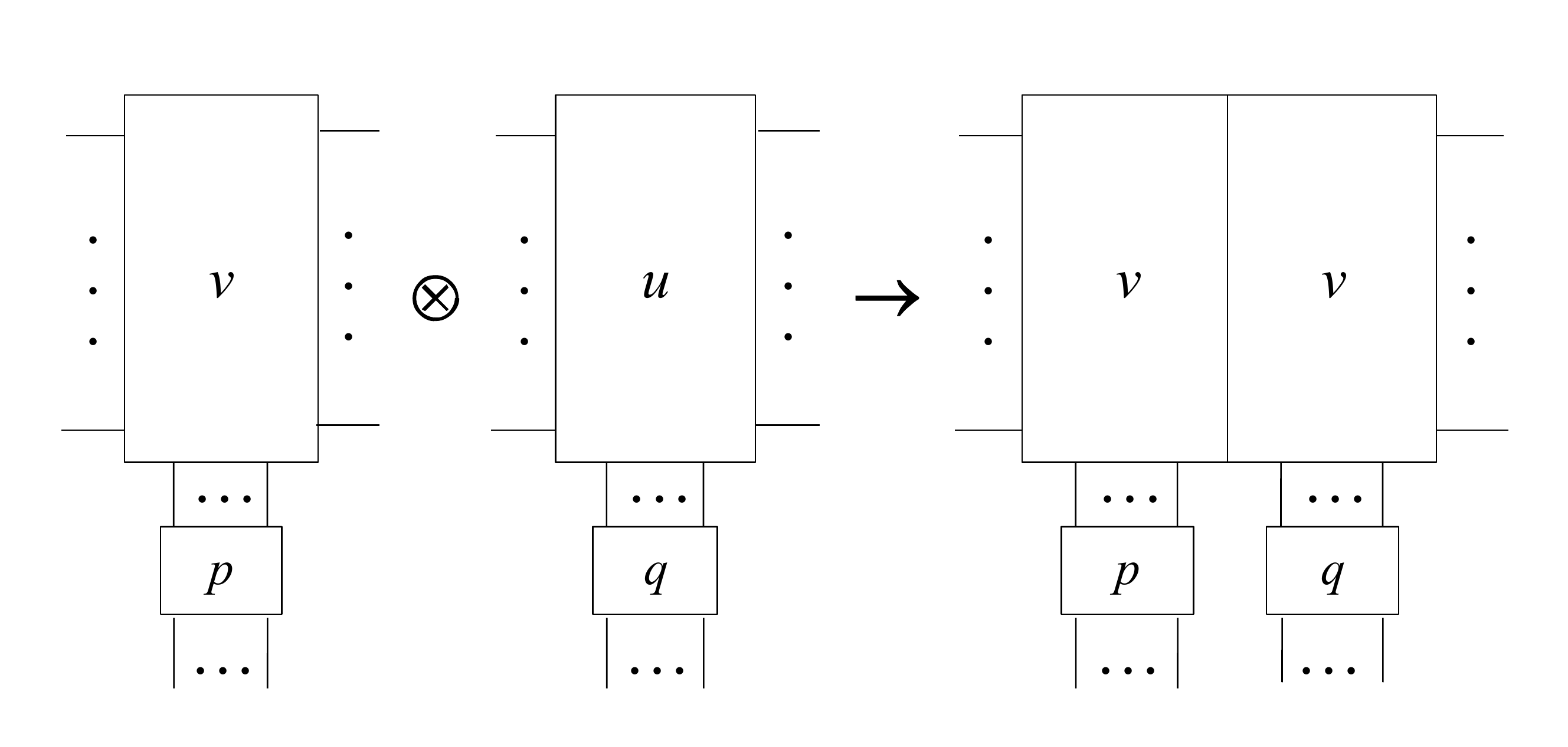}\\
  \caption{} \label{}
\end{figure}
Then $T_0$ has a extension $T:H(p)\otimes H(q)\to H(p|q)$.

We have $T$ is a surjective isometric $M-M$ bilinear isomorphism.
\end{theorem}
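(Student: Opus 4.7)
The plan is to establish (i) that $T_0$ descends to the Connes relative tensor product balanced over $M$, (ii) that the resulting map is isometric on the dense subspace $\cup_m H_m(p)^{\text{bdd}} \otimes_M \cup_m H_m(q)^{\text{bdd}}$ so it extends to a bounded $T:H(p)\otimes_M H(q)\to H(p|q)$, and finally (iii) that $T$ is surjective and $M$-$M$-bilinear. Once (i) and (ii) hold, standard density gives a well-defined isometry $T$, and it is enough to produce a dense image and check the intertwining on diagrams.

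First I would verify $M$-balance. For $x\in M_n(D)\subset M$ with $n\leq m$, the right action of $x$ on $v\in H_m(p)$ attaches $x$ to the bottom-right of the tangle in the definition of $v$, while the left action on $u\in H_m(q)$ attaches $x$ to the top-right of $u$. After applying $T_0$, both procedures produce the same Motzkin tangle since $x$ sits on the strings joining $v$ and $u$ in the concatenated picture. Hence $T_0(v\cdot x\otimes u)=T_0(v\otimes x\cdot u)$, so $T_0$ factors through $H(p)\otimes_M H(q)$.

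The core of the proof is the isometry check, which I expect to be the main obstacle. The relative tensor product inner product reads $\langle v_1\otimes u_1,v_2\otimes u_2\rangle=\langle v_1\cdot\langle u_1,u_2\rangle_M,v_2\rangle$, where $\langle u_1,u_2\rangle_M\in M$ is characterized by $\tr(x\langle u_1,u_2\rangle_M)=\langle xu_1,u_2\rangle$ for $x\in M$. Diagrammatically, $\langle u_1,u_2\rangle_M$ is precisely the element of $M_m(D)$ obtained by stacking $u_2^*$ on top of $u_1$ and closing the $q$-block on the right (the bottom $l$ strings connect through $q$ and $q^*=q$, producing a loop-counted tangle in the remaining $m$ strings). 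Computing $\langle T_0(v_1\otimes u_1),T_0(v_2\otimes u_2)\rangle$ from Figure~17 gives the trace of the full concatenated diagram $(v_2|u_2)^*(v_1|u_1)$, with the $p$- and $q$-blocks glued to their adjoints. A planar isotopy first closes the $q$-loop (yielding $\langle u_1,u_2\rangle_M$ inserted on the right of $v_1$ between the two $m$-string blocks) and then closes the $v$-loop, producing exactly $\langle v_1\cdot\langle u_1,u_2\rangle_M,v_2\rangle$. The verification is delicate because one must track the loop parameters and the placement of the inserted $M$-element correctly; one also uses $pp^*=p$ and $qq^*=q$ throughout. Once this holds on a spanning set of simple tangle vectors, bilinearity extends it to all bounded vectors.

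For surjectivity, every basis diagram of $H_m(p|q)$ is a Motzkin tangle whose bottom carries $p|q$; I would split such a diagram along the vertical line separating the $p$-boundary from the $q$-boundary. The left half becomes an element of $H_m(p)$ (after using $p=p^2$ to reinsert a $p$ on the bottom of the left piece) and the right half an element of $H_m(q)$, and their image under $T_0$ reproduces the original diagram up to a planar isotopy. Thus $T_0$ has dense range, and together with the isometry this forces $T$ to be surjective. Finally, $M$-$M$-bilinearity of $T$ is built in: the left $M$-action acts on the top-left strings of $v$ in both the domain and the codomain, and the right $M$-action acts on the top-right strings of $u$ similarly; since the generators $l_i,r_i,e_i$ of $M$ live near the far left (resp.\ far right) of the diagrams, their action is untouched by the $T_0$-concatenation.
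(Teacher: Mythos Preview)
Your treatment of $M$-balance and of the isometry is essentially the paper's argument: the paper also identifies $\langle u_1,u_2\rangle_M$ diagrammatically as the tangle obtained by stacking $u_2^*$ over $u_1$ and closing off the $q$-block, then checks that the trace of the fully concatenated picture equals $\langle v_1\cdot\langle u_1,u_2\rangle_M,\,v_2\rangle$. So on that part you are aligned with the paper.

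The gap is in your surjectivity argument. Splitting an arbitrary basis tangle of $H_m(p|q)$ ``along the vertical line separating the $p$-boundary from the $q$-boundary'' does not in general produce a pair $(v,u)\in H_m(p)\times H_m(q)$ whose $T_0$-image is the original tangle. A generic Motzkin $(2m+k+l)$-tangle may have any number of strings crossing that line (for instance a string from a left boundary point into the $q$-portion of the bottom, together with one from a right boundary point into the $p$-portion), whereas every element in the image of $T_0$ restricted to $H_m(p)\otimes H_m(q)$ has exactly $m$ strings crossing. Passing to larger $m'$ by adding through-strings on top does not fix this, since the added through-strings each contribute one extra crossing and the discrepancy persists. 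Reinserting $p=p^2$ on the bottom does not address the transverse strings either. So the claim that every basis diagram is hit by $T_0$ is false as stated, and your density argument collapses.

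The paper avoids this entirely by a dimension count: from Proposition~5.16 one has $\dim_M^L H(p)=D^k\tr(p)$ and $\dim_M^L H(q)=D^l\tr(q)$, while $\tr(p|q)=\tr(p)\tr(q)$ gives $\dim_M^L H(p|q)=D^{k+l}\tr(p)\tr(q)$. Since the coupling constant is multiplicative under the Connes tensor product, the image of the isometry $T$ has the same $M$-dimension as $H(p|q)$, hence equals it. If you want to keep a diagrammatic proof of surjectivity you would need a genuinely different mechanism (e.g.\ exhibiting an $M$-$M$-cyclic vector in the image), not the vertical cut.
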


Such a map defined above is certainly $M-M$ bilinear.
It remains to prove the isometry and surjectivity.

\begin{proposition}
The map $T:H(p)\otimes H(q)\to H(p|q)$ defined above is an isometry.
\end{proposition}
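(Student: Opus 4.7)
The plan is to reduce the isometry identity to a diagrammatic identity on a dense subspace, to describe the $M$-valued inner product $\langle u_1, u_2\rangle_M$ explicitly as a Motzkin tangle, and then to match two closed Motzkin diagrams by a planar isotopy.

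First I would restrict to simple tensors $v \otimes u$ with $v \in \bigcup_m H_m(p)$ and $u \in \bigcup_m H_m(q)$, whose span is dense in $H(p) \otimes_M H(q)$. Using the inner-product-preserving embedding $H_m \hookrightarrow H_{m+1}$ given by adding a through-string on the top, I would further arrange that $v_1, v_2 \in H_m(p)$ and $u_1, u_2 \in H_m(q)$ sit at a common level $m$ and are each represented by a single Motzkin tangle. It then suffices to verify
\[
\langle T_0(v_1 \otimes u_1),\, T_0(v_2 \otimes u_2)\rangle_{H(p|q)}
\;=\;
\langle v_1\,\langle u_1, u_2\rangle_M,\, v_2\rangle_{H(p)}
\]
on such generators.

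The key step is an explicit diagrammatic formula for $\langle u_1, u_2\rangle_M \in M$: namely, the Motzkin $(m,m)$-tangle obtained by stacking $u_2^*$ above $u_1$, joining the right $m$-boundaries of the two tangles to close that side, collapsing the two bottom copies of $q$ using $q^*q = q$, and retaining the left $m$-boundaries of $u_1$ and $u_2^*$ as the two sides of the resulting element of $M_m(D) \subset M$. The defining property $\tr(x\,\langle u_1, u_2\rangle_M) = \langle xu_1, u_2\rangle_{H(q)}$ for $x \in M_m(D)$ then drops out by pre-composing the tangle with $x$ on the left and recognising the resulting total closure as the trace-closure defining $\langle xu_1, u_2\rangle_{H(q)}$. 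With this description in hand, both sides of the target identity become values of closed Motzkin diagrams, and a planar isotopy at the base, separating the left $p$-column from the right $q$-column in the juxtaposition $(p|q)^*(p|q) = p|q$, identifies the two closures. Isometry on the dense domain then extends to all of $H(p) \otimes_M H(q)$ by linearity and continuity.

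The step I expect to be the main obstacle is confirming that the graphically constructed $\langle u_1, u_2\rangle_M$ really defines a bounded operator in $M$, rather than merely an element of the algebraic union $\bigcup_k M_k(D)$, and that this assignment extends continuously from the dense subspace to all $M$-bounded vectors of $H(q)$. The argument is of the same flavour as the earlier boundedness lemma for the $M$-actions on $H(p)$: one realises $H_m(q)$ as a closed subspace of $L^2(M_{m + \lceil l/2\rceil}, \tr)$ via the embedding constructed in Section 5.2, and pulls the required norm control back from the finite-von-Neumann-algebra inner product there, so that the graphical $\langle u_1, u_2\rangle_M$ is indeed realised as an element of the trace-completion and hence of $M$.
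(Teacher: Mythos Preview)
Your proposal is correct and follows essentially the same diagrammatic route as the paper: restrict to finite-level vectors at a common level $m$, identify $\langle u_1,u_2\rangle_M$ graphically via its defining trace property, and then match the two closed Motzkin diagrams by planar isotopy to obtain $\langle v_1\otimes u_1,\,v_2\otimes u_2\rangle=\langle T_0(v_1\otimes u_1),\,T_0(v_2\otimes u_2)\rangle$ on the dense subspace. Your final worry is unnecessary here: for $u_1,u_2\in H_m(q)$ the graphical element already lies in $M_m(D)\subset A_m\subset M$, so no separate boundedness or continuity argument is required for the $M$-valued inner product on the dense domain.
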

\begin{proof}
Take $m\in\mathbb{N}$ large enough and $v_1,v_2 \in H_m(P),u_1,u_2\in H_m(q)$.
By definition, the inner product is determined by $\langle v_1\otimes u_1,v_2\otimes v_2\rangle=\langle v_1\langle u_1,u_2 \rangle_M,v_2 \rangle$.

Consider the element $\langle u_1,u_2 \rangle_M$ which is given by $\tr(x\langle u_1,u_2 \rangle_M)=\langle xu_1,u_2 \rangle_M$ for all $x\in M$.
Its right hand side is given by the trace of
\begin{figure}[H]
  \centering
  \includegraphics[width=7cm]{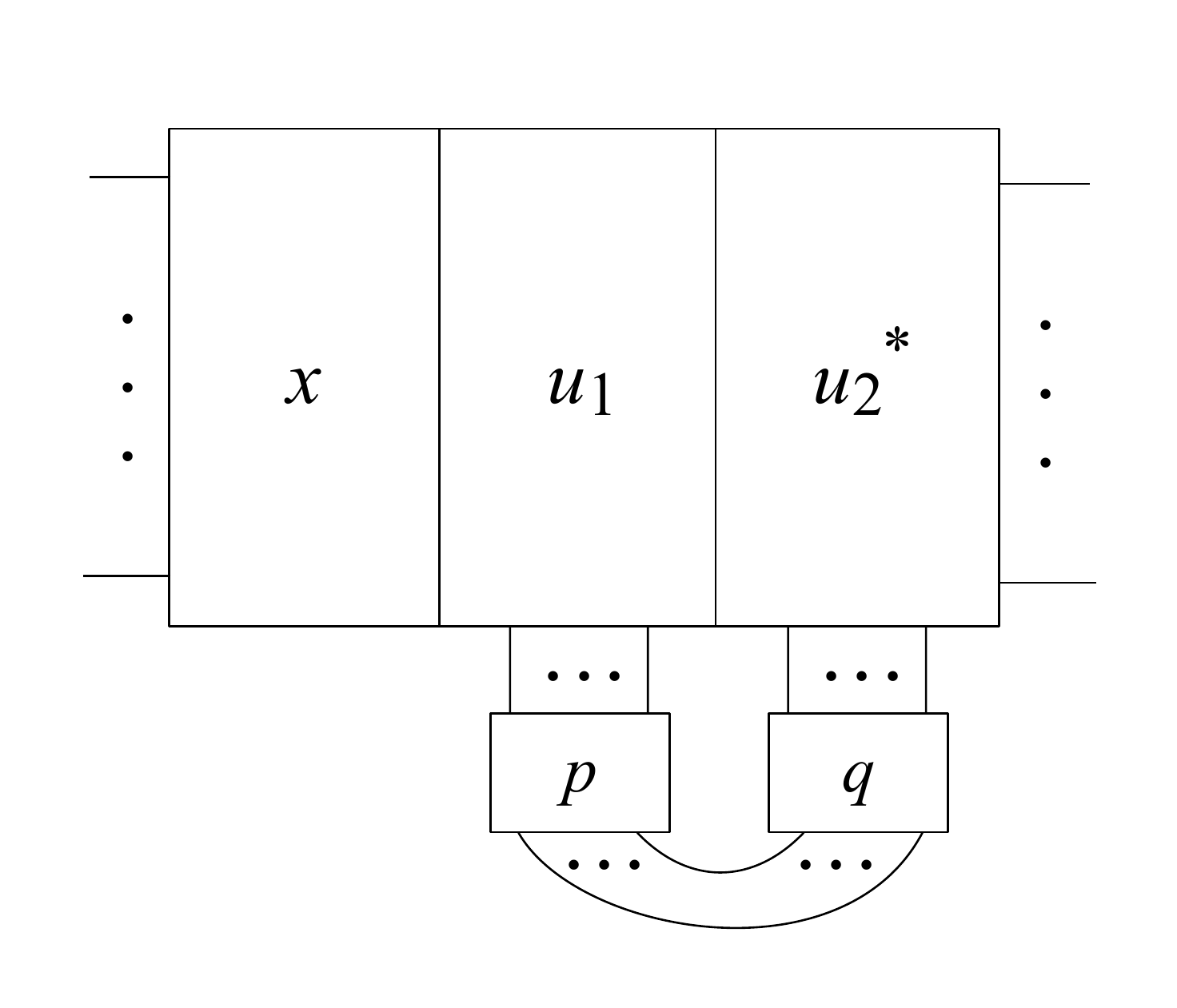}\\
  \caption{} \label{}
\end{figure}
Hence $\langle u_1,u_2 \rangle_M$ is the one presented as the graph below:
\begin{figure}[H]
  \centering
  \includegraphics[width=5cm]{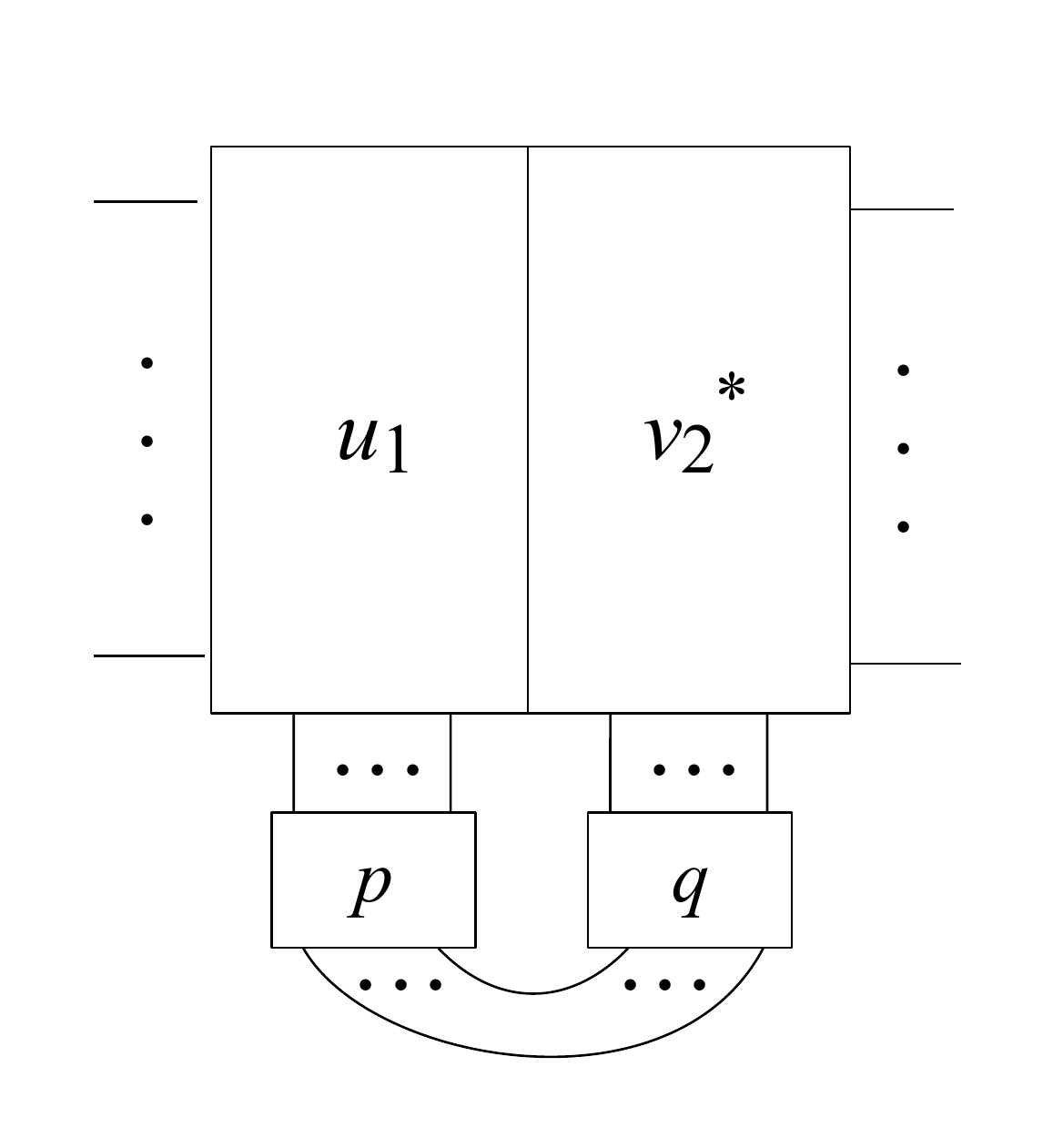}\\
  \caption{} \label{}
\end{figure}

Now, by definition, the inner product $\langle v_1\otimes u_1,v_2\otimes v_2\rangle=\langle v_1\langle u_1,u_2 \rangle_M,v_2 \rangle$ is the trace
of the following graph
\begin{figure}[H]
  \centering
  \includegraphics[width=8cm]{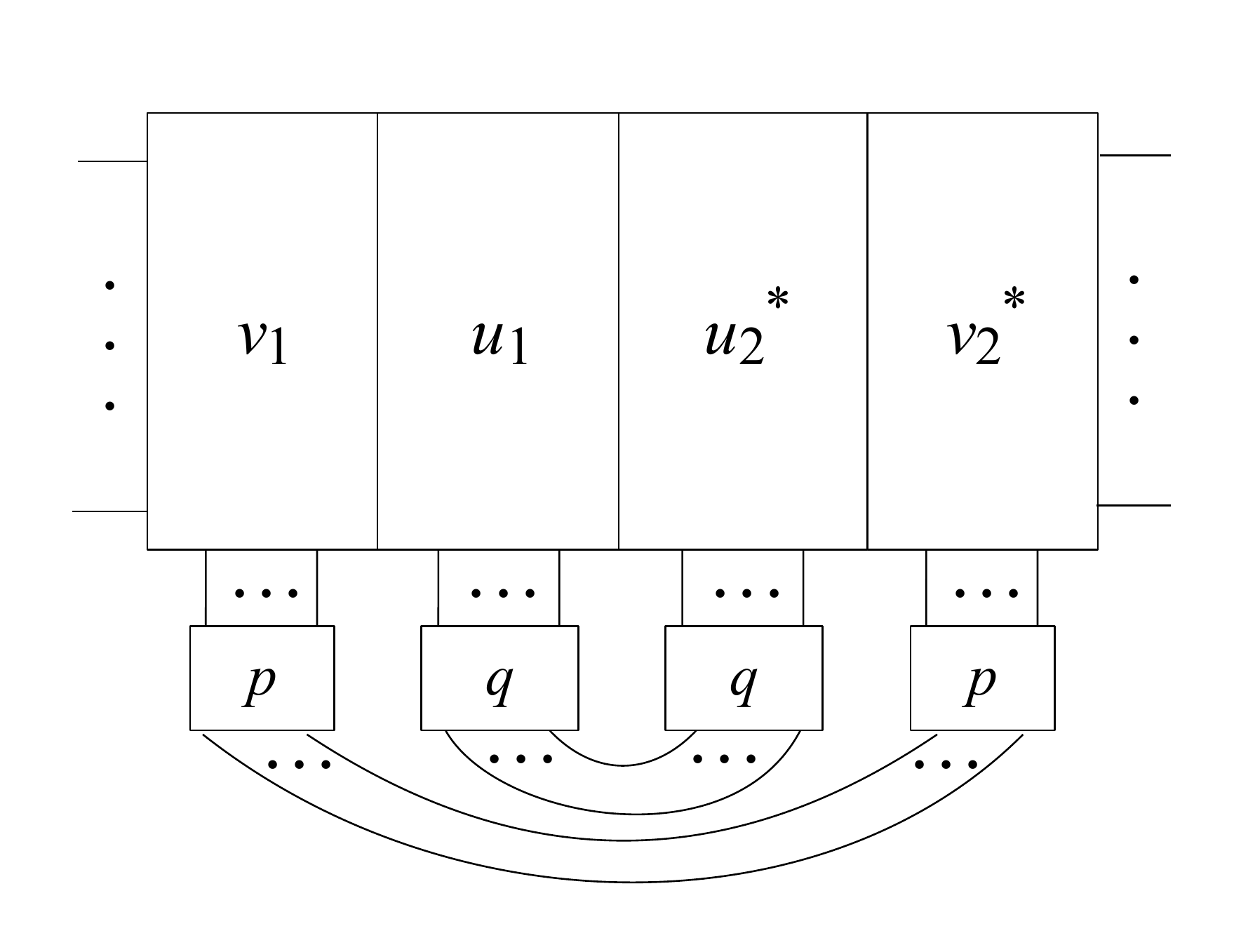}\\
  \caption{} \label{}
\end{figure}
This is just the inner product of the following two vectors in $H_m(p|q)$.
\begin{figure}[H]
  \centering
  \includegraphics[width=8cm]{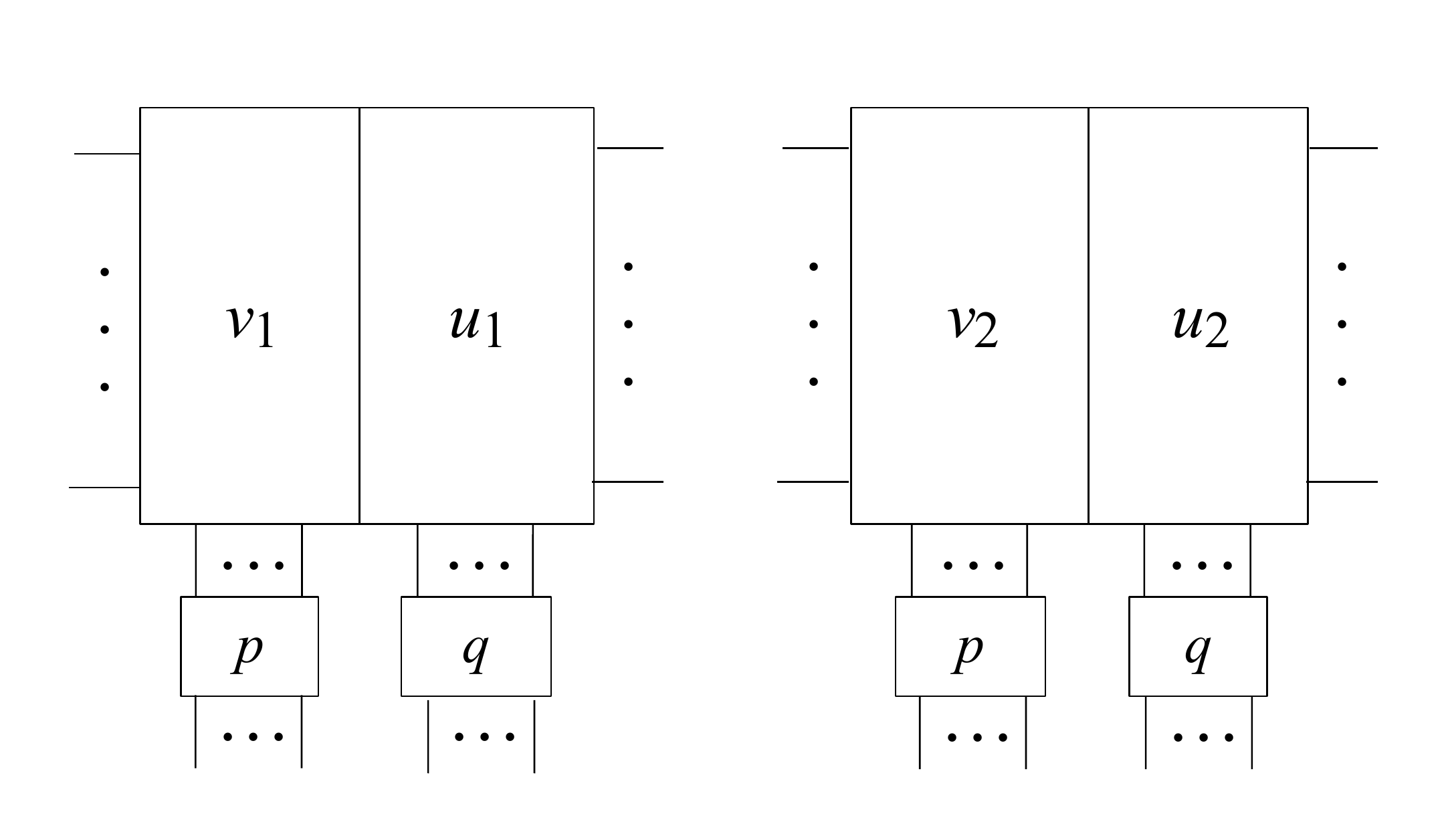}\\
  \caption{} \label{}
\end{figure}
So we have
\begin{center}
$\langle v_1\otimes u_1,v_2\otimes u_2\rangle=\langle T_0(v_1\otimes u_1),T_0(v_2\otimes u_2)\rangle$
\end{center}
which says $T_0$ is an isometry.
As $T_0$ are defined on dense subspaces, $T$ is an isometry.
\end{proof}

\begin{proposition}
The map $T:H(p)\otimes H(q)\to H(p|q)$ defined above is surjective.
\end{proposition}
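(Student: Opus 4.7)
The plan is to deduce surjectivity from a Connes-dimension count combined with the fact that $T$ is already known (by the previous proposition) to be an injective, $M$-$M$-bilinear isometry. The key ingredient is the standard multiplicativity of Connes dimension for bifinite bimodules over a $\text{II}_1$ factor: if $\prescript{}{M}{H}_M$ and $\prescript{}{M}{K}_M$ are bifinite, then $\dim^L_M(H\otimes_M K)=\dim^L_M H\cdot \dim^L_M K$, and likewise for right dimensions.

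First I would record bifiniteness of all three bimodules appearing here. Proposition 5.16 supplies
\begin{equation*}
\dim^L_M H(p)=D^k\tr(p),\qquad \dim^L_M H(q)=D^l\tr(q),\qquad \dim^L_M H(p|q)=D^{k+l}\tr(p|q),
\end{equation*}
together with the analogous equalities for right dimensions (by the symmetry of the construction under the $*$-operation). Next I would verify the diagrammatic identity $\tr(p|q)=\tr(p)\tr(q)$: closing up the juxtaposed tangle $p|q$ inside $M_{k+l}(D)$ produces two disjoint closed tangles, one coming from $p$ and one from $q$, so the trace factorizes. Combining these two inputs yields
\begin{equation*}
\dim^L_M\!\bigl(H(p)\otimes_M H(q)\bigr)=D^k\tr(p)\cdot D^l\tr(q)=D^{k+l}\tr(p|q)=\dim^L_M H(p|q),
\end{equation*}
and symmetrically for right dimensions.

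Since $T$ is an $M$-$M$-bilinear isometry, its image is a closed $M$-$M$ subbimodule of $H(p|q)$ whose left $M$-dimension equals that of the ambient bimodule. The orthogonal complement of this image is again a closed $M$-$M$ subbimodule of $H(p|q)$, and additivity of left dimension on orthogonal direct sums forces this complement to have left dimension zero; by faithfulness of $\dim^L_M$ on subbimodules of a bifinite bimodule it must therefore be $\{0\}$. Hence $T$ is surjective, and together with the previous proposition this gives the claimed $M$-$M$-bilinear unitary isomorphism $H(p)\otimes_M H(q)\cong H(p|q)$.

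The main obstacle is justifying the multiplicativity formula for Connes dimension in this specific setup; this is standard once bifiniteness is known (see \cite{EK98}), but one should be careful about the normalization of traces and the compatibility of the left and right $M$-actions via the conditional expectations built into $H(p)$, $H(q)$. The trace factorization $\tr(p|q)=\tr(p)\tr(q)$ is a short diagrammatic check given the definition of $\tr$ from Section 2.2. An alternative, more hands-on route—writing a generic spanning diagram of $H_m(p|q)$ as a limit of juxtapositions $T_0(v\otimes u)$ by using the left and right $M$-action to push strings that run between the $p$-side and the $q$-side out to infinity—is possible but substantially more cumbersome than the dimension count above.
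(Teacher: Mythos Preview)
Your proposal is correct and follows essentially the same approach as the paper: both argue by a Connes-dimension count, using Proposition~5.16 together with the diagrammatic factorization $\tr(p|q)=\tr(p)\tr(q)$ to match $\dim^L_M\bigl(H(p)\otimes_M H(q)\bigr)$ with $\dim^L_M H(p|q)$, and then conclude surjectivity from the already established isometry. Your write-up is in fact more explicit than the paper's about the two points that carry the argument (multiplicativity of Connes dimension under $\otimes_M$, and why equality of dimensions forces the orthogonal complement of the image to vanish), which the paper leaves implicit.
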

\begin{proof}
We know that $\dim_M^L(H(p))=\dim_M^R(H(p))=\tr_{M'}(p)/D^k$ and $\dim_M^L(H(q))=\dim_M^R(H(q))=\tr_{M'}(q)/D^l$.

Moreover, as $p|q\in M_{k+l}(D)$ is the juxtaposition of $p$ and $q$, $\tr(p|q)=\tr(p)\tr(q)$.
Hence $\dim_M^L(H(p|q))=\dim_M^R(H(p|q))=\tr_{M'}(p|q)/D^{l+k}$.
This is the product of the two dimensions above.
Hence the image of $T$ is the whole space $H(p|q)$.
\end{proof}

\subsection{The Tensor Category and Fusion Rule}

We construct a family of $M-M$ bimodules which form a tensor category. Then we give the whole fusion algebra.
A subcategory with fusion rule $A_n$ is also obtained here.
In this section, we let the elements in $M_k(D)$ also stand for their images in $A_k$ under the representation $\pi_k$ or $\pi$ (the ones we get in Section 3 and 4 from the trace $\tr$.) 

Let $D=2\cos(\frac{\pi}{n+1})+1$. 
Hence $g_n=g_{n+1}=\dots=0$. 
For $k\geq 0$, we define $H_{k,i}=H(q_{k,i})$ with $0\leq i\leq \min\{k,n-1\}$ and $H_{0,0}=L^2(M)$.
Here $\{q_{k,i}\}$ are the minimal projections defined in section 3.3,
which are also in the commutants by section 4.2.
By Corollary 5.3, $\{H_{k,i}\}_{k\geq 0,0\leq i\leq \min\{k,n-1\}}$ is a family of irreducible $M-M$ bimodules.
\begin{proposition}
Assume $D=2\cos(\frac{\pi}{n+1})+1$, the principal graph of the inclusion $M_0\subset M_1$ is $A_n$.
\end{proposition}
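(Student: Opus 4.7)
The plan is to read off the principal graph from the Bratteli diagram of the tower of higher relative commutants for $M_0\subset M_1$, exploiting that this tower has essentially already been identified in Section~4.

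First I would verify that the chain $\{M_k\}_{k\in\mathbb{Z}}$ introduced in Section~4.1 really is the Jones tower of $M_0\subset M_1$. Proposition~4.7 gives $[M_k:M_{k-1}]=D^2$, and the argument there shows $M_k$ is the basic construction of $M_{k-2}\subset M_{k-1}$ (the proof for $M_0=\langle M_{-1},e\rangle$ extends to all $k$ by the obvious translation of generators $e_i,l_i,r_i$). Next, the shift endomorphism sending $e_i,l_i,r_i\mapsto e_{i+1},l_{i+1},r_{i+1}$ provides an isomorphism of inclusions $(M_0\subset M_k)\cong(M_{-1}\subset M_{k-1})$; iterating, $M_0'\cap M_k\cong M_{-k}'\cap M_0$. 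Proposition~4.10, applied at $D=2\cos(\pi/(n+1))+1$, then identifies this with $A_{k-1}=\pi(M_{k-1}(D))$. In particular $M_0'\cap M_1 = A_0 = \mathbb{C}$, so the subfactor is irreducible, and the tower of higher relative commutants for $M_0\subset M_1$ is
\[
  \mathbb{C}=A_0\subset A_1\subset A_2\subset\cdots.
\]

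Second, appeal to the Bratteli diagram of $\{A_k\}$ computed in Section~3. By Theorem~3.2 and Proposition~3.14, at $D=2\cos(\pi/(n+1))+1$ this diagram is of truncated type: once $k\geq n-1$, the algebra $A_k$ has exactly $n$ simple summands, and the inclusion matrix of $A_k\hookrightarrow A_{k+1}$ is the tridiagonal matrix $Q_n$, which is precisely the adjacency matrix of the Dynkin diagram $A_n$.

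Finally, I would invoke the standard dictionary from subfactor theory: for an irreducible finite-index subfactor $N\subset M$, the principal graph is the bipartite graph whose two parts are indexed by the minimal central projections of $N'\cap M_{2k}$ and $N'\cap M_{2k+1}$ (for $k$ in the stable region), with edge multiplicities read from the inclusion $N'\cap M_{2k}\hookrightarrow N'\cap M_{2k+1}$. Since our tower of higher relative commutants is $\{A_k\}$ with stationary tridiagonal inclusion matrix $Q_n$, this bipartite graph is $A_n$. The main technical point is really the first paragraph: one must be careful to confirm the identification of the Jones tower with $\{M_k\}$ and the translation of Proposition~4.10 (stated for $M_{-k}'\cap M_0$) into the statement for $M_0'\cap M_k$ via the shift; once this is nailed down, the result is an assembly of facts already proved in Sections~3 and~4.
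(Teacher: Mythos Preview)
Your proposal is correct and follows essentially the same route as the paper, whose proof is the single line ``It follows Proposition~4.10 and Proposition~3.13.''  The paper simply invokes the identification of the higher relative commutants with the $A_k$'s (Proposition~4.10) together with the structure/Bratteli description of the $A_k$'s at the special value of $D$; your write-up fills in the bookkeeping the paper leaves implicit, namely that $\{M_k\}$ is indeed the Jones tower and that the shift gives $(M_0\subset M_k)\cong(M_{-k}\subset M_0)$ so that Proposition~4.10 applies.
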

\begin{proof}
It follows Proposition 4.10 and Proposition 3.13.
\end{proof}

\begin{theorem}
The fusion rule of the irreducible $M-M$ bimodules $H_{k,i}$'s is given by
\begin{center}
$H_{k,i}\otimes H_{l,j}=H_{k+l,|i-j|}\oplus H_{k+l,|i-j|+2}\oplus\dots \oplus H_{k+l,k+l-|k
+l-(i+j)|}$
\end{center}

\end{theorem}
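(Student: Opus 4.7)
The plan is to apply Theorem 5.12 to reduce the fusion computation to an internal calculation inside the finite dimensional algebra $A_{k+l}$. Since $q_{k,i}$ and $q_{l,j}$ are self-adjoint idempotents in $M_k(D)$ and $M_l(D)$ respectively, Theorem 5.12 gives an isometric $M$-$M$ bilinear isomorphism
\begin{equation*}
H_{k,i}\otimes H_{l,j}\;=\;H(q_{k,i})\otimes H(q_{l,j})\;\cong\;H(q_{k,i}\,|\,q_{l,j}).
\end{equation*}
Any projection $r$ in the $m$-th matrix summand of $A_{k+l}$ yields $H(r)\cong H_{k+l,m}^{\oplus \operatorname{rank}(r)}$, so it suffices to decompose $q_{k,i}|q_{l,j}$ into minimal projections of $A_{k+l}$ and count how many land in each summand.

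The first reduction is to strip off the ``cap'' factors $p_{i+1}\cdots p_k$ and $p_{j+1}\cdots p_l$ appearing in $q_{k,i}=g_ip_{i+1}\cdots p_k$ and $q_{l,j}=g_jp_{j+1}\cdots p_l$. Because these capping strings commute with everything placed at disjoint positions and square to themselves, the minimal-projection decomposition of $q_{k,i}|q_{l,j}$ inside $A_{k+l}$ is obtained from that of the ``core'' idempotent $g_i\,|\,g_j$ by attaching the same dead strings; in particular the multiplicities agree. The next step is therefore to decompose $g_i\,|\,g_j$ as a sum of mutually orthogonal projections, each equivalent in $A_{k+l}$ to one of the minimal projections $q_{k+l,m}$ from Corollary 3.16. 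I would do this by induction on $\min(i,j)$, using the Jones--Wenzl recursion of Proposition 2.3 and inserting an $e$-type cap between the two Motzkin Jones--Wenzl idempotents to peel off the through-rank $(i{+}j{-}2)$ piece from the through-rank $(i{+}j)$ piece, exactly as in the Temperley--Lieb fusion; the properties $g_ke_k=e_kg_k=0$, $E(g_k)=\frac{d\,P_k(\tau)}{D\,P_{k-1}(\tau)}g_{k-1}$, and $g_{i-1}g_i=g_i$ make this peeling work at every step. Iterating produces an orthogonal decomposition of $g_i\,|\,g_j$ into pieces of through-ranks $|i-j|,|i-j|+2,\dots,i+j$, each appearing once, with the understanding that any piece of through-rank $\geq n$ is killed in $A_{k+l}$ because $g_n=g_{n+1}=\cdots=0$. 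This last remark is precisely what truncates the range at $\min(i+j,n-1)$, matching the upper index $k+l-|k+l-(i+j)|=i+j$ of the theorem.

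Finally, to check multiplicities I would compute left and right $M$-dimensions with Proposition 5.11 and verify
\begin{equation*}
\dim^L_M H_{k,i}\cdot\dim^L_M H_{l,j}\;=\;\sum_{m}\dim^L_M H_{k+l,m},
\end{equation*}
the sum ranging over the claimed values of $m$; the identity reduces, via $\tr(q_{k,i})=d^iP_i(\tau)/D^k$ and the product $\tr(p|q)=\tr(p)\tr(q)$, to the Chebyshev identity that underlies the $A_n$ fusion. Combined with the fact (Proposition 5.13) that the principal graph of $M_0\subset M_1$ is $A_n$, which already rigidifies the fusion structure generated by $H_{1,1}$, this forces each claimed irreducible summand to appear with multiplicity exactly one.

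The main obstacle is the explicit peeling step for $g_i\,|\,g_j$: producing, for each $m$ in the allowed range, an explicit projection $f_m\in A_{k+l}$ with $f_m\leq g_i\,|\,g_j$, $f_mf_{m'}=0$ for $m\neq m'$, $\sum_m f_m=g_i\,|\,g_j$, and $f_m$ equivalent to $q_{k+l,m}$. This is a bookkeeping calculation analogous to the Wenzl--Jones fusion for Temperley--Lieb, complicated slightly by the extra generators $l_i,r_i$ of the Motzkin algebra, but Lemma 2.6 together with the orthogonality relations of Proposition 2.3 supplies exactly the tools needed.
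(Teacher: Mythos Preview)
Your approach is essentially the paper's: apply the tensor-map theorem to pass to $H(q_{k,i}\,|\,q_{l,j})$, then decompose the juxtaposed projection inside $A_{k+l}$ using the Jones--Wenzl-type recursion for the $g_k$'s, with the vanishing $g_n=0$ providing the truncation. The paper organizes the induction slightly differently: rather than peeling $g_i\,|\,g_j$ directly, it first reduces to $j=1$ (the full fusion then follows from associativity and induction on $j$), and for that base case it writes down an explicit partial isometry $w\in A_{k+l}$ with $ww^*=q_{k,i}\,|\,q_{l,1}$ and $w^*w=(g_i\,|\,g_1)p_{i+2}\cdots p_{k+l}$. This is exactly your ``stripping the caps'' step, but note that it is not quite as immediate as you suggest: the dead strings in $q_{k,i}\,|\,q_{l,j}$ sit in positions $i{+}1,\dots,k$ and $k{+}j{+}1,\dots,k{+}l$, not all on the right, so ``attaching the same dead strings'' to a decomposition of $g_i\,|\,g_j$ does not literally give back $q_{k,i}\,|\,q_{l,j}$; one needs the partial-isometry argument to slide them over. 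After that, the paper's peeling is a single step: from $g_i\,|\,g_1=g_i(1-p_{i+1})=g_{i+1}+\mu_i^2 g_ie_ig_i$ one checks $\mu_i^2 g_ie_ig_i\sim g_{i-1}\,|\,e_1$, giving $H_{k,i}\otimes H_{l,1}\cong H_{k+l,i+1}\oplus H_{k+l,i-1}$. Your direct induction on $\min(i,j)$ would work as well but requires carrying the more elaborate peeling at every stage; the paper's $j=1$ reduction is a cleaner packaging of the same computation. The dimension check and the $A_n$ principal-graph remark you add are consistent with the paper's post-proof discussion but are not needed for the argument itself.
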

\begin{proof}
Firstly, under the representations, we notice $g_k=0$ for $k\geq n$ since $g_n=0$ and $g_n\geq g_m$ (as projections) if $m\geq n$.

It suffices to fix $j=1$ and $1\leq i\leq \min\{n-1,k\}$.
By Proposition 5.17, $H_{k,i}\otimes H_{l,1}\cong H(q_{k,i}|q_{l,1})=H((g_ip_{i+1}\dots p_k)|(g_1p_2\cdots p_l))$ as $M-M$ bimodules.

Now let us consider the following partial isometry (and its image under $\pi_{k+l}$) in $A_{k+l}$:
\begin{center}
$w=g_i~|~(((p_1\dots p_{k-i})|g_1)r_{k-i}\dots r_1)~|~(p_1\dots p_{l-1})$,
\end{center}
which is a juxtaposition of three elements
\begin{enumerate}
\item $g_i\in M_i(D)$,
\item $((p_1\dots p_{k-i})|g_1)r_{k-i}\dots r_1 \in M_{k-i+1}(D)$,
\item $(p_1\dots p_{l-1}) \in M_{l-1}(D)$.
\end{enumerate}
One can check $ww^*=q_{k,i}|q_{l,1}$ and $w^*w=(g_i|g_1)p_{k+l-i-1}\dots p_{k+l}$, which establishes the equivalence of the following projections in $A_{k+l}$ (under the representations): $(g_ip_{i+1}\dots p_k)|(g_1p_2\cdots p_l)$ and $(g_i|g_1)p_{i+2}\dots p_{k+l}$.

Recall $g_{i+1}=g_i\cdot (1-p_{i+1})-\frac{D}{d}\frac{P_{i-1}(\tau)}{P_{i}(\tau)}g_i e_i g_i$.
Let $\mu_i=\sqrt{\frac{D}{d}\frac{P_{i-1}(\tau)}{P_{i}(\tau)}}$.
Then $g_i|g_1=g_{i+1}+(\mu_i g_i e_i)(\mu_i e_i g_i)$.
And $w_i=(\mu_i g_i e_i)(\mu_i e_i g_i)$ is a self-adjoint idempotent satisfying $w_ig_{i+1}=g_{i+1}w_{i}=0$ by Proposition 3.1.

One can show $w_i$ is equivalent to $e_ig_{i-1}=g_{i-1}e_i=g_{i-1}|e_1$:
\begin{align*}
 w_i\sim (\mu_i e_i g_i)(\mu_i g_i e_i)&=\mu_i^2 e_i \frac{d\cdot P_{i}(\tau)}{D\cdot P_{i-1}(\tau)}g_{i-1}
=\frac{D}{d}\frac{P_{i-1}(\tau)}{P_{i}(\tau)} \frac{d\cdot P_{i}(\tau)}{D\cdot P_{i-1}(\tau)} e_i g_{i-1}\\
&=e_ig_{i-1}=g_{i-1}|e_1=g_{i-1}e_i.
\end{align*}
also by Proposition 3.1.

So $g_i|g_1\cong g_{i+1}\oplus g_{i-1}|e_1=g_{i+1}\oplus g_{i-1}e_i$.
Then we have
\begin{center}
$g_ip_{i+1}\dots p_k|g_1p_2\cdots p_l=(g_{i+1}p_{i+2}\dots p_{k+l})\oplus (g_{i-1}e_ip_{i+2}\dots p_{k+l})$.
\end{center}
But $H(g_{i-1}e_ip_{i+2}\dots p_{k+l})\cong g_{i-1}p_{i}\dots p_{k+l}=H(q_{k+1,i-1})$.
Hence we have
\begin{center}
$H_{k+l,i+1}\oplus H_{k+l,i-1}=H_{k,i}\otimes H_{l,1}$,
\end{center}
which completes the proof.

\end{proof}

The dimensions of both sides can partially confirm this.
In fact, the left and right dimension of $H_{k,i}$ are the same and given by
\begin{center}
$\dim_M^L(H_{k,i})=\dim_M^R(H_{k,i}))=\frac{d^i}{D^k}P_{i}(\tau)$
\end{center}
where $0\leq i\leq \min\{k,n-1\}$.
For $H_{k+l,i+1}$ and $H_{k+l,i-1}$,
as $q_{k+l,i+1}\bot q_{k+l,i-1}$, they form a direct sum of bimodules whose left and right dimension are
\begin{center}
$\frac{d^{i+1}}{D^{k+l}}P_{i+1}(\tau)+\frac{d^{i-1}}{D^{k+l}}P_{i-1}(\tau)
=\frac{d^{i+1}}{D^{k+l}}(P_{i+1}(\tau)+d^{-2}P_{i-1}(\tau))
=\frac{d^{i+1}}{D^{k+l}}P_{i}(\tau)$.
\end{center}
Hence $H_{k+l,i+1}\oplus H_{k+l,i-1}$ and $H_{k,i}\otimes H_{l,1}$
have the same left and right dimensions.

We further define $H_k=H_{k,k}\cong H(g_k)$ for $k \geq 1$ and $H_0=H_{0,0}=L^2(M)$.
They form a subcategory with a fusion rule of type $A_n$ \cite{BK01}.
Moreover, the case $D\geq 3$ is also included in the following result.

\begin{proposition}
The fusion rule of $\{H(g_k)\}_{k\geq 0}$ depends on $D$:
\begin{enumerate}
\item For $D=2\cos(\frac{\pi}{n+1})+1$, the $M-M$ bimodules $\{H_k\}_{0\leq k\leq n-1}$ are irreducible and have a fusion rule of $A_{n}$.
\item For $D\geq 3$, the $M-M$ bimodules $\{H_k\}_{k\geq 0}$ are not irreducible and have a fusion rule of $A_{\infty}$.
\end{enumerate}
\end{proposition}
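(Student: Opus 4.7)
My plan is to derive both parts from Theorem~5.19 applied with $i=k$, $j=l=1$:
\begin{equation*}
H_k\otimes H_1 \;=\; H_{k,k}\otimes H_{1,1} \;=\; H_{k+1,\,k-1}\oplus H_{k+1,\,k+1}
\end{equation*}
for $k\ge 1$. The second summand equals $H_{k+1,k+1}=H_{k+1}$ by definition, and $H_0\otimes H_k=H_k$ is automatic since $H_0=L^2(M)$ is the identity bimodule. The common remaining task is to identify $H_{k+1,k-1}=H(g_{k-1}\,p_k p_{k+1})$ with $H_{k-1}=H(g_{k-1})$ as $M$-$M$ bimodules. Diagrammatically $q_{k+1,k-1}$ is just $g_{k-1}$ juxtaposed with two disconnected vertex pairs, so the ``insert two isolated dots on the right'' map provides a natural $M$-$M$ bilinear embedding $H_{k-1}\hookrightarrow H_{k+1,k-1}$; by Proposition~5.17 both sides have left and right $M$-dimension equal to $d^{k-1}P_{k-1}(\tau)$ (using $\tr(g_{k-1}p_k p_{k+1})=\tr(g_{k-1})/D^2$), and the embedding is therefore an isomorphism. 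A cleaner alternative is to exhibit a partial isometry $w\in\pi(M_{k+1}(D))$ with $ww^*=g_{k-1}$ and $w^*w=g_{k-1}p_k p_{k+1}$, in the spirit of the partial-isometry argument in the proof of Theorem~5.19.

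For Part~1, with $D=2\cos(\pi/(n+1))+1$: irreducibility of $H_k$ for $0\le k\le n-1$ follows from Corollary~5.3 once one observes that $g_k=q_{k,k}$ is a minimal projection in $\Hom_{M,M}(H(1_k))$; by the relative-commutant analysis of Section~4.2 (Proposition~4.10) this endomorphism algebra is identified with a copy of a Motzkin algebra in which the $q_{k,r}$ of Corollary~3.16 remain minimal. The fusion computation above then gives $H_k\otimes H_1 = H_{k-1}\oplus H_{k+1}$ for $1\le k\le n-2$. The endpoint case $k=n-1$ uses $\tr(g_n)=0$ at $D=2\cos(\pi/(n+1))+1$ (Lemma~3.4), which forces $H_n=0$ in $\pi$, so the formula collapses to $H_{n-1}\otimes H_1=H_{n-2}$. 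These relations are exactly the $A_n$ fusion rule of the Introduction.

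For Part~2, with $D\ge 3$: by Proposition~3.11 every $\tr(g_k)>0$ and $\pi$ is faithful, so all $H_k$ are nonzero and the same identity $H_k\otimes H_1=H_{k-1}\oplus H_{k+1}$ holds for every $k\ge 1$ without any truncation---this is the $A_\infty$ fusion pattern. Non-irreducibility of $H_k$ follows from Proposition~4.15: for $D\ge 3$ the relative commutant $\Hom_{M,M}(H(1_k))$ has the Pascal-pyramid structure with $k(k+1)/2$ simple summands rather than the $k+1$ of the $A_n$ case, so $g_k$ is no longer a minimal projection there and $H_k$ splits into a proper direct sum of irreducible sub-bimodules.

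The main obstacle is the identification $H_{k+1,k-1}\cong H_{k-1}$: a dimension match plus Schur suffices in Part~1 (both sides being irreducible), but in Part~2 one must really build the explicit intertwiner from the partial isometry $w$ above and verify its $M$-$M$ bilinearity and isometry by a direct diagrammatic check at the level of the defining spaces $V_m(\cdot)$ before passing to the quotients by the radical of the inner product.
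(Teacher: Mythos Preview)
Your approach is essentially the same as the paper's: both derive the fusion from the general fusion-rule theorem together with the identification $H_{k+1,k-1}\cong H_{k-1}$ (the paper writes it as $H_{k,k-2}\cong H_{k-2,k-2}$), use the vanishing of $g_n$ for the truncation in Part~1, and invoke the Section~4.3 relative-commutant computation to see $g_k$ is non-minimal for $D\ge 3$ in Part~2. Two minor corrections: the fusion theorem you cite is Theorem~5.20 (not 5.19), the dimension formula is Proposition~5.15 (not 5.17), and note that Theorem~5.20 is stated only for $D=2\cos(\pi/(n+1))+1$, so in Part~2 you must---as the paper does---appeal to its \emph{proof} rather than its statement.
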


\begin{proof}
If $D=2\cos(\frac{\pi}{n})+1$, we have $g_k=0$ for $k\geq n$.
Then it follows Theorem 5.20 and $H_{k,k-2}\cong H_{k-2,k-2}$ if $k\leq n-1$.

For the case $D\geq 3$, all $g_k$ are nonzero and the relative commutants are strictly bigger than the $A_k$ by Section 4.3.
By computing the trace, $g_k$ is no longer minimal projections and $H_k$ is no longer irreducible.
And the fusion rule is $A_{\infty}$ as the proof of Theorem 5.20.

\end{proof}

\begin{remark}
The bimodules $H_k$'s are indeed $g_kL^2(M_k)g_k$ where $L^2(M_k)$ is an $M-M$ bimodule and $g_k\in M'\cap M_k$ is a minimal projection (of rank $1$).
For the case $D\geq 3$, the projections above are no longer minimal.
And the irreducible bimodules here are $L^2(M_k,\tr)p_{k_i}$, where $p_{k_i}$ are the minimal projections obtained in Section 4.3.

\end{remark} 

\addcontentsline{toc}{section}{References}

\end{document}